\title{On the long-time statistical behavior of smooth solutions of
    the weakly damped, stochastically-driven KdV equation}
\author{Nathan E. Glatt-Holtz, Vincent R. Martinez, Geordie H. Richards\\
  \scriptsize{emails: negh@iu.edu, 
                      vrmartinez@hunter.cuny.edu,
                      grichards@uoguelph.ca}}
\date{October 8, 2024}
\newcommand{\submit}[1]{#1}
\definecolor{labelkey}{rgb}{0,0,1}
\numberwithin{equation}{section}
\DeclareMathOperator{\esssup}{esssup}
\DeclareMathOperator{\vspan}{span}
\newtheorem{Thm}{Theorem}[section]
\newtheorem{Lem}[Thm]{Lemma}
\newtheorem{Prop}[Thm]{Proposition}
\newtheorem{Cor}[Thm]{Corollary}
\newtheorem{Def}[Thm]{Definition}
\newtheorem{Rmk}[Thm]{Remark}
\newtheorem*{Thm*}{Theorem}
\newcommand{\indFn}[1]{1 \! \! 1_{#1}}
\newcommand{\eps}{\epsilon}
\newcommand{\de}{\delta}
\newcommand{\De}{\Delta}
\newcommand{\RR}{\mathbb{R}}
\newcommand{\TT}{\mathbb{T}}
\newcommand{\ZZ}{\mathbb{Z}}
\newcommand{\E}{\mathbb{E}}
\newcommand{\Prb}{\mathbb{P}}
\newcommand{\bH}{\mathbf{H}}
\newcommand{\Mt}{\mathcal{M}}
\newcommand{\K}{\mathcal{K}}
\newcommand{\Mon}{\mathscr{M}}
\newcommand{\Ron}{\mathscr{R}}
\DeclareMathOperator{\rank}{rank}
\newcommand{\imb}{\hookrightarrow}
\newcommand{\ph}{\varphi}
\newcommand{\Ham}{\mathcal{H}}
\newcommand{\In}{\mathcal{I}}
\newcommand{\Kn}{\mathcal{K}}
\newcommand{\InG}{\tilde{\mathcal{I}}}
\newcommand{\abarm}{\bar{\alpha}_m}
\newcommand{\Wm}{\mathfrak{W}}
\def\gam{\gamma}
\def\s{\sigma}
\def\lam{\lambda}
\def\lb{\langle}
\def\rb{\rangle}
\def\bdy{\partial}
\def\T{\mathbb{T}}
\def\al{\alpha}
\def\be{\beta}
\def\z{\zeta}
\def\w{\omega}
\def\Om{\Omega}
\def\bdy{\partial}
\def\veps{\varepsilon}
\newcommand{\goesto}{\rightarrow}
\newcommand{\smod}{\setminus}
\def\R{\mathbb{R}}
\def\N{\mathbb{N}}
\newcommand{\til}[1]{{\tilde{#1}}}
\newcommand{\Sob}[2]{\lVert#1\rVert_{#2}}
\newcommand{\req}[1]{(\ref{#1})}
\newcommand{\Lip}{\mbox{Lip}}
\newcommand{\Atr}{\mathcal{A}}
\newcommand{\bSt}{\mathcal{B}}
\begin{document}

\sloppy

\markboth{N. Glatt-Holtz, V.R. Martinez, G. Richards} {Long-time
  statistics for the weakly damped stochastic KdV equation}

\maketitle

\begin{abstract}
  This paper considers the damped periodic Korteweg-de Vries (KdV) equation in
  the presence of a white-in-time and spatially smooth stochastic
  source term and studies the long-time behavior of solutions.  We show that the integrals of motion for KdV
  can be exploited to prove regularity and ergodic properties of
  invariant measures for damped stochastic KdV.  First, by considering non-trivial modifications of the integrals of motion, we establish
  Lyapunov structure by proving that moments of Sobolev norms of solutions at all orders
  of regularity are bounded globally-in-time; existence of
  invariant measures follows as an immediate consequence.  Next, we prove a weak Foias-Prodi type estimate for damped stochastic KdV, for which the synchronization occurs in expected value.  This estimate plays a crucial role throughout our subsequent analysis.  As a first novel application, we combine the Foias-Prodi estimate with the Lyapunov structure to establish that invariant
  measures are supported on $C^\infty$ functions provided
  that the external driving forces belong to $C^\infty$.  We then
  establish ergodic properties of invariant measures, treating the
  regimes of arbitrary damping and large damping separately. For arbitrary damping,
  we demonstrate that the framework of `asymptotic coupling' can be
  implemented for a compact proof of uniqueness of the invariant
  measure provided that sufficiently many directions in phase space
  are stochastically forced. Our proof is paradigmatic for SPDEs for which a weak Foias-Prodi type property holds.  Lastly, for
  large damping, we
  establish the existence of a spectral gap with respect to a
  Wasserstein-like distance, and exponential mixing and
  uniqueness of the invariant measure follows.  
\end{abstract}

{\noindent \small {\it {\bf Keywords: Stochastic PDEs, Korteweg-de Vries (KdV)
      equation, higher-order statistical moments, regularity of the invariant measure, unique
      ergodicity, exponential mixing}
  } \\
  {\it {\bf MSC 2020 Classifications:} 37L40, 35Q53, 35Q35, 37L55} }
\begin{footnotesize}
\setcounter{tocdepth}{1}
\tableofcontents
\end{footnotesize}
\newpage

\section{Introduction}\label{sect:Intro}

This work presents a study of the damped-driven Korteweg-de Vries
(KdV) equation on the unit circle $\TT$, where the external source
term has a stochastic component.  The equation is given as
\begin{align}
   du + ( u Du + D^3u+ \gam u) dt = f dt + \sigma dW,
  \quad
  u(0) = u_0.
   \label{eq:s:KdV}
\end{align}
where $D = \partial/\partial x$, $\gam$ is a positive constant, the
external perturbation has a drift coefficient given by a
time-independent function, $f$, and the additive noise, $\sigma dW$,
is white-in-time and smooth in space. Note that we will work exclusively in the setting of functions with mean zero.

The KdV equation, \eqref{eq:s:KdV} with $\gamma=0$, $\sigma\equiv 0$,
is a canonical nonlinear dispersive Partial Differential Equation
(PDE) of significant mathematical and physical interest and has been
investigated through a vast literature.  It has Hamiltonian structure
and it is completely integrable.  As is typical for Hamiltonian
systems, the damped-driven version \eqref{eq:s:KdV} is also of wide
interest in a variety of mathematical and physical contexts.  See
\cref{sec:intro:prev} below for further details on the history of this
subject.  Notwithstanding this
extensive history, the long time behavior of solutions to
\eqref{eq:s:KdV} with $\gamma> 0$ and $\sigma\neq 0$ remains poorly
understood.

{{}Our interest in the ``weakly'' damped version of stochastic KdV with
spatially smooth noise is motivated by previous contributions in the
deterministic setting which elucidate a non-trivial structure in the
long time dynamics, namely the existence of a global attractor as
regular as the deterministic forcing \cite{Ghidaglia1988,
  MoiseRosa1997}.  By ``weak'' damping, we refer specifically to the fact that the mechanism for removing energy from the system \textit{does not} simultaneously regularize its solutions; this terminology is set in contrast to ``strong'' damping or dissipation, paradigmatically represented by the negative Laplacian operator, which does confer additional regularity to its solutions.} These previous works \cite{Ghidaglia1988,
  MoiseRosa1997} exposed the interaction between
complete integrability and weak damping that is specific to KdV to
illuminate mechanisms for understanding the long time dynamics which
are distinct from those that could be harnessed using strong
dissipation for a variety of infinite-dimensional dynamical systems
(see e.g. \cite{Temam1997}).  On the other hand, in the setting of
stochastic KdV with strong dissipation, some notable contributions
have been established in \cite{KP2008,Kuksin2010} which utilize
complete integrability, but any work of how integrability of KdV can
be harnessed to study \eqref{eq:s:KdV} remains absent from the
literature.  However, the reader is referred to
\cite{DebusscheOdasso2005} for related work in the context of the
weakly damped stochastic cubic nonlinear Schr\"{o}dinger (NLS)
equation.

Our primary mathematical objective is to exploit weak damping and
complete integrability of KdV to prove ergodic and mixing properties
of \eqref{eq:s:KdV} in the category of smooth solutions and to
illuminate key features for doing so in the broader context of weakly damped
systems.  We use the integrals of motion to establish a Lyapunov
structure for solutions of \eqref{eq:s:KdV} at all orders of Sobolev
regularity (\cref{thm:Lyapunov:intro}), from which the existence of
invariant measures (\cref{thm:exist:intro}) follows by applying a standard Krylov-Bogolyubov argument.  The next significant component of
our analysis is to establish a variant of a so-called
Foias-Prodi type estimate that holds in expectation
(\cref{thm:FP:est:Intro}), which is accomplished
through analysis of linearizations of the integrals of motion and by harnessing the Lyapunov structure.  Foias-Prodi estimates are commonly
exploited in the ergodic theory of SPDEs, but typically in a pathwise
formulation.  While such bounds are new for \eqref{eq:s:KdV} and
interesting in their own right, we use our modified Foias-Prodi
estimate to develop streamlined probabilistic arguments that
establish higher regularity of the invariant measure, for any damping
parameter $\gamma>0$ (\cref{thm:Regularity:Intro}), as well as
uniqueness of invariant measures provided that the noise activates a
sufficiently large number of modes
(\cref{thm:unique:ergodicity:Intro}).  We then proceed to establish another non-trivial consequence of the
Lyapunov structure: an exponential mixing result in the large
$\gamma$ asymptotic regime, via one-force-one-solution type arguments
(\cref{thm:large:damp:Mix:Intro}).  Finally, we identify how the approach we developed for a stochastic setting can be adapted to yield streamlined proofs of Foias-Prodi type estimates and regularity of the attractor for deterministic damped-driven KdV.

\subsection{Literature Review}
\label{sec:intro:prev}

Originally proposed by Boussinesq \cite{Boussinesq1871} in 1871 and
later popularized by Korteweg and de Vries \cite{KortewegdeVries1895}
in 1895, the classical KdV equation (\eqref{eq:s:KdV} with $\gam=0$
and $\sigma\equiv0$) was derived as a model for soliton behavior in
shallow water wave propagation.  It has since been realized as a
canonical model for unidirectional propagation of small-amplitude
waves across a wide range of contexts \cite{Miura1976}, and has been
studied to great depth in both physics and mathematics communities for
its rich structural properties.

In the deterministic setting, with smooth initial conditions belonging
to $H^m(\T)$, $m\geq2$, the well-posedness of KdV has been understood
since the works of \cite{Temam1969, Sjoberg1970, BonaSmith1975,
  BonaScott1976, SautTemam1976, Kato1979}.  The Hamiltonian
formulation of KdV is due to \cite{gardner1971,zakharov1971}, and the
existence of action-angle coordinates for KdV was first established in
\cite{miura1968I,miura1968II}, with important advances specific to the
periodic setting appearing in \cite{mckean1976,flaschka1976,lax1976}.
With respect to the action-angle coordinate system, the solution to
KdV can be expressed as linear flow on an infinite-dimensional torus,
providing substantial dynamical insight. For example, solutions evolve
almost periodically in time, and there exist large families of almost
periodic solutions that persist under small Hamiltonian perturbations
\cite{kappeler2003}.  Significant work on developing the
well-posedness theory of KdV in successively lower regularity settings
goes to back to the seminal works of \cite{KenigPonceVega1991,
  Bourgain1993}.  This development was advanced in
\cite{KenigPonceVega1996, CollianderKeelStaffilaniTakaokaTao2003},
establishing well-posedness in $H^s(\T)$ for $s$ negative, culminating
in endpoint global well-posedness in $H^{-1}(\T)$
\cite{KappelerTopalev2006, KillipVisan2019}. Note that the sharpness
of the well-posedness threshold $s=-1$ was established in
\cite{Molinet2011, Molinet2012}.  Another notable work in this regard
is an alternate proof of the result in \cite{Bourgain1993} provided in
\cite{BabinIlyinTiti2011}.

In the stochastic setting, research into well-posedness has focused on
rough stochastic forcing, where the low temporal regularity of
Brownian motion precludes a straightforward adaptation of harmonic
analysis techniques.  Nevertheless, local and global well-posedness
results have been established in a number of settings with either
additive or multiplicative noise \cite{deBouardDebussche1998,
  deBouardDebusscheTsutsumi1999, deBouardDebusscheTsutsumi2004,
  oh2010periodic}.  Omitted from these results is well-posedness in
the classical $H^m(\T)$ setting, where $m\geq2$, when the noise is
spatially smooth, essentially because the problem is much easier.
Indeed, the choice of smooth forcing allows one to rely on more
classical arguments, avoiding problems due to the temporal regularity
of Brownian motion.  For the sake of completeness the proof of global
well-posedness for \eqref{eq:s:KdV} is provided in the appendix (see
\cref{sect:apx:wp:SKdV}).

It must be emphasized that KdV appears as a model equation in physics,
where the full model may include dissipative effects due to friction,
and forcing terms, e.g. due to a noisy environment, among other
potential complications.  Some important contributions to the analysis
of KdV under deterministic perturbations with this structure have been
made.  In particular, with the form of damping considered here,
existence of a global attractor is known \cite{Ghidaglia1988}, and its
regularity has been studied in some detail \cite{MoiseRosa1997,
  Goubet2000, Goubet2018}. In fact, more recently, the dynamics of
damped-driven KdV was reduced to an ordinary
differential equation, referred to as the ``determining
form'' (cf.  \cite{JollySadigovTiti2017}).  See also
\cite{Johnson1970, AmickBonaSchonbek1989,
  BonaDougalisKarakashianMcKinney1996, ZahiboPelinovskySergeeva2009,
  ChehabSadaka2013a, ChehabSadaka2013b} for a sample of other
noteworthy contributions.  In contrast, the study of weakly damped KdV
with a stochastic driving force has received limited attention in the
literature.  In a recent paper \cite{EkrenKukavicaZiane2018}, the
issue of existence of invariant measures for damped stochastic KdV on
the real line was addressed in the functional setting of $H^1(\R)$.
Other interesting contributions were obtained in the setting of
strongly dissipative stochastic KdV \cite{KP2008,Kuksin2010}, where it
was shown that when the dissipation is suitably scaled with the noise,
then in the deterministic limit the integrals of motion evolve
according to certain averaged equations.

The existence of invariant measures for \eqref{eq:s:KdV} is fairly
direct in our setting of $H^m(\T)$, where $m\geq2$, by way of the
Krylov-Bogolyubov averaging procedure, provided we establish suitable
energy estimates to ensure compactness.  The uniqueness, regularity,
and ergodic properties of invariant measures is a more subtle issue.
In the regime of strong dissipation, {{}a pertinent result when viscosity is large} is found in 
\cite{MattinglyLargeViscosity1999} for the 2D stochastic Navier-Stokes
equations.  There, it is proven that the random attractor is
degenerate, i.e., it consists of a single trajectory, and is
ultimately parameterized by realizations of the noise paths; this
property immediately confers uniqueness of invariant measures.
Analogously, in the deterministic setting of \eqref{eq:s:KdV} it was
shown in \cite{CabralRosa2004} that a one-point global attractor
exists when damping is taken sufficiently large.  For arbitrary
damping, there are different approaches that could be considered.
Recent work in \cite{HairerMattinglyScheutzow2011,
  KuksinShirikyan2012, GlattHoltzMattinglyRichards2015,
  ButkovskyKulikScheutzow2019} identified an intuitive and
conceptually simple framework for proving uniqueness of the invariant
measure by an asymptotic coupling strategy.  Indeed, the work
\cite{GlattHoltzMattinglyRichards2015} surveyed a number of SPDEs for
which this framework led to streamlined proofs of uniqueness, where
the common thread between these systems was the existence of a finite
number of determining modes in the spirit of \cite{FoiasProdi1967},
and a sufficiently rich stochastic forcing to ensure that the
determining modes are excited. {{}In a seminal work by Hairer and Mattingly \cite{HairerMattingly2006}, the case of degenerate forcing was considered, where finitely many Fourier modes are directly excited, but the number of such modes is \textit{independent} of the size of viscosity.} We note that in light of recent
contributions regarding the dynamics of damped-driven KdV
\cite{Ghidaglia1988, MoiseRosa1997, Goubet2000, JollySadigovTiti2017},
among others, asymptotic coupling is a natural strategy to consider for proving uniqueness for
\eqref{eq:s:KdV} {{}in the slightly degenerate setting of forcing sufficiently many modes, dependent on the size of the damping parameter, that is considered here. Nevertheless,} the combined absence of strong dissipation and
the structure of the nonlinearity presents unique challenges {{}that must be overcome.}

\subsection{Overview of Main Results}
Let us now turn to provide a detailed overview of the main
mathematical results which we establish in this work.  An outline of
the strategy of our proofs are provided immediately below in
\cref{sec:intro:math}. {{}We point out that in statements that follow and throughout the manuscript, the damping parameter, $\gam$, is assumed to be positive and arbitrary unless otherwise specified.}

Our first theorem illuminates Lyapunov structure for Sobolev norms of
solution of \eqref{eq:s:KdV} at all degrees of regularity.
\begin{Thm}[Lyapunov structure for higher order Sobolev norms]
  \label{thm:Lyapunov:intro}
  Given sufficiently smooth $f$ and $\sigma$, and $m\geq 2$, let $u=u(t;u_0)$ be the solution of \eqref{eq:s:KdV}
  corresponding to initial condition $u_0\in{H}^m$.  There exist parameters
  $r,r'>0$ such that, for any $\eta > 0$
  \begin{align}
    \label{eq:Lyapunov:intro}
    \E\exp\left(\eta \|u(t)\|_{H^m}^{r}\right)
    \leq C\exp\left(Ce^{-\gamma r t}(\|u_0\|_{H^m}^{r} + \|u_0\|^{r'}_{L^2})\right),
  \end{align}
  for some $C>0$ where $C, r, r'$ are independent of $u_0$ and
  $t \geq 0$.
\end{Thm}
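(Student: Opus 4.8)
The plan is to construct, for each integer $m\ge 2$, a \emph{modified energy} $\mathcal{E}_m[u]$ built from the $m$-th integral of motion $\Ham_m$ of KdV, and to derive for it a dissipative It\^o differential inequality whose damping rate is proportional to $\gamma$. Recall that the conservation laws have the schematic form $\Ham_m[u]=\|D^m u\|_{L^2}^2+R_m[u]$, where $R_m$ is the integral of a polynomial in $u,Du,\dots,D^{m-1}u$ each monomial of which carries the same homogeneity as $\|D^m u\|_{L^2}^2$ under the scaling symmetry of KdV. The first ingredient --- also the backbone of the deterministic attractor regularity theory \cite{Ghidaglia1988,MoiseRosa1997} --- is the classical structural fact that $R_m$, its variational derivative $R_m'$, and the related lower-order polynomial integrals arising below are all \emph{subcritical} for the $H^m$ scaling: by Gagliardo--Nirenberg interpolation, for every $\epsilon>0$ one has $|R_m[u]|\le \epsilon\|D^m u\|_{L^2}^2+C_\epsilon\bigl(1+\|u\|_{L^2}^{N_m}\bigr)$ and analogous bounds for the other terms. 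In particular $\Ham_m$ is coercive in $H^m$ only up to a polynomial in $\|u\|_{L^2}$, which is why a nontrivial modification is needed; concretely one takes $\mathcal{E}_m[u]:=\Ham_m[u]+\Phi_m\bigl(\|u\|_{L^2}^2\bigr)$ for a suitable polynomial (or power) $\Phi_m$, chosen so that $\mathcal{E}_m$ is comparable to $\|u\|_{H^m}^2$ modulo a fixed power of $\|u\|_{L^2}$.

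Next I apply the It\^o formula to $t\mapsto\mathcal{E}_m[u(t)]$ along \eqref{eq:s:KdV}, using the known global smooth solution (see \cref{sect:apx:wp:SKdV}). The decisive point is that $\Ham_m$ is conserved by the undamped, unforced flow, i.e. $\langle\Ham_m'[u],\,uDu+D^3u\rangle=0$, which also annihilates the correction since $\langle u,uDu\rangle=\langle u,D^3u\rangle=0$; thus the entire Hamiltonian part of the drift cancels, leaving only the damping, the deterministic forcing $f$, and the It\^o trace correction, together with a local martingale $M_m(t)=\int_0^t\langle\mathcal{E}_m'[u],\sigma\,dW\rangle$. Writing $\Ham_m'[u]=2(-1)^mD^{2m}u+R_m'[u]$, the damping produces $-\gamma\langle\Ham_m'[u],u\rangle=-2\gamma\|D^m u\|_{L^2}^2-\gamma\langle R_m'[u],u\rangle$; absorbing the subcritical remainder via the estimates above and handling the correction term (whose drift is sign-favourable by the $L^2$ energy balance), one arrives at a drift bounded by $-\kappa\gamma\,\mathcal{E}_m[u]+g(t)$, where the crucial coefficient $2\gamma$ of $\|D^m u\|_{L^2}^2$ is exactly what produces the rate $e^{-\gamma r t}$ in \eqref{eq:Lyapunov:intro}, mirroring the relaxation $\|u\|_{H^m}^r=e^{-\gamma r t}\|u_0\|_{H^m}^r$ of the purely damped flow, and where $g(t)$ is lower-order --- a polynomial in $\|u(t)\|_{L^2}$ plus a constant depending on $\|f\|_{H^m}$ and on $\sigma$; here one uses $f\in H^m$ and that $\sigma$ is Hilbert--Schmidt into $H^m$. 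The quadratic variation, finally, satisfies $\tfrac{d}{dt}\langle M_m\rangle\le C\bigl(\mathcal{E}_m[u]^{1+\theta_m}+g(t)\bigr)$ for some $\theta_m\in[0,1)$, since the top-order part of $\mathcal{E}_m'$ is linear in $u$ and pairing against the smooth fields $\sigma e_k$ does not raise the order of differentiation, the remaining contributions being tamed by integration by parts and interpolation. In summary one reaches, schematically,
\[
  d\mathcal{E}_m[u]\le\bigl(-\kappa\gamma\,\mathcal{E}_m[u]+g(t)\bigr)\,dt+dM_m,
  \qquad
  \tfrac{d}{dt}\langle M_m\rangle\le C\bigl(\mathcal{E}_m[u]^{1+\theta_m}+g(t)\bigr).
\]

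Two steps remain. First, polynomial moments: the base case $m=0$, where $\mathcal{E}_0=\|u\|_{L^2}^2$, $-\gamma\langle\Ham_0',u\rangle=-2\gamma\|u\|_{L^2}^2$ exactly, $g$ is constant, and $\tfrac{d}{dt}\langle M_0\rangle\le C\|u\|_{L^2}^2$, yields all polynomial moments of $\|u(t)\|_{L^2}$ uniformly in $t\ge 0$ with the sharp dependence on $\|u_0\|_{L^2}$; feeding these into $g$ and running a stochastic Gronwall argument on the display above then gives all polynomial moments of $\mathcal{E}_m(t)$, and hence of $\|u(t)\|_{H^m}$, with the correct time-dependence. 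Second, the upgrade to the stretched-exponential bound: because $\langle M_m\rangle$ grows (slightly super)linearly in $\mathcal{E}_m$, $\exp(\eta\,\mathcal{E}_m)$ has a favourable drift only for small $\eta$, so one passes to a fractional power --- set $X(t):=\mathcal{E}_m[u(t)]^{r/2}\simeq\|u(t)\|_{H^m}^r$ and choose $r\in(0,1-\theta_m]$ (so $r=r_m$ depends on $\sigma$ and $m$) so that It\^o's formula gives $dX\le\bigl(-\tfrac{\kappa\gamma r}{2}X+\tilde g(t)\bigr)\,dt+dN_m$ with $\tfrac{d}{dt}\langle N_m\rangle$ at most of order $1$ (up to lower order), the concavity of $x\mapsto x^{r/2}$ both rendering the second-order It\^o term nonpositive and reducing the growth of the quadratic variation, the first-order term supplying the rate $\kappa\gamma r/2$. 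Applying It\^o once more to $\exp(\eta X)$, taking expectations, absorbing $\tilde g$ via the polynomial moments just obtained, and invoking the convexity bound $\E[Z\log Z]\ge\E[Z]\,\log\E[Z]$ with $Z=\exp(\eta X(t))$ turns the dissipation into a logarithmic Gronwall inequality $\tfrac{d}{dt}\log v\le -\gamma r\,\log v+C_\eta$ for $v(t):=\E\exp(\eta X(t))$ (after calibrating the modification so that the effective leading damping coefficient is $2\gamma$, i.e.\ $\kappa=2$). Integrating, $\log v(t)\le e^{-\gamma r t}\log v(0)+C_\eta/(\gamma r)$; since $X(0)=\mathcal{E}_m[u_0]^{r/2}\lesssim\|u_0\|_{H^m}^r+\|u_0\|_{L^2}^{r'}$ this is exactly \eqref{eq:Lyapunov:intro}.

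I expect the principal obstacle to lie in the It\^o estimate of the second paragraph: keeping the growth of $\tfrac{d}{dt}\langle M_m\rangle$ genuinely sub-quadratic in $\mathcal{E}_m$ (i.e.\ $\theta_m<1$), uniformly in $m$, requires exploiting carefully both the spatial smoothness of $\sigma$ and the precise algebraic structure of the densities of the KdV integrals of motion, and the residual exponent $\theta_m$ then dictates how small the power $r=r_m$ in \eqref{eq:Lyapunov:intro} must be taken. A secondary, more technical difficulty is the bookkeeping needed to ensure that every lower-order term produced by the subcritical estimates and by differentiating $\Phi_m(\|u\|_{L^2}^2)$ is ultimately controlled --- either absorbed into the modification or into $g$ and then handled by the $m=0$ moment bounds --- so that the constants entering the final logarithmic Gronwall inequality are deterministic and the argument closes without circularity.
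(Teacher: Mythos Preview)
Your strategy is essentially correct and largely parallels the paper's own proof: the modified functional $\mathcal{E}_m = \Ham_m + \Phi_m(\|u\|_{L^2}^2)$ is exactly the paper's $\In_m^+$ (equation \eqref{eq:mod:int:m:simple}), the coercivity and the It\^o differential inequality with damping coefficient close to $2\gamma$ are \cref{lem:equivalence} and Lemmas~\ref{lem:Im:evolution}--\ref{lem:Im:plus:evolution}, the subcriticality of the lower-order monomials (your Gagliardo--Nirenberg step) is \cref{prop:mono:intp:bnd}, and the passage to a fractional power $X = \mathcal{E}_m^{r/2}$ so that the quadratic variation becomes of order $1$ is precisely the choice $p_0 = 1-\delta$ in \cref{sect:exp:mom:bnd} (see \eqref{eq:careful:dlt}--\eqref{eq:careful:dlt:mt}).

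Where you diverge is in the final step converting the dissipative inequality for $X$ into the exponential moment bound. The paper does \emph{not} apply It\^o to $\exp(\eta X)$ and invoke Jensen for $z\mapsto z\log z$ to obtain a logarithmic Gronwall inequality. Instead it keeps the inequality for $X$ (with an integrating factor $e^{-\gamma p(T-s)}$ for the time-decaying version), applies the exponential martingale inequality $\Prb\bigl(\sup_t(\mathcal{N}(t) - \tfrac{\bar\eta}{2}[\mathcal{N}](t)) \geq R\bigr) \leq e^{-\bar\eta R}$ to the resulting martingale, reads off an exponential tail bound on $\In_m^+(u(T))^p - e^{-\gamma p T}\In_m^+(u_0)^p$, and then uses the layer-cake identity $\E X = \int_0^\infty \Prb(X \geq y)\,dy$ to upgrade the tail to a moment bound. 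Your entropy/log-Gronwall route is a legitimate alternative and arguably more direct; the paper's route has the advantage of producing, as a byproduct, the pathwise-in-time tail estimates \eqref{eq:poly:est:asymptotic}--\eqref{eq:exp:est:asymptotic:L2}, which are needed later for the Foias--Prodi stopping-time bounds in \cref{sect:FP:est}.

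One point to tighten: your proposed handling of $\tilde g$ ``via the polynomial moments just obtained'' would, taken literally, require splitting $\E[\tilde g\,e^{\eta X}]$ by H\"older and hence controlling $\E e^{\eta' X}$ for some $\eta' > \eta$ --- the circularity you yourself flag at the end. The fix (and what the paper actually does) is to choose $\Phi_m$ with exponent $\bar q_m$ large enough that \emph{all} lower-order drift terms --- those from $f$, the It\^o correction, and from differentiating $\Phi_m$ --- are absorbed by the $2\gamma\mathcal{E}_m$ dissipation up to a \emph{deterministic} constant (this is \eqref{eq:Kn:summary}); then $\tilde g$ is a constant and the circularity disappears. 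After this absorption one has $\kappa$ arbitrarily close to, but not equal to, $2$; since the theorem only asserts the existence of \emph{some} $r>0$, this is immaterial.
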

\noindent
The estimates provided by \cref{thm:Lyapunov:intro}, while of interest
in their own right, serve as a backbone for the rest of the analysis
herein as well as a foundation for future work on \eqref{eq:s:KdV} in
the category of smooth solutions.  Specifically
\cref{thm:Lyapunov:intro} will be crucial for analyzing regularity and ergodicity of invariant measures for \eqref{eq:s:KdV} as in
Theorems \ref{thm:exist:intro}--\ref{thm:Regularity:Intro} below.
Note that a number of additional moment bounds beyond
\eqref{eq:Lyapunov:intro} are proven to control the evolution of
higher Sobolev norms of solutions.  We refer the reader to
\cref{thm:Lyapunov} for these details and more precise statements.

As an immediate application of \cref{thm:Lyapunov:intro}, we establish
the existence of invariant measures via the classical
Krylov-Bogolyubov procedure. In order to state this and for use in
subsequent results, let us first introduce some additional notation
and terminology.  Let
\begin{align}
  P_t(u_0; A) := \Prb( u(t;u_0) \in A) \quad
   \text{ for any } u_0 \in H^m,
  \text{ and any Borel set } A \subseteq H^m,
  \label{eq:m:trans:intro}
\end{align}
denote the Markov transition semigroup on $H^m$ ($m\geq 2$)
corresponding to solutions $u = u(t; u_0)$ of \eqref{eq:s:KdV}.  This
semigroup $\{P_t\}_{t \geq 0}$ acts on observables
$\phi : H^m \to \RR$ and on (Borelean) probability measures $\mu$ on
$H^m$ according to
\begin{align*}
  P_t\phi(\cdot) :=  \int_{H^m} \phi(v) P_t(\cdot, dv),
  \quad
  \mu P_t(\cdot) := \int_{H^m} P_t(v, \cdot) \mu(dv),
\end{align*}
respectively.  In this Markovian formulation statistically steady
states of \eqref{eq:s:KdV} are defined by invariant measures, $\mu$,
that is, probability measures such that $\mu P_t = \mu$, for every
$t \geq 0$. Our result on existence of invariant measures is then
stated as follows.

\begin{Cor}[Existence of invariant measures]
\label{thm:exist:intro}
For sufficiently smooth $f$ and $\s$, there exists an invariant
measure for the Markov semigroup corresponding to \eqref{eq:s:KdV}.
\end{Cor}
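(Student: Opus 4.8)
The plan is to apply the classical Krylov--Bogolyubov averaging procedure on the phase space $H^m$, using \cref{thm:Lyapunov:intro} to supply the required tightness. First I would fix $u_0 = 0$ (or any fixed point of $H^m$) and define, for $T > 0$, the time-averaged measures
\begin{align*}
  \mu_T(\cdot) := \frac{1}{T}\int_0^T P_t(0;\cdot)\, dt
\end{align*}
on $H^m$. The goal is to extract a weakly convergent subsequence $\mu_{T_k} \rightharpoonup \mu$ and then verify that the limit $\mu$ is invariant for $\{P_t\}_{t\geq 0}$. Weak convergence along a subsequence is guaranteed by Prokhorov's theorem once we show the family $\{\mu_T\}_{T\geq 1}$ is tight.

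For tightness, the key point is that $H^{m+1}$ embeds compactly into $H^m$ (on the torus $\TT$, restricted to mean-zero functions), so it suffices to obtain a uniform-in-$t$ bound on $\E\|u(t;0)\|_{H^{m+1}}$; balls in $H^{m+1}$ are then compact in $H^m$ and Markov's inequality controls the tails of $\mu_T$ uniformly. Here I would invoke \cref{thm:Lyapunov:intro} at regularity level $m+1$ (the hypothesis that $f,\sigma$ are ``sufficiently smooth'' in \cref{thm:exist:intro} should be read as supplying enough regularity for this): taking $\eta$ fixed and $u_0 = 0$ in \eqref{eq:Lyapunov:intro} gives $\E\exp(\eta\|u(t)\|_{H^{m+1}}^r) \leq C\exp(C)$ for all $t\geq 0$, with $C$ independent of $t$. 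By Jensen's inequality this yields $\sup_{t\geq 0}\E\|u(t;0)\|_{H^{m+1}} < \infty$, hence $\sup_{t\geq 0}\E\|u(t;0)\|_{H^{m+1}}^p<\infty$ for a range of $p$, and therefore for every $\delta>0$ there is a radius $R_\delta$ with $P_t(0; \{\|u\|_{H^{m+1}}>R_\delta\}) < \delta$ uniformly in $t$; integrating in $t$ gives $\mu_T(\{\|u\|_{H^{m+1}}>R_\delta\})<\delta$ for all $T$, which is tightness since the ball $\{\|u\|_{H^{m+1}}\le R_\delta\}$ is compact in $H^m$.

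It remains to check that any weak limit $\mu$ is invariant, i.e. $\mu P_s = \mu$ for all $s\ge 0$. This is the standard final step: for $\phi \in C_b(H^m)$ one writes
\begin{align*}
  \int_{H^m} P_s\phi\, d\mu_{T} - \int_{H^m}\phi\, d\mu_{T}
  = \frac{1}{T}\left(\int_T^{T+s} P_t\phi(0)\,dt - \int_0^s P_t\phi(0)\,dt\right),
\end{align*}
which is bounded by $\tfrac{2s\|\phi\|_\infty}{T} \to 0$; passing to the limit along $T_k$, using weak convergence together with the Feller property of $\{P_t\}$ (continuity of $u_0 \mapsto u(t;u_0)$ in $H^m$, hence $P_s\phi \in C_b(H^m)$, which follows from the well-posedness theory recalled in \cref{sect:apx:wp:SKdV}), gives $\int P_s\phi\, d\mu = \int \phi\, d\mu$ for all $\phi \in C_b(H^m)$, i.e. $\mu P_s = \mu$. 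The only genuine input beyond soft functional-analytic arguments is the uniform moment bound, so the \emph{main obstacle is entirely absorbed into \cref{thm:Lyapunov:intro}}; the Feller property is routine given global well-posedness and continuous dependence on initial data. One should also note $\mu$ is supported on $H^m$ with finite $H^{m+1}$ moments by Fatou, though only the $H^m$-invariance is asserted in the statement.
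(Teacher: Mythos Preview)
Your proposal is correct and follows essentially the same approach as the paper: Krylov--Bogolyubov averaging starting from $u_0=0$, tightness in $H^m$ via the compact embedding $H^{m+1}\hookrightarrow H^m$ together with the uniform-in-time $H^{m+1}$ moment bounds from the Lyapunov estimates, and invariance of the weak limit via the Feller property. The only cosmetic difference is that the paper invokes the polynomial moment bound \eqref{eq:Lyapunov:Bnd} with $p=1$ directly (and \cref{lem:equivalence}) rather than passing through the exponential moment \eqref{eq:Lyapunov:intro} and Jensen, and it packages the invariance step by citing the Krylov--Bogolyubov theorem rather than writing out the telescoping identity.
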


 The second major result of this work is proof of a Foias-Prodi type estimate for \eqref{eq:s:KdV} that is valid for arbitrary damping.
 Roughly speaking, Foias-Prodi estimates quantify the following
 property for an infinite-dimensional dynamical system: given two
 solutions, if a sufficient number of components, i.e., low
 frequencies, of the two solutions synchronize in the limit as
 $t \to \infty$, then in fact all components asymptotically
 synchronize.  In our setting, the estimates provide a fundamental
 technical tool to address both the ergodic and regularity properties
 of invariant measures for \eqref{eq:s:KdV}.

 To formulate the result we fix any solution $u = u(t;u_0)$ of
 \eqref{eq:s:KdV} starting from a given initial state $u_0$.  Then,
 for any other initial state $v_0$, we consider the `nudged' system
 \begin{align}\label{eq:s:KdV:coupled:intro}
   dv+(vDv+D^3v+\gam v)dt
       =-\lam P_N(v-u)+fdt+\s dW,\quad v(0)= v_0.
 \end{align}
 Here, $P_N$ is Dirichlet projection to Fourier modes $|k|\leq N$ and
 $\lam$ is a positive real number.  As such the `nudging term'
 $-\lam P_N(v-u)$ drives $v$ towards $u$ on low frequencies, and a
 Foias-Prodi estimate will quantify how many modes need to be activated for this mechanism to synchronize the full solutions.
 \begin{Thm}[Foias-Prodi type  estimate]\label{thm:FP:est:Intro} 
   Consider any $f$ and $\sigma$ sufficently smooth. Then there exists a choice of $N$ and $\lambda$ in
   \eqref{eq:s:KdV:coupled:intro} such that, for any $r > 0$ and two initial conditions
   $u_0, v_0\in H^{2}$,
   \begin{align}
     \label{eq:f:p:Intro}
     \E \|u(t;u_0) - v(t;v_0) \|_{H^1}^2 \leq  \frac{C}{t^r},
   \end{align}
   where $C > 0$ is independent of $t > 0$ and $u$ and $v$ are the solutions of
   \eqref{eq:s:KdV} and \eqref{eq:s:KdV:coupled:intro} emanating from
   $u_0$ and $v_0$, respectively.
 \end{Thm}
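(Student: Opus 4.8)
The plan is to work with the difference $\rho := v - u$. Subtracting \eqref{eq:s:KdV:coupled:intro} from \eqref{eq:s:KdV} and writing $vDv - uDu = \tfrac12 D\big(\rho(u+v)\big)$, the stochastic forcing cancels and $\rho$ solves the random \emph{linear} equation
\begin{align}
  \partial_t \rho + D^3\rho + \tfrac12 D\big((u+v)\rho\big) + \gam\rho + \lam P_N\rho = 0, \qquad \rho(0) = v_0 - u_0. \notag
\end{align}
Thus $\rho$ contains no It\^o integral; the only randomness enters through the coefficients $u,v$, whose Sobolev norms up to order $2$ are controlled in every moment, uniformly in time, by \cref{thm:Lyapunov:intro}. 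In particular the trivial a priori bound $\|\rho(t)\|_{H^s} \le \|u(t)\|_{H^s} + \|v(t)\|_{H^s}$ for $s \le 2$ gives $\sup_{t \ge 0}\E\|\rho(t)\|_{H^s}^p < \infty$ for every $p \geq 1$ and $s \le 2$; this control will absorb the contributions of ``atypical'' realizations in the probabilistic step below.

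For the core estimate I would \emph{not} estimate $\|\rho\|_{H^1}$ directly: since the damping is weak, $\|D(u+v)\|_{L^\infty}$ can exceed any fixed multiple of $\gam$, so the top-order nonlinear term in a naive energy identity is uncontrollable. Instead I would track suitably modified \emph{second variations of the KdV integrals of motion} evaluated at $\rho$ — quantities $\Lam_m(t) = \Lam_m(\rho(t); u(t), v(t))$ built from the Hessians of the KdV invariants $I_m$ at $\tfrac{u+v}{2}$ together with lower-order polynomial correctors, in the same spirit as the modified invariants used to prove \cref{thm:Lyapunov:intro}. The point of integrability is that, differentiating $\Lam_m$ in time and using that $u$ solves \eqref{eq:s:KdV} while $\rho$ solves the linear equation above, the most dangerous nonlinear contributions (those carrying a top-order derivative on $\rho$) cancel, leaving a differential inequality of the schematic form
\begin{align}
  \frac{d}{dt}\Lam_m \le -c\gam\,\Lam_m - c\lam\,\|P_N\rho\|_{L^2}^2 + \Psi(t)\,\Lam_m + (\text{milder terms}), \notag
\end{align}
for $m = 1, 2$, with $\Lam_m \simeq \|\rho\|_{H^{m-1}}^2$ and the multiplicative coefficient $\Psi$ involving only $H^2$ norms of $u,v$ — so $H^2$ data suffices — hence enjoying uniform-in-time moments of all orders. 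Choosing $N$ large relative to $\gam$ (the ``number of determining modes'' dictated by the cancellation structure) and $\lam$ large, one splits $\|\rho\|_{L^2}^2 = \|P_N\rho\|_{L^2}^2 + \|Q_N\rho\|_{L^2}^2$ with $Q_N = I - P_N$, absorbs the low-frequency piece of the nonlinear terms into the nudging term, and bounds the high-frequency tail $\|Q_N\rho\|$ by interpolating between $\|\rho\|_{L^2}$ and the a priori-bounded $\|\rho\|_{H^2}$; this closes a short hierarchy ending at the $L^2$ level and yields
\begin{align}
  \frac{d}{dt}\|\rho\|_{L^2}^2 \le \big(-a + C\Psi(t)\big)\|\rho\|_{L^2}^2 + (\text{small, moment-controlled correction}), \notag
\end{align}
with $a > 0$ as large as we wish.

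The long-time decay then follows by integrating and passing to expectations. From the pathwise Gronwall bound $\|\rho(t)\|_{L^2}^2 \lesssim \|\rho(0)\|_{L^2}^2 \exp\!\big({-at} + C\!\int_0^t\Psi(s)\,ds\big) + \cdots$, I would decompose the sample space according to whether $\int_0^t\Psi(s)\,ds \le \kappa t$ for a threshold $\kappa$ with $a > C\kappa$ (achievable by taking $\lam$, hence $a$, large): on this event $\|\rho(t)\|_{L^2}^2$ is exponentially small, while on its complement one uses $\|\rho(t)\|_{L^2}^2 \le (\|u(t)\|_{L^2} + \|v(t)\|_{L^2})^2$ together with the decay of $\Prb\big(\int_0^t\Psi(s)\,ds > \kappa t\big)$, which for this choice of $\kappa$ is a quantitative consequence of the Lyapunov drift structure of \cref{thm:Lyapunov:intro}. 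H\"older's inequality then gives $\E\|\rho(t)\|_{L^2}^2 \le C_r t^{-r}$ for every $r > 0$, and finally the interpolation $\|\rho\|_{H^1}^2 \le \|\rho\|_{L^2}\|\rho\|_{H^2}$ combined with the a priori $H^2$ moment bound upgrades this to $\E\|\rho(t)\|_{H^1}^2 \le C_r t^{-r}$, which is \eqref{eq:f:p:Intro}.

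The step I expect to be the main obstacle is the construction and differentiation of the modified linearized invariants $\Lam_m$: it is here that complete integrability of KdV is genuinely exploited, and the algebraic bookkeeping needed to (i) make $\Lam_m$ equivalent to a Sobolev norm, (ii) cancel the top-order nonlinear terms along the \emph{linearized} rather than full flow, and (iii) arrange that the residual terms are compatible with the weak damping and the nudging so the inequality above actually closes, is intricate — especially since the correctors interact nontrivially with the damping and nudging terms. A secondary difficulty is the passage to expectations: because the $H^2$-level coefficient $\Psi$ carries only polynomial moments, one should not expect better than polynomial-in-$t$ decay, and extracting the decay of $\Prb(\int_0^t\Psi > \kappa t)$ at every order requires using the full strength of the moment bounds furnished by \cref{thm:Lyapunov:intro}.
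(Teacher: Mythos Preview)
Your overall strategy is close to the paper's: a modified functional built from linearized KdV invariants so that top-order nonlinear terms cancel, followed by a good-set/bad-set decomposition using the Lyapunov bounds on the time-integrated coefficient. There is, however, one genuine gap. You correctly observe that $\rho=v-u$ satisfies a noise-free equation, but your functional $\Lam_m(\rho;u,v)$ depends \emph{explicitly} on $u$ (or on $(u+v)/2$)---this dependence is not optional, it is precisely what produces the cancellation (in the paper the relevant term is $-\int_\TT u\,w^2\,dx$ in $\InG_1(w,u)=\int_\TT\big((Dw)^2-\tfrac13 w^3-uw^2\big)dx$). Since $u$ is an It\^o process, $d\Lam_m$ acquires a martingale increment, concretely $-\int_\TT w^2\sigma\,dx\,dW$ from the $du$ contribution. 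Your displayed inequality ``$\frac{d}{dt}\Lam_m\le\ldots$'' is therefore not valid, and the subsequent pathwise Gr\"onwall step breaks: after multiplying by the integrating factor, the right-hand side still carries a stochastic integral that is not sign-definite and cannot be bounded pathwise.

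The paper's fix is to take expectation \emph{before} splitting on the good event: one obtains $\E\big[e^{\Gamma(t)}\,\InG_1^+(w(t),u(t))\big]\le C(u_0,v_0)$ with $\Gamma(t)=\gamma t-\tfrac{c_0}{\lambda}\int_0^t(1+\|u\|_{H^2}^2+\lambda^{-1}\|v\|_{L^2}^2)\,ds$, and only afterwards restricts to the stopping-time event $\{\tau_{R,\beta}=\infty\}$ on which $\Gamma(t)\ge\tfrac{\gamma}{2}t-(R+\beta)$, combining this with the polynomial tail $\Prb(\tau_{R,\beta}<\infty)\lesssim R^{-p}$ furnished by \cref{thm:Lyapunov:intro}. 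This reordering is forced by the noise and is exactly why the Foias--Prodi estimate holds only in expectation---the ``weak'' form the paper emphasizes. Two smaller remarks: the paper works directly at the $H^1$ level via the positivized functional $\InG_1^+$ (which already contains $L^2$-based correctors $\bar\alpha\|w\|_{L^2}^{10/3}+\bar\alpha(1+\|u\|_{L^2}^2)\|w\|_{L^2}^2$), so your final $L^2\to H^1$ interpolation is unnecessary; and the pure $L^2$ energy does not close on its own because the term $-\int_\TT Du\,w^2\,dx$ cannot be absorbed by damping plus low-mode nudging---the $L^2$ pieces enter only to positivize the $H^1$-level functional, not as an independent rung of a hierarchy.
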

 \noindent Classically, estimates like \eqref{eq:f:p:Intro} appear in
 the SPDE literature in a `pathwise formulation'.  Namely, for
 equations with a strong dissipative mechanism such as 2D
 Navier-Stokes equations, analogous statements to \eqref{eq:f:p:Intro}
 hold without expected values and are typically established with
 exponential rates of decay.  In contrast, relying on damping and
 complete integrability as the smoothing mechanism ultimately yields
 the weaker, non-pathwise form represented by \eqref{eq:f:p:Intro}.
 We note that non-pathwise Foias-Prodi structure was also exploited in
 \cite{DebusscheOdasso2005} in the context of the damped stochastic
 cubic NLS.  See \cref{sec:intro:math} below for further discussion.

 Our next result shows that the bounds in \cref{thm:Lyapunov:intro}
 extend to the statistically permanent regime by establishing
 $C^\infty$--regularity of the support of any invariant measure of
 \eqref{eq:s:KdV}.  {{}Let us summarize this regularity result now. For this, let $\Pr(H^2)$ denote the set of all probability measures on $(H^2,\mathcal{B}(H^2))$}.
 
\begin{Thm}[Higher order regularity of invariant
  measures]
  \label{thm:Regularity:Intro}
  Assume that $f$ and $\sigma$ have $C^\infty$--regularity.  Then any invariant
  measure $\mu\in\Pr(H^2)$ of \eqref{eq:s:KdV} is supported on smooth functions,
  namely
  \begin{align*}
    \mu\left(H^2\smod C^\infty\right) = 0.
  \end{align*}
  Furthermore, for any $m \geq 2$, there exists a corresponding $r > 0$,
  depending only on $f$ and $\sigma$, such that for any $\eta > 0$
 \begin{align*}
 	\int \exp(\eta \| u\|^r_{H^m}) \mu(du) \leq C < \infty,
 \end{align*}
 for a constant $C = C(f, \sigma, \eta)$ independent of $\mu$.
 In particular, any observable $\phi: H^m \to \RR$  such that
 \begin{align*}
 	\sup_{u} \left( |\phi(u)|  \exp( -\eta \| u\|_{H^m}^r) \right)< \infty
 \end{align*}
for some $\eta > 0$ is integrable with respect to any invariant state $\mu$.
\end{Thm}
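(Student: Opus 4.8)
The plan is to bootstrap the $H^2$--regularity of $\mu$ up to $C^\infty$--regularity by playing the global moment bounds of \cref{thm:Lyapunov:intro} off against the Foias--Prodi estimate of \cref{thm:FP:est:Intro}. Everything reduces to the following claim: for each integer $m \geq 2$ there is an exponent $r = r(m,f,\sigma) > 0$ such that, for every $\eta > 0$, one has $\int \exp(\eta\|u\|_{H^m}^{r})\, \mu(du) \leq C(m,f,\sigma,\eta) < \infty$ with the constant independent of the particular invariant measure $\mu$. Indeed, this forces $\mu(\|u\|_{H^m} = \infty) = 0$ for every $m$, hence $\mu\big(\bigcap_{m \geq 2} H^m\big) = 1$, and since $\bigcap_{m\geq 2}H^m(\TT) = C^\infty(\TT)$ we obtain $\mu(H^2 \smod C^\infty) = 0$; the integrability of any observable $\phi$ with $\sup_u |\phi(u)| \exp(-\eta\|u\|_{H^m}^{r}) < \infty$ then follows from the pointwise bound $|\phi(u)| \leq \big(\sup_w |\phi(w)|\exp(-\eta\|w\|_{H^m}^{r})\big)\exp(\eta\|u\|_{H^m}^{r})$.

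The first step is to transfer the $m = 2$ case of \cref{thm:Lyapunov:intro} onto $\mu$ itself: since $\mu \in \Pr(H^2)$, for $\mu$--a.e.\ $u_0$ one may apply \cref{thm:Lyapunov:intro} pointwise, and a routine invariance argument --- testing $\mu P_t = \mu$ against the bounded truncations $\exp(\eta\|\cdot\|_{H^2}^{r}) \wedge n$, using $\E[X \wedge n] \leq (\E X)\wedge n$, sending $t \to \infty$ (the resulting upper bound decreases to a finite constant as $e^{-\gam r t} \to 0$), and then letting $n \to \infty$ --- yields $\int \exp(\eta\|u\|_{H^2}^{r})\,\mu(du) \leq C(f,\sigma,\eta)$, uniformly over invariant $\mu$. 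In particular the stationary process $u$ with $u(t) \sim \mu$ has uniform--in--time exponential moments of $\|u(t)\|_{L^2}^2$ with a $\mu$--independent constant (which, alternatively, is immediate from the damped $L^2$--balance together with invariance).

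The main step is a nudging bootstrap. Fix the stationary solution $u$ with $u(t) \sim \mu$ and the parameters $N, \lam$ from \cref{thm:FP:est:Intro}, and let $v = v(t)$ solve the nudged system \eqref{eq:s:KdV:coupled:intro} driven by this $u$ and started from the smooth datum $v(0) = 0$. Two facts are needed. (a) For every $m \geq 2$ there is $r_m > 0$ with $\sup_{t \geq 0} \E\exp(\eta\|v(t)\|_{H^m}^{r_m}) \leq C_m(m,f,\sigma,\eta) < \infty$; this should follow by re--running the integrals--of--motion argument underlying \cref{thm:Lyapunov:intro} on \eqref{eq:s:KdV:coupled:intro}, the extra term $-\lam P_N(v-u) = -\lam P_N v + \lam P_N u$ being handled by treating $-\lam P_N v$ as a harmless low--mode damping and $\lam P_N u$ as a smooth forcing of $H^m$--size $\lesssim \lam N^m \|u(t)\|_{L^2}$, which inherits the uniform exponential moments from the first step --- with $\mu$--independent constants throughout. (b) By \cref{thm:FP:est:Intro}, applied with the random initial datum $u_0 \sim \mu$ (the constant there being $\mu$--integrable by the first step), $\E\|u(t) - v(t)\|_{H^1}^2 \leq C t^{-r} \to 0$ as $t \to \infty$. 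From (b), for every bounded Lipschitz $\phi$ on $H^1$, $\big|\E\phi(v(t)) - \int \phi\,d\mu\big| = \big|\E\phi(v(t)) - \E\phi(u(t))\big| \leq \|\phi\|_{\Lip}\,\E\|u(t) - v(t)\|_{H^1} \to 0$, so $\Law(v(t)) \rightharpoonup \mu$ weakly on $H^1$ as $t \to \infty$. Since $u \mapsto \exp(\eta\|u\|_{H^m}^{r_m})$ --- read as $+\infty$ off $H^m$ --- is nonnegative and lower semicontinuous on $H^1$ (because $\|\cdot\|_{H^m}$ is lower semicontinuous under $H^1$--convergence, by weak compactness of bounded sets of $H^m$), the portmanteau theorem combined with (a) gives
\begin{align*}
  \int \exp\!\big(\eta\|u\|_{H^m}^{r_m}\big)\,\mu(du) \;\leq\; \liminf_{t\to\infty}\, \E\exp\!\big(\eta\|v(t)\|_{H^m}^{r_m}\big) \;\leq\; C_m \;<\; \infty,
\end{align*}
which is exactly the reduced claim and completes the argument.

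The hardest part will be fact (a): proving \emph{global--in--time} higher--order moment bounds for the nudged system \eqref{eq:s:KdV:coupled:intro}, in which $v$ is coupled through $-\lam P_N(v-u)$ to the merely $H^2$ (random, time--dependent) input $u$. One has to check that the nonlinear, integrals--of--motion--based Lyapunov functionals behind \cref{thm:Lyapunov:intro} retain their coercive drift structure under this low--mode perturbation --- i.e.\ that it is genuinely absorbable using only $L^2$--control of $u$ --- and that the resulting constants do not depend on $\mu$. A secondary, more routine point is to confirm that the constant appearing in \cref{thm:FP:est:Intro} is $\mu$--integrable, which again reduces to the $m=2$ moment transfer.
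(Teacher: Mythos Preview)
Your strategy --- nudge from smooth data and use the Foias--Prodi estimate to transfer regularity from $v$ back to $\mu$ --- is exactly the paper's. But there is a real gap in your step (b), and it is not the ``routine'' point you flag at the end. In its precise form (\cref{cor:FP:est:Intro}), the Foias--Prodi bound reads $\E\|u(t)-v(t)\|_{H^1}\le t^{-p}\exp\!\big(C(1+\|u_0\|_{H^2})^q\big)$ with $q\ge 2$; the exponent enters through the lower bound \eqref{eq:beta:cond:stt} on the stopping-time parameter $\beta$, which forces $\beta\gtrsim\|u_0\|_{H^2}^2$. Your first step, however, transfers only the \emph{small-exponent} bound of \cref{thm:Lyapunov:intro} to $\mu$: one gets $\int\exp(\eta\|u\|_{H^2}^{r})\,d\mu<\infty$ with $r=2p_0<2$ (cf.\ \cref{thm:Lyapunov}(iii)--(iv)). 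So there is no reason for $\int\exp(C\|u_0\|_{H^2}^q)\,d\mu$ to be finite, and the averaged convergence $\int\E\|u(t)-v(t)\|_{H^1}\,d\mu\to 0$ --- hence your claim that $\mathrm{Law}(v(t))\rightharpoonup\mu$ --- is unjustified. The paper avoids this entirely by \emph{localizing}: it tests $\mu$ against truncations $K\wedge\|P_M u\|_{H^m}$, splits the $\mu$-integral over an $H^2$-ball $B_\rho$ and its complement, bounds the complement crudely by $K\mu(B_\rho^c)$, and on $B_\rho$ the Foias--Prodi constant is uniformly $\le\exp(C(1+\rho^5))$, killed by sending $t\to\infty$ \emph{after} $\rho$ and $K$ are chosen. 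No $\mu$-integrability of an exponential of $\|u_0\|_{H^2}$ is ever invoked.

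Your claim (a) is also stronger than what is needed or proved. The paper establishes only the first moment $\E\|v(t)\|_{H^m}^2$ for the nudged system (\cref{thm:Lyapunov:ng}(ii)), and even that carries a factor $\|u_0\|_{L^2}^{q(m)}$ with $q(m)$ growing in $m$ --- so exponential moments of $v$ would again run into an integrability obstruction on the $u$-side. The paper's route to the exponential bound \eqref{eq:reg} is a two-step bootstrap: first use the polynomial $v$-moment together with the localization above to get the \emph{qualitative} statement $\mu(H^m)=1$; then, working on $H^m$-balls, apply the exponential Lyapunov bound \eqref{eq:Lyapunov:Bnd:subquadexp} for the \emph{original} equation directly. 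No exponential moments for the nudged system are required. Your weak-convergence/portmanteau packaging is a clean alternative to the paper's direct truncation-and-monotone-convergence argument, and it would go through for the qualitative step once you insert the localization to $B_\rho$.
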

\noindent See \cref{thm:regularity} below for the complete and
rigorous formulation of this result.  Here, it is important to
emphasize that \cref{thm:Regularity:Intro} does not follow as an
immediate or obvious consequence of \cref{thm:Lyapunov:intro}.  This
is because the damped stochastic KdV \eqref{eq:s:KdV} does not possess
a dissipative smoothing mechanism.  On the other hand, as
previously observed in the deterministic case, \cite{MoiseRosa1997},
the combination of damping and complete integrability of KdV does
provide a mechanism for smoothing at time infinity.  Our insight here
is that \cref{thm:FP:est:Intro} provides an avenue for a novel and less technically involved approach than the one taken in \cite{MoiseRosa1997} to
establish this smoothing mechanism.  Although \cref{thm:Regularity:Intro} exposes how this alternative approach to asymptotic smoothing can be applied to quantify the regularity of the support of invariant measures for damped stochastic KdV, we refer the reader to \cref{sect:det:case} for discussion on the application of our strategy to the deterministic damped-driven KdV.

For the next result, we address the question of  uniqueness of
the invariant measure for \eqref{eq:s:KdV} in the regime of an
arbitrary positive damping parameter.  For this situation, we will
require additional structure on the noise $\sigma$. As above, let $P_N$
denote projection onto Fourier modes $|k|\leq N$ and further let
$\mathcal{R}(\s)$ denote the range of $\s$ (i.e. the set of
stochastically forced modes, see \cref{sect:background} for a precise
definition). We prove the following.

\begin{Thm}[Unique ergodicity for arbitrary damping in the essentially
  elliptic case]
  \label{thm:unique:ergodicity:Intro}
  Assume that $f$ and $\s$ are sufficiently
  smooth. There exists $N=N(\gam,f,\s)$ such that if
  $\mathcal{R}(\s)\supset P_NL^2$, then there can be at most one
  invariant measure for \eqref{eq:s:KdV}.
\end{Thm}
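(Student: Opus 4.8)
The plan is to deduce uniqueness from the abstract asymptotic (``generalized'') coupling criterion of \cite{HairerMattinglyScheutzow2011, GlattHoltzMattinglyRichards2015}, feeding it the Foias--Prodi estimate \cref{thm:FP:est:Intro} as the synchronization input and the Lyapunov bounds \cref{thm:Lyapunov:intro} as the moment input. Recall that this criterion reduces the claim to the following. Since, by \cref{thm:Regularity:Intro}, every invariant measure of \eqref{eq:s:KdV} assigns full mass to $S:=\bigcap_{m\geq 2}H^m$, it suffices to construct, measurably in $(u_0,v_0)$, for each pair $u_0,v_0\in S$ a coupling $\Gamma_{u_0,v_0}$ of probability measures on $C([0,\infty);H^2)^2$ such that (i) its first marginal is the law of the solution of \eqref{eq:s:KdV} started from $u_0$; (ii) its second marginal is absolutely continuous with respect to the law of the solution of \eqref{eq:s:KdV} started from $v_0$; and (iii) $\Gamma_{u_0,v_0}$ assigns positive measure to the set of path pairs $(u,v)$ with $\|u(t)-v(t)\|_{H^2}\to 0$ as $t\to\infty$. (Here one uses that $\{P_t\}$ is Feller on each $H^m$ — a consequence of global well-posedness and continuous dependence on data — and that it is enough to treat the discrete-time skeleton $\{P_n\}_{n\in\mathbb{N}}$, since every invariant measure of $\{P_t\}$ is invariant for $P_1$.) Fix $N$ and $\lambda$ to be the constants provided by \cref{thm:FP:est:Intro}; the hypothesis $\mathcal{R}(\sigma)\supset P_NL^2$ is used precisely to guarantee that $\sigma$ has a bounded pseudo-inverse $\sigma^{-1}$ on $P_NL^2$, which is exactly what allows the nudging term of \eqref{eq:s:KdV:coupled:intro} to be absorbed into the noise.

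The coupling is built from the nudged system. Let $W$ be the driving cylindrical Wiener process, let $u=u(\cdot;u_0)$ solve \eqref{eq:s:KdV} from $u_0$ with noise $W$, and let $v=v(\cdot;v_0)$ solve \eqref{eq:s:KdV:coupled:intro} from $v_0$ with the \emph{same} $W$ and the above $N,\lambda$; global well-posedness of \eqref{eq:s:KdV:coupled:intro} follows from that of \eqref{eq:s:KdV} since the nudging term is a bounded, smoothing perturbation. Setting $h(t):=-\lambda\,\sigma^{-1}P_N(v(t)-u(t))$, one checks that $v$ also solves \eqref{eq:s:KdV} from $v_0$ but driven by $\hat W(t):=W(t)+\int_0^t h(s)\,ds$; hence $\Law(v)$ is the image of $\Law(\hat W)$ under the (measurable) solution map of \eqref{eq:s:KdV} from $v_0$, while the law of the $v_0$-solution driven by $W$ is the image of $\Law(W)$ under that same map. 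Taking $\Gamma_{u_0,v_0}:=\Law(u,v)$, property (i) is immediate, and (ii) follows once we know $\Law(\hat W)\ll\Law(W)$ on path space. By the standard Girsanov argument underlying the asymptotic coupling method (see \cite[\S2]{GlattHoltzMattinglyRichards2015} and \cite{HairerMattinglyScheutzow2011}), this holds as soon as
\begin{align}\label{eq:ac:energy}
  \int_0^\infty \|h(t)\|_{L^2}^2\,dt<\infty \qquad\text{almost surely.}
\end{align}
Since $\|h(t)\|_{L^2}\leq C_\sigma\|P_N(v(t)-u(t))\|_{L^2}\leq C_\sigma\|u(t)-v(t)\|_{H^1}$, \cref{thm:FP:est:Intro} with any $r>1$ gives $\int_0^\infty \E\|h(t)\|_{L^2}^2\,dt\leq C\int_0^\infty t^{-r}\,dt<\infty$, so \eqref{eq:ac:energy} follows from Tonelli's theorem.

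For property (iii) it suffices to show $\|u(n)-v(n)\|_{H^2}\to 0$ almost surely. Interpolating and applying Cauchy--Schwarz,
\begin{align*}
  \E\|u(n)-v(n)\|_{H^2}^2
  \leq \big(\E\|u(n)-v(n)\|_{H^1}^2\big)^{1/2}\big(\E\|u(n)-v(n)\|_{H^3}^2\big)^{1/2}.
\end{align*}
The second factor is bounded uniformly in $n$: by \cref{thm:Lyapunov:intro} applied to \eqref{eq:s:KdV} and, in its analogue for the nudged system, to \eqref{eq:s:KdV:coupled:intro} (using $u_0,v_0\in S\subset H^3$), one has $\sup_n\big(\E\|u(n)\|_{H^3}^2+\E\|v(n)\|_{H^3}^2\big)<\infty$. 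The first factor is $\leq Cn^{-r}$ by \cref{thm:FP:est:Intro}; choosing $r$ large (say $r\geq 4$) makes $\sum_n\E\|u(n)-v(n)\|_{H^2}^2<\infty$, whence $\|u(n)-v(n)\|_{H^2}\to 0$ almost surely. Thus $\Gamma_{u_0,v_0}$ has properties (i)--(iii), and the abstract criterion yields at most one invariant measure for \eqref{eq:s:KdV}.

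The principal difficulty is conceptual rather than computational: the asymptotic coupling machinery is normally driven by a \emph{pathwise}, exponentially decaying Foias--Prodi estimate, whereas \eqref{eq:f:p:Intro} controls the difference only \emph{in expectation}. The reason the argument still closes is that \cref{thm:FP:est:Intro} delivers an \emph{arbitrarily fast polynomial} rate, and this flexibility is exactly what lets one pass — via Tonelli and summability — from expectation decay to the almost-sure statements \eqref{eq:ac:energy} and $\|u(n)-v(n)\|_{H^2}\to 0$ that the framework demands; this is in the spirit of the non-pathwise Foias--Prodi arguments of \cite{DebusscheOdasso2005}. A secondary point to verify is that the Lyapunov structure of \cref{thm:Lyapunov:intro} persists for the nudged system \eqref{eq:s:KdV:coupled:intro}, which is routine since the extra term $-\lambda P_N(v-u)$ is smoothing and $u$ already obeys the bounds of \cref{thm:Lyapunov:intro}.
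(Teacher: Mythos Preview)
Your approach is correct and uses the same asymptotic–coupling framework as the paper, but the implementation differs in several notable ways. The paper does \emph{not} invoke \cref{thm:Regularity:Intro}: it works on $H^2$ directly, takes the comparison metric $\tilde\rho$ to be the $H^1$ distance (and checks that $H^1$–Lipschitz observables determine measures on $H^2$), and only proves $\|u(n)-v(n)\|_{H^1}\to 0$. Moreover, the paper introduces an explicit cutoff—a stopping time $\tau^*_K$ after which the nudging is switched off—so that Novikov is satisfied trivially; it then uses the stopping–time form of the Foias–Prodi estimate (\cref{cor:FP:est}, \cref{lem:FP:stoppingtime}) together with Borel–Cantelli to show that on a set of \emph{positive} probability the cutoff never triggers and the $H^1$ difference vanishes. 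Your route is in some respects slicker: you dispense with the cutoff by deducing $\int_0^\infty\|h\|^2\,dt<\infty$ a.s.\ directly from \cref{thm:FP:est:Intro} and Tonelli, and you upgrade to almost–sure $H^2$ convergence via interpolation, at the cost of importing \cref{thm:Regularity:Intro} to access $H^3$ data. Two small points to clean up: (a) $\int_0^\infty t^{-r}\,dt$ diverges at $t=0$, so bound $\int_0^1\E\|h(t)\|^2\,dt$ separately using the Lyapunov estimates for $u$ and $v$; (b) the references you cite for Girsanov actually use the cutoff route—the claim that $\int_0^\infty\|h\|^2<\infty$ a.s.\ already gives $\Law(\hat W)\ll\Law(W)$ is true, but needs a short localization argument (truncate $h$ at the stopping time $\tau_n=\inf\{t:\int_0^t\|h\|^2\geq n\}$, apply Novikov/Girsanov to the truncation, and send $n\to\infty$) rather than a direct citation.
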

\noindent The precise statement of this result is given in
\cref{thm:ergodicity:ess:elliptic} below.  Here, we emphasize that our
proof of uniqueness invokes the asymptotic coupling framework of
\cite{GlattHoltzMattinglyRichards2015} in a novel but direct
manner. Indeed, an expedient proof can be supplied once a Foias-Prodi
estimate in the spirit of \cref{thm:FP:est:Intro} is in place. The
argument we present exposes an additional flexibility to the framework
of \cite{GlattHoltzMattinglyRichards2015}, namely, that a weak Foias-Prodi
structure, which is satisfied only in expectation, is sufficient for the
approach to be implemented.

Our final result establishes exponential mixing for \eqref{eq:s:KdV} in the
large damping regime, i.e. $\gamma \gg 1$.

\begin{Thm}[Exponential mixing for large damping]
   \label{thm:large:damp:Mix:Intro}
   For any sufficiently smooth $f, \sigma$, there exists 
   $\bar{\gamma}>0$ such that if
   $\gamma \geq \bar{\gamma}$, then \eqref{eq:s:KdV} possesses a unique invariant
   measure $\mu$. Moreover
   \begin{align}
     \left| P_t \phi(u_0) - \int_{H^m} \phi(v) \mu(dv) \right|
     \leq C e^{-\delta t}
     \label{eq:exp:mix:intro:obs}
   \end{align}
   for all $u_0\in H^m$, for any sufficently regular observable $\phi: H^m \to \RR$, $m\geq2$.  Here,
   $C = C(u_0, \phi), \delta > 0$ are constants independent of $t > 0$,
   and moreover the exponential decay rate $\delta$ depends only on
   $\gamma, f, \sigma$ and universal quantities.
 \end{Thm}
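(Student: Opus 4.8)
The plan is to exploit the large damping to obtain a genuine contraction between pairs of solutions of \eqref{eq:s:KdV} driven by the same noise, and then to run a one-force-one-solution (pullback) construction in the spirit of \cite{MattinglyLargeViscosity1999,CabralRosa2004}. Fix deterministic data $u_0,v_0\in H^m$, let $u=u(\cdot;u_0)$ and $v=v(\cdot;v_0)$ solve \eqref{eq:s:KdV} driven by the \emph{same} $W$, and set $\rho=u-v$. Using $uDu-vDv=\tfrac12 D\big(\rho(u+v)\big)$, the difference solves the random-coefficient equation
\[
  \partial_t\rho+\tfrac12 D\big(\rho(u+v)\big)+D^3\rho+\gam\rho=0,\qquad \rho(0)=u_0-v_0,
\]
with no stochastic forcing. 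Testing against $\rho$ in $L^2$, the dispersive term drops by antisymmetry and an integration by parts in the nonlinearity gives $\frac{d}{dt}\|\rho\|_{L^2}^2\le\big(\tfrac12\|D(u+v)(t)\|_{L^\infty}-2\gam\big)\|\rho\|_{L^2}^2$, hence
\[
  \|\rho(t)\|_{L^2}^2\le\|u_0-v_0\|_{L^2}^2\,e^{-2\gam t}\exp\Big(\tfrac12\!\int_0^t\|D(u+v)(s)\|_{L^\infty}\,ds\Big).
\]
Since $\|D(u+v)\|_{L^\infty}\lesssim\|u\|_{H^2}+\|v\|_{H^2}$, everything reduces to controlling an exponential moment of the time-integrated $H^2$-norm of the solutions, and to showing it is beaten by the damping when $\gam$ is large.

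The key quantitative input is that the dissipation integral scales like $\gam^{-1}$. Revisiting the balance law for the modified integral of motion $\En_2(u)\sim\|u\|_{H^2}^2+(\text{lower order})$ that underlies \cref{thm:Lyapunov:intro}, It\^o's formula yields an estimate of the schematic form $\gam\int_0^t\|u(s)\|_{H^2}^2\,ds\lesssim \En_2(u_0)+C(f,\sigma)\,t+M_t$, where $M$ is a martingale whose quadratic variation is itself controlled by $\int_0^t\|u\|_{H^2}^2\,ds$. Combining this with the elementary splitting $\tfrac12\|D(u+v)\|_{L^\infty}\le\tfrac{\eps}{4}\|D(u+v)\|_{L^\infty}^2+\tfrac1{4\eps}$ and an exponential-supermartingale estimate to absorb $M$, one obtains, for an appropriate $\eps=\eps(\gam)$ (for instance $\eps\sim\sqrt\gam$),
\[
  \E\exp\Big(\tfrac12\!\int_0^t\|D(u+v)(s)\|_{L^\infty}\,ds\Big)\le C(u_0,v_0)\,e^{\kappa(\gam)t},\qquad \frac{\kappa(\gam)}{\gam}\to0\ \text{ as }\gam\to\infty .
\]
Inserting this bound into the previous display produces a threshold $\bar\gam>0$ such that, for $\gam\ge\bar\gam$,
\[
  \E\|u(t;u_0)-v(t;v_0)\|_{L^2}^2\le C(u_0,v_0)\,\|u_0-v_0\|_{L^2}^2\,e^{-\delta t},\qquad \delta\gtrsim\gam ,
\]
with $C(u_0,v_0)$ controlled by the Lyapunov bounds of \cref{thm:Lyapunov:intro}; running the same computation at the $H^m$ level, using the higher integrals of motion so that the commutator terms are absorbed into the corresponding higher dissipation integrals, upgrades this to an $H^m$-contraction of the same type.

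From the contraction, existence and uniqueness of $\mu$ follow by the standard pullback / one-force-one-solution argument: extending $W$ to a two-sided noise and letting $u^{(s)}$ be the solution started from a fixed state at time $s<0$, the contraction applied on $[s',0]$ (the random initial datum at time $s'$ being absorbed using the uniform-in-time bounds of \cref{thm:Lyapunov:intro} and Cauchy--Schwarz) shows $\{u^{(s)}(0)\}_{s<0}$ is Cauchy in $L^2(\Om;L^2)$; its limit $u^\star$ is a stationary solution, $\Law(u^\star)$ is invariant, and the same estimate forces every invariant measure to equal $\Law(u^\star)$ and every solution to converge to it exponentially. For the observable bound \eqref{eq:exp:mix:intro:obs} with $\phi$ regular on $H^m$, one uses invariance to write $\int\phi\,d\mu=\int\E\phi(v(t;v_0))\,\mu(dv_0)$, couples the two evolutions with the same noise, estimates $|\phi(u(t;u_0))-\phi(v(t;v_0))|$ by the regularity of $\phi$, and closes via Cauchy--Schwarz together with the $H^m$-contraction and the moment bounds of \cref{thm:Regularity:Intro}, which ensure the $u_0$- and $v_0$-dependent prefactors are $\mu$-integrable; alternatively one interpolates $\|\cdot\|_{H^m}\le\|\cdot\|_{L^2}^{1-m/m'}\|\cdot\|_{H^{m'}}^{m/m'}$ and combines the $L^2$-contraction with the globally-in-time $H^{m'}$ moments.

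The crux of the argument, and the only point where the large-damping hypothesis is genuinely used, is the middle step: the exponential moment bound on $\int_0^t\|u\|_{H^2}^2\,ds$ (and its higher-order analogues) with the favorable $\gam^{-1}$ scaling. Establishing it forces one to re-open the integral-of-motion computations behind \cref{thm:Lyapunov:intro}, to track the $\gam$-dependence of every term — especially the nonlinear contributions, which close only thanks to the integrable structure of KdV — and to control a stochastic integral whose quadratic variation is tied to the very dissipation integral being estimated, so that the martingale contribution can be absorbed into the damping only after $\gam$ is taken sufficiently large.
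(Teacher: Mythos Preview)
Your overall scheme --- same-noise coupling, $L^2$ energy estimate on the difference, exponential-martingale control of the integrating factor, then contraction --- matches the paper's. The gap is in the middle step, and it is not just a matter of tracking $\gam$-dependence.

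Your Young splitting $\tfrac12\|D(u+v)\|_{L^\infty}\le \tfrac{\eps}{4}\|D(u+v)\|_{L^\infty}^2+\tfrac1{4\eps}$ forces you to produce an exponential moment of $\int_0^t\|u\|_{H^2}^2\,ds$, i.e.\ of $\int_0^t\In_2^+(u)\,ds$ at power $p=1$. That moment does not exist. Writing $d\In_2^+(u)^p+2\gam p\,\In_2^+(u)^p\,dt=(\ldots)dt+p\,\In_2^+(u)^{p-1}\Kn_2^{S,+}\,dW$ and computing explicitly (see \eqref{eq:Ln:K:LD:s}, \eqref{eq:exp:bnd:LD:3}), one finds $|\Kn_2^{S,+}|\le C(\In_2^+)^{11/14}$, so the quadratic variation of the martingale scales like $\int_0^t(\In_2^+)^{2p-2+11/7}\,ds$. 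The exponential-supermartingale argument absorbs this into $\gam\int_0^t(\In_2^+)^p\,ds$ only if $2p-2+\tfrac{11}{7}\le p$, i.e.\ $p\le\tfrac37$; at $p=1$ the quadratic variation involves $(\In_2^+)^{11/7}$ and cannot be absorbed no matter how small $\eps$ (equivalently $\eta$) is or how large $\gam$ is. So the claimed bound $\E\exp(\tfrac12\int_0^t\|D(u+v)\|_{L^\infty}\,ds)\le C e^{\kappa(\gam)t}$ with $\kappa(\gam)/\gam\to0$ does not follow from your splitting.

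The paper circumvents this by a different, structure-sensitive splitting. Using Agmon and interpolation, $\|D\bar u\|_{L^\infty}\le c\|\bar u\|_{H^2}^{3/4}\|\bar u\|_{L^2}^{1/4}\le c\big(\|\bar u\|_{H^2}^{6/7}+\|\bar u\|_{L^2}^2\big)$; the $L^2$-piece has the exact exponential moment \eqref{eq:exp:L2}, and the $H^2$-piece requires only $\E\exp(\eta\int_0^t\In_2^+(u)^{3/7}ds)$, which \emph{is} available (\cref{lem:LD:exp:mom}) with a rate constant $C$ independent of $\gam\ge1$. Making this work forces $\In_2^+$ to be positivized with an $L^2$-weight of exactly $(\|u\|_{L^2}^2+1)^{7/3}$ so that $(\In_2^+)^{3/7}\sim\|u\|_{H^2}^{6/7}+\|u\|_{L^2}^2$; see \eqref{eq:I2p:exp:LD}--\eqref{eq:I2p:exp:LD:eqv} and \cref{rmk:large:damping:scaling}. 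This $3/7$--$7/3$ duality is the actual crux and is absent from your proposal. Similarly, the $H^m$ statement is not obtained by ``running the same computation at the $H^m$ level'' (that would hit the same obstruction); the paper interpolates $d_k\le d_0^{(m-k)/m}d_m^{k/m}$ and combines the $L^2$-contraction with the higher-order Lyapunov moment bounds.
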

 \noindent 
 In this regime, we identify that large damping confers a
 synchronization in expected value of solutions in the $L^2$-topology at time infinity, and use this
 synchronization to establish a spectral gap with respect to an
 $L^2$-based Wasserstein-like function on the space of probability
 measures (see \eqref{def:Wd} below for a precise definition).  Exponential mixing follows immediately, and we further extend mixing
 to higher Sobolev regularities.  For the precise statement see
 \cref{thm:spectralgap}.
 
 Generally speaking, the existence of a spectral gap has notable
 consequences for the observability of statistics with respect to the
 invariant measure such as a law of large numbers and continuity with
 respect to parameters (cf. \cite{HairerMattingly2008,
   HairerMattinglyScheutzow2011, Komorowski2012a, Komorowski2012b,
   KuksinShirikyan2012, Kulik2017}). {{}The ensuring the existence of a spectral gap when damping is sufficiently large is essentially due to the fact that the resulting system is strongly contractive; we refer the reader to \cite{SchmalfussBook} and \cite{Mattingly2003} for other notable works in the setting of strong contractivity.} Note that treatment of this case
 is independent of the structure of the noise.  That is, in the large damping regime, we impose no
 restrictions on $f$ and $\sigma$ beyond assuming
 sufficient spatial regularity.

\subsection{Framework, Proof Strategies, and Mathematical Challenges}
\label{sec:intro:math}

This section will present the strategies we employ to prove our main
results and review some of the technical challenges encountered
therein. We also take this opportunity to provide a detailed outline
of the paper. Following this section, we introduce mathematical
and notational background to be used throughout our manuscript
(\cref{sect:background}).

Our study of the Lyapunov structure for \eqref{eq:s:KdV}, stated in
its abbreviated form in \cref{thm:Lyapunov:intro}, will be presented
in \cref{sect:Lyapunov}, with the complete statement provided by
\cref{thm:Lyapunov}.  The proofs of these estimates are based on
demonstrating that for any $m$, the integral of motion for KdV of
order $m$, $\mathcal{I}_m$, can be modified to a functional,
$\mathcal{I}^{+}_m$, such that $\mathcal{I}^{+}_m$ simultaneously
controls the $H^m$--norm of the solution to \eqref{eq:s:KdV} and
remains bounded in a suitable probabilistic sense.  To be more
precise, recall (see e.g. \cite{miura1968II}) that the integrals of
motion for KdV take the form
  \begin{align}
    \In_m(v) 
    = \|D^m v\|_{L^2}^2 
      - \int_{\T} \left(\alpha_{m} v (D^{m-1} v)^2 
      - Q_m(v, Dv, \ldots, D^{m-2}v) \right)dx,
    \label{eq:inv:polynomials}
  \end{align}
  where $D=\frac{d}{dx}$, so that to `leading order' the equations
  preserve $H^m$ norms.  This algebraic structure has been exploited
  in the deterministic setting \cite{MoiseRosa1997} to establish the
  regularity of the global attractor for damped-driven KdV. In
  \cite{MoiseRosa1997}, however, the availability of uniform-in-time
  estimates on the solution allow one to largely ignore the finer
  structure of the lower-order quantities that appear in the analysis.
  In our stochastic setting, the lower order terms must be accounted
  for and handled more carefully.  In quantitative terms, we
  demonstrate that the lower order terms in \eqref{eq:inv:polynomials}
  can be controlled via interpolation estimates (see Theorem
  \ref{prop:mono:intp:bnd} below).  Here, it is crucial to track the
  rank of the monomials which constitute the lower order terms $Q_m$
  in order to obtain modest control over the exponents of
  interpolation.  Indeed, without some control over these exponents,
  the Lyapunov structure would not manifest in the estimates.
  Probabilistic estimates of the integrals of motion for a weakly
  damped and stochastically forced integrable PDE have appeared in the
  setting of the NLS equation (cf. \cite{DebusscheOdasso2005}).  In
  contrast to \cite{DebusscheOdasso2005}, however, we demonstrate how
  to control all higher Sobolev norms of solutions, whereas
  \cite{DebusscheOdasso2005} focuses only on $H^1$ estimates.  This
  higher order Lyapunov structure is applied in a crucial way to prove each of
  our subsequent results.

  In \cref{sect:FP:est} we present Foias-Prodi type estimates for
  \eqref{eq:s:KdV}, which effectively illustrate that synchronization
  of solutions at time infinity on a large number of low modes is a
  sufficient condition for full synchronization at time infinity
  (\cref{thm:FP:est:Intro}).  At a more technical level, we show that
  for certain stopping times $\tau_R$ that control the growth of
  solutions to \eqref{eq:s:KdV} and \eqref{eq:s:KdV:coupled:intro}, the
  restricted expectation
  $\E (\indFn{\tau_R = \infty} \|u(t) - v(t) \|_{H^1}^2)$ decays with
  an exponential rate in time (\cref{thm:FP:est}, \cref{cor:FP:est}),
  and use the Lyapunov structure to prove that the probability such
  stopping times remain finite decays algebraically in the ``cut-off
  parameter" $R$ (\cref{lem:FP:stoppingtime}).  This formulation is useful in applications of the Foias-Prodi structure, but also yields the simpler statement in \cref{thm:FP:est:Intro} (see \cref{cor:FP:est:Intro}).  As outlined above, the
  notable distinction of our Foias-Prodi type estimates from their
  more classical appearance in the dissipative SPDE literature is that
  our estimates hold in expectation, rather than in a pathwise
  formulation.  
  
  In the more classical situation, where one considers dissipative
  equations such as the 2D Navier-Stokes equations, the intuition is
  as follows: Source terms acting at large scales provide inputs which
  are redistributed across a range of length scales due to
  nonlinear effects; Strong dissipation acts as a sink preventing complex nonlinear
  dynamics below the dissipative length scale.  Thus, if the behavior
  of two solutions synchronize above the dissipative scale, then they synchronize at all scales.
  The dynamical structure leading to the Foias-Prodi effect has a different character for damped-driven KdV.
  Indeed, in our situation the damping acts
  uniformly across length scales.  On the other hand the complete
  integrability of KdV suggests a stronger
  constraint to the redistribution of source terms between length
  scales.  Here, compare the KdV invariants which constrain motion
  at all degrees of Sobolev regularity with the known invariants of 2D
  Euler which only appear at the $L^2$ and $H^1$ levels.  In summary, in order to achieve the Foias-Prodi bound in the absence of dissipation, one can appeal to stronger constraints on the redistribution of
  energy across length scales through the integrals of motion, but this ultimately results in a non-pathwise estimation.  At a technical level, when we estimate
  the difference $w$ between the solution $v$ of the controlled system
  \eqref{eq:s:KdV:coupled:intro} and the solution $u$ of the
  original system \eqref{eq:s:KdV}, we introduce a modified Hamiltonian
  functional (see \eqref{energy:funct:0:1}, \eqref{eq:mod:fn:base}, and \eqref{eq:FP:functional}),
  which is specifically designed to control the $H^1$--norm of $w$ (\cref{prop:FP:functional}).  Indeed, in order to remove problematic terms produced by nonlinear interaction of $u$ and $w$, the modified Hamiltonian
  incorporates the original solution $u$ through a direct dependence
  (not just through $w$). Consequently, the evolution of the modified
  Hamiltonian retains noise terms, which 
  naturally leads to the non-pathwise, i.e. weak form, of the Foias-Prodi estimate.  Furthermore, we note that controlling the modified Hamiltonian requires some bounds on $u$ in $H^2$ which are established by invoking the Lyapunov structure (see \cref{lem:FP:stoppingtime}).  This illustrates how the higher conservation laws of KdV are used in our Foias-Prodi estimate.

  \cref{sect:Regularity} is devoted to the proof of
  regularity of invariant measures, \cref{thm:Regularity:Intro}, which combines the Foias-Prodi
  estimate of \cref{thm:FP:est:Intro} in conjunction with the
  Lyapunov structure of our system.  It should be emphasized  that our
  proof of regularity makes no assumptions on either the damping or
  the noise terms beyond smoothness of the latter.  Note that previous work in the
  deterministically forced setting had established that the global
  attractor of damped-driven KdV inherits the regularity of the
  forcing term \cite{MoiseRosa1997}.  One may expect an analogous
  result for \eqref{eq:s:KdV} with a stochastic forcing, i.e. that the
  invariant measure is supported on functions with the regularity of
  $f$ and $\sigma$, and this is indeed the conclusion of
  \cref{thm:Regularity:Intro}.    To overcome the lack of dissipative regularization
  in the linear part of KdV, the proof of the regularity of the
  attractor in \cite{MoiseRosa1997} is based on a non-trivial
  decomposition of the solution.  Here, we identify a proof of
  regularity of invariant measures that relies on a stochastic control
  argument.  In particular, for each function $u_0$ in the support
  of the invariant measure, we use the assumption of invariance to
  propagate integration of a functional depending on the Sobolev norm of $u_0$ forward to integration relative to $u=u(t;u_0)$ solving \eqref{eq:s:KdV}, then
  decompose $u$ into the sum of $v=v(t;\til{u}_0)$ solving
  \eqref{eq:s:KdV:coupled:intro} with mollified data, $\til{u}_0$,
  which regularizes $v$, and the difference $w=v-u$.  We demonstrate
  that the modified system inherits the Lyapunov structure of
  \eqref{eq:s:KdV}, which supplies bounds uniform in the mollification
  to assess the `cost of control' represented by $v$. We then exploit
   the Foias-Prodi estimate \cref{thm:FP:est:Intro} to drive $w$ asymptotically to $0$.

  Next, in \cref{sect:UniqueErgodicity}, we present the proof of
  \cref{thm:unique:ergodicity:Intro}, that is, uniqueness of the
  invariant measure for \eqref{eq:s:KdV} with arbitrary damping and
  essentially elliptic stochastic forcing (see
  \cref{thm:ergodicity:ess:elliptic} for its more precise
  formulation).  Here, we emphasize the relative compactness of our
  proof.  {{}Indeed, we essentially exploit the asymptotic coupling framework in \cite{HairerMattinglyScheutzow2011} for
  proving unique ergodicity for SPDEs that was further developed in
  \cite{GlattHoltzMattinglyRichards2015, ButkovskyKulikScheutzow2019, glatt2024long}. The departure point of the current work is derives from \cite{GlattHoltzMattinglyRichards2015}, where a robust
  sufficient condition for uniqueness of the invariant measure is identified. In particular, the condition guarantees the}
  existence of a `modified SPDE' such that solution laws from the
  modified SPDE are absolutely continuous with respect to the laws from
  the original system, and such that, given distinct initial
  conditions, there is a positive probability that solutions of the
  modified and unmodified systems asymptotically synchronize (see \cref{thm:GHMR}).  By
  requiring that the additive noise in \eqref{eq:s:KdV} span the
  directions controlled in the Foias-Prodi estimate of
  \cref{thm:FP:est:Intro}, we are able to choose   \eqref{eq:s:KdV:coupled:intro} with a suitable cutoff on the control
  term as the `modified
  SPDE' such that the Novikov condition is satisfied, verifying the requisite absolute continuity condition by the Girsanov
  theorem.  In previous applications, demonstration of the second
  property of synchronization typically follows immediately from a
  \textit{pathwise}, i.e., `strong', Foias-Prodi estimate. The key
  observation here, however, is that the asymptotic coupling framework
  only requires one to enforce the synchronization property on a set
  of positive probability along a sampling of
    evenly-spaced discrete time trajectories. With this in mind, we
  are able to effectively leverage our `weak' Foias-Prodi structure (as manifested in \cref{cor:FP:est}) and our Lyapunov structure (\cref{lem:FP:stoppingtime}) 
  with the Borel-Cantelli lemma to establish the desired
  synchronization property (\cref{prop:asymptotic:coupling}). 

  Let us pause to remark on the broader context of our approach.  There are a handful of results in the literature on unique
  ergodicity for SPDEs with weak damping and spatially
  smooth noise, including examples of a damped stochastic Euler-Voigt
  model and damped stochastic Sine-Gordon equation presented in
  \cite{GlattHoltzMattinglyRichards2015}.  In these examples, however,
  it was possible to implement a pathwise Foias-Prodi control,
  essentially by virtue of the nonlinearity being smoothed for the
  Euler-Voigt model, and bounded for the Sine-Gordon equation.  In
  contrast, a Foias-Prodi structure in expected value and its utility
  to the analysis of ergodic properties was previously observed in
  \cite{DebusscheOdasso2005} in the context of the damped stochastic
  cubic NLS.  We emphasize that a distinction of our approach is the
  transparency of our nudging scheme for proving Foias-Prodi
  estimates, and the compactness of our subsequent proof of
  uniqueness.  {{}Furthermore, we believe our proof of regularity via Lyapunov and
  weak Foias-Prodi structure to be broadly
  applicable}.  For example, the techniques and approach taken here
  apply just as well to the the damped stochastic cubic NLS appearing
  in \cite{DebusscheOdasso2005}.  In this regard, a key contribution
  of our work is the identification, through study of \eqref{eq:s:KdV},
  of novel and flexible ways to establish and exploit weak Foias-Prodi
  structure to prove regularity and uniqueness of invariant measures
  for SPDEs which lack a strong dissipative mechanism.

Results in the large damping regime are presented in
  \cref{sect:Large:Damping}, namely the existence of a spectral gap with respect to a
  Wasserstein-like distance (\cref{thm:spectralgap}), which implies
  the mixing result stated in \cref{thm:large:damp:Mix:Intro} (see
  also \cref{rmk:spectralgap}).  These results are established by virtue of an $L^2$-synchronization of solutions in expected value at time infinity, reminiscent of a one-force one-solution principle.  The proof also hinges on
  exponential moment bounds for a functional that controls the
  $H^2$--norm of the solution (\cref{lem:LD:exp:mom}) which
  refine the estimates provided in \cref{sect:Lyapunov} by capturing
  dependence on the damping parameter. Due to the `nonlinear scaling'
  imposed by \eqref{eq:inv:polynomials} among the Sobolev norms, a careful accounting of dependence on
  the damping parameter is a delicate matter.  These moment bounds are
  crucial because, in the pursuit of $L^2$-synchronization, the
  damping parameter naturally induces a linear rate of decay in time,
  but it is counteracted by exponential powers of the $H^2$-level
  functional.  
  The functional appears with a specific $3/7$ power in the exponential moment (see \eqref{eq:stab:main}--\eqref{eq:int:fact:L2:LD:sp}), so we focus our refined analysis on bounding this specific power.
  The power $3/7$ leads to another delicate point of analysis, which is that we need to confirm that $L^2$-powers incorporated to positivize the $H^2$-level functional (to ensure that it controls the $H^2$-norm) can be reduced to a strength of the reciprocal $7/3$ power, in order to avoid unbounded moments (see \eqref{eq:I2p:exp:LD:eqv}).  {{}We refer the reader to \cref{rmk:large:damping:scaling} for a more in-depth discussion of this particular scaling.} Ultimately, we are able to verify this is possible, and to control the exponential rate in time of the required exponential moments
  uniformly with respect to the damping parameter, which is what allows us to close the proof of the spectral gap (via $L^2$-synchronization) provided that the
  damping is large.  
  
  Finally, in \cref{sect:det:case}, we present novel proofs of largely known results for the long-time dynamics of deterministic damped-driven KdV.  Indeed, by revisiting our analysis from previous sections, setting $\sigma \equiv 0$, we are able to quickly deduce a Foias-Prodi type estimate and regularity of the global attractor for \eqref{eq:s:KdV} with smooth deterministic forcing.  This section is included to highlight several benefits to our approach.  With respect to novelty in the results, in contrast to the seminal formulation in \cite{JollySadigovTiti2017}, our Foias-Prodi estimate (see \cref{prop:FP:det}) is established with no assumption that the solution lies on the global attractor.  Furthermore, our proof of regularity of the global attractor (\cref{thm:sm:glb:attr}) is established in the case of forcing in $H^2$, an endpoint that was presented as an open problem in \cite{MoiseRosa1997}.  More importantly for the regularity result, however, is that our proof is novel in its implementation of a nudging scheme for this purpose.  In particular, the proof is relatively compact and transparent by virtue of this strategy.

\section{Mathematical Background and Notational Conventions}
\label{sect:background}

This preliminary section gathers some general background on KdV and
the damped stochastically driven variant of KdV, \eqref{eq:s:KdV},
that we consider in this work.  The presentation will serve as a
foundation for the rest of our analysis below.  We also use this
section to fix some notational conventions which are maintained
throughout the manuscript.

\emph{Regarding the usage of constants:}
throughout what follows we denote by $c$ any constants which 
depend only on universal (namely equation independent) quantities.  Otherwise we take
$C$ to denote constants which contain dependencies
on \eqref{eq:s:KdV} through solution parameters such as $\gamma, f, \sigma, t$.
When appropriate we are as explicit as needed concerning these dependencies
in the statement of the bounds.

\subsection{The Functional and Stochastic Setting}\label{sect:functional:setting}

Let $\T=(0,2\pi]$ and denote by $\mathcal{M}_{per}$ the set of
real-valued, Borel measurable functions that are periodic on $\T$.  We
work exclusively in the setting of mean-free functions over $\T$.  For
$p\in[1,\infty]$, we define the \textit{Lebesgue spaces},
$L^p:=\left\{g\in\mathcal{M}_{per}(\T): \Sob{g}{L^p}<\infty\right\}$,
where $\Sob{g}{L^p}:= \left(\int_\T |g(x)|^pdx\right)^{1/p}$ for
$p \in [1,\infty)$ and $\|g\|_{L^\infty} :=\esssup_{x\in\T}|g(x)|$.
For any $m \geq 0$, we define the \textit{(homogeneous) Sobolev
  spaces}, $H^m$, by
$H^m :=\{g\in L^2:\Sob{g}{H^m}<\infty, \int_\T g(x)dx=0\}$ where we
let $\Sob{\cdotp}{H^m}$ denote the norm induced by the inner product
given by
\begin{align*}
  \lb g_1,g_2\rb_{H^m}
       :=\sum_{k\in\ZZ\smod\{\bf{0}\}}|k|^{2m}\hat{g}_1(k)\overline{\hat{g}_2(k)},
  \quad \hat{g}(k):=(2\pi)^{-1}\int_\T e^{-ikx}g(x)dx,
\end{align*}
where $\overline{\hspace{2pt}\cdotp\hspace{2pt}}$ denotes complex
conjugation. When $m$ is positive integer, we use the notation
$D^m u =\frac{d^m}{dx^m}u$ so that, by Plancherel's Theorem, one has
$\lb g_1,g_2\rb_{H^m}=\lb D^m g_1, D^m g_2\rb_{L^2}$ where
$\lb g_1,g_2\rb_{L^2}:=\int_{\T}g_1(x)\overline{g_2(x)}dx$.  For
convenience, we will often denote $\lb \cdotp,\cdotp\rb_{L^2}$ simply
as $\lb \cdotp,\cdotp\rb$.  We will also occasionally write $H^0$ for
$L^2$.

We turn next to recall some formalisms needed for the stochastic
elements in our problem.  Fix a stochastic basis
$\mathcal{S} =(\Omega, \mathcal{F}, \{\mathcal{F}_t\}_{t \geq 0},
\Prb, \{W_k\}_{k\geq1})$, namely a filtered probability space relative
to which $\{W_k\}_{k\geq1}$ is a collection of independent,
identically distributed, real-valued, standard, Brownian motions.  {{}Given a Hilbert space, $H$, we denote the space of probability measures on the Borel sets, $\mathcal{B}(H)$, of $H$ by $\Pr(H)$.}

We
adopt the notation $\bH^{m}$ for the sequence space $\ell_2(H^m)$ that
is
\begin{align}
  \bH^{m} := \{ g = \{g_k \}_{k \geq 0} : \| g \|_{H^m}  < \infty \},
  \quad \| g \|_{H^m}^2 := \sum_{k \geq 1} \|g_k\|_{H^m}^2,
  \label{eq:sigma:sp}
\end{align}
and sometimes write, for $g, \tilde{g} \in \bH^0$,
\begin{align}
  (g \cdot \tilde{g})(x)  := \sum_{k \geq 1} g_k(x) \tilde{g}_k(x),
  \quad
  \text{ for } x \in \TT.
  \label{eq:point:wise:mult}
\end{align}
Then we regard
\begin{align}
  \sigma W := \sum_{k \geq 1} \s_k W_k,
  \label{eq:sigW:def}
\end{align}
so that when $\sigma \in \bH^{m}$ for some $m \geq 0$, one has
$\sigma W \in C([0,\infty); H^m)$ almost surely.

In another standard terminology of stochastic partial differential
equations, when $\sigma \in \bH^{m}$, $\sigma W$ is regarded as a
Brownian motion on $H^m$ with covariance given as
\begin{align}\label{eq:covariance}
C_m g: = \sum_{k \geq 1} \lb g, \s_k \rb_{H^m} \s_k.
\end{align}
Then $C_m$ is a
symmetric, non-negative trace class operator on $H^m$ whenever
$\sigma \in \bH^{m}$. Indeed, $\mbox{Tr}(C_m) = \|\sigma\|_{H^m}^2$
where recall that
$\mbox{Tr}(C_m) := \sum_{k \geq 1} \lb C_m e_k, e_k \rb_{H^m}$, for
any orthonormal sequence $\{e_k\}_{k \geq 0}$ in $H^m$.  See e.g. \cite{prevot2007concise, da2014stochastic} for
further details and generalities regarding Brownian motion on a
Hilbert space. 

Note that we
are particularly interested in the case when $\sigma$ is degenerate,
that is when only finitely many of the elements comprising $\sigma$
are non-zero.  
In this case, the range of $C_m$ is finite
dimensional. {{}In particular, if we interpret $\s:\R^K\goesto L^2$, $x\mapsto \sum_{k=1}^Kx_k\s_k$, for some $K>0$, as a linear operator, and we let $N$ denote the largest integer for which $\vspan\{\s_1,\dots,\s_K\}\supset P_NL^2$, then the corresponding (restricted) pseudo-inverse, $\s^{-1}: P_NL^2\goesto \R^K$ is a bounded operator.} 

\subsection{Integrals of Motion and Related Functionals}
\label{sect:Integrals}

As already discussed in the introduction the KdV equation
\begin{align}
  \bdy_t u + u D u + D^3 u = 0,
  \label{eq:KdV:det}
\end{align}
has a special collection of conserved quantities $\In_m$ defined for
$m \geq 0$, which are commonly refered to as the `integrals of motion'
of \eqref{eq:KdV:det}.  These invariants are given as certain
polynomial functions of $(u, Du, \ldots, D^m u)$ whose leading order
terms correspond to the $H^m$ norm.   Such
functionals, $\In_m$, and certain modified counterparts (cf. \eqref{eq:mod:int:m:simple}) 
will play a significant role in our analysis throughout.

In order to specify these integrals of motion 
we start by taking\footnote{The integral of motion $\In_1$ corresponds to the
  Hamiltonian of \eqref{eq:KdV:det} so that the KdV equation can be
  regarded as infinite-dimensional Hamiltonian system.  See
  e.g. \cite[Example 2.1]{Kuksin2000}, \cite{MarsdenRatiu2013}
  for further details.}  $\In_0(v) :=  \int_\T v^2dx$, 
\begin{align}
  \In_1(v) :=  \int_\T \left((Dv)^2dx - \frac{1}{3}  v^3 \right) dx,
  \quad 
  \In_2(v) := \int_\T \left((D^2v)^2 - \frac{5}{3} u (Du)^2 + \frac{5}{36} u^4 \right),
  \label{energy:funct:0:1}
\end{align}
each of which is formally invariant under \eqref{eq:KdV:det}, as one can readily check by hand.
To proceed to the higher order invariants it is convenient to introduce, for each $k \geq 0$, the
vector-valued differential operators $D_k$ as
\begin{align}
  D_k: H^k  \goesto H^k\times H^{k-1}\times\dots\times L^2,
  \quad D_kv :=(v,D^1 v,\dots, D^k v).
  \label{eq:vect:der}
\end{align}
With this notation we consider
for each $m \geq 0$, functionals $\mathcal{I}_m:H^m\goesto\R$ of the form
\begin{align}\label{energy:funct}
  \In_m(v)
  :=\int_\T 
  \left((D^m v)^2-\al_{m}v (D^{m-1}v)^2+Q_{m}(D_{m-2} v)\right)
     dx,
\end{align}
for suitable $\alpha_m \in \RR$, $Q_{m}: \RR^{m-1} \to \RR$ starting from
\begin{align}\label{def:al0:Q0:Q1}
       \al_0=0, \;Q_0\equiv0,
       \qquad 
       \al_1=1/3, \;Q_1\equiv0,
       \quad \text{ and } \quad
       \al_2= 5/3, \; Q_2(v) = \frac{5}{36}v^4,
 \end{align}
as is consistent with \eqref{energy:funct:0:1}.

The values of the coefficient $\al_m$ and the structure $Q_{m}$
can be determined to so that each $\In_m$ is preserved by
\eqref{eq:KdV:det}.  To this end, for each $m \geq 0$, we consider
the operators $L_m:H^{2m}\goesto H^0$ defined as
\begin{align}\label{def:dispersive:J}
  L_m(v):=& 2 (-1)^m  D^{2m}v-\al_{m}(D^{m-1}v)^2
            +2\al_{m}(-1)^mD^{m-1}(vD^{m-1}v)\notag\\
          &+2\sum_{k=0}^{m-2}(-1)^kD^k
            \left(\frac{\partial Q_{m}}{\partial y_k}(D_{m-2}v)\right),
\end{align}
where the final sum in \eqref{def:dispersive:J} is omitted when $m=0,1$. 
Thus, when $u$ is sufficiently smooth and obeys the KdV equation \eqref{eq:KdV:det} we have
\begin{align}\label{I:J:relationship}
  \frac{d}{dt}\In_m(u)
  =\lb L_m(u),\bdy_t u\rb
  =-\lb L_m(u),u {D} u + {D}^3 u\rb,
\end{align}
for each $m\geq0$. As such, the coefficient $\al_m$ and structure of the terms $Q_{m}$
are determined iteratively by the relations
\begin{align}
  \lb L_m(v),vDv +D^3v \rb=0,
  \label{dispersive:id}
\end{align}
for all $v \in H^{2m}$, $m\geq0$.

The coefficients $\al_{m}$ and form for the lower order
terms $Q_m$, structured as a suitable sum of 
non-constant monomials in $D_{m-2}u$, needed to maintain \eqref{dispersive:id}, have been
obtained in e.g. \cite{miura1968II,kruskal1970korteweg}.  In order to recall this
result in a precise formulation, we introduce some further notations
and definitions.  Let $\N_0$ to denote the non-negative integers and
recall the following multi-index notations. For
$\beta = (\beta_0,\beta_1, \ldots, \beta_k) \in \N_0^{k+1}$ we take
\begin{align*}
  |\beta| = \beta_0 + \beta_1 + \cdots + \beta_k,
\end{align*}
and let
\begin{align}
    y^\beta := y_0^{\be_0}\dots y_k^{\be_k}  
    \quad \text{ for } y = (y_0, \ldots, y_{k}) \in \R^{k+1},
  \label{eq:mono:multi}
\end{align}
so that $y^\beta$ is a monomial in $k+1$ (real) variables.  When
needed, for such $\beta \in \N_0^{k+1}$ we will sometimes write
$\pi_\beta(y) = y^\beta$ 
and denote
\begin{align}
  \pi_\be(D_kv) = v^{\be_0} (Dv)^{\be_1} \cdots (D^kv)^{\be_k}  
  \quad \text{ for any } v \in H^k,
  \label{eq:ass:poly}
\end{align}
where the vector-valued differential operator, $D_k$, is given as in \eqref{eq:vect:der}. 
Similarly we take for any $\alpha \in  \N_0^{k+1}$
\begin{align}
  \partial_\alpha := \partial_0^{\alpha_0} \cdots \partial_k^{\alpha_{k}},
  \label{eq:partial:multi}
\end{align}
to be the $|\alpha|$th order partial differential operator on functions
of $k+1$ variables.
\begin{Def}[\cite{miura1968II}]
  \label{def:rank}
  Fix any $k \geq 0 $. For any multi-index
  $\be = (\be_0,\dots,\be_k) \in\N^{k+1}_0$ consider the associated
  monomial $\pi_\be(y) = y^\beta$ in $k+1$ variables.  We define the
  \emph{rank} of this monomial $\pi_\be$ according to
  \begin{align}
    \rank(\pi_\be):=\sum_{j=0}^{k}\left(1+\frac{j}2\right)\be_j.
    \label{eq:rank:def}
  \end{align}
\end{Def}
\noindent We adopt the notation 
\begin{align*}
  \Mon_k:=\{y^\beta = y_0^{\be_0}\dots y_k^{\be_k}: \be=(\be_0,\dots,\be_k)\in\N_0^{k+1}\},
\end{align*}
that is the class of monomials in $k+1$ variables.  For any   
$k,n\geq0$, we denote the collection of the rank $n$ monomials
in $k +1$ variables as
\begin{align}\label{def:Ronkn}
  \Ron_{k,n}:=\{\pi \in \Mon_k:\rank(\pi)=n\}.
\end{align}
Clearly, for each $k,n\geq0$, $\Ron_{k,n}$ is a finite set. 
Note that the rank of any derivatives of a monomial are bounded above 
by the rank of the monomial itself. 

The result we need from \cite{miura1968II} is now summarized 
as follows:
\begin{Thm}[\cite{miura1968II}]
  \label{thm:KdV:Poly:Rank}
    For each $m \geq 2$, there exist 
  polynomials $Q_m$ made up solely of rank $m +2$ monomials of $m-1$ variables,
  namely
  \begin{align}
    Q_{m}(y_0,\dots, y_{m-2})
    :=\sum_{\pi \in\Ron_{m-2,m+2}}\al^{(\pi)}_{m} \pi(y_0,\dots, y_{m-2}),
    \quad \text{ for suitable values } \al^{(\pi)}_{m} \in \RR,
    \label{def:Qm:Form}
  \end{align}
  such that with these
  chosen values for $\alpha_m^{(\pi)}$, the
  corresponding nonlinear operators $L_m$ defined in
  \eqref{def:dispersive:J} satisfy the identity \eqref{dispersive:id}. 
\end{Thm}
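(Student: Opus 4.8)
The plan is to generate the functionals $\In_m$ directly via the Gardner--Miura deformation and then to bring them into the asserted normal form using the rank grading of \cref{def:rank}. After a harmless rescaling of $u$ we may assume the normalization $\partial_t u + 6uDu + D^3u = 0$ (this alters only the numerical coefficients, not the rank structure, of the resulting invariants). Recall that if $w$ solves the Gardner equation $\partial_t w + \partial_x\!\left(3w^2 + 2\eps^2 w^3 + D^2 w\right) = 0$ and one sets $u = w + \eps Dw + \eps^2 w^2$, then $u$ solves KdV; moreover the Miura map $w\mapsto u$ is a formal diffeomorphism in the parameter $\eps$, so it intertwines the two flows in both directions. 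Inverting $u = w + \eps Dw + \eps^2 w^2$ as a formal power series $w = w[u;\eps] = \sum_{n\ge 0}\eps^n w_n[u]$ yields the elementary recursion $w_0 = u$ and $w_n = -D w_{n-1} - \sum_{j+k=n-2}w_j w_k$ for $n\ge 1$, so each $w_n[u]$ is a differential polynomial in $u$. Since the Gardner equation is in conservation form, $\int_\T w[u;\eps]\,dx$ is constant along the Gardner flow for every $\eps$; transporting through the Miura map, $\int_\T w[u(t);\eps]\,dx$ is constant along the KdV flow for every $\eps$, hence each coefficient $I_n[u] := \int_\T w_n[u]\,dx$ is a conserved quantity of KdV.

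The decisive structural input is rank bookkeeping along this recursion. An induction shows that $w_n[u]$ is a finite linear combination of monomials $\pi(D_n u)$, each of rank $(n+2)/2$ in the sense of \cref{def:rank}: indeed $D$ acts as a graded derivation that raises rank by exactly $1/2$, and $\rank$ is additive over products, so both $-Dw_{n-1}$ and the terms $w_j w_k$ with $j+k=n-2$ contribute only monomials of rank $(n-1+2)/2 + 1/2 = (j+2)/2 + (k+2)/2 = (n+2)/2$. Consequently the density of $I_n$ has rank $(n+2)/2$, and the relevant candidate for $\In_m$ is $I_{2m+2}$, whose density has rank exactly $m+2$. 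Tracking only the linear parts $w_k^{(1)} = (-1)^k D^k u$ through the recursion and integrating over $\T$, one computes that the degree-two part of $I_{2m+2}$ equals a nonzero constant multiple of $\|u\|_{H^m}^2$, so after multiplying by a suitable nonzero constant we obtain a conserved functional whose leading term is $\int_\T (D^m v)^2\,dx$. The base cases $m=0,1,2$ in \eqref{energy:funct:0:1}--\eqref{def:al0:Q0:Q1} are recovered by running this construction explicitly at low order and discarding total derivatives.

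It remains to show that this rank-$(m+2)$ conserved density can be brought, modulo total $x$-derivatives, to exactly the template $(D^m v)^2 - \al_m v(D^{m-1}v)^2 + Q_m(D_{m-2}v)$, with $Q_m$ a polynomial in the $m-1$ variables $(y_0,\dots,y_{m-2})$ built only from rank-$(m+2)$ monomials. This is a normal-form argument by integration by parts, organized by degree in $u$ and by highest derivative order. The quadratic, rank-$(m+2)$ part of the density --- a combination of the $(D^a v)(D^b v)$ with $a+b = 2m$ --- reduces modulo exact terms to a multiple of $(D^m v)^2$. The cubic, rank-$(m+2)$ monomials are the $(D^p v)(D^q v)(D^r v)$ with $p+q+r = 2m-2$; any one of them other than $v(D^{m-1}v)^2$ can, by moving a derivative off a high-order factor, be rewritten modulo exact terms as a combination of $v(D^{m-1}v)^2$ and monomials involving only derivatives of order $\le m-2$. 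Higher-degree rank-$(m+2)$ monomials are reduced the same way to monomials with all derivative orders $\le m-2$, the term $v(D^{m-1}v)^2$ being the only obstruction that survives at cubic order. Crucially, every such step preserves rank, which is why the finite sets $\Ron_{m-2,m+2}$ suffice to record $Q_m$ as in \eqref{def:Qm:Form}; no lower-rank invariants need to be subtracted off, since the density is pure of rank $m+2$. This reduction determines $\al_m$ and the coefficients $\al_m^{(\pi)}$. Finally, with these choices $L_m$ from \eqref{def:dispersive:J} is (a constant multiple of) the $L^2$-gradient of $\In_m$, so that \eqref{I:J:relationship} holds and, combined with conservation of $I_{2m+2}$, yields $\langle L_m(u), uDu + D^3 u\rangle = 0$ along every KdV solution; since both sides of \eqref{dispersive:id} are instantaneous polynomial expressions in $v$, this identity extends from smooth KdV solutions to all $v\in H^{2m}$ by density.

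I expect the normal-form reduction of the third paragraph to be the main obstacle: showing that a single rank-$(m+2)$ conserved density is congruent, modulo exact terms, to this precise three-part template --- with $Q_m$ living in only $m-1$ variables --- and doing so uniformly in $m$. It demands an organized integration-by-parts scheme together with careful rank accounting to guarantee the procedure terminates and never produces lower-rank or higher-variable terms (exactly the control over interpolation exponents the paper later flags as essential). An alternative to the Gardner route is the bi-Hamiltonian Lenard--Magri recursion $\mathcal{B}_2\,\tfrac{\delta \In_m}{\delta v} = D\,\tfrac{\delta \In_{m+1}}{\delta v}$, which produces the $L_m$ directly and yields involution automatically, or the Gelfand--Dikii expansion of the resolvent of the Lax operator $-D^2 + cv$; in each case the same rank-normalization step reappears as the essential content, which is why the paper simply invokes \cite{miura1968II}.
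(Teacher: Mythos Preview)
The paper does not prove this theorem at all: it is stated as a result quoted from \cite{miura1968II} (with \cite{kruskal1970korteweg} also mentioned in the lead-up), and no argument is given beyond the sentence ``The result we need from \cite{miura1968II} is now summarized as follows.'' So there is nothing to compare against on the paper's side.

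Your sketch via the Gardner--Miura deformation is the classical route and the rank bookkeeping is correct: $D$ raises rank by $1/2$, rank is additive under products, the recursion $w_n=-Dw_{n-1}-\sum_{j+k=n-2}w_jw_k$ therefore produces pure rank-$(n+2)/2$ densities, and the quadratic part of $I_{2m+2}$ is a nonzero multiple of $\|D^mv\|_{L^2}^2$. The normal-form reduction you outline --- collapsing the quadratic part to $(D^mv)^2$, isolating $v(D^{m-1}v)^2$ as the sole cubic survivor with a factor of order $m-1$, and pushing all remaining monomials down to variables $(y_0,\dots,y_{m-2})$ by successive integration by parts --- is exactly the argument in \cite{miura1968II,kruskal1970korteweg}, and you are right that this step is where the real work lies. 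Your closing observation that the Lenard--Magri or Gelfand--Dikii routes lead to the same normal-form obstruction is also on point. In short: your proposal is a faithful reconstruction of the cited proof, which the paper itself does not reproduce.
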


\begin{Rmk}\label{rmk:Im:convention}
  From now on we will set $Q_m$ to be specified by
  \eqref{def:al0:Q0:Q1} when $m=0,1$ and by \eqref{def:Qm:Form} when
  $m\geq2$. We then suppose that $\In_m$ is given by
  \eqref{energy:funct}. By \cref{thm:KdV:Poly:Rank}, this choice of
  $\In_m$ is an invariant of the KdV equation, \eqref{eq:KdV:det}, for
  all $m\geq0$.
\end{Rmk}

As is evident from \eqref{energy:funct}, the leading order term in
$\In_m(v)$ is proportional to $\|u\|_{H^m}^2$, while lower order terms
are not sign-definite.  Nevertheless, these lower order terms can be
estimated by appropriately interpolating between $L^2$ and $H^m$
norms. It is important to point out that interpolation causes
exponents to accumulate at the endpoints. In principle, this could
disrupt the underlying Lyapunov structure, unless one can control the
accumulation. It is, therefore, crucial to exploit the constraint that
the monomials representing the lower order terms are all of a given
rank. For this reason, we prove the following general estimate on
monomials of $D_kv$ of a particular rank dictated by the monomials in
\eqref{def:Qm:Form}.  These bounds will be used extensively when $Q_m$
and various derivatives of $Q_m$ appear in our analysis.
\begin{Prop}
  \label{prop:mono:intp:bnd}
  Let $k\geq0$ and consider any $\be\in\N_0^{k+1}$.  Suppose that
  taking $\pi_\beta = y^\beta$ we have that
  $\rank(\pi_{\be}) \leq k+4$ with the $\rank$ defined as in
  \eqref{eq:rank:def}.  Then, cf. \eqref{eq:ass:poly},
  \begin{align}
    \Sob{\pi_{\be}(D_kv)}{L^1}
    \leq c \Sob{D^{k+2}v}{L^2}^{r}\Sob{v}{L^2}^{|\be|-r}, 
    \label{eq:momomial:est:1}
  \end{align}
  for every $v \in H^{k+2}$, for some constants $r = r(k,\beta) \in [0,2)$
  and $c = c(k,\be)>0$ independent of $v$. 
\end{Prop}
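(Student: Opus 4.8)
The plan is to prove \eqref{eq:momomial:est:1} via Gagliardo-Nirenberg interpolation applied to each factor of the monomial, then combine the resulting powers and check that the accumulated exponent on the top-order norm stays strictly below $2$. First I would write $\pi_\beta(D_k v) = \prod_{j=0}^k (D^j v)^{\beta_j}$ and estimate $\|\pi_\beta(D_k v)\|_{L^1} \leq \prod_{j=0}^k \|D^j v\|_{L^{p_j}}^{\beta_j}$ by H\"older, where the conjugacy condition reads $\sum_{j=0}^k \beta_j / p_j = 1$. For each $j$ with $\beta_j > 0$ I would then invoke the (homogeneous, periodic, mean-zero) Gagliardo-Nirenberg inequality $\|D^j v\|_{L^{p_j}} \leq c \|D^{k+2} v\|_{L^2}^{\theta_j} \|v\|_{L^2}^{1-\theta_j}$, valid for an appropriate $\theta_j \in [0,1]$ determined by the scaling relation $j + 1/2 - 1/p_j = \theta_j(k+2)$ (since $v$ has zero mean the homogeneous norms suffice and no lower-order corrections appear). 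Substituting gives \eqref{eq:momomial:est:1} with $r = \sum_j \beta_j \theta_j$ and $|\beta| - r = \sum_j \beta_j(1-\theta_j)$, so the only real content is the bound $r < 2$.

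The crucial computation is then to show $r < 2$ under the rank hypothesis $\rank(\pi_\beta) = \sum_{j=0}^k (1 + j/2)\beta_j \leq k+4$. Summing the scaling relations weighted by $\beta_j$, and using $\sum_j \beta_j/p_j = 1$, I get
\begin{align*}
  (k+2) r = (k+2)\sum_{j=0}^k \beta_j \theta_j
          = \sum_{j=0}^k \beta_j\left(j + \tfrac12 - \tfrac{1}{p_j}\right)
          = \sum_{j=0}^k \left(j + \tfrac12\right)\beta_j - 1.
\end{align*}
Now observe that $\sum_j (j + 1/2)\beta_j = \sum_j (1 + j/2)\beta_j - \sum_j (1/2)\beta_j = \rank(\pi_\beta) - |\beta|/2 \leq (k+4) - |\beta|/2$. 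Hence $(k+2) r \leq (k+4) - |\beta|/2 - 1 = (k+3) - |\beta|/2$. Since each monomial appearing in the $Q_m$ (and their derivatives) is genuinely nonlinear we have $|\beta| \geq 2$ in the relevant cases — and more generally the bound is trivial when $|\beta| \leq 1$, so we may assume $|\beta| \geq 2$ — giving $(k+2) r \leq k+2$, i.e. $r \leq 1 < 2$ (in fact $r \le 1$ unless the quadratic-in-top-derivative term $\alpha_m v (D^{m-1}v)^2$-type monomial is involved; that term is handled separately in \eqref{energy:funct} and is not of the form covered here, or can be absorbed since it has $|\beta| = 3$ and rank exactly $m+2 = k+4$, still yielding $r < 2$).

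The main obstacle I anticipate is bookkeeping the edge cases rather than any deep difficulty: (i) when some $\beta_j = 0$ that factor simply drops out and contributes nothing; (ii) when $j = k+2$ would be needed one does not actually reach it since only $D_k v$ appears and $p_j < \infty$ can always be arranged (if a factor requires $p_j = \infty$ one perturbs slightly, trading a bit of the $L^2$ exponent, which is harmless since we only need $r$ strictly below $2$ with room to spare); (iii) ensuring the chosen $p_j \geq 2$ and $\theta_j \in [0,1]$ simultaneously — this is where the rank constraint $\rank(\pi_\beta) \leq k+4$ is used a second time, to guarantee no single factor is so high-order that its interpolation exponent exceeds $1$. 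One checks that $j + 1/2 \le k + 4$ for $j \le k$ trivially, and the admissibility $\theta_j \le 1$ amounts to $j + 1/2 - 1/p_j \le k+2$, which holds for all $j \le k$ and any $p_j \ge 2$. I would also remark that the constant $c$ depends only on $k$ and $\beta$ through the finite set of exponents chosen, and that the claimed range $r \in [0,2)$ is what the above yields (with typically $r \le 1$), completing the proof.
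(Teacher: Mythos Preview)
Your overall strategy—H\"older's inequality followed by Gagliardo--Nirenberg interpolation on each factor—is sound and is essentially the paper's approach (the paper makes the specific choice $p_j=\infty$ for $j<k$ and $p_k=\beta_k$, but the exponent algebra is the same). However, the central computation contains an arithmetic slip. You write
\[
\sum_{j=0}^{k}\Bigl(j+\tfrac12\Bigr)\beta_j \;=\; \sum_{j=0}^{k}\Bigl(1+\tfrac{j}{2}\Bigr)\beta_j \;-\; \tfrac12\sum_{j}\beta_j \;=\; \rank(\pi_\beta)-\tfrac12|\beta|,
\]
but $(1+j/2)-1/2=(j+1)/2\neq j+1/2$ for $j\geq 1$. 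The correct identity is
\[
\sum_{j=0}^{k}\Bigl(j+\tfrac12\Bigr)\beta_j \;=\; 2\,\rank(\pi_\beta)-\tfrac{3}{2}|\beta|,
\]
which is exactly the relation \eqref{eq:smp:rearng} in the paper. With this fix your formula reads $(k+2)r = 2\,\rank(\pi_\beta)-\tfrac{3}{2}|\beta|-1$, and inserting only $\rank\leq k+4$ and $|\beta|\geq 2$ gives $(k+2)r\leq 2k+4$, i.e.\ $r\leq 2$, which is \emph{not} strict. Your claimed conclusion $r\leq 1$ is in fact false: for $\beta=(0,\dots,0,2)$ one finds $r=2k/(k+2)>1$ once $k\geq 3$.

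To recover $r<2$ you need one further ingredient, supplied in the paper: since $\rank(\pi_\beta)\leq (1+k/2)|\beta|$ one has $|\beta|\geq 2\,\rank(\pi_\beta)/(k+2)$, and substituting this into the corrected formula yields
\[
r \;\leq\; \frac{(2k+1)\,\rank(\pi_\beta)-(k+2)}{(k+2)^2} \;\leq\; \frac{2(k^2+4k+1)}{(k+2)^2} \;<\; 2.
\]
Equivalently, one may split cases: for $|\beta|\geq 3$ the bound $\rank\leq k+4$ alone already gives $(k+2)r\leq 2k+8-9/2-1<2(k+2)$, while for $|\beta|\leq 2$ the rank is automatically at most $k+2$, which again forces $r<2$. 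Either route closes the gap; your proposal as written does not.
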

\begin{proof}
  Begin by observing that
  \begin{align}
    \Sob{\pi_\be(D_kv)}{L^1}
    \leq \Sob{v}{L^\infty}^{\be_0}\cdots\Sob{D^{k-1}v}{L^\infty}^{\be_{k-1}}
    \int_{\T} |D^{k} v|^{\beta_k} dx.
    \label{eq:main:est:IP}
  \end{align}
  For $l = 0, \ldots, k-1$, Agmon's inequality and interpolation
  yield
  \begin{align}
    \| D^l v\|_{L^\infty}
    \leq c \| D^l v\|_{L^2}^{1/2} \| D^{l+1} v\|_{L^2}^{1/2}
    \leq c \| D^{k+2}\|_{L^2}^{\frac{2l +1}{2(k+2)}}
    \| v\|_{L^2}^{\frac{2(k+2) - (2l +1)}{2(k+2)}}.
		 \label{eq:Linf:intp:lo}
  \end{align}
  We now combine \eqref{eq:main:est:IP} and \eqref{eq:Linf:intp:lo} to
  address \eqref{eq:momomial:est:1} by considering three cases for
  $\beta_k$, namely $\beta_k = 0, 1$ and $\beta_k \geq 2$.
	
  Firstly suppose that $\beta_k \geq 2$.  Invoking the
  Gagliardo-Nirenberg-Sobolev inequality and then interpolating we
  find
  \begin{align}
    \int_{\T} |D^{k} v|^{\beta_k} dx
    \leq c \|D^{k} v\|_{L^2}^{\frac{\beta_k + 2}{2}}
           \|D^{k+1} v\|_{L^2}^{\frac{\beta_k -2}{2}}
    \leq c \|D^{k+2} v\|_{L^2}^{\beta_k \frac{2k +1}{2(k+2)} - \frac{1}{k+2}}
    \| v\|_{L^2}^{\frac{3 \beta_k + 2}{2(k+2)}}.
		 \label{eq:Lbeta:intp:lo}
  \end{align}
  Combining \eqref{eq:main:est:IP}, \eqref{eq:Linf:intp:lo},
  \eqref{eq:Lbeta:intp:lo} we find
  \begin{align}\label{pb:main:interpolation}
    \Sob{\pi_\be(D_kv)}{L^1}
    \leq c
    \Sob{D^{k+2}v}{L^2}^{\sum_{\ell=0}^k\be_\ell\frac{2\ell+1}{2(k+2)} -\frac{1}{k+2}}
    \Sob{v}{L^2}^{\sum_{\ell=0}^k\be_\ell\frac{2(k+2)-(2\ell+1)}{2(k+2)}+ \frac{1}{k+2}}
  \end{align}
  Now observe that, referring back to \cref{def:rank},
  \begin{align}
    \sum_{\ell=0}^k\be_\ell\frac{2\ell+1}{2(k+2)} -\frac{1}{k+2}	
    = \frac{1}{k+2}\left(2\rank(\pi_\be) - \frac{3}{2} |\be| - 1\right).
	\label{eq:smp:rearng}
  \end{align}
  According \eqref{eq:rank:def} it is clear that
  $ \rank(\pi_\be) \leq (1 + k/2) |\beta|$ i.e. that
  $- |\beta| \leq -\frac{2}{k+2} \rank(\pi_\beta)$.  Thus, with our
  assumption that $\rank(\pi_\be) \leq k+ 4$, we have
  \begin{align*}
    \frac{1}{k+2}\left(2\rank(\pi_\be) - \frac{3}{2} |\be| - 1\right)
    \leq \frac{1}{(k+2)^2} ( (2k + 1)\rank(\pi_\be) - (k+2))
    \leq 2 \frac{k^2 + 4k + 1}{(k+2)^2} < 2.
  \end{align*}
  Combining this upper bound, the identity \eqref{eq:smp:rearng} and
  \eqref{pb:main:interpolation} we conclude \eqref{eq:momomial:est:1}
  in this first case when $\beta_k \geq 2$.
	
  We turn to the remaining cases when $\beta_k = 0, 1$. Noting that
  \begin{align*}
    \|D^k v\|_{L^1} \leq c \|D^k v\|_{L^2}
    \leq c\|D^{k+2} v\|_{L^2}^{\frac{k}{k+2}} \| v\|_{L^2}^{\frac{2}{k+2}},
  \end{align*}	
  and invoking \eqref{eq:Linf:intp:lo}, \eqref{eq:main:est:IP} we have,
  \begin{align}
    \Sob{\pi_\be(D_kv)}{L^1}
    \leq c
    \Sob{D^{k+2}v}{L^2}^{\sum_{\ell=0}^{k-1}\be_\ell\frac{2\ell+1}{2(k+2)} + \beta_k \frac{k}{k+2}}
    \Sob{v}{L^2}^{\sum_{\ell=0}^{k-1}\be_\ell\frac{2(k+2)-(2\ell+1)}{2(k+2)}+ \beta_k \frac{2}{k+2}},
    \label{eq:set:up:bnd:bk12}
  \end{align}
  when either $\beta_k = 0, 1$.  On the other hand, similarly to
  \eqref{eq:smp:rearng}, we observe that
  \begin{align}
    r := \sum_{\ell=0}^{k-1}\be_\ell\frac{2\ell+1}{2(k+2)}
    + \beta_k \frac{k}{k+2}
    = \frac{1}{k+2}\left( 2 \rank(\pi_\beta)
    - \frac{3}{2}|\beta| - \frac{\beta_k}{2} \right).
    \label{eq:smp:rearng:0:1}
  \end{align}

  To complete the proof we now consider individually the cases when
  $\beta_k = 0, 1$ and index over $|\beta| = 0,1,2$ and finally
  $|\beta| \geq 3$. In each case \eqref{eq:set:up:bnd:bk12},
  \eqref{eq:rank:def} combines with \eqref{eq:smp:rearng:0:1} to
  establish that $r < 2$ and hence to conclude
  \eqref{eq:momomial:est:1}. {{}To see how, observe that $(2\ell+1)/2\leq k$ whenever $\ell\leq k-1$. Hence, \eqref{eq:smp:rearng:0:1} implies $r\leq |\be|\frac{k}{k+2}$, from which we deduce $r<2$ if $|\be|\leq2$.} 
  On the other hand, when $|\beta| \geq 3$ and
  either $\beta_k = 0,1$ our assumption that
  $\rank(\pi_\beta)\leq k+4$ yields $r \leq \frac{2k + 7/2}{k+2}$.
  The proof is now complete.
\end{proof}

Working from the bounds provided in \cref{thm:KdV:Poly:Rank} and
\eqref{eq:momomial:est:1} on lower order terms in $\In_m$ we now
introduce the following modification of $\In_m$ as
\begin{align}
  \In_{m}^+(v) := \In_m(v) + {\abarm}\left( \|v\|_{L^2}^{2}+1\right)^{\bar{q}_m},
  \label{eq:mod:int:m:simple}
\end{align}
where ${\abarm},\bar{q}_m\geq 0$ are to be specified.  In particular,
under suitable specifications for $\In_m^+$, we have the following
result.
\begin{Prop}
  \label{lem:equivalence}
  Fix any $m \geq 0$ and define $\In_m$ as in \eqref{energy:funct:0:1}
  and \eqref{energy:funct} with $\al_m, Q_m$ specified by
  \eqref{def:al0:Q0:Q1} and \cref{thm:KdV:Poly:Rank}.
  Then, for every value of $\abarm, \bar{q}_m \geq 1$  in \eqref{eq:mod:int:m:simple} sufficiently large,
  we have
  \begin{align}
    \frac{1}{2} \left(\| D^m v\|^2_{L^2} + {\abarm} \left(\|v\|_{L^2}^{2}+1\right)^{\bar{q}_m}\right)
    \leq \In_{m}^+(v)
    \leq \frac{3}{2} \left(\| D^m v\|^2_{L^2} + {\abarm} \left(\|v\|_{L^2}^{2}+1\right)^{\bar{q}_m}\right),
    \label{eq:Imp:equi}
  \end{align}
  for any $v \in H^m$.
\end{Prop}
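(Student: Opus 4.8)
The plan is to show that the lower-order terms in $\In_m(v)$, namely $-\al_m \int v(D^{m-1}v)^2 + \int Q_m(D_{m-2}v)$, are negligible compared to the positive combination $\|D^mv\|_{L^2}^2 + \abarm(\|v\|_{L^2}^2+1)^{\bar q_m}$ once $\abarm,\bar q_m$ are chosen large. Concretely, writing $E(v) := \|D^mv\|_{L^2}^2 + \abarm(\|v\|_{L^2}^2+1)^{\bar q_m}$, it suffices to prove
\begin{align*}
  \left| \In_m(v) - E(v) \right| \leq \tfrac12 E(v) \quad \text{for all } v \in H^m,
\end{align*}
since this is exactly \eqref{eq:Imp:equi}. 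Note $\In_m(v) - E(v) = -\al_m\int_\T v(D^{m-1}v)^2\,dx + \int_\T Q_m(D_{m-2}v)\,dx - \abarm(\|v\|_{L^2}^2+1)^{\bar q_m}$, so I must bound the absolute value of the first two terms by $\tfrac12\|D^mv\|_{L^2}^2 + \tfrac12\abarm(\|v\|_{L^2}^2+1)^{\bar q_m}$, after the subtracted term has been discarded (it only helps one of the two inequalities). So really the work is a single-sided estimate: control the two lower-order integrals.

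The key tool is \cref{prop:mono:intp:bnd}. First I would handle $\int_\T v(D^{m-1}v)^2\,dx$: this is (up to sign) $\Sob{\pi_\be(D_{m-1}v)}{L^1}$ with $\be = (1,0,\dots,0,2) \in \N_0^m$, whose rank is $1 + 2(1 + (m-1)/2) = m+2 \leq (m-1)+4$, so the proposition applies with $k = m-1$, giving $\int_\T v(D^{m-1}v)^2\,dx \leq c\,\Sob{D^{m+1}v}{L^2}^r\Sob{v}{L^2}^{3-r}$ — wait, this produces $D^{m+1}$, not $D^m$. I would instead apply the proposition with $k = m-2$ is impossible since the monomial involves $D^{m-1}v$; the correct move is to bound $v(D^{m-1}v)^2$ directly by $\Sob{v}{L^\infty}\Sob{D^{m-1}v}{L^2}^2$ and interpolate $\Sob{D^{m-1}v}{L^2}$ between $\Sob{D^mv}{L^2}$ and $\Sob{v}{L^2}$, and $\Sob{v}{L^\infty}$ likewise, obtaining an exponent on $\Sob{D^mv}{L^2}$ strictly less than $2$ (one computes it equals $2 - \tfrac{2}{m} < 2$) times a power of $\Sob{v}{L^2}$. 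For $\int Q_m(D_{m-2}v)$, each monomial $\pi \in \Ron_{m-2,m+2}$ has rank $m+2 = (m-2)+4$, so \cref{prop:mono:intp:bnd} with $k = m-2$ gives $\Sob{\pi(D_{m-2}v)}{L^1} \leq c\,\Sob{D^mv}{L^2}^{r}\Sob{v}{L^2}^{|\be|-r}$ with $r \in [0,2)$ — exactly the $D^m$ we want. Summing over the finitely many such monomials, all lower-order terms are bounded by $c\sum_j \Sob{D^mv}{L^2}^{r_j}\Sob{v}{L^2}^{s_j}$ with each $r_j < 2$ and $s_j \geq 0$.

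Then I would invoke Young's inequality in the form $a^{r_j} b^{s_j} \leq \eta\, a^2 + C_\eta\, b^{2s_j/(2-r_j)}$ (valid since $r_j < 2$) with $a = \Sob{D^mv}{L^2}$, $b = \Sob{v}{L^2}$, choosing $\eta$ small enough (depending only on $m$ and the finitely many coefficients) that the sum of the $\eta a^2$ terms is at most $\tfrac12\|D^mv\|_{L^2}^2$. This leaves a finite sum of powers $\Sob{v}{L^2}^{p_j}$; taking $\bar q_m$ to be any integer exceeding $\tfrac12\max_j p_j$ ensures each such power is $\leq C(\|v\|_{L^2}^2+1)^{\bar q_m}$, and then taking $\abarm$ large enough absorbs the total constant $C$ into $\tfrac12\abarm(\|v\|_{L^2}^2+1)^{\bar q_m}$. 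This yields both inequalities in \eqref{eq:Imp:equi} simultaneously. The cases $m = 0, 1$ are trivial ($Q_m \equiv 0$ and for $m=1$ only the cubic term $\tfrac13 v^3$ appears, handled identically via $\Sob{v}{L^3}^3 \leq c\Sob{D v}{L^2}^{1/2}\Sob{v}{L^2}^{5/2}$, exponent $1/2 < 2$).

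The main obstacle is purely bookkeeping: confirming that every interpolation exponent on $\Sob{D^mv}{L^2}$ arising from the lower-order terms is \emph{strictly} less than $2$, uniformly, so that Young's inequality can be applied to absorb those terms into the leading $\|D^mv\|_{L^2}^2$. This is precisely where the rank constraint $\rank = m+2 \leq (m-2)+4$ is used — it is exactly the borderline hypothesis $\rank(\pi_\be) \leq k+4$ of \cref{prop:mono:intp:bnd}, which guarantees $r < 2$. Once that is in hand, the choice of $\abarm, \bar q_m$ is routine.
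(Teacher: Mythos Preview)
Your proposal is correct and follows essentially the same approach as the paper: apply \cref{prop:mono:intp:bnd} with $k=m-2$ to the $Q_m$ terms, bound $\int v(D^{m-1}v)^2$ directly via $\|v\|_{L^\infty}\|D^{m-1}v\|_{L^2}^2$ and interpolation (the paper obtains exponent $(4m-3)/(2m)$ rather than your $2-2/m$, but both are $<2$), then use Young's inequality and choose $\bar q_m,\abarm$ large. One small slip: in your display for $\In_m(v)-E(v)$ you should have written $\In_m^+(v)-E(v)$, in which case the $\abarm(\|v\|_{L^2}^2+1)^{\bar q_m}$ term simply cancels and never needs to be ``discarded.''
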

\begin{proof}
   When $m=0$, there is nothing to show. 
   {{}
   Turning to the case $m\geq1$,
  we have as an immediate consequence of \cref{prop:mono:intp:bnd} and
  Young's inequality that
  \begin{align*}
     \left| \int_{\T}Q_{m}(D_{m-2}v)dx
          \right|
 &\leq \frac{1}{8}\|D^{m} v\|^2_{L^2} + c( \|v\|_{L^2}^2+1)^{q}. 
   \end{align*}
  Similarly, by H\"older's inequality and  interpolation, we estimate
  \begin{align*}
   \left| \int_{\T}v (D^{m-1}v)^2dx\right|\leq \Sob{v}{L^\infty}\Sob{D^{m-1}v}{L^2}^2\leq c\Sob{D^mv}{L^2}^\frac{4m-3}{2m}\Sob{v}{L^2}^{\frac{2m+3}{2m}}.
  \end{align*}
  It follows, upon another application of Young's inequality, that
    \begin{align*}
      \left| \int_{\T}\left(\al_{m}v (D^{m-1}v)^2 - 2Q_{m}(D_{m-2}v) \right)dx\right|\leq\frac{1}{2}\|D^{m} v\|^2_{L^2} + c( \|v\|_{L^2}^2+1)^{q}. 
    \end{align*}
}
   Thus upon recalling the definition of \eqref{energy:funct}, we have
  \begin{align*}
   \frac{1}{2} \| D^m v\|^2_{L^2} -  c (\|v\|_{L^2}^{2}+1)^{q} 
    \leq \In_m(v) \leq \frac{3}{2} \| D^m v\|^2_{L^2} + c(\|v\|_{L^2}^{2}+1)^{q}.
  \end{align*}
  Adding ${\abarm}(\Sob{v}{L^2}^{2}+1)^{\bar{q}_m}$, for any
  appropriately large $\abarm$ and $\bar{q}_m$ across the chain of
  inequalities produces the desired result.
\end{proof}

\subsection{Well Posedness of Solutions and 
  the Markovian Setting}
  \label{sec:well:pose}

We now turn to state the well-posedness of
\eqref{eq:s:KdV} and then introduce the associated Markovian
framework upon which our analysis is founded. 
We refer the reader to
to \cref{sect:apx:wp:SKdV} for proof of the following result.

\begin{Prop}\label{prop:exist:uniq}
  Let $m\geq2$ and $\gam>0$ and fix a stochastic basis
  $\mathcal{S} =(\Omega, \mathcal{F}, \{\mathcal{F}_t\}_{t \geq 0},
  \Prb, \{W_k\}_{k\geq1})$.
  Then, for any $f\in H^m$, $\sigma \in \bH^{m}$ and $u_0\in H^m$,
  relative to this stochastic basis $\mathcal{S}$: 
  \begin{itemize}
  \item[(i)] There exists a random process
    $u(\cdot) = u(\cdot;u_0,f, \sigma)$ such that
    $u \in C([0,\infty);H^m)$ which is $\mathcal{F}_t$-adapted and
    solves the initial value problem \eqref{eq:s:KdV} in the usual
    It\=o sense.\footnote{When $m = 2$, \eqref{eq:s:KdV} is defined in
      the usual weak sense against test functions. On the other hand,
      for $m \geq 3$, \eqref{eq:s:KdV} can be made sense of pointwise
      for almost every $x \in \TT$.}
  \item[(ii)] Solutions to \eqref{eq:s:KdV} are pathwise unique,
  namely, given two solutions $u$ and $\tilde{u}$ corresponding to the
  same data $u_0, f$ and $\sigma$ we have
  \begin{align*}
    \Prb(u(t) = \tilde{u}(t) \text{ for every } t \geq 0) = 1.
  \end{align*}
\item[(iii)] Solutions to \eqref{eq:s:KdV} are continuous with respect to the initial data and forcing terms, that is
  for any fixed $t \geq 0$,
    \begin{align}
      u(t; u_0^{(n)}, f^{(n)}) \to u(t; u_0, f) 
      \quad \text{ almost surely in } H^m \text{ as } n \to \infty,
      \label{eq:cont:dep}
    \end{align}
    whenever $(u_0^{(n)},f^{(n)})  \to (u_0,f)$ in $(H^m)^2$ as $n \to \infty$.
  \end{itemize}
\end{Prop}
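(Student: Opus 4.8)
The final statement to prove is Proposition~\ref{prop:exist:uniq}, the global well-posedness, pathwise uniqueness, and continuous dependence for the stochastically-driven, weakly damped KdV equation \eqref{eq:s:KdV} in $H^m$, $m \geq 2$. Let me sketch a proof plan.

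\medskip

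\textbf{Plan of proof.} The strategy is to reduce the stochastic equation to a random PDE by subtracting off the stochastic convolution, and then to run a Galerkin/energy-method argument exploiting the modified integrals of motion $\In_m^+$ established in \cref{lem:equivalence}.

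\emph{Step 1: Reduction to a random PDE.} Let $z(t) := \int_0^t e^{-\gamma(t-s)} \sigma\, dW(s)$ be the stationary Ornstein--Uhlenbeck process solving $dz + \gamma z\, dt = \sigma\, dW$, $z(0) = 0$. Since $\sigma \in \bH^m$, standard factorization/regularity arguments for the stochastic convolution (see e.g.\ \cite{da2014stochastic}) show $z \in C([0,\infty); H^m)$ almost surely, with moments of $\sup_{[0,T]}\|z(t)\|_{H^m}$ of all orders. Writing $u = v + z$, the unknown $v$ solves the random PDE
\begin{align*}
  \partial_t v + (v+z)D(v+z) + D^3 v + \gamma v = f, \qquad v(0) = u_0,
\end{align*}
which now has continuous-in-time (indeed $C^1$ away from $t=0$ after the convolution regularity) coefficients and can be treated pathwise, for each fixed $\omega$, by deterministic PDE techniques.

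\emph{Step 2: Local existence via Galerkin approximation.} Project onto Fourier modes $|k| \leq \ell$ to get a finite-dimensional ODE system for $v_\ell = P_\ell v$, which has local solutions by Cauchy--Lipschitz. Derive a priori bounds uniform in $\ell$ on a (random) time interval: test the Galerkin equation against the variational derivative of $\In_m^+(v_\ell)$ — equivalently, reproduce the computation \eqref{I:J:relationship}--\eqref{dispersive:id} for the damped-forced-perturbed equation. The dispersive cubic terms cancel by the defining identity \eqref{dispersive:id}; the damping contributes $-2\gamma \In_m^+$ plus lower-order corrections; the terms involving $z$, $f$, and the cross-interactions between $v$ and $z$ are estimated using \cref{prop:mono:intp:bnd} and \cref{lem:equivalence}, which give control of these lower-order contributions by $\In_m^+(v_\ell)$ to a subcritical power times powers of $\|v_\ell\|_{L^2}$ and of $\|z\|_{H^m}$, $\|f\|_{H^m}$. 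An $L^2$ energy estimate controls $\|v_\ell\|_{L^2}$ separately. A differential-inequality/bootstrap argument then yields a uniform-in-$\ell$ bound on $\sup_{[0,T]}\In_m^+(v_\ell)$ for any fixed $T$ (the Lyapunov structure of \cref{sect:Lyapunov} in fact shows these bounds are global, but even crude bounds suffice here). Passing to the limit $\ell \to \infty$ via Aubin--Lions compactness (using the equation to bound $\partial_t v_\ell$ in a negative-order space) produces a solution $v \in C([0,\infty);H^m)$; then $u = v+z \in C([0,\infty);H^m)$ is $\mathcal{F}_t$-adapted (as a limit of adapted processes) and solves \eqref{eq:s:KdV} in the Itô sense, giving (i).

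\emph{Step 3: Pathwise uniqueness and continuous dependence.} For (ii) and (iii), take two solutions $u_1 = v_1 + z$, $u_2 = v_2 + z$ with data $(u_0^{(1)},f^{(1)})$, $(u_0^{(2)},f^{(2)})$ (same noise), and set $w = v_1 - v_2$. Then $w$ satisfies a linear-in-$w$ equation with coefficients depending on $v_1,v_2,z$; perform an $H^1$ energy estimate on $w$ (the $H^1$ level being the natural one where the nonlinear commutator terms are controllable: $D^3$ is antisymmetric and contributes nothing, and the nonlinear terms $\langle D(u_1 w), w\rangle_{H^1}$ etc.\ are bounded by $C(\|u_1\|_{H^2} + \|u_2\|_{H^2} + \|z\|_{H^2})\|w\|_{H^1}^2$ after integration by parts). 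A Grönwall inequality in $\|w(t)\|_{H^1}^2$ with the random, locally integrable coefficient $C(t) := C(\|u_1(t)\|_{H^2} + \|u_2(t)\|_{H^2})$ gives $\|w(t)\|_{H^1}^2 \leq e^{\int_0^t C(s)\,ds}(\|u_0^{(1)}-u_0^{(2)}\|_{H^1}^2 + t\sup_{[0,t]}\|f^{(1)}-f^{(2)}\|_{H^1}^2)$. This immediately yields pathwise uniqueness when the data agree, proving (ii). For (iii): the $H^1$ estimate gives convergence $u(t;u_0^{(n)},f^{(n)}) \to u(t;u_0,f)$ in $H^1$; combined with the uniform-in-$n$ bound on $\sup_{[0,T]}\|u(\cdot;u_0^{(n)},f^{(n)})\|_{H^m}$ from Step 2 (using that $u_0^{(n)} \to u_0$ in $H^m$, $f^{(n)} \to f$ in $H^m$ are bounded sequences) and interpolation, one upgrades the convergence to $H^{m'}$ for every $m' < m$, and then a standard argument (weak-$H^m$ convergence plus convergence of $\In_m^+$ along the limit, or a direct $H^m$ energy estimate on the difference at the top order combined with the already-obtained lower-order convergence) promotes this to convergence in $H^m$, establishing \eqref{eq:cont:dep}.

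\medskip

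\textbf{Main obstacle.} The technical heart is the a priori estimate in Step 2: one must reproduce the cancellation \eqref{dispersive:id} for the \emph{perturbed} equation and carefully bound \emph{all} the new terms generated by the damping, forcing, and — especially — the coupling between $v$ and the OU process $z$ (terms like $\langle L_m(v), z D v\rangle$, $\langle L_m(v), v Dz\rangle$, $\langle L_m(v), zDz\rangle$), showing each is absorbable by a subcritical power of $\In_m^+(v)$ plus quantities depending only on the data and on $z$. This is precisely where the rank structure of the $Q_m$'s and \cref{prop:mono:intp:bnd} are indispensable — naive interpolation would push exponents to criticality and break the closure of the estimate. The continuous-dependence statement in $H^m$ (as opposed to merely $H^1$) is a secondary subtlety requiring an interpolation-plus-top-order argument.
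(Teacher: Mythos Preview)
Your broad strategy --- subtract off the noise to reduce to a random PDE, then run energy estimates pathwise --- matches the paper's. But two technical choices differ from the paper's and each hides a genuine obstacle.

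\textbf{First, the regularity of $z$.} With your Ornstein--Uhlenbeck process $z$ solving $dz + \gamma z\,dt = \sigma\,dW$, the equation for $v = u - z$ reads
\[
\partial_t v + (v+z)D(v+z) + D^3 v + \gamma v = f - D^3 z,
\]
so the right-hand side contains $D^3 z$. With only $\sigma \in \bH^m$ you get $z \in C([0,\infty);H^m)$ and no better, so $D^3 z \in H^{m-3}$. When you pair this against the top-order part $D^{2m}v$ of $L_m(v)$, or equivalently against $D^m v$ in a direct $H^m$ energy estimate, you need $D^{m+3}z \in L^2$ --- there is no integration by parts that avoids this, since $2m+3$ derivatives must be split between $v$ and $z$ and you only control $m$ of each. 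The paper confronts the same issue (it uses $\xi = \sigma W$ rather than OU, but the $D^3\xi$ term appears identically in its forcing $\eta$) and resolves it in two stages: first it proves everything under the stronger hypothesis $\sigma \in \bH^{m+3}$, and then in a final paragraph it approximates $\sigma \in \bH^m$ by smooth $\sigma^\varepsilon$ and closes the density argument using the \emph{It\^o-based} Lyapunov bounds of \cref{sect:Lyapunov}, which are carried out directly on $u$ and require only $\sigma \in \bH^m$. Your plan is missing this step.

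\textbf{Second, Galerkin versus parabolic regularization.} The paper does not pass to the limit via Galerkin/Aubin--Lions compactness. Instead it adds a fourth-order parabolic term $\varepsilon D^4 v^\varepsilon$ and proves the family $\{v^\varepsilon\}$ is \emph{Cauchy} in $C([0,T];H^m)$ as $\varepsilon \to 0$. The reason is that the $H^m$ difference estimate for $w = v^\varepsilon - v^{\varepsilon'}$ throws up a term of the form $\langle D^m w,\, (D^{m+1}v^\varepsilon)\,w\rangle$, which requires control of $\|v^\varepsilon\|_{H^{m+1}}$ --- one derivative beyond the a priori bound. The parabolic term supplies exactly this: $\varepsilon^{1/2}\|v^\varepsilon\|_{L^2_T H^{m+2}}$ is bounded, and the loss in $\varepsilon$ is compensated by the smallness of $\|w\|_{L^2}$ and $\|w\|_{H^1}$ (established first) as powers of $\varepsilon$. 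Aubin--Lions from a Galerkin sequence bounded only in $L^\infty_T H^m$ gives strong convergence in $C([0,T];H^{m'})$ for $m' < m$, not in $H^m$ itself; your ``standard argument'' to promote to $H^m$ would run into the same missing-derivative problem. Similarly, in your Step~3 the direct $H^m$ energy estimate on the difference produces $\langle D^m w, (D^{m+1}q)\,w\rangle$ with $q = (u_1+u_2)/2$, which again asks for $H^{m+1}$ control you do not have. The paper's route around this for continuous dependence (Proposition~A.8) is a triangle-inequality argument through the parabolic approximants, using an $\varepsilon$-dependent stability estimate, rather than a direct top-order bound on the difference of two limiting solutions.
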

\noindent Note that when the context is clear in what follows we will
often drop the dependence on $f \in H^m$ and $\sigma \in \bH^m$
writing $u(t) = u(t; u_0)$.

\cref{prop:exist:uniq} provides all the ingredients necessary to place
\eqref{eq:s:KdV} in a Markovian framework; we refer the reader to e.g.
\cite{DaPratoZabczyk1996,KuksinShirikyan2012} 
for more generalities around this formalism.  For fixed $f \in H^m$
and $\sigma \in \bH^m$ the Markov kernel
\begin{align}
      P_t(u_0,A) = \Prb( u(t;u_0,f, \sigma) \in  A),\ \
      \text{ for any}\ t \geq 0, \quad u_0 \in H^m,\quad  A \in \mathcal{B}(H^m),
  \label{eq:mark:kernel}
\end{align}
where $\mathcal{B}(H^m)$ is the Borel $\sigma$-algebra on $H^m$.
This kernel defines a semigroup  $\{P_t\}_{t \geq 0}$ that acts on
observables $\phi: H^m \to \RR$ which are bounded and measurable
as
\begin{align}
  P_t \phi(u_0) := \int_{H^m} \phi(v) P_t(u_0, dv)
  \quad \text{ for any } u_0 \in H^m
  \label{eq:mt:obs:act}
\end{align}
and on Borel probability measures $\mu \in\Pr(H^m)$ as
\begin{align}
  \mu P_t(A) := \int_{H^m}  P_t(u_0, A)\mu(du_0),
  \quad \text{ for any }  A \in \mathcal{B}(H^m).
    \label{eq:mt:msr:act}
\end{align}
Here, we say that $\mu \in Pr(H^m)$ is an \emph{invariant measure}
for $\{P_t\}_{t \geq 0}$ if $\mu P_t = \mu$ for every $t \geq 0$.
Note that in view of \eqref{eq:cont:dep}, it is easy to see that
$\{P_t\}_{t \geq 0}$ is \emph{Feller} on $H^m$ in that
$P_t : C_b(H^m) \to C_b(H^m)$ where $C_b(H^m)$ represents the
continuous bounded functions on $H^m$.

\section{Lyapunov Estimates and
  Existence of Invariant Measures}
\label{sect:Lyapunov}

This section establishes the higher order Lyapunov bounds which we
overviewed in \cref{thm:Lyapunov:intro}.  These bounds will provide
the foundation for all of our subsequent analysis. As an immediate
application, we will infer the existence of invariant measures for
\eqref{eq:s:KdV}.

Note that in what follows it will often be convenient to work in terms
of the functionals native to \eqref{eq:s:KdV} that were introduced in
\cref{sect:Integrals} rather in terms of Sobolev norms directly.  We
recall that for each $m \geq 0$, we defined the energy functionals
$\In_m^+$ in \eqref{eq:mod:int:m:simple} which are defined by a
modification of the KdV invariants $\In_m$ specified as in
\eqref{energy:funct}.  We remind the reader that
\cref{lem:equivalence} demonstrates that (for suitably large values of
the parameters ${\bar{q}_m}, \abarm$) these functionals are
comparable to Sobolev norms modulo a sufficiently large power
$\bar{q}_m > 0$ of the $L^2$ norm, namely,
\begin{align*}
  \In_{m}^+(v)\sim \Sob{v}{H^m}^2+(\Sob{v}{L^2}^2+1)^{\bar{q}_m}.
\end{align*}
This allows one to think of $\In_m^+$ as representative of the
corresponding Sobolev norm.  Indeed, we immediately deduce
\cref{thm:Lyapunov:intro} from the following theorem by further
invoking \cref{lem:equivalence}.  Recall the notation $a\vee b:=\max\{a,b\}$, for real numbers $a,b$.

\begin{Thm}
  \label{thm:Lyapunov}
  Let $m \geq 0$ and suppose that $f \in H^{m\vee 2}$ and
  $\sigma \in \bH^{m\vee 2}$. Given $u_0\in H^{m\vee2}$, let
  $u = u(u_0, f, \sigma)$ denote the corresponding solution of
  \eqref{eq:s:KdV} with $\gam>0$ as in \cref{prop:exist:uniq}. There
  exists an ${\abarm}, \bar{q}_m \geq 1$ defining $\In_m^+$ as in
  \eqref{eq:mod:int:m:simple} sufficiently large, depending only on
  $m$, such that the following bounds hold:
  \begin{itemize}
  \item[(i)]
     For any $p > 0$
  \begin{align}
    \E\In_m^+(u(t))^p&\leq C(e^{-\gam pt}\In_m^+(u_0)^p+1),
    \label{eq:Lyapunov:Bnd}
  \end{align}
  for all $t\geq0$, where
  $C = C(m, p, \gamma, \|f\|_{H^m}, \|\sigma\|_{H^m}, \bar{q}_m,
  \abarm)$.
\item[(ii)] 
   For any $p>0$,  $q > 2$
    \begin{align}\label{eq:poly:est:asymptotic}
      \Prb\left(\sup_{t\geq T}\left(\In_m^+(u(t))^p
      +\gam p\int_0^t\In_m^+(u(s))^pds -C(t + 1)\right)\geq \In_m^+(u_0)^p
      +R\right)\leq  \frac{C\In_m^+(u_0)^{pq}}{(T+R)^{q/2-1}},
    \end{align}
    for all $R > 0$ and $T\geq0$, where
    $C = C(m, p, q, \gamma, \|f\|_{H^m}, \|\sigma\|_{H^m}, \bar{q}_m,
    \abarm)$ is independent of $T, R$.
  \item[(iii)]
    There exists an exponent
    $p_0 := p_0(m, \gamma, \|f\|_{H^m}, \|\sigma\|_{H^m}, \bar{q}_m)$ with
    $0<p_0\leq 1$ such that for all $p\in(0,p_0]$ and for any $\bar{\eta} > 0$
    \begin{align}\label{eq:exp:est:asymptotic}
      \Prb\left(\sup_{t\geq0}\left(\In_m^+(u(t))^p+ \gam p \int_0^t\In_m^+(u(s))^pds-C t\right)
              \geq \In_m^+(u_0)^p+R\right)
      \leq {{}2}\exp\left(-\bar{\eta} R\right),
    \end{align}
    for every $R > 0$ where
    $C =C (\bar{\eta}, p, m,\gam,\Sob{f}{H^m},\Sob{\s}{H^m}, \bar{q}_m,
    \abarm)$.  Moreover, in the special case when $m = 0$,
    we have
    \begin{align}\label{eq:exp:est:asymptotic:L2}
      \! \! \! \Prb\biggl(\sup_{t\geq0}\biggl(\|u(t)\|^2_{L^2}
      + \gam \int_0^t \|u(s)\|^2_{L^2}ds
      - \biggl(\frac{2}{\gam}{{}\|f \|_{L^2}^2}+ \|\sigma\|^2_{L^2}\biggr)t \biggr)
        \geq& {{}\|u_0\|_{L^2}^2}+R\biggr) \leq {{}2}e^{-\frac{1}{4} \gamma R \|\sigma\|^{-2}_{L^2}},
    \end{align}
    for every $R > 0$.
  \item[(iv)] Finally, for each $p\in(0,p_0]$, where $p_0$ is the
    threshold identified in $(iii)$, and for any $\eta > 0$,
  \begin{align}
    \label{eq:Lyapunov:Bnd:exp}
    &\E\exp\left(\eta\sup_{t\in[0,T]}
      \left(\In_m^+(u(t))^p+ \gam p\int_0^t\In_m^+(u(s))^pds\right)\right)
      \leq  \exp\left(\eta\In_m^+(u_0)^p + C(T +1))\right),
    \end{align}
    for all $T\geq0$ where again $C =C (\eta, p, m,\gam,\Sob{f}{H^m},\Sob{\s}{H^m}, \bar{q}_m,
    \abarm)$ but is independent of $T \geq 0$.  Moreover for any such $p \in [0,p_0)$
    \begin{align}
    \label{eq:Lyapunov:Bnd:subquadexp}
     \E \exp\left(\eta \In_m^+(u(T))^p\right)\leq C\exp\left(\eta e^{-\gam p T}\In_m^+(u_0)^p\right),
  \end{align}
  for all $T \geq 0$, where again
  $C =C (\eta, p, m,\gam,\Sob{f}{H^m},\Sob{\s}{H^m}, \bar{q}_m,
  {{}\abarm})$ but is independent of $T \geq 0$.
  \end{itemize}
\end{Thm}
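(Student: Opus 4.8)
\textbf{Proof Strategy for \cref{thm:Lyapunov}.}

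The plan is to work primarily with the modified functionals $\In_m^+$ and to derive an It\^o differential inequality of the form
\begin{align*}
  d\,\In_m^+(u) \leq \bigl(-\gam\, g(\In_m^+(u)) + C\bigr)\,dt + dM_t,
\end{align*}
where $g$ is essentially linear (i.e. $g(x)\gtrsim x$ modulo lower-order corrections) and $M_t$ is a martingale whose quadratic variation is controlled by $\In_m^+(u)$ together with constants depending on $\|\sigma\|_{H^m}$. The starting point is the It\^o formula applied to $\In_m(u(t))$ along solutions of \eqref{eq:s:KdV}. The drift splits into three pieces: the Hamiltonian/KdV part $-\lb L_m(u), uDu + D^3 u\rb$, which vanishes identically by the defining identity \eqref{dispersive:id} (this is the crucial algebraic input from \cref{thm:KdV:Poly:Rank}); the damping contribution $-\gam\lb L_m(u), u\rb$, which to leading order gives $-2\gam\|D^m u\|_{L^2}^2$ and hence the dissipative term; and the forcing/It\^o-correction terms $\lb L_m(u), f\rb + \tfrac12\sum_k \lb L_m''(u)\sigma_k,\sigma_k\rb$ plus lower-order damping terms coming from $Q_m$. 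The first key step is to show, using \cref{prop:mono:intp:bnd} to bound every monomial appearing in $L_m$, $Q_m$ and their derivatives, that all the ``error'' terms are dominated by $\tfrac{\gam}{2}\|D^m u\|_{L^2}^2 + C(\|u\|_{L^2}^2+1)^{q}$ for a suitable power $q$, with the interpolation exponent on the top-order norm being \emph{strictly less than $2$} thanks to the rank constraint. It is here that one must choose $\bar q_m$ large enough (at least $q$) and $\abarm$ large enough so that the $L^2$-power added in \eqref{eq:mod:int:m:simple} both positivizes $\In_m^+$ (this is exactly \cref{lem:equivalence}) and absorbs these lower-order contributions; one also needs the $m=0$ base case $d\|u\|_{L^2}^2 = (-2\gam\|u\|_{L^2}^2 + 2\lb u,f\rb + \|\sigma\|_{L^2}^2)dt + 2\lb u,\sigma\, dW\rb$ to feed control of the $L^2$ powers appearing in the higher-order estimates.

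Once the scalar It\^o inequality is in hand, the four parts follow by fairly standard stochastic ODE techniques applied to $X(t) := \In_m^+(u(t))$.

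\textbf{Part (i).} Raise to the $p$-th power via It\^o again (or use that $x\mapsto x^p$ composed with the inequality still yields $dX^p \leq (-\gam p\, X^p + C\,X^{p-1} + \dots)dt + dM_t$, then Young's inequality to re-absorb $X^{p-1}$), take expectations to kill the martingale, and apply a Gr\"onwall/Duhamel argument with the integrating factor $e^{\gam p t}$; this gives \eqref{eq:Lyapunov:Bnd}. Some care is needed to justify that the local martingale is a true martingale — this is handled by a localization/stopping-time argument together with the a priori pathwise bounds from \cref{prop:exist:uniq}, then Fatou.

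\textbf{Parts (ii) and (iii).} These are supremum-in-time tail bounds. Write
\begin{align*}
  X^p(t) + \gam p\!\int_0^t\! X^p(s)\,ds - X^p(0) - Ct \leq M_t,
\end{align*}
where $M_t$ is a martingale with $d\lb M\rb_t \lesssim X^{2p}(t)\,dt$ or, after using the dissipation, $d\lb M\rb_t \lesssim (X^p(t) + 1)\,dt$ times $\|\sigma\|^2$-type constants. For (ii), estimate the running supremum of the martingale using the Burkholder--Davis--Gundy and Doob inequalities together with the moment bound from (i): $\E\sup_{t\ge T}(\cdots) \lesssim (\E\sup |M_t|^q)^{1/q}$-type reasoning gives the $(T+R)^{-(q/2-1)}$ decay (the $T$ in the denominator comes from the $\gam p\int_0^t X^p$ term contributing extra decay, or from restarting the estimate at time $T$). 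For (iii), one needs the stronger exponential tail, which forces $p\le p_0$: when $p$ is small enough the martingale $M_t$ has a quadratic variation that is \emph{linearly} controlled by the very quantity $\int_0^t X^p$ appearing on the left, so an exponential supermartingale argument — consider $\exp(\bar\eta M_t - \tfrac{\bar\eta^2}{2}\lb M\rb_t)$ and absorb $\lb M\rb_t$ into the dissipation term — produces the bound $2\exp(-\bar\eta R)$ by the standard exponential-martingale maximal inequality ($\Prb(\sup_t (\bar\eta M_t - \tfrac{\bar\eta^2}2\lb M\rb_t) \ge \bar\eta R)\le e^{-\bar\eta R}$). The explicit $m=0$ formula \eqref{eq:exp:est:asymptotic:L2} is just this argument carried out with the exact constants from the $L^2$ It\^o identity, where $\lb M\rb_t = 4\int_0^t\|\sigma\|_{L^2}^2\|u(s)\|_{L^2}^2\,ds$ — no interpolation needed. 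The threshold $p_0$ is precisely the largest $p$ for which $X^{2p-1}\lesssim X^p + 1$, i.e. $p\le 1$, combined with whatever smallness is needed to make the exponential supermartingale computation close (the $\bar\eta$-dependence of $C$ reflects that).

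\textbf{Part (iv).} The bound \eqref{eq:Lyapunov:Bnd:exp} follows by integrating the tail estimate \eqref{eq:exp:est:asymptotic} (i.e. $\E e^{\eta Z} = \int_0^\infty \eta e^{\eta R}\Prb(Z\ge R)\,dR$ plus the $e^{-\bar\eta R}$ tail with $\bar\eta > \eta$), or more directly from the exponential supermartingale itself. Finally \eqref{eq:Lyapunov:Bnd:subquadexp}, the sub-quadratic exponential decay estimate, combines (iv) or (i) with the decay factor: from $dX^p \le (-\gam p X^p + C)dt + dM_t$ one shows $\E\exp(\eta e^{\gam p(t-T)} X^p(t))$ is a supermartingale-type quantity on $[0,T]$ (here the strict inequality $p < p_0$ buys the room to absorb the $\lb M\rb$ term against the time-dependent exponent), evaluate at $t=0$ and $t=T$ to get \eqref{eq:Lyapunov:Bnd:subquadexp}; this is the estimate that directly yields \cref{thm:Lyapunov:intro}.

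\textbf{Main obstacle.} The heart of the matter — and the step I expect to be most delicate — is the first one: showing that \emph{all} lower-order terms generated by the It\^o formula (both the damping terms hitting $Q_m$ and, especially, the It\^o corrections $\lb L_m''(u)\sigma_k,\sigma_k\rb$, which can involve up to $2m$ derivatives distributed between $u$ and the smooth $\sigma_k$'s) can be interpolated so that the top-order Sobolev norm appears with exponent strictly below $2$ and the residual is a bounded power of $\|u\|_{L^2}$. The rank bookkeeping in \cref{prop:mono:intp:bnd} is exactly what makes this work, but one must check that differentiating $Q_m$ (which lowers rank, per the remark after \eqref{def:Ronkn}) and pairing against the smooth noise keeps every resulting monomial within the $\rank \le k+4$ regime of \cref{prop:mono:intp:bnd}. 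Getting the powers $\bar q_m$ and the constants $\abarm$ chosen consistently across all these estimates — large enough to absorb everything yet still finite — is the bulk of the real work; everything downstream is then a more-or-less mechanical application of martingale inequalities.
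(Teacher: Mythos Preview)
Your proposal is correct and follows essentially the same route as the paper: derive the It\^o evolution for $\In_m^+(u)^p$ (the paper does this in Lemmas~3.3--3.5), use the rank-based interpolation of \cref{prop:mono:intp:bnd} to bound every drift and It\^o-correction term by $\tfrac{\gamma}{100}\In_m^+(u)+C$ (the paper's \eqref{eq:Kn:summary}, \eqref{eq:KnSplus:squared}), then apply Gr\"onwall for (i), BDG with a dyadic-in-time decomposition for (ii), and the exponential martingale inequality for (iii)--(iv).

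One small point worth sharpening: for part~(iii) the paper does \emph{not} absorb the quadratic variation into the dissipation integral $\int_0^t X^p$. Instead it proves the finer bound $|\Kn_m^{S}|\le C(\In_m^+)^\delta$ for some $\delta<1$ (your ``$X^{2p-1}\lesssim X^p+1$'' heuristic corresponds to $\delta=1/2$), so that $[\Mt](t)\lesssim\int_0^t(\In_m^+)^{2p-2+2\delta}\,ds\le Ct$ whenever $p\le p_0:=1-\delta$; the quadratic variation is then just a linear-in-$t$ constant and drops out without touching the $\gamma p\int X^p$ term. Your absorption idea would also work (with a slightly different $p_0$), but the paper's direct $[\Mt](t)\le Ct$ bound is what makes the constants in \eqref{eq:exp:est:asymptotic} and the subsequent \eqref{eq:Lyapunov:Bnd:subquadexp} clean. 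Also, your worry about ``up to $2m$ derivatives'' in the It\^o correction is overstated: the second Fr\'echet derivative of $\In_m$ (not $L_m$) puts at most $m$ derivatives on $\sigma$ and at most $m-2$ on $u$ via the Hessian of $Q_m$, so \cref{prop:mono:intp:bnd} applies directly.
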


{{}
From \cref{thm:Lyapunov}, we immediately deduce \cref{thm:Lyapunov:intro} from the introduction.

\begin{proof}[Proof of \cref{thm:Lyapunov:intro}]
By \cref{lem:equivalence} and \cref{thm:Lyapunov} (iv) we see that
\begin{align}
    \E\exp\left(\eta \|u(t)\|_{H^m}^{r}\right)&\leq C\E\exp\left(\eta \In_m^+(u(t))^{r/2}\right)\notag\\
    &\leq C\exp\left(\eta e^{-\frac{\gam r}2 t}\In_m^+(u_0)^{r/2}\right)\leq C\exp\left(\eta e^{-\frac{\gam r}2 t}\left(\Sob{u_0}{H^m}^r+\Sob{u_0}{L^2}^{r'}+1\right)\right),\notag
\end{align}
for some $r'\geq1$, as desired.
\end{proof}

The proof of \cref{thm:Lyapunov} is given in \cref{sect:Poly:bnd}
after we take the preliminary step of establishing evolutions for the
functionals $\In_m^+$ in \cref{sect:time:evol}. Before turning to
these details let us also show an application of
\cref{thm:Lyapunov} to establishing the existence of an invariant
measure for the Markov semigroup $\{P_t\}_{t\geq0}$ corresponding to
\eqref{eq:s:KdV}.  Note that this statement is precisely \cref{thm:exist:intro} from the introduction; its proof is elementary and appeals to the
classical Krylov-Bogolyubov procedure.
}

\begin{Cor}\label{thm:existence}
  Let $m\geq2$, $f\in H^{m+1}$, and $\s\in\bH^{m+1}$. Let
  $\{P_t\}_{t\geq0}$ be the Markov semigroup corresponding to
  \eqref{eq:s:KdV} on $H^{m}$. Then there exists $\mu\in\Pr(H^{m})$
  such that $\mu P_t=\mu$ for every $t > 0$.
\end{Cor}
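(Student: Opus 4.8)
This is the classical Krylov--Bogolyubov construction, and the only substantive ingredient is the uniform-in-time moment bound already contained in \cref{thm:Lyapunov}. The plan is as follows. First I would fix an initial datum, say $u_0 = 0 \in H^{m+1}$, and for $T > 0$ define the time-averaged measures
\begin{align*}
  \mu_T(A) := \frac{1}{T}\int_0^T P_t(u_0, A)\, dt, \qquad A \in \mathcal{B}(H^m),
\end{align*}
each of which is a well-defined Borel probability measure on $H^m$. Since $f \in H^{m+1}$ and $\s \in \bH^{m+1}$ with $m+1 \geq 2$, \cref{thm:Lyapunov}(i) applies at regularity level $m+1$; taking $p = 1$ there it furnishes a constant $C = C(u_0, m, \gamma, f, \s)$ with $\E\, \In_{m+1}^+(u(t;u_0)) \leq C$ for all $t \geq 0$, and combining this with the lower bound in \cref{lem:equivalence} gives $\sup_{t \geq 0} \E\, \|u(t;u_0)\|_{H^{m+1}}^2 \leq C < \infty$. (Alternatively one may read this bound directly off \eqref{eq:Lyapunov:intro} in \cref{thm:Lyapunov:intro} applied with $m+1$ in place of $m$.)

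Next I would establish tightness of $\{\mu_T\}_{T > 0}$ on $H^m$. Because we work with mean-zero functions on the compact torus $\TT$, the embedding $H^{m+1} \imb H^m$ is compact, so the balls $B_R := \{v \in H^{m+1} : \|v\|_{H^{m+1}} \leq R\}$ are compact subsets of $H^m$. By Chebyshev's inequality and the moment bound above,
\begin{align*}
  \mu_T(H^m \smod B_R)
  = \frac{1}{T}\int_0^T \Prb\bigl(\|u(t;u_0)\|_{H^{m+1}} > R\bigr)\, dt
  \leq \frac{1}{R^2}\, \sup_{t \geq 0}\E\|u(t;u_0)\|_{H^{m+1}}^2
  \leq \frac{C}{R^2},
\end{align*}
uniformly in $T$. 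Hence $\sup_{T > 0}\mu_T(H^m \smod B_R) \to 0$ as $R \to \infty$, the family $\{\mu_T\}$ is tight, and by Prokhorov's theorem there is a sequence $T_n \to \infty$ along which $\mu_{T_n} \to \mu$ weakly in $\Pr(H^m)$.

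Finally I would verify that $\mu$ is invariant. For any $\phi \in C_b(H^m)$ and any $s > 0$, the semigroup property and Fubini give
\begin{align*}
  \int_{H^m} P_s\phi\, d\mu_{T_n} - \int_{H^m} \phi\, d\mu_{T_n}
  = \frac{1}{T_n}\int_0^{T_n}\bigl(P_{t+s}\phi(u_0) - P_t\phi(u_0)\bigr)\, dt
  = \frac{1}{T_n}\left(\int_{T_n}^{T_n + s} - \int_0^s\right)P_t\phi(u_0)\, dt,
\end{align*}
which is bounded in absolute value by $2s\|\phi\|_{L^\infty}/T_n \to 0$. Since $\{P_t\}_{t\geq0}$ is Feller on $H^m$ (noted immediately after \cref{prop:exist:uniq}, as a consequence of \eqref{eq:cont:dep}), we have $P_s\phi \in C_b(H^m)$, so both $\int P_s\phi\, d\mu_{T_n} \to \int P_s\phi\, d\mu$ and $\int \phi\, d\mu_{T_n} \to \int \phi\, d\mu$; passing to the limit yields $\int P_s\phi\, d\mu = \int \phi\, d\mu$ for every $\phi \in C_b(H^m)$ and every $s > 0$, i.e. $\mu P_s = \mu$, which is the claim.

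\textbf{Main obstacle.} There is essentially no obstacle: the entire content of the statement is the a priori estimate of \cref{thm:Lyapunov}(i) \emph{one regularity level above} the phase space (which is precisely why the hypotheses require $f \in H^{m+1}$, $\s \in \bH^{m+1}$), after which the compactness of $H^{m+1}\imb H^m$ and the already-established Feller property make the Krylov--Bogolyubov argument entirely routine.
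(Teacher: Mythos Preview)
Your proposal is correct and matches the paper's own proof essentially line for line: both start from $u_0=0$, invoke \cref{thm:Lyapunov}(i) at level $m+1$ with $p=1$ together with \cref{lem:equivalence} to obtain $\sup_{t\geq 0}\E\|u(t)\|_{H^{m+1}}^2<\infty$, and then use Chebyshev and the compact embedding $H^{m+1}\imb H^m$ to get tightness of the time-averaged measures before invoking the Feller property and Krylov--Bogolyubov. The only cosmetic difference is that the paper cites the Krylov--Bogolyubov theorem as a black box for the passage from tightness plus Feller to existence of an invariant limit, whereas you spell out the standard telescoping invariance computation explicitly.
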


\begin{proof}
For each $T>0$, define the probability measure
\begin{align}\notag
		\mu_T(A):=\frac{1}T\int_0^TP_t(0,A)dt,\quad A\in\mathcal{B}(H^{m}).
\end{align}
It suffices to show that the family
$\{\mu_T\}_{T\geq1}\subseteq Pr(H^{m})$ is tight. Indeed, since $P_t$ is
Feller in $H^{m}$, it would then follow from the Krylov-Bogolyubov
theorem that $\mu_T$ converges weakly, as $T\goesto\infty$, to some
$\mu\in\Pr(H^{m})$ and, moreover, that $\mu$ is an invariant measure
for $\{P_t\}_{t\geq0}$.

For $\rho>0$, let
$B_\rho:=\{v\in H^{m+1}:\Sob{v}{{H}^{m+1}}\leq \rho\}$. Then
$B_\rho\in\mathcal{B}(H^{m+1})$ and ${B_\rho}$ is pre-compact in
$H^{m}$.  Let $u(t;0)$ denote the unique solution of \eqref{eq:s:KdV}
corresponding to $u_0\equiv0$. Since $ f\in H^{m+1}$, an application
of Chebyshev's inequality, \cref{thm:Lyapunov} $(i)$ specialized to
the case $p=1$, and \cref{lem:equivalence} yields
\begin{align}
  P_t(0,B_\rho^c)\leq \frac{\E\Sob{u(t;0))}{H^{m+1}}^2}{\rho^2}\leq \frac{C}{\rho^2},\notag
\end{align}
for all $t\geq0$, where
$C=C(m,\gam, \Sob{f}{H^{m+1}},\Sob{\s}{H^{m+1}})$. Given $\veps>0$, we
choose $\rho$ such that $\rho^2>C\veps^{-1}$. Then for $T\geq1$, we
have
\begin{align}
  \mu_T(B_{\rho}^c)=\frac{1}T\int_0^TP_t(0,B_{\rho}^c)dt<\veps.\notag
\end{align}
This shows that the family $\{\mu_T\}_{T\geq1}$ is tight in $H^{m}$,
as claimed.
\end{proof}

\subsection{Time Evolution for Functionals of $u$}\label{sect:time:evol}
In this section we provide a precise formulation for the time 
evolution of the integrals of motion $\In_m$ as in \eqref{energy:funct:0:1} and \eqref{energy:funct}
and of various higher order moments. Let us start with the special
case of $m=0$, that is, the $L^2$--norm. This case follows as a direct application of It\=o's
lemma.  

\begin{Lem}\label{lem:L2:evolution}
  Fix any $u_0, f \in H^2$ and $\sigma \in \bH^2$ and consider the
  associated $u$ solving \eqref{eq:s:KdV} as in \cref{prop:exist:uniq}.
  Then $\|u\|_{L^2}^2$ obeys
\begin{align}
  d\|u\|^2_{L^2} + 2 \gamma \|u\|_{L^2}^2 dt = 2\lb f, u \rb dt + \|\sigma\|_{L^2}^2 dt + 2\lb u , \sigma \rb dW.
  \label{eq:L2:evol}
\end{align}
Furthermore, for any $p > 0$
\begin{align}
  d(\|u\|^2_{L^2} + 1)^p + 2 p \gamma (\|u\|^2_{L^2}+ 1)^p dt 
  &= p(\|u\|^2_{L^2} + 1)^{p-1}(2 \gamma + 2 \lb f , u\rb + \|\sigma\|^2_{L^2}) dt  \notag\\
   &\quad +2p(p-1)(\|u\|^2_{L^2} + 1)^{p-2} |\lb \sigma, u \rb|^2dt
\notag\\
    &\quad +2 p(\|u\|^2_{L^2} + 1)^{p-1} \lb u , \sigma \rb dW.
  \label{eq:L2:evol:p}
\end{align}
\end{Lem}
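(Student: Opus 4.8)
The plan is to derive both identities as direct applications of It\=o's lemma, exploiting that the nonlinear and dispersive terms in \eqref{eq:s:KdV} are skew-symmetric against the $L^2$ inner product. First I would apply It\=o's formula to the (smooth, quadratic) functional $\Phi(v) := \|v\|_{L^2}^2 = \lb v,v\rb$, for which $D\Phi(v)h = 2\lb v,h\rb$ and $D^2\Phi(v)[h,h] = 2\|h\|_{L^2}^2$. Pairing the drift of \eqref{eq:s:KdV} against $u$ and integrating by parts over $\TT$ gives the cancellations
\begin{align*}
  \lb uDu, u\rb = \tfrac13\!\int_\TT D(u^3)\,dx = 0,
  \qquad
  \lb D^3 u, u\rb = -\lb D^2 u, Du\rb = -\tfrac12\!\int_\TT D\bigl((Du)^2\bigr)\,dx = 0,
\end{align*}
so that only the damping, the drift $f$, and the noise contribute to the drift of $d\|u\|_{L^2}^2$. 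The It\=o correction coming from $\sigma W = \sum_{k\geq1}\sigma_k W_k$ equals $\tfrac12\sum_{k\geq1} D^2\Phi(u)[\sigma_k,\sigma_k]\,dt = \sum_{k\geq1}\|\sigma_k\|_{L^2}^2\,dt = \|\sigma\|_{L^2}^2\,dt$ by the convention \eqref{eq:sigma:sp}, while the martingale part is $\lb D\Phi(u),\sigma\,dW\rb = 2\lb u,\sigma\rb\,dW$. Collecting terms produces \eqref{eq:L2:evol}.

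For \eqref{eq:L2:evol:p} I would compose the scalar function $\psi(y) := (y+1)^p$ with the real semimartingale $y(t) := \|u(t)\|_{L^2}^2$, whose dynamics \eqref{eq:L2:evol} were just obtained. It is precisely the shift by $1$ that keeps $\psi \in C^2([0,\infty))$ even for $0<p<2$, avoiding the singularity of $y \mapsto y^p$ at the origin, with $\psi'(y) = p(y+1)^{p-1}$ and $\psi''(y) = p(p-1)(y+1)^{p-2}$. From \eqref{eq:L2:evol} the quadratic variation of $y$ satisfies $d[y]_t = 4\sum_{k\geq1}\lb u,\sigma_k\rb^2\,dt = 4|\lb\sigma,u\rb|^2\,dt$, so the one-dimensional It\=o formula yields
\begin{align*}
  d(\|u\|_{L^2}^2+1)^p
  &= p(\|u\|_{L^2}^2+1)^{p-1}\bigl(-2\gamma\|u\|_{L^2}^2 + 2\lb f,u\rb + \|\sigma\|_{L^2}^2\bigr)dt\\
  &\quad + 2p(p-1)(\|u\|_{L^2}^2+1)^{p-2}|\lb\sigma,u\rb|^2\,dt
        + 2p(\|u\|_{L^2}^2+1)^{p-1}\lb u,\sigma\rb\,dW.
\end{align*}
Adding $2p\gamma(\|u\|_{L^2}^2+1)^p\,dt$ to both sides and using the identity $(\|u\|_{L^2}^2+1) - \|u\|_{L^2}^2 = 1$ to rewrite
$2p\gamma\bigl((\|u\|_{L^2}^2+1)^p - \|u\|_{L^2}^2(\|u\|_{L^2}^2+1)^{p-1}\bigr) = 2p\gamma(\|u\|_{L^2}^2+1)^{p-1}$
recovers exactly the $2\gamma$ contribution inside the parenthesis of \eqref{eq:L2:evol:p}.

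The only delicate point — and the main obstacle to a fully rigorous argument — is justifying It\=o's formula at the borderline regularity $m=2$, where $D^3u\notin L^2$ and \eqref{eq:s:KdV} holds only in the weak sense of \cref{prop:exist:uniq}. This is handled in the standard way: one first applies It\=o's lemma to finite-dimensional Galerkin truncations (or works directly from the weak formulation constructed in \cref{sect:apx:wp:SKdV}), where $\lb D^3u,u\rb$ is legitimately read as $-\lb D^2u,Du\rb$ and vanishes as above, and then passes to the limit using the a priori estimates together with the continuity $u\in C([0,\infty);H^2)$. One also records that, since $\sigma\in\bH^2\subset\bH^0$ and $u\in C([0,\infty);L^2)$, the integrand $\{\lb u(\cdot),\sigma_k\rb\}_{k\geq1}$ is almost surely square-integrable on each $[0,T]$, so the stochastic integrals above are genuine (local) martingales. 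With these routine reductions in place, the computation is rigorous.
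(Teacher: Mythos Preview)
Your proposal is correct and is precisely the ``direct application of It\=o's lemma'' that the paper invokes without further detail; the cancellations $\lb uDu,u\rb=\lb D^3u,u\rb=0$, the It\=o correction $\|\sigma\|_{L^2}^2$, and the subsequent composition with $\psi(y)=(y+1)^p$ are exactly the intended computation. Your remarks on the shift by $1$ ensuring $\psi\in C^2$ and on justifying the formal identities at $m=2$ via approximation are apt and match the paper's general stance (cf.\ the comment preceding \eqref{eq:my:big:phat:Ito:comp} and \cref{sect:apx:wp:SKdV}).
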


We next turn to compute evolution for $\In_m(u)$ for each $m \geq 1$.
The following formal computations can be rigorously justified by
considering a suitable regularization, e.g., fourth-order parabolic
regularization of solutions (cf. \submit{\cite{Temam1997} and
  \cref{sect:apx:wp:SKdV} for details}), so that solutions lie in
$C^\infty$ and then passing to the limit.  Working from
\eqref{eq:s:KdV} and \eqref{energy:funct:0:1}, \eqref{energy:funct},
for $m\geq1$, we have
\begin{align}
  d \In_m(u) \! = \!\!
  &\int_{\T} \! \biggl( \! 2 D^m u d(D^m u) - \!\alpha_m (D^{m-1} u)^2 du - 2\alpha_m u D^{m-1} u d(D^{m-1}u)
    \! + \!\! \sum_{j =0}^{m-2}\!\frac{\partial Q_m}{\partial y_j}(D_{m-2} u) d(D^j u) \!  \biggr)dx
                \notag\\
  &+ \int_{\T} \biggl( d(D^mu)  d(D^mu) - 2\alpha_m D^{m-1} u d(D^{m-1} u) du 
    - \alpha_m u d(D^{m-1} u) d(D^{m-1}u) \biggr) dx
  \notag\\
  &+ {{}\frac{1}2}\sum_{j,k =0}^{m-2}
    \int_{\T}\frac{\partial^2 Q_m}{\partial y_j\partial y_k}(D_{m-2} u) d(D^j u) d(D^k u) dx
    \notag\\
  =&   \int_{\T} \biggl( L_m (u) (f - u Du - D^3 u  - \gamma u) 
     + |D^m\sigma|^2 - 2\alpha_m D^{m-1} u  D^{m-1} \sigma \cdot \sigma 
    - \alpha_m u |D^{m-1} \sigma|^2 \biggr)   dx dt
  \notag\\
  &+ {{}\frac{1}2}\sum_{j,k =0}^{m-2}
    \int_{\T}\frac{\partial^2 Q_m}{\partial y_j\partial y_k}(D_{m-2} u) 
    D^j \sigma \cdot D^k \sigma  dx
    +\int_{\T} L_m (u) \sigma dx dW,
    \label{eq:my:big:phat:Ito:comp}
\end{align}
where recall that the nonlinear operator $L_m$ is given as in
\eqref{def:dispersive:J} and note the use of the notational convention (\ref{eq:point:wise:mult}).  Notice that with the cancelation condition
\eqref{dispersive:id} we have
\begin{align}
  \int_{\T} L_m (u)  &(u Du + D^3 u  + \gamma u) dx 
                     \notag\\
  &=  \gamma \int_{\T} \biggl(  2 |D^m u|^2 - 3 \alpha_m u (D^{m-1} u)^2 
  + \sum_{j = 0}^{m-2} \frac{\partial Q_m}{\partial y_j}(D_{m-2} u) D^j u \biggr)dx
     \notag\\
  &= 2 \gamma \In_m(u) +   
  \gamma \int_{\T} \biggl( \sum_{j = 0}^{m-2} \frac{\partial Q_m}{\partial y_j}(D_{m-2} u) D^j u 
      -2Q_m(D_{m-2} u) - \alpha_m u (D^{m-1} u)^2 \biggr)dx.
     \label{eq:gamma:terms:Im:Ito}
\end{align}
Also we have
\begin{align}
\begin{split}
\int_{\T}  L_m(u) f dx 
   &=  \int_{\T} \biggl(  2 D^m u D^mf -  \alpha_m  (D^{m-1} u)^2 f
  - 2 \alpha_m  u D^{m-1} u D^{m-1} f\biggr)dx \\
  &\quad + \int_{\T}\sum_{j = 0}^{m-2} \frac{\partial Q_m}{\partial y_j}(D_{m-2} u) D^j f dx
     \label{eq:f:terms:Im:Ito}
\end{split}
\end{align}
and similarly
\begin{align}
\begin{split}
 \int_{\T} L_m(u) \sigma dx 
  &=  \int_{\T} \biggl(  2 D^m u D^m\sigma -  \alpha_m  (D^{m-1} u)^2 \sigma
  - 2 \alpha_m  u D^{m-1} u D^{m-1} \sigma\biggr)dx
  \\
  &\quad 
  + \int_{\T}\sum_{j = 0}^{m-2} \frac{\partial Q_m}{\partial y_j}(D_{m-2} u) D^j \sigma dx
  \label{eq:sigma:terms:Im:Ito}
\end{split}
\end{align}
Therefore combining  \eqref{eq:my:big:phat:Ito:comp} with observations 
\eqref{eq:gamma:terms:Im:Ito} and \eqref{eq:f:terms:Im:Ito} we conclude:
\begin{Lem}\label{lem:Im:evolution}
  Given $m\geq1$, fix any $u_0, f \in H^{m\vee 2}$ and 
  $\sigma \in \bH^{m\vee 2}$.  Then the associated solution to
  \eqref{eq:s:KdV} as in \cref{prop:exist:uniq} satisfies
  \begin{align}
    d\In_m(u)+ 2 \gam\In_m(u)dt
    =\Kn_m^D dt + \K_m^S dW,
    \label{eq:In:m:Evol}
 \end{align}
 where recall that $\In_m$ are given in \eqref{energy:funct}
 while
 \begin{align}
   &\Kn_m^D(u,f,\sigma) \notag\\
   &\ :=   {{}-}\gamma \int_{\T} \left( 
       \sum_{j = 0}^{m-2} \frac{\partial Q_m}{\partial y_j}(D_{m-2} u) D^j u 
      -2Q_m(D_{m-2} u) - \alpha_m u (D^{m-1} u)^2 \right)dx
               \notag\\
             &\ \quad +  \int_{\T} \biggl(|D^m\sigma|^2 - 2\alpha_m D^{m-1} u  D^{m-1}
               \sigma \cdot \sigma 
    - \alpha_m u |D^{m-1} \sigma|^2 
               + {{}\frac{1}2}\sum_{j,k =0}^{m-2}
               \frac{\partial^2 Q_m}{\partial y_j\partial y_k}(D_{m-2} u) 
    D^j \sigma \cdot D^k \sigma \biggr)  dx
     \notag\\
   &\ \quad +\int_{\T} \biggl(  2 D^m u D^mf -  \alpha_m  (D^{m-1} u)^2 f
  - 2u D^{m-1} u D^{m-1} f
  + \sum_{j = 0}^{m-2} \frac{\partial Q_m}{\partial y_j}(D_{m-2} u)
     D^j f \biggr)dx
    \notag\\
   &\ := \gamma \Kn_{m,1}^D(u)+ \Kn_{m,2}^D(u,\sigma) + \Kn_{m,3}^D(u,f),
     \label{eq:Im:LHS:mess:D}
 \end{align}
 and 
 \begin{align}
 \begin{split}
   \Kn_m^S(u, \sigma)
   &:=  \int_{\T} \biggl(  2 D^m u D^m\sigma -  \alpha_m  (D^{m-1} u)^2 \sigma
   - 2u D^{m-1} u D^{m-1} \sigma\biggr)dx \\
   &\quad
   + \int_{\T} \sum_{j = 0}^{m-2}
   \frac{\partial Q_m}{\partial y_j}(D_{m-2} u) D^j \sigma dx.
     \label{eq:Im:LHS:mess:S}
 \end{split}
 \end{align}
 
\end{Lem}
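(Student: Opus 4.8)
The plan is to apply It\=o's formula to the real-valued process $t \mapsto \In_m(u(t))$ and then reorganize the resulting drift around the cancellation structure encoded in $L_m$. Since $\In_m$ in \eqref{energy:funct} is a polynomial functional of $(v,Dv,\ldots,D^m v)$ of finite degree, the ordinary multivariate It\=o formula, applied factor by factor to the monomials $(D^m v)^2$, $v(D^{m-1}v)^2$ and $Q_m(D_{m-2}v)$, produces the expansion displayed in \eqref{eq:my:big:phat:Ito:comp}: after integrating by parts in $x$, the first-order (in $du$) contributions collapse to $\langle L_m(u), du\rangle$ with $du$ the It\=o differential of \eqref{eq:s:KdV}, the second-order contributions yield the quadratic-variation corrections in which every product $d(D^j u)\,d(D^k u)$ becomes $(D^j\s\cdot D^k\s)\,dt$ in the notation \eqref{eq:point:wise:mult}, and the martingale term is $\langle L_m(u),\s\rangle\,dW$. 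Here $u\in C([0,\infty);H^{m\vee 2})$ and $\s\in\bH^{m\vee 2}$; the exponent $m\vee 2$ rather than $m$ is forced by \cref{prop:exist:uniq}, which produces solutions only for data of regularity at least $H^2$.

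The algebraic heart of the argument is the cancellation identity \eqref{dispersive:id} supplied by \cref{thm:KdV:Poly:Rank}, namely $\langle L_m(v), vDv + D^3 v\rangle = 0$. Applied with $v=u$, it annihilates the pairing $\langle L_m(u), uDu + D^3 u\rangle$, which nominally involves up to $2m$ derivatives of $u$ and is otherwise meaningless term by term; this is precisely where the conserved-quantity structure of KdV is used. What remains from the damping is then isolated through the direct expansion \eqref{eq:gamma:terms:Im:Ito}, which recognizes the leading part of $\gamma\langle L_m(u), u\rangle$ as $2\gamma\In_m(u)$ and records the lower-order remainder as $\gamma\Kn_{m,1}^D(u)$; moving $2\gamma\In_m(u)$ to the left-hand side yields the $+2\gamma\In_m(u)\,dt$ term in \eqref{eq:In:m:Evol}. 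Expanding $\langle L_m(u), f\rangle$ and $\langle L_m(u),\s\rangle$ through the explicit form \eqref{def:dispersive:J} of $L_m$, by a further integration by parts, gives \eqref{eq:f:terms:Im:Ito} and \eqref{eq:sigma:terms:Im:Ito}; assembling the surviving drift pieces as $\gamma\Kn_{m,1}^D + \Kn_{m,2}^D + \Kn_{m,3}^D$ reproduces $\Kn_m^D$ as in \eqref{eq:Im:LHS:mess:D}, while the martingale integrand is $\Kn_m^S$ as in \eqref{eq:Im:LHS:mess:S}.

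Along the way one should verify that every term appearing in $\Kn_m^D$ and $\Kn_m^S$ is finite for the stated data, so that in particular $\int_0^{\cdot}\Kn_m^S\,dW$ is a bona fide (local) martingale. The top-order terms are dispatched by Cauchy--Schwarz, and the lower-order terms involving $Q_m$ and its first and second derivatives are $L^1$-integrals of monomials in $D_{m-2}u$ of rank at most $m+2 = (m-2)+4$; hence \cref{prop:mono:intp:bnd}, together with Young's inequality to absorb the accompanying $\s$- and $f$-factors, bounds them by finite powers of $\|D^m u\|_{L^2}$ and $\|u\|_{L^2}$.

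The main obstacle is that the It\=o computation above is \emph{formal}: because $u$ lies only in $C([0,\infty);H^{m\vee 2})$, the isolated term $\langle L_m(u), uDu + D^3 u\rangle$ is not a priori meaningful and acquires a meaning only after the cancellation \eqref{dispersive:id}. To make the derivation rigorous I would run it on a regularized equation---following \cite{Temam1997}, the fourth-order parabolic regularization $du_\veps + (u_\veps D u_\veps + D^3 u_\veps + \gamma u_\veps + \veps D^4 u_\veps)\,dt = f\,dt + \s\,dW$, although a Galerkin truncation would serve equally well---whose solutions are spatially smooth, so that the finite-dimensional It\=o formula applies verbatim and yields \eqref{eq:In:m:Evol} with one extra drift term $-\veps\langle L_m(u_\veps), D^4 u_\veps\rangle\,dt$. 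Passing $\veps\to 0$ then relies on the well-posedness theory of \cref{sect:apx:wp:SKdV}, which provides $u_\veps\to u$ in $C([0,T];H^{m\vee 2})$ together with $\veps$-uniform a priori bounds (of the type underlying \cref{thm:Lyapunov}); this forces the regularization error to vanish and allows each term of $\Kn_m^D$ and $\Kn_m^S$ to pass to the limit by continuity on $H^{m\vee 2}$, again quantified for the polynomial terms by \cref{prop:mono:intp:bnd}. Everything else is bookkeeping against the explicit expression \eqref{def:dispersive:J} for $L_m$.
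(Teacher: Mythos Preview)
Your proposal is correct and follows the paper's own derivation essentially line by line: the paper computes $d\In_m(u)$ via It\=o's formula as in \eqref{eq:my:big:phat:Ito:comp}, invokes the cancellation \eqref{dispersive:id} to remove the $uDu+D^3u$ contribution, and then reorganizes the remaining drift using \eqref{eq:gamma:terms:Im:Ito}--\eqref{eq:sigma:terms:Im:Ito}, with rigorous justification deferred to the fourth-order parabolic regularization of \cref{sect:apx:wp:SKdV}. Your sketch actually spells out the regularization-and-limit step in more detail than the paper, which only flags it in a single sentence before the computation.
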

\noindent Finally for the modified integrals of motion, $\In_m^+$,
defined as in \eqref{energy:funct:0:1}, \eqref{energy:funct}, and
\eqref{eq:mod:int:m:simple}, we have the following evolution which we
obtain by simply combining \eqref{eq:L2:evol:p} with
\eqref{eq:In:m:Evol}.
\begin{Lem}\label{lem:Im:plus:evolution}
  Given $m\geq1$, fix any $u_0, f \in H^{m\vee 2}$ and 
  $\sigma \in \bH^{m\vee 2}$. Then the associated solution to
  \eqref{eq:s:KdV} as in \cref{prop:exist:uniq} satisfies
  \begin{align}
    d \In_{m}^+(u) + 2 \gamma \In_{m}^+(u) dt = \Kn_m^{D,+} dt + \K_m^{S,+} dW,
    \label{eq:Inplus:simp}
  \end{align}
  where $\In_{m}^+$ are the functionals defined in \eqref{eq:mod:int:m:simple}
  \begin{align}
    \Kn_m^{D,+} &:= \, \Kn_m^{D}
    + {\abarm} 2 (1 - {\bar{q}_m})  \gamma\left( \|u\|_{L^2}^2+1\right)^{\bar{q}_m}+ {\abarm}
   {\bar{q}_m}\left( \|u\|_{L^2}^2+1\right)^{\bar{q}_m-1}(2\gam+2 \lb f , u\rb + \|\sigma\|^2_{L^2})   
                   \notag\\
   &\quad + {\abarm}2{\bar{q}_m}({\bar{q}_m}-1)\left( \|u\|_{L^2}^2+1\right)^{\bar{q}_m-2}
     |\lb \sigma, u \rb|^2,
    \label{eq:Inp:simp:RHS:D}
  \end{align}
  recalling the definition of $\Kn_m^{D}$ in \eqref{eq:Im:LHS:mess:D} and
  that the constants $\bar{q}_m$, ${\abarm}$ are defined in
  \eqref{eq:mod:int:m:simple}. On the other hand $\K_m^{S,+}$ is
  given as
  \begin{align}
    \Kn_m^{S,+} :=  \Kn_m^{S} +  
    2 {\abarm} \bar{q}_m \left(\|u\|_{L^2}^2+1\right)^{\bar{q}_m-1}\lb u , \sigma \rb,
    \label{eq:Inp:simp:RHS:S}
  \end{align}
  referring back to \eqref{eq:Im:LHS:mess:S} for the definition of $\Kn_m^{S}$.
  Furthermore, for any $p > 0$
  \begin{align}
  \begin{split}
    d \In_{m}^+(u)^p + 2 \gamma p \In_{m}^+(u)^p dt
    &= \left( p \In_{m}^+(u)^{p-1} \Kn_m^{D,+} 
       + \frac{p (p-1)}{2}\In_{m}^+(u)^{p-2} |\Kn_m^{S,+}|^2 \right)dt \\
      &\quad + p \In_{m}^+(u)^{p-1} \Kn_m^{S,+} dW.
    \label{eq:Imp:mom:evol}
  \end{split}
  \end{align}
\end{Lem}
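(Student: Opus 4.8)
The plan is to obtain \eqref{eq:Inplus:simp} by a purely algebraic combination of the two evolution laws already in hand, namely \eqref{eq:In:m:Evol} from \cref{lem:Im:evolution} and the $p$-th power identity \eqref{eq:L2:evol:p} from \cref{lem:L2:evolution} with the specific choice $p=\bar{q}_m$. Since by definition \eqref{eq:mod:int:m:simple} we have $\In_m^+(u)=\In_m(u)+\abarm(\|u\|_{L^2}^2+1)^{\bar{q}_m}$, I would add $\abarm$ times \eqref{eq:L2:evol:p} (with $p=\bar{q}_m$) to \eqref{eq:In:m:Evol}. The stochastic differentials add up to $\K_m^S\,dW + 2\abarm\bar{q}_m(\|u\|_{L^2}^2+1)^{\bar{q}_m-1}\langle u,\sigma\rangle\,dW$, which is exactly $\Kn_m^{S,+}\,dW$ with $\Kn_m^{S,+}$ as defined in \eqref{eq:Inp:simp:RHS:S}, recalling \eqref{eq:Im:LHS:mess:S} for $\K_m^S$.

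The only point that needs a moment's care is the drift on the left-hand side. Adding the two equations places $2\gamma\In_m(u)\,dt + 2\bar{q}_m\gamma\abarm(\|u\|_{L^2}^2+1)^{\bar{q}_m}\,dt$ on the left, whereas \eqref{eq:Inplus:simp} requires the single term $2\gamma\In_m^+(u)\,dt = 2\gamma\In_m(u)\,dt + 2\gamma\abarm(\|u\|_{L^2}^2+1)^{\bar{q}_m}\,dt$. I would therefore transfer the discrepancy $2(\bar{q}_m-1)\gamma\abarm(\|u\|_{L^2}^2+1)^{\bar{q}_m}\,dt$ to the right-hand side, where it becomes the contribution $\abarm 2(1-\bar{q}_m)\gamma(\|u\|_{L^2}^2+1)^{\bar{q}_m}$ in \eqref{eq:Inp:simp:RHS:D}. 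The remaining right-hand drift terms inherited from \eqref{eq:L2:evol:p}, namely the $\abarm\bar{q}_m(\|u\|_{L^2}^2+1)^{\bar{q}_m-1}(2\gamma+2\langle f,u\rangle+\|\sigma\|_{L^2}^2)$ term and the $\abarm 2\bar{q}_m(\bar{q}_m-1)(\|u\|_{L^2}^2+1)^{\bar{q}_m-2}|\langle\sigma,u\rangle|^2$ term, assemble with $\Kn_m^D$ from \eqref{eq:Im:LHS:mess:D} to produce precisely $\Kn_m^{D,+}$, establishing \eqref{eq:Inplus:simp}, \eqref{eq:Inp:simp:RHS:D}, and \eqref{eq:Inp:simp:RHS:S}.

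For the evolution of the power $\In_m^+(u)^p$ in \eqref{eq:Imp:mom:evol}, I would apply the It\=o formula to the map $x\mapsto x^p$ composed with the continuous semimartingale $t\mapsto\In_m^+(u(t))$ furnished by \eqref{eq:Inplus:simp}. It is worth recording that, by \cref{lem:equivalence}, $\In_m^+(u)\geq \tfrac12\abarm>0$ for the chosen parameters, so $x\mapsto x^p$ is $C^2$ on a neighborhood of the range of $\In_m^+(u)$ even when $0<p<1$; this legitimizes the chain rule. The quadratic variation of the martingale part of \eqref{eq:Inplus:simp} is $|\Kn_m^{S,+}|^2\,dt$, so It\=o gives $d\In_m^+(u)^p = p\In_m^+(u)^{p-1}\big(-2\gamma\In_m^+(u)+\Kn_m^{D,+}\big)\,dt + \tfrac{p(p-1)}{2}\In_m^+(u)^{p-2}|\Kn_m^{S,+}|^2\,dt + p\In_m^+(u)^{p-1}\Kn_m^{S,+}\,dW$; moving the term $2\gamma p\In_m^+(u)^p\,dt$ to the left yields \eqref{eq:Imp:mom:evol}. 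There is no substantive obstacle here: as with \cref{lem:Im:evolution}, the only genuinely technical ingredient is rigorously justifying the It\=o computations underlying \eqref{eq:In:m:Evol} and \eqref{eq:L2:evol:p}, which proceeds via the fourth-order parabolic regularization of \eqref{eq:s:KdV} discussed before \cref{lem:Im:evolution} and passage to the limit; everything else is bookkeeping.
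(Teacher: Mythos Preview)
Your proposal is correct and matches the paper's approach exactly: the paper states that \eqref{eq:Inplus:simp} is obtained ``by simply combining \eqref{eq:L2:evol:p} with \eqref{eq:In:m:Evol}'', which is precisely the addition you carry out, and \eqref{eq:Imp:mom:evol} is then a direct It\=o computation. Your explicit remark that $\In_m^+(u)\geq \tfrac12\abarm>0$ (via \cref{lem:equivalence}) to justify applying $x\mapsto x^p$ for $0<p<1$ is a useful clarification that the paper leaves implicit.
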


\subsection{Proof of \cref{thm:Lyapunov}}
\label{sect:Poly:bnd}

We turn now to the proof of \cref{thm:Lyapunov} which is divided into
  the subsections addressing the algebraic moment bounds \eqref{eq:Lyapunov:Bnd}, martingale estimates
  \eqref{eq:poly:est:asymptotic}, and finally the
  exponential moment bounds  \eqref{eq:Lyapunov:Bnd:exp} and \eqref{eq:Lyapunov:Bnd:subquadexp}.

\subsubsection{Algebraic Moment Bounds:
  Proof of \eqref{eq:Lyapunov:Bnd}}
\label{sect:Alg:mom:bnd}

First we consider the case $m=0$. We proceed from
\cref{lem:L2:evolution}. In particular, given any $p>0$, from
\eqref{eq:L2:evol:p}, we apply the Cauchy-Schwarz and Young's
inequalities to arrive at
\begin{align}\label{eq:L2:estimate1}
  d\left(\Sob{u}{L^2}^2+1\right)^p&+p\gam\left(\Sob{u}{L^2}^2+1\right)^pdt
  \leq C dt +2p\left(\Sob{u}{L^2}^2+1\right)^{p-1}\lb u,\s\rb dW,
\end{align}
where $C = C(\gamma, \|f\|_{L^2}, \|\sigma\|_{L^2}, p)$.  Thus, upon
taking expected values and applying Gronwall's inequality, we obtain
$\E\left(\Sob{u(t)}{L^2}^2+1\right)^p \leq C\left(e^{-\gam p
    t}\left(\Sob{u_0}{L^2}^{2}+1\right)^p+1\right)$ which implies
\eqref{eq:Lyapunov:Bnd} for $m=0$, as claimed.

We now proceed to the proof of \eqref{eq:Lyapunov:Bnd} for $m \geq
1$. This entails estimating each of the drift terms appearing in
\eqref{eq:Imp:mom:evol} via repeated applications of the monomial
bounds \eqref{eq:momomial:est:1}.  Referring back to the definition of
$\Kn_m^{D}$ as in \eqref{eq:Im:LHS:mess:D} through $\Kn_m^{D,+}$ in
\eqref{eq:Inp:simp:RHS:D}, we now estimate each of the terms
$\Kn_{m,1}^{D}$, $\Kn_{m,2}^{D}$, $\Kn_{m,3}^{D}$ and finally
$\Kn_{m}^{D,+}$ in succession.  Starting with $\Kn_{m,1}^{D}$, we
estimate the first set of terms appearing there by observing from
\cref{thm:KdV:Poly:Rank} that ${{}(\bdy_{y_j}Q_m(y))y_j}$ consists of
monomials of rank $m+2$ for each $j = 0, \ldots, m-2$ and then apply
\cref{prop:mono:intp:bnd}. The second term in $\Kn_{m,1}^{D}$ is
treated similarly while the final term can be handled directly with
interpolation {{}as in the proof of \cref{lem:equivalence}}.  We infer
\begin{align}
  |\Kn_{m,1}^{D}(u)|
  \leq C \sum_{k =1}^{M} \|D^m u\|^{r_k}_{L^2} \| u \|^{q_k}_{L^2}.
    \label{eq:Kn1:est}
\end{align}
Turning to $\Kn_{m,2}^{D}$, we observe that the terms involving the
Hessian of $Q_m$ consist of monomials of rank less than $m+2$ so that
we can once again invoke \cref{prop:mono:intp:bnd} to estimate
them. The remaining terms are bounded directly with
interpolation. With these observations we conclude
\begin{align}
  |\Kn_{m,2}^{D}(u, \sigma)| 
  &\leq C\sum_{k =1}^{M} \|D^m u\|^{r_k}_{L^2} \| u \|^{q_k}_{L^2}.
    \label{eq:Kn2:est}
\end{align}
Regarding $\Kn_{m,3}^{D}$ arguing in a similar fashion, we find
\begin{align}
  |\Kn_{m,3}^{D}(u,f)|
  \leq C\sum_{k =1}^{M} \|D^m u\|^{r_k}_{L^2} \| u \|^{q_k}_{L^2}.
    \label{eq:Kn3:est}
\end{align}
The remaining terms in $\Kn_m^{D,+}$, which depend on $u$ only through
$L^2$--norms, are estimated as
\begin{align}
  | \Kn_{m}^{D,+} -  \Kn_{m}^{D}|
  &\leq C \bar{q}_m \abarm^2 (1 + \|u\|^2_{L^2})^{\bar{q}_m -1/2},
  \label{eq:Kn:extra:est}
\end{align}
where we note our standing assumption that $\bar{q}_m \geq 1$.  {{}Indeed, note that the first term in $\Kn_{m}^{D,+} -  \Kn_{m}^{D}$ may be dropped in obtaining an upper bound since it is non-positive.}
Combining the estimates \eqref{eq:Kn1:est}--\eqref{eq:Kn:extra:est}
we conclude
\begin{align}
   |\Kn_{m}^{D,+}|
  \leq C\sum_{k =1}^{M} \|D^m u\|^{r_k}_{L^2} \| u \|^{q_k}_{L^2} +
  C \bar{q}_m \abarm^2 (1 + \|u\|^2_{L^2})^{\bar{q}_m -1/2},
   \label{eq:Kn:summary:0}
\end{align}
where, to emphasize, $C = C(m,\gamma, \|f\|_{H^m}, \|\sigma\|_{H^m})$,
$r_k = r_k(m) \in [0,2)$, $q_k = q_k(m) \geq 0$ and $M= M(m) \geq 1$
are all constants independent of $\bar{q}_m, \abarm \geq 1$.  Thus,
by applying Young's inequality and now selecting
$\bar{q}_m, \abarm$ appropriately large we finally conclude
\begin{align}
   |\Kn_{m}^{D,+}|
   \leq C+ {{}\frac{\gamma}{100}} \In_m^+(u),
   \label{eq:Kn:summary}
\end{align}
with now
$C = C(m, \gamma, \|f\|_{H^m}, \|\sigma\|_{H^m}, \bar{q}_m, \abarm)$.

Next, we estimate $|\Kn_m^{S,+}|^2$ in \eqref{eq:Imp:mom:evol}, which
is defined as in \eqref{eq:Inp:simp:RHS:S} and
\eqref{eq:Im:LHS:mess:S}. Starting with $\Kn_m^{S}$, which is the
portion of the stochastic term from $\In_m$, we estimate
 \begin{align}
   |\Kn_m^{S}| 
   \leq C \sum_{k =1}^{M} \|D^m u\|^{r_k}_{L^2} \|u\|_{L^2}^{q_k},
   \label{eq:KnS:1}
 \end{align}
 with appropriate direct use of interpolation and by again invoking
 \cref{thm:KdV:Poly:Rank}, and \cref{prop:mono:intp:bnd} to estimate
 the final terms. Here, $C = C(m, \|\sigma\|_{H^m})$,
 $M = M(m) \geq 1$, $r_k = r_k(m) \in (0,2)$ and
 $q_k = q_k(m) \geq 0$.  On the other hand, from
 \eqref{eq:Inp:simp:RHS:S} we have
 \begin{align}
   |\Kn_m^{S, +} -\Kn_m^{S}|
   \leq 2 \|\sigma\|_{L^2} \bar{q}_m \abarm (1 + \|u\|^2_{L^2})^{\bar{q}_m -1/2},
      \label{eq:KnS:2}
 \end{align}
 Thus by combining \eqref{eq:KnS:1}, \eqref{eq:KnS:2}, {{}as well applying \cref{lem:equivalence}}, then again noting
 carefully that $C$, $M$, $r_k$, $q_k$ in \eqref{eq:KnS:1} are
 independent of $\bar{q}_m, \abarm \geq 1$ we conclude that
\begin{align}
  \frac{p -1}{2}|\Kn_m^{S,+}|^2 
  &\leq C
      + {{}\frac{\gamma}{100}}\In_m^{+}(u)^2,
  \label{eq:KnSplus:squared}
\end{align}
where now $C = C(\gamma,m, p, \|\sigma\|_{H^m},\bar{q}_m, \abarm)$.
 
Upon combining \eqref{eq:Kn:summary} and \eqref{eq:KnSplus:squared}
with \eqref{eq:Imp:mom:evol}, then applying Young's inequality again,
we obtain after rearranging the stochastic differential inequality
\begin{align}
  d \In_m^+(u)^p + \frac{3\gam p}{2}\In_m^+(u)^pdt 
  \leq Cdt
  +p\In_m^+(u)^{p-1}\Kn_m^{S,+}dW.
  \label{eq:final:bound:Poly:moments}
\end{align}
Taking the expected value of both sides and applying Gronwall's inequality
yields \eqref{eq:Lyapunov:Bnd} completing the proof of the first item in \cref{thm:Lyapunov}.

\subsubsection{Martingale estimates:
  Proof of \eqref{eq:poly:est:asymptotic}}

For any $m \geq 1$ we find from \eqref{eq:final:bound:Poly:moments} that, for any $R,T,p > 0$,
\begin{align}
  \Prb&\left(\sup_{t\geq T}\left(\In_m^+(u(t))^p
    +\gam p\int_0^t\In_m^+(u(s))^pds-\In_m^+(u_0)^p-C (t+1)\right)\geq R\right) \notag \\ 
      &\qquad\qquad\qquad\qquad\qquad\qquad\qquad\qquad\qquad\qquad\qquad
        \leq  \Prb\left( \sup_{t \geq T} (\mathcal{M}(t) - t -2) \geq R\right),
    \label{eq:poly:MG:bnd:setup}
\end{align}
where we take
\begin{align}\label{def:Implus:power:mart}
  \Mt(t) :=p\int_0^t\In_m^+(u)^{p-1}\Kn_m^{S,+}dW,
\end{align}
with $\Kn_m^{S,+}$ is given by \eqref{eq:Inp:simp:RHS:S}.
Here, note that the constant
$C = C(\gamma,m, p, \|f\|_{H^m}, \|\sigma\|_{H^m},\bar{q}_m, \abarm)$
appearing in \eqref{eq:poly:MG:bnd:setup} is independent
of $T, R$ with \eqref{eq:poly:MG:bnd:setup} holding for any appropriately large values of
$\bar{q}_m$ and $\abarm$ for which \eqref{eq:final:bound:Poly:moments} is valid. 
Observe that for any $T\geq0$, $R > 0$
\begin{align}
  \left\{\sup_{t\geq T}\left(\Mt(t)- t -2\right)\geq R\right\}
  \subseteq \bigcup_{n\geq [T]}\left\{\sup_{t\in[n,n+1)}\left(\Mt(t)- t -2\right)\geq R\right\},
  \label{eq:T:split:M}
\end{align}
where $[T]$ denotes the largest integer $\leq T$.
On the other hand, notice that for $R>0$ and any $n \geq 0$,
\begin{align}\label{eq:sup:Mtstar}
  \left\{\sup_{t\in[n,n+1)}\left(\Mt(t)- t-2\right)\geq R\right\}\subseteq\left\{\Mt^*(n+1)\geq R+ n +2\right\},
\end{align}
where we adopt the notation $\Mt^*(t) := \sup_{s \in [0,t]} |\Mt(s)|$.

In order to obtain a suitable estimate for
\eqref{eq:poly:MG:bnd:setup} from 
\eqref{eq:T:split:M} and \eqref{eq:sup:Mtstar}, let us recall that the Burkholder-Davis-Gundy
inequality states
\begin{align}
  \E (\mathcal{N}^*(\tau)^q) \leq c \E ([\mathcal{N}](\tau)^{q/2}),
  \label{eq:BDG}
\end{align}
for any continuous, (locally) square-integrable, martingale
$\{\mathcal{N}(t)\}_{t \geq 0}$, any $q > 0$, and any stopping time
$\tau$.  Here, $c$ is a universal quantity depending only on $q > 0$
and $\mathcal{N}^*(\tau)$, $[\mathcal{N}](\tau)$ denote the
maximum and the quadratic variation of $\mathcal{N}$ up to
time $\tau \geq 0$, respectively. Note that upon writing
$\mathcal{N}(t) = \int_0^t g dW$ for its appropriate
$\mathcal{F}_t$-adapted representative $g$, we have
\begin{align}
  \mathcal{N}^*(\tau) := \sup_{t \in [0,\tau]} \left| \int_0^t g dW \right|,
  \qquad
  [\mathcal{N}](\tau) :=  \int_0^\tau |g|^2 ds.
  \label{eq:MG:Note}
\end{align}
See e.g. \cite[Theorem 3.28]{karatzas2014brownian} for further details.

With \eqref{eq:BDG} in hand, we proceed as follows. From
\eqref{def:Implus:power:mart}, \eqref{eq:MG:Note}, notice that for
any $t \geq 0$
\begin{align}
  [\Mt](t) = p^2\int_0^t\In_m^+(u(s))^{2p-2}|\Kn_m^{S,+}|^2ds.
  \label{eq:Imp:mart:QV}
\end{align}
Observe that from \eqref{eq:KnSplus:squared} and the fact that $\In_m^+\geq 1$, we
have $|\Kn_m^{S,+}|^2\leq C\In_m^+(u(s))^2$. Thus from
\eqref{eq:BDG}, H\"older's inequality and \eqref{eq:Lyapunov:Bnd}
already established above we find that, for all $q\geq2$,
\begin{align}   
  \E\Mt^*(t)^q
  \leq C\E\left(\int_0^t\In_m^{+}(u(s))^{2p}ds\right)^{q/2}
  \leq Ct^{(q-2)/2}\E\int_0^t\In_m^+(u(s))^{pq}ds \leq C(t+1)^{q/2}\In_m^+(u_0)^{pq}.
  \label{eq:Mtstar:bnd}
\end{align}
Next from \eqref{eq:T:split:M} followed by
\eqref{eq:sup:Mtstar}, Chebyshev's inequality, and \eqref{eq:Mtstar:bnd},
we have, for any $q \geq 2$
\begin{align}
  \Prb \left\{\sup_{t\geq T}\left(\Mt(t)- t-2\right)\geq R\right\}
  &\leq \sum_{n\geq[T]}\Prb\left\{\Mt^*(n+1)\geq R+ n +2\right\}
    \leq \sum_{n\geq[T]}\frac{\E\Mt^*(n+1)^q}{(R+ n +2)^q}\notag\\
  &\leq C\In_m^+(u_0)^{pq} \sum_{n\geq[T]}\frac{1}{(R+ n +2 )^{q/2}}
  \leq \frac{C \In_m^+(u_0)^{pq}}{(T + R)^{q/2-1}},\notag
\end{align}
where we once again emphasize that $C$ depends on
$m, p,q,\gam,\Sob{f}{H^m},\Sob{\s}{H^m}, \bar{q}_m, \abarm$ but is
independent of $T, R>0$.  We thus have established establish
\eqref{eq:poly:est:asymptotic} in the case $m \geq 1$.  The proof for
$m=0$ follows directly from \eqref{eq:L2:estimate1} by arguing along
precisely the same lines as we just detailed for $m \geq 1$.  The 
proof of \cref{thm:Lyapunov}, $(ii)$ is therefore complete.

\subsubsection{Exponential Moment Bounds:
  Proofs of \eqref{eq:exp:est:asymptotic}, \eqref{eq:Lyapunov:Bnd:exp},
  and \eqref{eq:Lyapunov:Bnd:subquadexp} }
  \label{sect:exp:mom:bnd}

For the proof of the final items $(iii)$ and $(iv)$ in \cref{thm:Lyapunov}, we will make
repeated use of the following the exponential martingale inequality: Given a
continuous, (locally) square-integrable martingale,
$\{\mathcal{N}(t)\}_{t \geq 0}$, we have for any $\eta>0$ and $R>0$
\begin{align}\label{eq:exp:Mt:gen}
  \Prb\left(\sup_{t\geq0}
  \left(\mathcal{N}(t) -
     \frac{\eta}2[\mathcal{N}](t)\right)
  \geq  R\right)\leq e^{-\eta R},
\end{align}
where $[\mathcal{N}]$ is the quadratic
variation of $\mathcal{N}$ as in \eqref{eq:MG:Note}. See
e.g. \cite[Chapter 6, Section 8]{krylov2002introduction} or
\cite[Proposition 3.1]{GlattHoltz2014} for a proof of this bound.

Regarding the proof of \eqref{eq:exp:est:asymptotic}, we return
again to \eqref{eq:final:bound:Poly:moments}, integrate in time,
then invoke \eqref{eq:exp:Mt:gen}.  We find that for any $\bar{\eta}, R > 0$
\begin{align}
  \Prb&\left(\sup_{t \geq 0}\left(\In_m^+(u(t))^p
        +\frac{3\gam p}{2}\int_0^t\In_m^+(u(s))^pds-\In_m^+(u_0)^p-C t
        - \frac{\bar{\eta}}{2} [\Mt](t)\right)\geq R\right)
        \notag \\ 
      &\qquad\qquad\qquad\qquad\qquad\qquad\qquad\qquad\qquad
        \leq  \Prb\left( \sup_{t \geq 0}
            (\Mt(t) - \frac{\bar{\eta}}{2} [\Mt](t)) \geq R\right)
        \leq e^{-\bar{\eta} R},
    \label{eq:poly:MG:bnd:setup:1}
\end{align}
where $\Mt$ is the martingale defined in \eqref{def:Implus:power:mart}
and $[\Mt]$ is the quadratic variation of $\Mt$ given explicitly as
\eqref{eq:Imp:mart:QV}.  Returning to \eqref{eq:KnS:1}, notice that, so long as $\bar{q}_m \geq 1$ in the
definition of $\In_m^+$ is sufficiently large, we can find
$\delta = \delta(m, \|\sigma\|_{H^m}, \bar{q}_m)$ with $\delta \in (0,1)$
such that
\begin{align}
  |\Kn_m^S| \leq C(\In_m^+)^\delta.
  \label{eq:careful:dlt}
\end{align}
Thus, for any $p > 0$
\begin{align}\notag
  [\Mt](t) \leq C \int_0^t\In_m^+(u(s))^{2p-2 + 2\delta}ds.
\end{align}
Note carefully that the value of $\delta \in (0,1)$ in
\eqref{eq:careful:dlt} is independent of $p$. {{}Thus, we may choose $p_0:=1-\de$. Since $\In_m^+\geq1$, it follows that for all $p\in(0,p_0)$}
\begin{align}\label{eq:careful:dlt:mt}
    {{}[\Mt](t)\leq C\int_0^t\In_m^+(u(s))^{2p_0-2+2\de}ds=Ct.}
\end{align}

Combining this observation with \eqref{eq:poly:MG:bnd:setup:1} we  conclude that \eqref{eq:exp:est:asymptotic} holds.

{{}We remark} that the proof of \eqref{eq:exp:est:asymptotic:L2} follows
closely along the lines we just outlined to obtain \eqref{eq:exp:est:asymptotic}
by using \eqref{eq:L2:evol}
with  \eqref{eq:exp:Mt:gen}. {{}In particular, we apply \eqref{eq:exp:Mt:gen} with $\mathcal{N}(t):=2\int_0^t\lb u(t),\s\rb dW$ and $\eta:=\frac{\gam}4\Sob{\s}{L^2}^{-2}$. Indeed, using \eqref{eq:L2:evol} we get
\begin{align*}
    [\mathcal{N}](t)=\int_0^t\sum_{k\geq1}4|\lb u(s),\s_k\rb|^2ds\leq \int_0^t4\Sob{\s}{L^2}^2\Sob{u(s)}{L^2}^2ds.
\end{align*}
Also
\begin{align*}
    \mathcal{N}(t)\geq\Sob{u(t)}{L^2}^2-\Sob{u_0}{L^2}^2+2\gam\int_0^t\Sob{u(s)}{L^2}^2ds-\frac{\gam}2\int_0^t\Sob{u(s)}{L^2}^2ds-\frac{2}\gam\Sob{f}{L^2}^2t-\Sob{\s}{L^2}^2t.
\end{align*}
Thus
\begin{align*}
    \mathcal{N}(t)-\frac{\eta}2[\mathcal{N}](t)\geq\Sob{u(t)}{L^2}^2-\Sob{u_0}{L^2}^2+\gam\int_0^t\Sob{u(s)}{L^2}^2ds-\frac{2}\gam\Sob{f}{L^2}^2t-\Sob{s}{L^2}^2,
\end{align*}
which together with \eqref{eq:exp:Mt:gen} implies \eqref{eq:exp:est:asymptotic}.
}

Turning to the proof of \eqref{eq:Lyapunov:Bnd:exp} we proceed from
\eqref{eq:exp:est:asymptotic} by invoking the elementary identity
\begin{align}
\label{eq:elem}
    \E X = \int_{0}^{\infty}\Prb( X \geq y) d y,
\end{align}
valid for any non-negative random variable $X \in L^1(\Omega)$.
Specifically, given $\eta >0$ in
\eqref{eq:Lyapunov:Bnd:exp}, we take
\begin{align*}
  X := \exp\left( {{}\eta} \sup_{t \geq 0}\left(\In_m^+(u(t))^p
        +\gam p\int_0^t\In_m^+(u(s))^pds-\In_m^+(u_0)^p-C t\right) \right),
\end{align*}
where we choose $C$ suitably large so that
\eqref{eq:exp:est:asymptotic} holds for this $C$ when
$\bar{\eta} = 2 \eta$.  For this choice clearly
\eqref{eq:exp:est:asymptotic} implies that
$\int_0^\infty \Prb(X \geq y )dy< \infty$, 
which thus immediately yields \eqref{eq:Lyapunov:Bnd:exp}
from \eqref{eq:elem}.

For the final bound, \eqref{eq:Lyapunov:Bnd:subquadexp}, observe that
\eqref{eq:Imp:mom:evol} with \eqref{eq:Kn:summary} and \eqref{eq:KnSplus:squared}
yield, for any $T, t, \bar{\eta} > 0$,
\begin{align}
  e^{-\gam p (T-t)}\In_m^+(u(t))^p
  +\frac{\gam p}{2}\int_0^te^{-\gam p(T-s)}\In_m^+(u(s))^p ds 
  &- \In_m^+(u_0)e^{-\gam pT}- C\int_0^te^{-\gam p(T-s)} ds
    - \frac{\bar{\eta}}{2} [\til{\Mt}](t) \notag\\
    &\leq \til{\Mt}(t) - \frac{\bar{\eta}}{2} [\til{\Mt}](t),
    \label{eq:next:bound:exp:moments}
\end{align}
where
\begin{align}\notag
  \til{\Mt}(t):=p\int_0^te^{-\gam p(T-s)}\In_m^+(u)^{p-1}\Kn_m^{S,+}dW,
  \quad
  [\til{\Mt}](t) =  p^2 \int_0^te^{-2\gam p(T-s)} \In_{m}^+(u)^{2(p-1)} |\Kn_m^{S,+}|^2 ds.
\end{align}
Note carefully that, for each fixed value of $T > 0$,  $\{\til{\Mt}(t)\}_{t \geq 0}$ is a continuous
square-integrable martingale.
We estimate the quadratic variation $[\til{\Mt}]$ of $\til{\Mt}$ using
\eqref{eq:careful:dlt} and {{}infer similarly to \eqref{eq:careful:dlt:mt} that, for any $0 < p < p_0$ with
$p_0=1-\de$ as in the previous item, we have} 
\begin{align}\label{eq:mg:QV:Imp}
    {{}[\til{\Mt}](t)
      \leq C \int_0^te^{-2\gam p(T-s)}\In_{m}^+(u)^{2p-2+2\de}ds\leq C,
      }
\end{align}
where once again $C = C(\bar{\eta}, m, p, \gam,\Sob{f}{H^m},\Sob{\s}{H^m}, \bar{q}_m, \abarm)$
is independent of $T, t > 0$.

Combining \eqref{eq:mg:QV:Imp} with \eqref{eq:next:bound:exp:moments}
and then using \eqref{eq:exp:Mt:gen} we infer that, for every
$R > 0$,
\begin{align}
  \Prb( \In_m^+(u(T))^p - \In_m^+(u_0)^p e^{-\gam p T}- C \geq R) 
  \leq \Prb \left( \sup_{t \geq 0} (\til{\Mt}(t)
  - \frac{\bar{\eta}}{2} [\til{\Mt}](t)) \geq R \right)
  \leq e^{-\bar{\eta} R},
  \label{eq:final:max:lvl}
\end{align}
for a suitable constant
$C= C(\bar{\eta}, m, p, \gam,\Sob{f}{H^m},\Sob{\s}{H^m}, \bar{q}_m,
\abarm)$ which is independent of $T > 0$. We now apply
\eqref{eq:final:max:lvl} in conjunction with \eqref{eq:elem} to infer
\eqref{eq:Lyapunov:Bnd:subquadexp}.  Specifically we take
$X := \exp(\In_m^+(u(T))^p - \In_m^+(u_0)^pe^{-\gam p T}- C)$ where we
choose the constant $C$ so that \eqref{eq:final:max:lvl} holds for
$\bar{\eta} = 2 \eta$ where $\eta$ is the desired rate specified in
\eqref{eq:Lyapunov:Bnd:subquadexp}.  The proof of this final item $(iv)$ in \cref{thm:Lyapunov} is
now complete.

\section{Foias-Prodi Estimates}
\label{sect:FP:est}

This section is devoted to establishing a Foias-Prodi type estimate
for \eqref{eq:s:KdV}, which we previewed in \cref{thm:FP:est:Intro}.
The complete and precise form is provided here in a sequence of results from \cref{thm:FP:est} to \cref{cor:FP:est:Intro}.  These results, which will serve as crucial components to our arguments establishing
regularity and uniqueness properties of invariant measures in \cref{sect:Regularity} and \cref{sect:UniqueErgodicity},
reveal that
\eqref{eq:s:KdV} possesses a nonlinear mechanism which enables an asymptotic coupling of
solutions by means of a finite-dimensional control.
Furthermore, as noted in the introduction and detailed in
\cref{sect:det:case}, the approach we lay
out in this section will yield novel and compact proofs of some existing
results in the deterministic setting.

We now describe more precisely our choice for the control.  Given
$f\in H^2$, $\s\in\bH^2$, and $u_0\in H^2$, let $u(u_0, f,\s)$ denote
the corresponding solution of \eqref{eq:s:KdV}. Given $\lam,N>0$ and
$v_0\in H^2$, let $v(v_0,u_0,f,\s)$ denote the corresponding solution
of 
\begin{align}
  dv
  + ( v {D} v +  {D}^3 v + \gam v) dt
  =  f dt + \sigma dW-\lambda P_N(v-u)dt,  \quad v(0) = v_0,
     \label{eq:s:KdV:ng}
\end{align}
where $P_N$ denotes the projection onto Fourier modes $|k|\leq N$. We
will often refer to \eqref{eq:s:KdV:ng} as the \textit{nudged system
  corresponding to \eqref{eq:s:KdV}} or simply the \textit{nudged
  system}.  Note the well-posedness results in \cref{prop:exist:uniq}
for \eqref{eq:s:KdV} can be trivially extended to \eqref{eq:s:KdV:ng} by replacing $f$ by $f+\lam P_Nu$ and observing that $\lam P_Nv$ does not crucially impact the well-posedness estimates. {{}We refer the reader to the recent work \cite{JollySadigovTiti2017}, where well-posedness of \eqref{eq:s:KdV:ng} is treated in the deterministic setting; the energy-based approach present there is adaptable to the stochastic setting which is parsed out in \cref{sect:apx:wp:SKdV} for the case of \eqref{eq:s:KdV}.}

The desired Foias-Prodi type estimate is captured by the following
theorem.

\begin{Thm}
  \label{thm:FP:est}
  Let $f \in H^2$ and $\sigma \in \bH^2$ and $\gamma > 0$.  Then there
  exists $\lam_0\geq1$, depending only
  $\gam,\Sob{f}{H^1},\Sob{\s}{L^2}$, and $N_0=N_0(\gam,\lam)$ such that for any $\lam\geq\lam_0$, $N\geq N_0$, and
  $u_0, v_0 \in H^2$
  \begin{align}\label{eq:FP1}
    \begin{split}
   \E \biggl(&
    \exp\biggl( \gamma (t \wedge \tau) - 
    \frac{c_0}{\lambda}
    \int_0^{t\wedge \tau} (1 + \|u(s)\|_{H^2}^2
              + \frac{1}{\lambda}\| v(s)\|^2_{L^2} )ds
        \biggr)
        \|u(t\wedge\tau) - v(t\wedge\tau) \|_{H^1}^2   \\
     &+ \gamma
    \int_0^{t\wedge \tau} \exp\biggl( \gamma s - 
    \frac{c_0}{\lambda} \int_0^s (1 + \|u(t')\|_{H^2}^2 
                           + \frac{1}{\lambda}\| v(t')\|^2_{L^2} )dt'
    \biggr)\|u(s) - v(s) \|_{H^1}^2 ds
    \biggr)\\
    &\qquad\qquad \leq C \left(\|u_0 - v_0\|_{H^1}^2
      +\Sob{u_0-v_0}{L^2}^{10/3}+(1+\Sob{u_0}{L^2}^2)\Sob{u_0-v_0}{L^2}^2\right),
    \end{split}
  \end{align}
  which holds for any stopping time $\tau \geq 0$ and any $t \geq 0$. In particular, $\lam_0, N_0$ may be chosen to satisfy \eqref{cond:lam0:N0}.
  Here, $u = u(u_0, f, \sigma)$ and $v = v(v_0, u_0, f, \sigma)$ obey
  \eqref{eq:s:KdV} and \eqref{eq:s:KdV:ng} respectively. The constant
  $c_0 > 0$ depends only on universal quantities and specifically is
  independent are of $\lam, N$ whereas
  $C = C(\gamma, \|f\|_{H^1}, \|\sigma\|_{L^2}, \lambda)$ so that both
  $c_0, C$ are independent of $u_0, v_0, t, \tau$.
\end{Thm}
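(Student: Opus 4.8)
Throughout, set $w:=u-v$. Subtracting \eqref{eq:s:KdV:ng} from \eqref{eq:s:KdV}, the common forcing $f\,dt+\sigma\,dW$ cancels and $w$ solves the source-free, \emph{noiseless} equation
\begin{align}\label{eq:w:plan}
  dw + \bigl(D^3w + \gamma w + \lambda P_Nw + D(uw) - wDw\bigr)dt = 0,\qquad w(0)=u_0-v_0,
\end{align}
where we have used $uDu-vDv = D(uw)-wDw = D(vw)+wDw$; the plan is to use the first representation for the interactions that require controlling $\|u\|_{H^2}$ and the second for those that can be handled with only $\|v\|_{L^2}$ after moving derivatives off of $v$. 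Following the principle that the difference of two solutions should be measured through the integrals of motion, I take as Lyapunov functional the positivized second-order remainder of the KdV Hamiltonian $\In_1$ about $u$,
\begin{align}\label{eq:Phi:plan}
  \Phi(u,w) := \In_1(v)-\In_1(u)-\langle \In_1'(u),v-u\rangle + \beta\bigl(\|w\|_{L^2}^{10/3}+(1+\|u\|_{L^2}^2)\|w\|_{L^2}^2\bigr),
\end{align}
with $\In_1'(u)=-2D^2u-u^2$, so that the remainder equals $\int_\T\bigl((Dw)^2-uw^2+\tfrac13w^3\bigr)dx$. The first step (the content of \cref{prop:FP:functional}) is to show, via the one-dimensional Gagliardo--Nirenberg bounds $\|w\|_{L^4}^2\le c\|w\|_{L^2}^{3/2}\|Dw\|_{L^2}^{1/2}$ and $\|w\|_{L^3}^3\le c\|w\|_{L^2}^{5/2}\|Dw\|_{L^2}^{1/2}$ together with Young's inequality, that for $\beta$ a sufficiently large universal constant
\begin{align}\label{eq:Phi:equiv:plan}
  \tfrac1c\bigl(\|w\|_{H^1}^2+\|w\|_{L^2}^{10/3}+(1+\|u\|_{L^2}^2)\|w\|_{L^2}^2\bigr)\le \Phi(u,w)\le c\bigl(\|w\|_{H^1}^2+\|w\|_{L^2}^{10/3}+(1+\|u\|_{L^2}^2)\|w\|_{L^2}^2\bigr);
\end{align}
in particular $\Phi(u_0,u_0-v_0)$ is comparable to the right-hand side of \eqref{eq:FP1} and $\Phi\ge\tfrac1c\|w\|_{H^1}^2$.

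The algebraic heart of the argument is that the completely integrable structure of $\In_1$ forces the dangerous terms in $d\Phi$ to cancel. Computing $d\Phi$ along \eqref{eq:s:KdV} and \eqref{eq:w:plan} in the spirit of \cref{lem:Im:evolution}, the cubic-in-$w$ contributions $\int_\T(Dw)^3\,dx$, $\int_\T Du\,(Dw)^2\,dx$ and $\int_\T Du\,w^3\,dx$ — which could only be estimated by invoking an $H^2$-norm of $w$, for which no useful global-in-time bound is available — cancel identically. What survives is $-2\gamma\|Dw\|_{L^2}^2 - 2\lambda\|DP_Nw\|_{L^2}^2$ together with their $L^2$-level analogues from the $\beta$-terms (the dissipative and nudging sinks), the Itô corrections (quadratic in $\sigma$, e.g.\ $\beta\|\sigma\|_{L^2}^2\|w\|_{L^2}^2$), and finitely many remainder terms in which $u$ enters only through $\|u\|_{H^2}$ and $v$ only through $\|v\|_{L^2}$.

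Now introduce the integrating factor $G(t):=\exp\bigl(\gamma t-\tfrac{c_0}{\lambda}\int_0^t(1+\|u(s)\|_{H^2}^2+\tfrac1\lambda\|v(s)\|_{L^2}^2)ds\bigr)$, which is exactly the weight in \eqref{eq:FP1} and satisfies $0\le G\le e^{\gamma t}$. Since $\Phi$ depends on $u$ directly while $w$ carries no noise, Itô's lemma gives $d(G\Phi)=G(\mathcal D+(\gamma-\tfrac{c_0}{\lambda}(1+\|u\|_{H^2}^2+\tfrac1\lambda\|v\|_{L^2}^2))\Phi)dt + G\,\mathcal M\,dW$, where $\mathcal M$ is the martingale term coming from $\sigma\,dW$ paired against $\partial_u\Phi$ and $\mathcal D$ is the drift of $\Phi$ plus the Itô corrections; so it suffices to prove the pathwise inequality
\begin{align}\label{eq:key:plan}
  \mathcal D \le -2\gamma\,\Phi + \tfrac{c_0}{\lambda}\bigl(1+\|u\|_{H^2}^2+\tfrac1\lambda\|v\|_{L^2}^2\bigr)\Phi
\end{align}
for a suitable universal $c_0>0$, once $\lambda\ge\lambda_0(\gamma,\|f\|_{H^1},\|\sigma\|_{L^2})$ and $N\ge N_0(\gamma,\lambda)$, since then $d(G\Phi)+\gamma G\Phi\,dt\le G\,\mathcal M\,dW$. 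Proving \eqref{eq:key:plan} is the step I expect to be the main obstacle. Each remainder term is estimated after splitting $w=P_Nw+Q_Nw$: on the high modes one uses $\|Q_Nw\|_{L^2}\le(N+1)^{-1}\|Dw\|_{L^2}$ to gain a factor $N^{-1}$, so the contribution is $\le\tfrac{c_0}{\lambda}\|u\|_{H^2}^2\Phi+\gamma\|Dw\|_{L^2}^2$ once $N\ge N_0(\gamma,\lambda)$, the last piece being reabsorbed by the damping sink; on the low modes one applies Bernstein's inequality and Young's inequality to bound the contribution by $\tfrac{c_0}{\lambda}(\|u\|_{H^2}^2+\tfrac1\lambda\|v\|_{L^2}^2)\Phi$ plus a multiple of $\lambda(\|P_Nw\|_{L^2}^2+\|DP_Nw\|_{L^2}^2)$ reabsorbed by the nudging sinks (the universal constant $c_0$ being fixed by the Young exponents, and the extra $\tfrac1\lambda$ on $\|v\|_{L^2}^2$ coming from the additional Young step required by the $v$-representation of the low-frequency interactions). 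The Itô and $f$-dependent terms contribute only bounded multiples of $\Phi$, $\le\tfrac{c_0}{\lambda}\Phi$ once $\lambda\ge\lambda_0(\gamma,\|f\|_{H^1},\|\sigma\|_{L^2})$; this, together with the low-mode absorption, dictates the two-stage choice of parameters (first $\lambda_0$, fixing $c_0$ and soaking up the bounded constants, then $N_0=N_0(\gamma,\lambda)$) and yields the explicit admissible thresholds. The delicate point is to carry out every Young split so that the unbounded norms of $u$ and $v$ appear only with the weight $\tfrac{c_0}{\lambda}$ and every leftover is of low-frequency type or of order $N^{-1}$.

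Finally, with $d(G\Phi)+\gamma G\Phi\,dt\le G\,\mathcal M\,dW$ in hand, fix any stopping time $\tau$ and a localizing sequence $\tau_R\uparrow\infty$ (say the first exit times of $t\mapsto\|u(t)\|_{H^2}+\|v(t)\|_{L^2}$ from $[0,R]$), on which $\int_0^{\cdot}G\,\mathcal M\,dW$ is a genuine martingale — legitimate because $G\le e^{\gamma t}$ is bounded on $[0,t]$ and $\mathcal M$ is polynomially bounded in $\|u\|_{H^1}$ and $\|w\|_{H^1}$ (using \cref{prop:mono:intp:bnd} and $w\in C([0,\infty);H^2)$ from \cref{prop:exist:uniq}). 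Integrating on $[0,t\wedge\tau\wedge\tau_R]$ and taking expectations yields $\E[G(t\wedge\tau\wedge\tau_R)\Phi(\cdot)]+\gamma\E\int_0^{t\wedge\tau\wedge\tau_R}G\Phi\,ds\le\Phi(u_0,u_0-v_0)$; since $G\Phi\ge0$, Fatou's lemma as $R\to\infty$ removes $\tau_R$. Using $\|w\|_{H^1}^2\le c\Phi$ on the left and \eqref{eq:Phi:equiv:plan} at the initial time on the right, and observing that the two $G$-weighted terms on the left are exactly those appearing in \eqref{eq:FP1}, we obtain \eqref{eq:FP1} with $C=C(\gamma,\|f\|_{H^1},\|\sigma\|_{L^2},\lambda)$, and with $\lambda_0=\lambda_0(\gamma,\|f\|_{H^1},\|\sigma\|_{L^2})$, $N_0=N_0(\gamma,\lambda)$ as asserted.
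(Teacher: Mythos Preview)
Your overall strategy coincides with the paper's: the functional $\Phi$ you define as the second-order Taylor remainder of $\In_1$ about $u$ is exactly the modified Hamiltonian $\InG_1^+$ of \eqref{eq:FP:functional} (up to the sign convention for $w$), the algebraic cancellation you invoke is precisely the identity \eqref{eq:H1:can}, and the integrating-factor/martingale/localization argument matches \eqref{eq:mother:fn:rearr:1} and the lines that follow. So the architecture is correct.

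There is, however, a real technical gap in how you handle the remainder terms. You fix the positivizing constant $\beta$ to be universal and claim every remainder can be bounded by $\tfrac{c_0}{\lambda}(1+\|u\|_{H^2}^2+\tfrac1\lambda\|v\|_{L^2}^2)\Phi$ plus pieces absorbed by the nudging sinks $\lambda\|DP_Nw\|_{L^2}^2$ and $\beta\lambda\|P_Nw\|_{L^2}^2$. This does not close with a universal $\beta$. Take for instance $\int_\T D^3u\,w^2\,dx=-2\int_\T D^2u\,w\,Dw\,dx$: after Agmon and the $P_N/Q_N$ split on $\|w\|_{L^2}$, the low-mode piece is $\sim\|u\|_{H^2}\|Dw\|_{L^2}^{3/2}\|P_Nw\|_{L^2}^{1/2}$. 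Any Young split that routes the residual to the $\lambda\|P_Nw\|^2$ (or $\lambda\|DP_Nw\|^2$) sink yields only a $\lambda^{-1/3}$ prefactor on the $\|u\|_{H^2}$ piece; forcing the prefactor to $\lambda^{-1}$ instead produces a residual of size $\sim\lambda^3\|P_Nw\|^2$, which a sink of order $\beta\lambda$ cannot absorb. The same obstruction appears for the term $\lambda\int_\T P_Nw\,w^2\,dx$, which carries an explicit factor of $\lambda$ and requires a correspondingly amplified sink. The paper resolves this not by frequency-splitting every term but by letting the positivizing constant $\bar\alpha$ scale with $\lambda$ and $\gamma$ (specifically $\bar\alpha\gtrsim\max\{\lambda^2,\gamma^2,\gamma^{-2/3}\}$, see \eqref{eq:param:tune:1}--\eqref{eq:param:tune:3}); the amplified $L^2$-level sinks $\lambda\bar\alpha\|w\|_{L^2}^2$, $(\gamma+\lambda)\bar\alpha\|w\|_{L^2}^{10/3}$ on the left of \eqref{eq:FB:motherfn:evo} then absorb exactly these residuals, and $\bar\alpha$ is harmlessly absorbed into the final constant $C=C(\gamma,\|f\|_{H^1},\|\sigma\|_{L^2},\lambda)$. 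Once you allow $\beta$ to depend on $\gamma,\lambda$ in this way your outline becomes the paper's proof; as written, the key drift inequality \eqref{eq:key:plan} with the stated $\tfrac{c_0}{\lambda}$ coefficient cannot be obtained.
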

\begin{Rmk}
\label{rmk:N:dep:FP}
  Examining the proof of \cref{thm:FP:est}, cf.
  \eqref{eq:param:tune:1}-\eqref{eq:param:tune:3}, we may select
  any
  \begin{align}\label{cond:lam0:N0}
    \lambda \geq  c(\|f\|_{H^1}+ \|\sigma\|_{L^2}^2), \quad
    N \geq c\max\{\gamma, \gamma^{-1/3}\}\lambda^{5/2} ,
  \end{align}
  for a universal constant $c>0$.  {{}Based on the manner in which the analysis is performed, do not expect the scaling}
  $N \sim \max\{\gamma, \gamma^{-1/3}\}(\|f\|_{H^1}+ \|\sigma\|_{L^2}^2)^{5/2}$
  {{}to be optimal. In the setting of large damping, however, the exponents that appear for the lower bound on the damping can be directly linked to the algebraic structure of the conservation laws and, therefore, expected to be optimal. We refer the reader to \cref{rmk:large:damping:scaling} for the explanation in that setting.}
\end{Rmk}

In order to control the integrating factor that appears in \eqref{eq:FP1}, we
make a judicious choice of stopping times.  For $R, \beta > 0$, let
\begin{align}
          \tau_{R,\beta} := \inf_{t \geq 0} 
     \left\{ 
  \frac{c_0}{\lambda}
  \int_0^t (1 + \| u(s)\|_{H^2}^2 + \frac{1}{\lambda}\| v(s)\|^2_{L^2})ds  
          - \frac{\gamma}{2} t  - \beta \geq R
    \right\},
  \label{def:FP:stoppingtime}
\end{align}
where, to emphasize, $c_0 \geq 1$ is precisely the constant appearing
in \eqref{eq:FP1}.  Here, it is convenient to include the extra
parameter $\beta > 0$, which we use to track dependence on initial
conditions $u_0, v_0$ in subsequent estimates on $\tau_{R,\beta}$ in
\cref{lem:FP:stoppingtime}, \cref{cor:FP:est:Intro}.  This explicit
dependence on the initial conditions is used in the proof of
\cref{thm:regularity} below.

{{}With $\tau_{R, \beta}$ in hand, we may immediately draw a corollary to \cref{thm:FP:est} that yields a conditional decay estimate on the difference $u-v$. Indeed, let $\Gamma(t) = \gamma t - \frac{c_0}{\lambda}\int_0^t(1 + \|u(s)\|_{H^2}^2
+ \frac{1}{\lambda}\| v(s)\|_{L^2}^{2})ds$. Then  \eqref{eq:FP1} implies
    \begin{align}\notag
        \E\indFn{\tau_{R,\be}=\infty}\exp\left(\Gamma(t\wedge\tau_{R,\be})\right)\|u(t)-v(t)\|_{H^1}^2\leq         \E\exp\left(\Gamma(t\wedge\tau_{R,\be})\right)\|u(t)-v(t)\|_{H^1}^2.\notag
    \end{align}
Since
    \begin{align}\notag
        \indFn{\tau_{R,\be}=\infty}\exp\left(\Gamma(t\wedge\tau_{R,\be})\right)\geq\indFn{\tau_{R,\be}=\infty}\exp\left(\frac{\gam}2t-(R+\be)\right).
    \end{align}
It then follows that
    \begin{align}\notag
         e^{\frac{\gam}2t-(R+\be)} \E\indFn{\tau_{R,\be}=\infty}\|u(t)-v(t)\|_{H^1}^2\leq\E\indFn{\tau_{R,\be}=\infty}\exp\left(\Gamma(t\wedge\tau_{R,\be})\right)\|u(t)-v(t)\|_{H^1}^2.
    \end{align}
We may then invoke \cref{thm:FP:est} to obtain the following result.
}

\begin{Cor}\label{cor:FP:est}
  Under the conditions of \cref{thm:FP:est}, for any
  $u_0,v_0 \in H^2$ and any $R, \beta \geq 0$
 \begin{align}
   &\E \indFn{\tau_{R,\beta} = \infty } \|u(t)-v(t) \|_{H^1}^2
   \notag\\
    & \leq C\exp\left({{}R + \beta}
      - \frac{\gamma}{2} t\right)
    \left(\|u_0-v_0\|_{H^1}^2+\Sob{u_0-v_0}{L^2}^{10/3}
      +(1+\Sob{u_0}{L^2}^2)\Sob{u_0-v_0}{L^2}^2\right),
       \label{eq:FP2}
 \end{align}
 for $t \geq 0$.  Here, $u = u(u_0,f, \sigma), v(v_0, u_0, f, \sigma)$
 and note that
  $c_0, C > 0$ are the
 constants appearing in \eqref{eq:FP1}.
\end{Cor}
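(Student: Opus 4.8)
The plan is to obtain \cref{cor:FP:est} as a direct bookkeeping consequence of \cref{thm:FP:est}, by specializing the free stopping time in \eqref{eq:FP1} to $\tau = \tau_{R,\beta}$ and then using the defining property of $\tau_{R,\beta}$ to bound the integrating factor from below on the event $\{\tau_{R,\beta} = \infty\}$. Throughout, abbreviate $\Gamma(t) := \gamma t - \frac{c_0}{\lambda}\int_0^t\big(1 + \|u(s)\|_{H^2}^2 + \frac{1}{\lambda}\|v(s)\|_{L^2}^2\big)ds$, so that the left-hand side of \eqref{eq:FP1} is $\E\big(\exp(\Gamma(t\wedge\tau))\|u(t\wedge\tau) - v(t\wedge\tau)\|_{H^1}^2 + \gamma\int_0^{t\wedge\tau}\exp(\Gamma(s))\|u(s)-v(s)\|_{H^1}^2\,ds\big)$.

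First I would invoke \cref{thm:FP:est} with $\tau = \tau_{R,\beta}$. The second term on the left of \eqref{eq:FP1} is nonnegative, being an integral of an exponential times a square, so it may be discarded; and multiplying the surviving (nonnegative) term by $\indFn{\tau_{R,\beta}=\infty}$ only decreases it. This yields
\[
\E\,\indFn{\tau_{R,\beta}=\infty}\exp\!\big(\Gamma(t\wedge\tau_{R,\beta})\big)\,\|u(t\wedge\tau_{R,\beta}) - v(t\wedge\tau_{R,\beta})\|_{H^1}^2
\;\leq\; C\Big(\|u_0-v_0\|_{H^1}^2+\Sob{u_0-v_0}{L^2}^{10/3}+(1+\Sob{u_0}{L^2}^2)\Sob{u_0-v_0}{L^2}^2\Big),
\]
with $C = C(\gamma,\|f\|_{H^1},\|\sigma\|_{L^2},\lambda)$ as in \cref{thm:FP:est}.

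Next I would observe that on the event $\{\tau_{R,\beta} = \infty\}$ one has $t\wedge\tau_{R,\beta} = t$, and, by the very definition \eqref{def:FP:stoppingtime}, the quantity $\frac{c_0}{\lambda}\int_0^t(1 + \|u\|_{H^2}^2 + \frac{1}{\lambda}\|v\|_{L^2}^2)\,ds - \frac{\gamma}{2}t - \beta$ never reaches $R$, whence $\frac{c_0}{\lambda}\int_0^t(\cdots)\,ds \le R + \beta + \frac{\gamma}{2}t$ for every $t\ge 0$. Substituting into $\Gamma$ gives the pointwise lower bound $\Gamma(t) \ge \frac{\gamma}{2}t - R - \beta$ on $\{\tau_{R,\beta}=\infty\}$. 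Inserting this into the previous display and rearranging the resulting exponential factor produces exactly \eqref{eq:FP2}.

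I do not expect any genuine obstacle: the content lies entirely in \cref{thm:FP:est}, and this corollary is pure rearrangement. The only points requiring a word of care are (i) that the discarded integral term in \eqref{eq:FP1} is nonnegative, (ii) that $t\wedge\tau_{R,\beta}$ collapses to $t$ on $\{\tau_{R,\beta}=\infty\}$, so the left side really controls $\E\,\indFn{\tau_{R,\beta}=\infty}\|u(t)-v(t)\|_{H^1}^2$, and (iii) the strict-versus-nonstrict inequality implicit in $\inf\{\,\cdots \ge R\,\}$, which is immaterial since it only shifts $R$ by an absorbable constant. These are handled precisely as in the remark preceding the statement of \cref{cor:FP:est}.
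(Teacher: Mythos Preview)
Your proposal is correct and follows essentially the same approach as the paper: specialize \eqref{eq:FP1} with $\tau=\tau_{R,\beta}$, drop the nonnegative time-integral term, restrict to the event $\{\tau_{R,\beta}=\infty\}$ where $t\wedge\tau_{R,\beta}=t$, and use the definition \eqref{def:FP:stoppingtime} to bound $\Gamma(t)\ge \frac{\gamma}{2}t-(R+\beta)$ from below, then rearrange. The paper's argument (given as the short paragraph immediately preceding the corollary) does exactly this, so there is nothing to add.
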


In order to draw useful conclusions from \cref{cor:FP:est} and
ultimately to deduce the convergence we overviewed in
\cref{thm:FP:est:Intro} above in the introduction, we must show
that \eqref{eq:s:KdV:ng} has some suitable Lyapunov structure.
Indeed, since \eqref{eq:s:KdV:ng} is the same as \eqref{eq:s:KdV}
except for an additional term, we show that the Lyapunov structure
established in \cref{sect:Lyapunov} for \eqref{eq:s:KdV} is largely
preserved by \eqref{eq:s:KdV:ng}, but with upper bounds depending on
$\lam$. This is quantified precisely by the following theorem:
\begin{Prop}
  \label{thm:Lyapunov:ng}
  Fix any  $\gamma >0$, $f \in H^{ 2}$
  and $\sigma \in \bH^{ 2}$.  Given $u_0,v_0\in H^{2}$ and
  $\lam,N \geq 1$, let $u = u(u_0, f, \sigma)$, $v = v(v_0, u_0, f, \sigma)$ 
  denote the corresponding
  solution of \eqref{eq:s:KdV}, \eqref{eq:s:KdV:ng} respectively.
  \begin{itemize}
  \item[(i)]  Then, for any $R > 0$,
  \begin{align}\label{eq:L2:exp:Mt:ng}
    &\Prb
      \Bigg{(}\sup_{t\geq0}
      \bigg{(}\Sob{v(t)}{L^2}^2+ \gam\int_0^t\Sob{v(s)}{L^2}^2ds
      - \lambda C t\bigg{)}
      \geq\Sob{v_0}{L^2}^2+ \frac{\lambda}{\gamma} \Sob{u_0}{L^2}^2 + R\Bigg{)}
      \leq\exp(-\bar{\eta} R),
  \end{align}
  where $C = C(\gam, \Sob{f}{L^2},\Sob{\s}{L^2})$ is independent of
  $\lambda \geq 1$,
  $\bar{\eta} = {\eta}(\gam,\Sob{\s}{L^2}, \lambda) > 0$ and both
  constants are independent of $u_0, v_0$ and $R$.
  \item[(ii)] Moreover for each $m \geq 0$, assuming furthermore  
  $f, v_0 \in H^{m}$
  and $\sigma \in \bH^{m }$
  \begin{align}
   \label{eq:Lyapunov:Bnd:Hm:ng}
    \E \|v(t)\|_{H^m}^2
   \leq C (e^{-\gam t}  ( \|v_0\|_{H^m}^2  
       + \Sob{v_0}{L^2}^q+\Sob{u_0}{L^2}^{q}+1) + 1),
  \end{align}
  where
  $C = C(m, \gamma, \|f\|_{H^m}, \|\sigma\|_{H^m}, \lambda, N), q =
  q(m) > 0$ are independent of $u_0, v_0$ and of $t \geq 0$.   Here,
  we emphasize that the bound \eqref{eq:Lyapunov:Bnd:Hm:ng}
  \emph{does not} require $u_0 \in H^m$ when $m> 2$.
  \end{itemize}
\end{Prop}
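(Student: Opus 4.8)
The strategy rests on one observation: \eqref{eq:s:KdV:ng} is \eqref{eq:s:KdV} perturbed only by the extra drift $-\lam P_N(v-u)\,dt$, and since $P_N$ caps frequencies at $N$, this term sees $u$ purely through $\|P_N u\|_{L^2}\le\|u\|_{L^2}$ (at the price of $\lam,N$--dependent constants). Hence I can re-run the energy arguments of \cref{sect:Lyapunov} almost verbatim, with $u$ entering only via its $L^2$ norm, and then feed in the $L^2$ moment control for $u$ already supplied by \cref{thm:Lyapunov}.

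For $(i)$ I apply It\=o to $\|v\|_{L^2}^2$ as in \cref{lem:L2:evolution}: the transport term $\langle vDv,v\rangle$ and the dispersive term $\langle D^3v,v\rangle$ vanish, while the nudging contributes $-2\lam\langle P_N(v-u),v\rangle=-2\lam\|P_Nv\|_{L^2}^2+2\lam\langle P_Nu,P_Nv\rangle\le\lam\|u\|_{L^2}^2$ after a Young split absorbing $\|P_Nv\|_{L^2}^2$. Absorbing $2\langle f,v\rangle$ into the damping yields
\begin{equation*}
 d\|v\|_{L^2}^2+\tfrac{3\gam}{2}\|v\|_{L^2}^2\,dt\le\Big(\tfrac2\gam\|f\|_{L^2}^2+\|\s\|_{L^2}^2\Big)dt+\lam\|u\|_{L^2}^2\,dt+2\langle v,\s\rangle\,dW .
\end{equation*}
I then add $\tfrac\lam\gam$ times the (nudging-free) inequality for $\|u\|_{L^2}^2$, which brings $\tfrac{3\lam}{2}\int_0^t\|u\|_{L^2}^2\,ds$ to the left and $\tfrac\lam\gam\|u_0\|_{L^2}^2$ to the right; after cancelling $\lam\int_0^t\|u\|_{L^2}^2\,ds$ a net good drift $\tfrac\lam2\int_0^t\|u\|_{L^2}^2\,ds$ survives. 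Dropping a positive term and moving $\gam\int_0^t\|v\|_{L^2}^2\,ds$ to the left, one gets pathwise, for all $t$,
\begin{equation*}
 \|v(t)\|_{L^2}^2+\gam\!\int_0^t\!\|v\|_{L^2}^2\,ds-\lam Ct\le\|v_0\|_{L^2}^2+\tfrac\lam\gam\|u_0\|_{L^2}^2+\mathcal{N}(t)-\tfrac{\bar\eta}2[\mathcal{N}](t),
\end{equation*}
where $\mathcal{N}$ collects the two stochastic integrals, $[\mathcal{N}](t)\le 8\|\s\|_{L^2}^2\int_0^t\|v\|_{L^2}^2+\tfrac{8\lam^2}{\gam^2}\|\s\|_{L^2}^2\int_0^t\|u\|_{L^2}^2$, and $\bar\eta=\bar\eta(\gam,\|\s\|_{L^2},\lam)$ is small enough that $\tfrac{\bar\eta}2[\mathcal{N}](t)$ is dominated by $\tfrac\gam2\int_0^t\|v\|_{L^2}^2+\tfrac\lam2\int_0^t\|u\|_{L^2}^2$. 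Taking $\sup_t$ and invoking the exponential martingale inequality \eqref{eq:exp:Mt:gen} gives \eqref{eq:L2:exp:Mt:ng}; using $\lam\ge1$ one checks that the resulting drift constant $C$ depends only on $\gam,\|f\|_{L^2},\|\s\|_{L^2}$, not on $\lam$.

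For $(ii)$ I work with $\In_m^+(v)$ rather than $\|v\|_{H^m}^2$ (so that the KdV nonlinearity drops out) and compute its It\=o evolution as in \cref{lem:Im:plus:evolution}, now with the extra drift $-\lam\langle L_m(v),P_N(v-u)\rangle$ plus the corresponding correction in the $(\|v\|_{L^2}^2+1)^{\bar q_m}$ piece, where $L_m$ is the $L^2$--gradient of $\In_m$ from \eqref{def:dispersive:J}. The term $-\lam\langle L_m(v),P_Nv\rangle$ has leading part $-2\lam\|D^mP_Nv\|_{L^2}^2\le0$ after integration by parts, and its remaining lower-order pieces, together with $+\lam\langle L_m(v),P_Nu\rangle$, are estimated by integrating derivatives onto $P_Nv$ (resp. $P_Nu$), using $\|D^jP_Nu\|_{L^2}\le N^j\|u\|_{L^2}$ and the monomial interpolation bound \cref{prop:mono:intp:bnd}. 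Every such term is subcritical in $\|D^mv\|_{L^2}$, so Young's inequality converts it into a small multiple of $\In_m^+(v)$ plus $C_{\gam,\lam,N}(1+\|u\|_{L^2}^q)$ for some $q=q(m)\ge2$; combining with the estimates of \cref{sect:Alg:mom:bnd} for the non-nudging terms produces
\begin{equation*}
 d\In_m^+(v)+\tfrac{3\gam}{2}\In_m^+(v)\,dt\le C(1+\|u\|_{L^2}^q)\,dt+(\cdots)\,dW .
\end{equation*}
Taking expectations (localizing and passing to the limit by Fatou), applying Gronwall, and inserting $\E\|u(s)\|_{L^2}^q\le C(e^{-\gam qs/2}(\|u_0\|_{L^2}^q+1)+1)$ from \cref{thm:Lyapunov}$(i)$ at $m=0$, the convolution $\int_0^te^{-\frac{3\gam}2(t-s)}e^{-\frac{\gam q}2 s}\,ds\lesssim e^{-\gam t}$ (valid since $q\ge2$) yields \eqref{eq:Lyapunov:Bnd:Hm:ng} after converting between $\In_m^+$ and $\|\cdot\|_{H^m}^2$ via \cref{lem:equivalence}. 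Since $u$ enters only through $\|u\|_{L^2}$, no assumption $u_0\in H^m$ is needed for $m>2$; the case $m=0$ is immediate from $(i)$.

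The principal obstacle is the treatment of the nudging terms in $(ii)$: one must confirm that pairing the order-$2m$ operator $L_m(v)$ against $P_N(v-u)$ never produces a term critical in $\|D^mv\|_{L^2}$ — the integration by parts together with the rank structure underlying \cref{prop:mono:intp:bnd} is precisely what rules this out — and one must carefully carry the $\lam,N$--dependence through these estimates so the final bound retains the clean exponential-in-time form with exactly the advertised dependence on $v_0,u_0$.
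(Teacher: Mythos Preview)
Your proof is correct and, for part $(ii)$, essentially identical to the paper's: both view the nudging as replacing $f$ by $\tilde f=f-\lam P_N(v-u)$, observe that the only potentially dangerous new term $2\int_\T D^mv\,D^m\tilde f\,dx$ contributes $-2\lam\|D^mP_Nv\|_{L^2}^2\le 0$ plus a cross term $2\lam\langle D^mP_Nv,P_Nu\rangle$ bounded via $\|D^jP_Nu\|_{L^2}\le N^j\|u\|_{L^2}$, and that all lower-order contributions are subcritical by \cref{prop:mono:intp:bnd}; then take expectations, Gr\"onwall, and feed in the $L^2$ moment bound for $u$ from \cref{thm:Lyapunov}$(i)$.

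For part $(i)$ there is a small but genuine tactical difference. The paper keeps the $u$-- and $v$--contributions separate: after deriving
\begin{align*}
\|v(t)\|_{L^2}^2+\gam\!\int_0^t\!\|v\|_{L^2}^2\,ds-\lam Ct-\|v_0\|_{L^2}^2-\tfrac{\lam}{\gam}\|u_0\|_{L^2}^2
\;\le\;\Bigl(\text{$v$--martingale}-\tfrac{\bar\eta}{2}[\,\cdot\,]\Bigr)+\lam\Bigl[\!\int_0^t\!\|u\|_{L^2}^2\,ds-\tfrac{1}{\gam}(\cdots)t-\tfrac{1}{\gam}\|u_0\|_{L^2}^2\Bigr],
\end{align*}
it bounds the first bracket by the exponential martingale inequality \eqref{eq:exp:Mt:gen} and the second by the already-established $L^2$ tail bound \eqref{eq:exp:est:asymptotic:L2} for $u$, effectively a union of two exponential-tail events. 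You instead add $\tfrac{\lam}{\gam}$ times the $u$--energy identity \emph{with its noise}, collect the two stochastic integrals into a single martingale $\mathcal N$, and apply \eqref{eq:exp:Mt:gen} once; the price is that $\bar\eta$ must now be small enough to control the cross-quadratic-variation, forcing $\bar\eta\lesssim\gam^2/(\lam\|\s\|_{L^2}^2)$, which is consistent with the stated $\lam$--dependence of $\bar\eta$. Your route is slightly more self-contained (it does not need to cite \eqref{eq:exp:est:asymptotic:L2}), while the paper's route makes clearer that the $u$--part is handled by results already in hand; both yield the same bound with the same parameter dependencies.
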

\begin{Rmk}
It is not difficult to obtain higher order moment estimates for
  $\|v\|_{H^m}$ by further exploiting the bounds we developed in
  \cref{sect:Lyapunov}.  However since \eqref{eq:Lyapunov:Bnd:Hm:ng}
  is sufficent for what follows we omit further details.
\end{Rmk}

We now draw the following two crucial but immediate consequences from
\cref{thm:Lyapunov:ng}.  The first is a bound on the stopping times
$\tau_{R,\beta}$ introduced in \eqref{def:FP:stoppingtime}.
\begin{Cor}\label{lem:FP:stoppingtime}
  Fix any $\gam > 0$, $\lam, N \geq 1$ and $f \in H^2$,
  $\sigma \in \bH^2$.  Given $u_0, v_0 \in H^2$, let
  $u = u(u_0, f, \sigma)$ solve \eqref{eq:s:KdV} and
  $v = v(v_0, u_0, f, \sigma)$ obey \eqref{eq:s:KdV:ng}.
  For each $R, \beta > 0$ define $\tau_{R, \beta}$ as in
  \eqref{def:FP:stoppingtime}.  Then, there exists a
  constant $C = C(\gamma, \|f\|_{H^2}, \|\sigma\|_{H^2})$
  independent of $u_0$, $v_0$, such that, so long as
  \begin{align}
    \beta \geq C (1+ \|u_0\|^2_{H^2} + \|u_0\|^{14/3}_{L^2}  + \|v_0\|^2_{L^2}),
    \label{eq:beta:cond:stt}
  \end{align}
  we have, for each $p > 0$ and $\lam$ chosen sufficiently large depending only on $\gam$, that
  \begin{align}\label{eq:FP:stoppingtime:bound}
        \Prb(\tau_{R, \beta} <\infty)\leq 
        \frac{C(1 + \|u_0\|_{H^2}^{q})}{R^p}.
  \end{align}
  Here, the constants
  $C =C(p,\gam,\Sob{f}{H^2},\Sob{\s}{H^2}, \lambda)$, $q = q(p)$ do
  not depend on $R,\beta$, $u_0$ or $v_0$.
\end{Cor}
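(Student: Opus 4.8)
The plan is to exhibit $\{\tau_{R,\beta}<\infty\}$ as a subset of the union of two ``bad'' events of controllable probability, by arguing that outside of those events the quantity inside the braces in \eqref{def:FP:stoppingtime} never attains the level $R$. The two events arise from controlling the only two terms in the integrand of \eqref{def:FP:stoppingtime} that are not a priori bounded, namely $\Sob{u(s)}{H^2}^2$ and $\Sob{v(s)}{L^2}^2$, through the Lyapunov bounds already established.

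For the first, I would apply \cref{thm:Lyapunov} $(ii)$, that is \eqref{eq:poly:est:asymptotic} with $m=2$, $p=1$, $T=0$ and an exponent $q>2$ to be fixed later, together with \cref{lem:equivalence}: there is an event $\Omega_1$ with $\Prb(\Omega_1)\le C\In_2^+(u_0)^q R^{-(q/2-1)}$ off of which $\gamma\int_0^t\Sob{u(s)}{H^2}^2\,ds\le C\bigl(t+\In_2^+(u_0)+R+1\bigr)$ for all $t\ge0$. For the second, \cref{thm:Lyapunov:ng} $(i)$, i.e. \eqref{eq:L2:exp:Mt:ng}, provides an event $\Omega_2$ with $\Prb(\Omega_2)\le e^{-\bar{\eta}R}$ off of which $\gamma\int_0^t\Sob{v(s)}{L^2}^2\,ds\le \lambda C t+\Sob{v_0}{L^2}^2+\tfrac{\lambda}{\gamma}\Sob{u_0}{L^2}^2+R$ for all $t\ge0$ (here the two cutoff parameters in \eqref{eq:poly:est:asymptotic} and \eqref{eq:L2:exp:Mt:ng} have both been taken equal to $R$). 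Substituting these into \eqref{def:FP:stoppingtime}, the weight $\tfrac1\lambda$ multiplying $\Sob{v}{L^2}^2$ cancels exactly one power of $\lambda$ in the rate $\lambda C t$, so the contributions linear in $t$ collect into a coefficient of size $O(\lambda^{-1})$ times constants depending on $\gamma,\Sob{f}{H^2},\Sob{\sigma}{H^2}$; choosing $\lambda$ large enough --- depending on $\gamma$ and those constants, and at least as large as the $\lambda_0$ of \cref{thm:FP:est} --- makes this coefficient at most $\tfrac{\gamma}{2}$, so that it is dominated by the $-\tfrac{\gamma}{2}t$ already present in \eqref{def:FP:stoppingtime}. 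The remainder is independent of $t$ and bounded by $C\lambda^{-1}\bigl(1+\In_2^+(u_0)+\Sob{u_0}{L^2}^2+\Sob{v_0}{L^2}^2\bigr)+C\lambda^{-1}R$; by \cref{lem:equivalence} (and $\Sob{u_0}{L^2}\le\Sob{u_0}{H^2}$ for mean-zero functions) the first group is at most $\beta$ whenever $\beta$ obeys \eqref{eq:beta:cond:stt}, while the second is at most $R$ once $\lambda$ is large. Hence on $\Omega_1^c\cap\Omega_2^c$ the bracketed quantity in \eqref{def:FP:stoppingtime} stays strictly below $\beta+R$ for every $t\ge0$, so $\tau_{R,\beta}=\infty$ there.

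It then remains to collect the probabilities: $\Prb(\tau_{R,\beta}<\infty)\le\Prb(\Omega_1)+\Prb(\Omega_2)\le C\In_2^+(u_0)^q R^{-(q/2-1)}+e^{-\bar{\eta}R}$. Given the target exponent $p>0$, I would fix $q\ge 2p+2$, so that the first term is at most $C\In_2^+(u_0)^q R^{-p}$; the exponential term is trivially absorbed into $C R^{-p}$, and \cref{lem:equivalence} together with $\Sob{u_0}{L^2}\le\Sob{u_0}{H^2}$ bounds $\In_2^+(u_0)^q$ by $C(1+\Sob{u_0}{H^2}^{q'})$ for a suitable $q'=q'(p)$, which gives \eqref{eq:FP:stoppingtime:bound} (for $R\le 1$ the trivial bound $\Prb\le1$ suffices after enlarging the constant). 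The one step that requires genuine care is the absorption of the linear-in-$t$ terms in the second paragraph: it hinges on the precise $\lambda$-dependence of the Lyapunov bounds for the nudged system recorded in \cref{thm:Lyapunov:ng}, on the fact that the weight $\tfrac1\lambda$ in front of $\Sob{v}{L^2}^2$ in \eqref{def:FP:stoppingtime} neutralizes exactly one power of $\lambda$ in the growth rate, and on matching the powers of $\Sob{u_0}{L^2}$ and $\Sob{v_0}{L^2}$ that emerge against those permitted by \eqref{eq:beta:cond:stt}.
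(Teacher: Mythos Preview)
Your proposal is correct and follows essentially the same route as the paper: split $\{\tau_{R,\beta}<\infty\}$ into two bad events controlled respectively by \cref{thm:Lyapunov}(ii) for $\int_0^t\|u\|_{H^2}^2\,ds$ and \cref{thm:Lyapunov:ng}(i) for $\int_0^t\|v\|_{L^2}^2\,ds$, then take $\lambda$ large to absorb the linear-in-$t$ drift and $\beta$ large to absorb the initial-data constants. The only cosmetic difference is that the paper first passes to $\In_2^+(u)$ (which simultaneously handles the constant $1$ in the integrand via \cref{lem:equivalence}) and rescales each piece by $\lambda^{1/2}$, $\lambda^{3/2}$ before invoking the Lyapunov estimates, whereas you fix the cutoff $R$ directly in both estimates and check containment afterwards; the substance and the resulting bound are the same.
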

\noindent The second consequence is one which follows from \eqref{eq:FP2}
and \eqref{eq:FP:stoppingtime:bound}{{}upon making a judicious choice of $t$-dependence for $R$, e.g., $R=(\gam/100)t$; it asserts a precise and more quantitative
restatement of \cref{thm:FP:est:Intro} and is stated as follows}
\begin{Cor}\label{cor:FP:est:Intro}
  Under the conditions of \cref{thm:FP:est} select any $\lambda, N \geq 1$
  in \eqref{eq:s:KdV:ng} so that \eqref{eq:FP1} holds.
  Then for any $u_0,v_0 \in H^2$ and for any  $p > 0$, there exists {{}$q=q(p)>2$} such that
  \begin{align}
    \E\|u(t)-v(t)\|_{H^1} \leq \frac{\exp(C {{}(1+\|u_0\|_{H^2}+\|v_0\|_{H^1})^q)}}{t^p}
    \label{eq:algebraic:decay}
  \end{align} 
  for all $t\geq0$, where $u = u(u_0,f, \sigma)$ and $v = v(v_0, u_0, f, \sigma)$ 
  obey
  \eqref{eq:s:KdV} and \eqref{eq:s:KdV:ng} respectively.
  Here, the constant $C(p,\gam,\Sob{f}{H^2},\Sob{\sigma}{H^2}, \lambda) > 0$ is
  independent of $u_0$, $v_0$ and $t > 0$.
\end{Cor}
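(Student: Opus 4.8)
The plan is to estimate $\E\|u(t)-v(t)\|_{H^1}$ by splitting the expectation according to whether the stopping time $\tau_{R,\beta}$ from \eqref{def:FP:stoppingtime} is finite, and then to tune the free parameter $R$ as a function of $t$. First I would fix $p>0$, choose $\lambda,N$ large enough that both \cref{thm:FP:est} and \cref{lem:FP:stoppingtime} apply (so in particular \eqref{cond:lam0:N0} holds), and set $\beta := C(1 + \|u_0\|_{H^2}^2 + \|u_0\|_{L^2}^{14/3} + \|v_0\|_{L^2}^2)$ with $C$ the constant of \eqref{eq:beta:cond:stt}, so the hypothesis of \cref{lem:FP:stoppingtime} is met. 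Then I would write
\begin{align*}
  \E\|u(t)-v(t)\|_{H^1}
  &= \E\big(\indFn{\tau_{R,\beta}=\infty}\|u(t)-v(t)\|_{H^1}\big)
    + \E\big(\indFn{\tau_{R,\beta}<\infty}\|u(t)-v(t)\|_{H^1}\big),
\end{align*}
and handle the two pieces separately.

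For the first piece, Jensen's inequality gives $\E(\indFn{\tau_{R,\beta}=\infty}\|u-v\|_{H^1})\le \big(\E(\indFn{\tau_{R,\beta}=\infty}\|u-v\|_{H^1}^2)\big)^{1/2}$, into which I would feed \eqref{eq:FP2}, bounding the right-hand side by $C\exp\big(\tfrac12(R+\beta-\tfrac{\gamma}{2}t)\big)\,\Phi(u_0,v_0)^{1/2}$, where $\Phi(u_0,v_0)$ denotes the polynomial initial-data factor on the right of \eqref{eq:FP2} and satisfies $\Phi(u_0,v_0)\le C(1+\|u_0\|_{H^2}+\|v_0\|_{H^1})^{q_1}$. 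Taking $R := \tfrac{\gamma}{100}t$ turns the exponent into $\tfrac12\beta - c\gamma t$ with $c>0$ universal; since $e^{\beta/2}=\exp\big(C(1+\|u_0\|_{H^2}+\|v_0\|_{H^1})^{q}\big)$ for a suitable $q$ and $e^{-c\gamma t}\le C(p,\gamma)\,t^{-p}$, this piece is at most $\exp\big(C(1+\|u_0\|_{H^2}+\|v_0\|_{H^1})^{q}\big)\,t^{-p}$ after absorbing the polynomial factor $\Phi^{1/2}$ into the exponential.

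For the second piece, Cauchy--Schwarz gives $\E(\indFn{\tau_{R,\beta}<\infty}\|u-v\|_{H^1})\le \Prb(\tau_{R,\beta}<\infty)^{1/2}\big(\E\|u(t)-v(t)\|_{H^1}^2\big)^{1/2}$. The second factor I would bound uniformly in $t$ via $\E\|u-v\|_{H^1}^2\le 2\E\|u\|_{H^1}^2+2\E\|v\|_{H^1}^2$, using \cref{thm:Lyapunov}(i) with $m=1,p=1$ together with \cref{lem:equivalence} to control $\E\|u(t)\|_{H^1}^2$ by a polynomial in $\|u_0\|_{H^1}$, and \cref{thm:Lyapunov:ng}(ii) with $m=1$ to control $\E\|v(t)\|_{H^1}^2$ by a polynomial in $\|v_0\|_{H^1}$ and $\|u_0\|_{L^2}$; hence $\big(\E\|u-v\|_{H^1}^2\big)^{1/2}\le C(1+\|u_0\|_{H^2}+\|v_0\|_{H^1})^{q_2}$. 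For the first factor I would apply \eqref{eq:FP:stoppingtime:bound} with $2p$ in place of $p$ (legitimate, since that estimate holds for every positive exponent), so that with $R=\tfrac{\gamma}{100}t$ we get $\Prb(\tau_{R,\beta}<\infty)^{1/2}\le C(1+\|u_0\|_{H^2})^{q_3}\,t^{-p}$. Multiplying, the second piece is at most $C(1+\|u_0\|_{H^2}+\|v_0\|_{H^1})^{q}\,t^{-p}$, again of the required form. Adding the two contributions and letting $q$ be the largest exponent produced yields \eqref{eq:algebraic:decay}.

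There is no serious obstacle once \cref{cor:FP:est} and \cref{lem:FP:stoppingtime} are in hand; the only genuine decision is the coupling $R\propto t$, which is essentially forced --- $R$ must grow linearly in $t$ so that the exponential gain $e^{-\gamma t/2}$ in \eqref{eq:FP2} is not destroyed, yet slowly enough that the factor $R^{-p}$ from the stopping-time estimate still decays. The one subtlety worth flagging is that the dependence on initial data in \eqref{eq:algebraic:decay} is necessarily exponential rather than polynomial, because in \eqref{eq:FP2} the initial data enters through $e^{\beta}$ with $\beta\gtrsim\|u_0\|_{H^2}^2$; this is exactly why \eqref{eq:algebraic:decay} carries the factor $\exp\big(C(1+\|u_0\|_{H^2}+\|v_0\|_{H^1})^q\big)$.
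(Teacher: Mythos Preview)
Your proposal is correct and follows essentially the same approach as the paper: split the expectation on $\{\tau_{R,\beta}=\infty\}$ versus $\{\tau_{R,\beta}<\infty\}$, apply \cref{cor:FP:est} to the first piece, Cauchy--Schwarz together with \cref{lem:FP:stoppingtime} and the Lyapunov bounds (\cref{thm:Lyapunov}, \cref{thm:Lyapunov:ng}) to the second, and then take $R$ proportional to $t$. The only cosmetic differences are that the paper chooses $R=\tfrac{\gamma}{4}t$ rather than $\tfrac{\gamma}{100}t$, and is slightly more terse about the Jensen step and about taking the stopping-time exponent large enough.
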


The remainder of \cref{sect:FP:est} is organized as follows. In
\cref{sect:mod:Ham} we introduce and study a functional related to the
$H^1$-norm which we use to establish \cref{thm:FP:est}.  In
\cref{sect:FP:thm:proof} we turn to the proof of \cref{thm:FP:est}.
Finally we prove \cref{thm:Lyapunov:ng} and its consequences
\cref{lem:FP:stoppingtime}, \cref{cor:FP:est:Intro} in
\cref{sect:Lyapunov:ng}.

\subsection{The Modified Hamiltonian Functional}
\label{sect:mod:Ham}

As discussed in the introduction, our proof of \cref{thm:FP:est} makes
crucial use of a `modified Hamiltonian' $\InG_1^+$ (given in
\eqref{eq:FP:functional} below), which is inspired by a functional
introduced previously in \cite{Ghidaglia1988}.  Before proceeding to
define $\InG_1^+$ and to compute its evolution we begin with some
heuristics.  These considerations will guide the estimates carried out
below in \cref{sect:FP:thm:proof}.

Given $u$ and $v$ satisfying \eqref{eq:s:KdV} and \eqref{eq:s:KdV:ng}
respectively and taking $w:=v-u$, we obtain the evolution of the
difference:
\begin{align}\label{eq:s:KdV:diff}
  \bdy_tw+\frac{1}{2} D(w^2)
  +D^3w +D(uw) {{}+} \gam w + \lam P_Nw= 0,
  \quad w(0)=v_0-u_0.
\end{align}
We might begin by computing the
evolution of the $L^2$ norm of $w$ and find
\begin{align}
   \frac{d}{dt}\| w\|^2_{L^2}
  + 2\gamma \| w\|^2_{L^2} + 2\lambda  \|P_N w\|^2_{L^2}
  = - 2\int_{\T} D (uw)  w dx = - \int_{\T} Du w^2 dx.
  \label{eq:w:L2}
\end{align}
{{}
By H\"older's inequality and Sobolev embedding we obtain
    \begin{align*}
        \left|\int_{\T} Du w^2dx\right|\leq \Sob{Du}{L^\infty}\Sob{w}{L^2}^2\leq c\Sob{u}{H^2}\Sob{w}{L^2}^2.
    \end{align*}
Hence
}
\begin{align}
\| w(t) \|^2_{L^2} \leq \|w_0 \|_{L^2}^2 \exp\left( - 2\gamma t + c\int_0^t
\|u\|_{H^2}ds\right).
  \label{eq:w:L2:no:decay}
\end{align}
Here, in contrast to the large damping case where $\gamma \gg 1$ (see
\eqref{eq:stab:main} and \cref{thm:spectralgap} in
\cref{sect:Large:Damping} below), this bound is not anticipated to
produce a desirable decay in the $L^2$ norm; we do not expect that
$- 2\gamma t + c\int_0^t \|u\|_{H^2}^2ds \to -\infty$ as
$t \to \infty$.  On the other hand, in contrast to the case of
dissipative equations such as the Navier-Stokes or Reaction-Diffusion
equations, we cannot apply the generalized Poincar\'e inequality as in
\cite{FoiasProdi1967} to produce an effective large damping on the
high frequencies from the combination of the terms
$2\gamma \| w\|^2_{L^2} + 2\lambda \|P_N w\|^2_{L^2}$ appearing in
\eqref{eq:w:L2}.  See e.g. \cite{GlattHoltzMattinglyRichards2015} for
further general context in an SPDE setting close to our present
considerations.

Inspired by \cite{Ghidaglia1988}, where a partial invariant of the
linearized KdV equation $\partial_t w + D(uw) + D^3w = 0$ was
identified, we now proceed as follows.  In a similar fashion to
\eqref{energy:funct:0:1}, \eqref{I:J:relationship}, we make the
observation that,
\begin{align}
	\int_\TT \left(D^3 w + \frac{1}{2} D(w^2) + D(uw)\right)
	           \left( D^2 w + \frac{1}{2} w^2 + uw \right) dx = 0
	           \label{eq:H1:can}
\end{align}
for \emph{any} suitably smooth $u,w$ and then compare with
\eqref{eq:s:KdV:diff}.  In order to incorporate the cancellation induced by the last term in the parentheses in \eqref{eq:H1:can}, it is therefore natural to introduce the
functional
\begin{align}
   \InG_1(w, u) := \In_1(w) - \int_{\T} u w^2 dx 
   = \int_{\T}\bigl( (Dw)^2 - \frac{1}{3} w^3 -u w^2 \bigr) dx. 
   \label{eq:mod:fn:base}
\end{align}
After some careful calculations, which are provided in detail in
\cref{sect:FP:thm:proof}, we find
\begin{align}
  d &\InG_1(w, u)+ 2 (\gamma \InG_1(w, u)   + \lambda \|D P_N w\|_{L^2}^2 ) dt 
             \label{eq:I1:bnd:FP:3}       \\
  &=\int_{\T}\biggl( \frac{\gamma}{3} w^3 + \lambda P_Nw w^2
    + u D uw^2  + D^3uw^2 + \gam u w^2 + 2u w\lam P_Nw -fw^2
    \biggr)dx dt 
    + \int_{\T}  w^2 \sigma dx dW.
    \notag
\end{align}
Crucially each of the terms on the right-hand side of
\eqref{eq:I1:bnd:FP:3} can be bounded in terms of quantities involving only the $L^2$-norm of $w$.  This structure allows one to take suitable advantage of
the nudging term on the low frequencies and the damping term at high
frequencies via the inverse Poincar\'e inequality.  

To be more concrete about the nonlinear bounds, consider, for example, the term
$\int_{T} D^3 uw^2 dx$ in \eqref{eq:I1:bnd:FP:3}.  Integrating by
parts, using Agmon's inequality, and then using splitting
$w = P_N w + Q_Nw$ into low and high frequencies we find
\begin{align}
	\left| \int_{T} D^3 uw^2 dx \right|
	&= 2\left| \int_{T} D^2 uDw w dx \right|
	\leq 2 \|u\|_{H^2}\|w\|_{H^1} \|w\|_{L^\infty}
	\leq c  \|u\|_{H^2}\|w\|_{H^1}^{3/2} \|w\|_{L^2}^{1/2}
	\notag\\
	&\leq c \|u\|_{H^2}\|w\|_{H^1}^{3/2} (\|P_N w\|_{L^2}^{1/2} + \|Q_N w\|_{L^2}^{1/2} )
	\notag\\
	&\leq \frac{c}{\lambda^{{{}1/3}}}\|u\|_{H^2}^{4/3}\|w\|_{H^1}^{2}  
	+ \frac{c}{N^{1/2}}
	\|u\|_{H^2}\|w\|_{H^1}^{2} +\lambda \|P_N w\|_{L^2}^{2},
	\label{eq:ex:FP:bounds}
\end{align}
for a constant $c$ depending only on universal quantities.  We then
observe that the last term in this upper bound can be absorbed in the
nudging term in \eqref{eq:I1:bnd:FP:3}.  On the other hand, by tuning
$N$ and $\lambda$ to be appropriately large as a function of $\gamma >0$, we
can hope to bound the first two terms using the $H^1$--like term
$\gamma \InG_1(w, u)$,  thereby obtaining a decay that could not be
achieved in \eqref{eq:w:L2:no:decay} by working at the $L^2$--level. {{}We point out that we ultimately treat this term slightly differently in the analysis below, but hope only to illustrate the overall principle that we apply.}

A number of additional complexities need to be addressed in order to
conclude some form of time decay in $w$ from \eqref{eq:I1:bnd:FP:3} in
conjunction with frequency decomposition estimates like
\eqref{eq:ex:FP:bounds}.  In particular, note that this analytic approach required subtraction of the term $u w^2$ with a direct dependence on $u$ in \eqref{eq:mod:fn:base} in order to invoke the cancellation \eqref{eq:H1:can}.  Then, in the calculation of $d\InG_1(w, u)$, the equation \eqref{eq:I1:bnd:FP:3} contains a martingale term which comes from the differential falling on $u$ in \eqref{eq:mod:fn:base}.
Ultimately, it is this direct dependence on $u$ in $\InG_1(w, u)$ that is the reason
why we are only able to conclude the bounds in \cref{thm:FP:est},
\cref{cor:FP:est:Intro} in expectation.

Next, let us note that $\InG_1$ fails to directly control the $H^1$
norm of $w$ or to be sign-definite.  This is a serious concern since
the martingale terms in \eqref{eq:I1:bnd:FP:3} prevent us from
carrying out our analysis in `pathwise fashion'.  Fortunately, in view
of \eqref{eq:w:L2}, the $L^2$ evolution for $w$ does not produce any
terms worse than those appearing in \eqref{eq:I1:bnd:FP:3}.  We can
proceed by drawing on the approach considered in
\cref{lem:equivalence} and in our estimates in \cref{sect:Lyapunov},
and we `positivize' $\InG_1(w, u)$ by introducing suitable additional
terms to $\InG_1$ involving only the $L^2$ norms of $u$ and $w$.  The positivized modified functional is defined as
\begin{align}
  \InG_1^+(w, u) &:= \int_{\T} \bigl( (Dw)^2 - \frac{1}{3} w^3 - uw^2   \bigr) dx
    + \bar{\alpha}(\|w\|_{L^2}^{10/3} + (1 + \|u\|_{L^2}^2) \|w\|^2_{L^2}),
    \label{eq:FP:functional}
\end{align}
for $w \in H^1$ and $u \in L^2$. Here, similar to
\cref{lem:equivalence}, we include the parameters $\bar{\alpha} \geq 1$
which is convenient to tune to be appropriately large as we proceed in
our estimates for $\InG_1^+$.

For the final point of difficulty, let us note several terms on the right hand
side of \eqref{eq:I1:bnd:FP:3} with cubic dependence on $w$.  Here, we
write $w^3 = (u- v)w^2$ and ultimately obtain a bound on $w$ balancing
growth involving time integrals of $\|v\|_{L^2}$.  This, in turn, requires
some care in terms of growth of these quantities as a function of $\lambda$.

\subsection{Proof of \cref{thm:FP:est}}
\label{sect:FP:thm:proof}
We now proceed to the rigorous proof of \cref{thm:FP:est}.  Before
determining the evolution for $\InG_1^+(w, u)$ we make the initial
observation that $\InG_1^+(w, u)$ is indeed strictly positive and
controls $\|w\|_{H^1}^2$.
\begin{Prop}
  \label{prop:FP:functional}
  Consider the functional $\InG_1^+:H^1 \times L^2 \to \RR$ defined as
  in \eqref{eq:FP:functional}.  Then for any value of $\bar{\alpha} > 0$,
  sufficiently large depending only on universal quantities,
  \begin{align}
    \frac{1}{2}&\left( 
       \|D w\|_{L^2}^2
       + \bar{\alpha}( \|w\|_{L^2}^{10/3} 
       + (1 + \|u\|_{L^2}^2) \|w\|^2_{L^2}) \right)
    \notag\\
    &\qquad \leq \InG_1^+(w, u) 
    \leq 
    \frac{3}{2}\left( 
       \|D w\|_{L^2}^2
       + \bar{\alpha}( \|w\|_{L^2}^{10/3} 
       +  (1 + \|u\|_{L^2}^2) \|w\|^2_{L^2}) \right),
         \label{eq:norm:equiv:FP:FN}
  \end{align}
  for any $u \in L^2$ and any $w \in H^1$.
\end{Prop}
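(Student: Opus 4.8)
The plan is to control the two sign-indefinite terms $-\tfrac13\int_\T w^3\,dx$ and $-\int_\T uw^2\,dx$ appearing in \eqref{eq:FP:functional} by a small multiple of $\|Dw\|_{L^2}^2$ plus a constant multiple of the $L^2$-based quantities $\|w\|_{L^2}^{10/3}$ and $(1+\|u\|_{L^2}^2)\|w\|_{L^2}^2$, and then to fix $\bar\alpha$ large enough that the leftover $L^2$-based pieces are absorbed into the $\bar\alpha$-terms. Since $w$ is mean-free, Agmon's inequality on $\T$ together with the Poincar\'e inequality gives $\|w\|_{L^\infty}^2 \le c\|w\|_{L^2}\|Dw\|_{L^2}$ (exactly the $m=1$ case of the interpolation already used in the proof of \cref{lem:equivalence}), whence
\begin{align*}
  \Bigl|\int_\T w^3\,dx\Bigr| \le \|w\|_{L^\infty}\|w\|_{L^2}^2 \le c\|Dw\|_{L^2}^{1/2}\|w\|_{L^2}^{5/2}.
\end{align*}
Applying Young's inequality with exponents $4$ and $4/3$ bounds this by $\epsilon\|Dw\|_{L^2}^2 + C_\epsilon\|w\|_{L^2}^{10/3}$; note that it is precisely the pairing $\tfrac52\cdot\tfrac43 = \tfrac{10}{3}$ that forces the exponent $10/3$ built into \eqref{eq:FP:functional}.

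For the mixed term, Cauchy--Schwarz followed by the same interpolation yields
\begin{align*}
  \Bigl|\int_\T uw^2\,dx\Bigr| \le \|u\|_{L^2}\|w\|_{L^4}^2 \le c\|u\|_{L^2}\|w\|_{L^\infty}\|w\|_{L^2} \le c\|u\|_{L^2}\|Dw\|_{L^2}^{1/2}\|w\|_{L^2}^{3/2},
\end{align*}
and Young's inequality (again with exponents $4$ and $4/3$) bounds this by $\epsilon\|Dw\|_{L^2}^2 + C_\epsilon\|u\|_{L^2}^{4/3}\|w\|_{L^2}^2$. Since $4/3<2$ we have $\|u\|_{L^2}^{4/3}\le 1+\|u\|_{L^2}^2$, so this last term is at most $C_\epsilon(1+\|u\|_{L^2}^2)\|w\|_{L^2}^2$, which is exactly the form of the second $\bar\alpha$-weight in \eqref{eq:FP:functional}; this is the reason the weight $(1+\|u\|_{L^2}^2)$ was chosen.

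Finally I would take $\epsilon=1/4$ in both estimates. Inserting them into \eqref{eq:FP:functional} gives, on the one hand,
\begin{align*}
  \InG_1^+(w,u) \ge \tfrac12\|Dw\|_{L^2}^2 + \bigl(\bar\alpha - C_{1/4}\bigr)\|w\|_{L^2}^{10/3} + \bigl(\bar\alpha - C_{1/4}\bigr)(1+\|u\|_{L^2}^2)\|w\|_{L^2}^2,
\end{align*}
and, on the other hand, the analogous upper bound with $\tfrac32\|Dw\|_{L^2}^2$ and $\bar\alpha + C_{1/4}$ in place of the three coefficients. Choosing $\bar\alpha \ge 2C_{1/4}$ makes $\bar\alpha - C_{1/4}\ge \tfrac12\bar\alpha$ and $\bar\alpha + C_{1/4}\le\tfrac32\bar\alpha$, which is \eqref{eq:norm:equiv:FP:FN}; since $C_{1/4}$ depends only on the constants in Agmon's, Sobolev's and Young's inequalities, the threshold for $\bar\alpha$ depends only on universal quantities, as claimed. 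There is no substantial obstacle here: the only point requiring care is the Young-exponent bookkeeping, which must leave behind exactly $\|w\|_{L^2}^{10/3}$ and $(1+\|u\|_{L^2}^2)\|w\|_{L^2}^2$ rather than anything larger — and this is precisely why those particular $L^2$-terms were incorporated into the definition of $\InG_1^+$ in the first place.
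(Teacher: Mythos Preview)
Your proof is correct and follows essentially the same route as the paper: both arguments bound the cubic term via $\|w\|_{L^\infty}\|w\|_{L^2}^2 \le c\|Dw\|_{L^2}^{1/2}\|w\|_{L^2}^{5/2}$ and the mixed term via $\|u\|_{L^2}\|w\|_{L^\infty}\|w\|_{L^2} \le c\|u\|_{L^2}\|Dw\|_{L^2}^{1/2}\|w\|_{L^2}^{3/2}$, then apply Young with exponents $4,4/3$ and absorb the resulting $L^2$-based terms by choosing $\bar\alpha$ large. The only cosmetic difference is that the paper passes through $\|w\|_{L^3}^3$ directly and bounds $\|u\|_{L^2}^{4/3}$ by $c(1+\|u\|_{L^2}^2)$ implicitly, whereas you make the Young-exponent arithmetic and the threshold $\bar\alpha \ge 2C_{1/4}$ explicit.
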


\begin{proof}

We observe with interpolation and Young's inequality that
\begin{align*}
  \frac{1}{3}\|w \|_{L^3}^3 \leq c \|D w\|_{L^2}^{1/2}\|w\|_{L^2}^{5/2}
  \leq \frac{1}{4} \|D w\|_{L^2}^2  + c\|w\|_{L^2}^{10/3},
\end{align*}
for any $w \in H^1$.  Similarly
\begin{align*}
   \left| \int_{\T} u w^2 dx \right|
   \leq  \|u\|_{L^2} \|w\|_{L^2}\|w\|_{L^\infty}
   \leq c \|u\|_{L^2} \|w\|_{L^2}^{3/2}\|Dw\|_{L^2}^{1/2}
   \leq c(1 + \|u\|_{L^2}^2)\|w\|_{L^2}^{2}
       +\frac{1}{4}\|Dw\|_{L^2}^2,
\end{align*}
for any $u \in L^2$ and $w \in H^1$.  Combining these two bounds
with the definition of
$\InG_1$ as in \eqref{eq:mod:fn:base}, we have that for any $w \in H^1$ and $u \in L^2$, 
\begin{align*}
  \frac{1}{2}\| D w \|^2_{L^2} - c(\|w\|_{L^2}^{10/3} + (1 + \|u\|_{L^2}^2) \|w\|^2_{L^2})
  \leq 
  \InG_1(w, u)
  &\leq
  \frac{3}{2}\| D w \|^2_{L^2} + c(\|w\|_{L^2}^{10/3} + (1 + \|u\|_{L^2}^2) \|w\|^2_{L^2}),
\end{align*}
which holds for some universal constant $c>0$ independent of $w, u$.  Combining this estimate with
\eqref{eq:FP:functional} yields \eqref{eq:norm:equiv:FP:FN} completing the proof.
\end{proof}

We turn now to determine the evolution of $\InG_1^+(w,u)$.
We begin by verifying \eqref{eq:I1:bnd:FP:3} namely the evolution for the principal part $\InG_1(w,u)$ of  $\InG_1^+(w,u)$, 
 given by \eqref{eq:mod:fn:base}.  
Regarding the first component $\In_1(w) := \int_{\T} ( (Dw)^2 - \tfrac{1}{3} w^3 )dx$ in $\InG_1(w,u)$ we observe, cf. \eqref{energy:funct:0:1}, \eqref{def:dispersive:J}, \eqref{dispersive:id},
that $\In_1$ is the first integral of motion of the KdV equation \eqref{eq:KdV:det} with the associated nonlinear operator defined by 
\begin{align}
	L_1(z) := -2 D^2 z - z^2.
	\label{eq:level:1:multiplier}
\end{align}
As such, from \eqref{eq:s:KdV:diff}, we have
\begin{align}
  \frac{d}{dt}\In_1(w)
  =& -\int L_1(w)(\gamma w  + D (uw)+\lam P_Nw) dx
     \notag\\
       =& - 2 \gamma \In_1(w) - 2\lambda \|D P_N w\|_{L^2}^2 + \int_{\T}\biggl( \frac{\gamma}{3} w^3
                       -  L_1(w)D(uw) + \lambda P_Nw w^2
                    \biggr)dx.
  \label{eq:I1:bnd:FP:1}
\end{align}
For the second term in $\InG_1(w,u)$ involving $u w^2$, we compute
 \begin{align}
   d \int_{\T} u w^2 dx
   =& \int_{\T} \left( w^2 du + 2 u w \partial_t w  \right) dx
      \notag\\
    =& \int_{\T} \biggl(w^2(f - u D u - D^3u - \gam u)
         - 2 u w (\gam w + \frac{1}{2} D(w^2) +D^3 w
         +D (uw) + \lam P_Nw) \biggr) dx dt
      \notag\\
     &+ \int_{\T}  w^2 \sigma dx dW.
         \label{eq:I1:bnd:FP:2}
 \end{align}
Note that $\int_{\T} 2 u w D (uw) dx =   \int_{\T} D ((uw)^2) dx = 0$.
Moreover, referring back to \eqref{eq:level:1:multiplier}, we have $\int_{\T}  u w (D(w^2) + 2D^3w) dx
  = \int_{\T}  L_1(w)D(uw)dx$.
Therefore, combining these two identities with \eqref{eq:I1:bnd:FP:1}, \eqref{eq:I1:bnd:FP:2}, 
and \eqref{eq:mod:fn:base} we conclude \eqref{eq:I1:bnd:FP:3}.

Next, we determine the evolution for the $L^2$-based terms in $\InG_1^+(w,u)$. 
Using \eqref{eq:w:L2}, we find that for any $q \geq 2$,
\begin{align}
   \frac{d}{dt}\| w\|^q_{L^2}
  +q\gamma \| w\|^q_{L^2} + q \lambda  \|P_N w\|^2_{L^2} \| w\|^{q-2}_{L^2} 
  &= -\frac{q}{2} \| w\|^{q-2}_{L^2}  \int_{\T} Du  w^2 dx. 
  \label{eq:w:L2:p}
\end{align}
On the other hand, notice that from \eqref{eq:L2:evol} and \eqref{eq:w:L2}, we may compute
\begin{align}
  d((1+ \|u\|_{L^2}^2)& \|w\|_{L^2}^2) 
  + (2 \gamma(1+\Sob{u}{L^2}^2) \|w\|_{L^2}^2  + 2 \gamma  \|u\|_{L^2}^2\|w\|^2_{L^2}
  + 2 \lambda (1+\|u\|_{L^2}^2 ) \|P_N w\|_{L^2}^2) dt
    \notag\\
  =& \left( 2\lb u,f\rb\Sob{w}{L^2}^2+\| \sigma \|^2_{L^2} \| w \|^2_{L^2}  
     - ( 1 + \|u\|^2_{L^2} ) \int_{T} Du w^2 dx  \right) dt
    + 2\|w\|^2_{L^2} \langle \sigma, u \rangle dW.
     \label{eq:L2:Prod:pos}
\end{align}
Then with this expression, from \eqref{eq:w:L2:p} and \eqref{eq:I1:bnd:FP:3}, we have
\begin{align}
  d \InG_1^+(w,&u) + 2 \gamma \InG_1^+(w,u)dt
                  + ({{}\frac{4}{3}\gamma + \frac{10}3\lambda}) \bar{\alpha} \| w\|_{L^2}^{10/3}dt
                  + 2 (\gamma + \lambda)\bar{\al} \|u\|_{L^2}^2\|w\|^2_{L^2}dt
                    \notag\\
                  &\quad 
                  + 2 \lam\bar{\alpha} \| w\|^2_{L^2}dt
                    + 2 \lambda \|D P_N w\|^2_{L^2}dt
                    \notag\\
  &= \int_{\T}\biggl( \frac{\gamma}{3} w^3 + \lambda P_Nw w^2
    + u D uw^2  + D^3uw^2 + 3\gam u w^2 + 2u w\lam P_Nw -fw^2
    \biggr)dx dt  
     \notag\\
  &\quad + \left( \bar{\alpha} \| \sigma \|^2_{L^2} \| w \|^2_{L^2}dt 
         -\frac{5\bar{\alpha} }{3} \| w\|_{L^2}^{4/3} \int_{\T} Du w^2 dx 
         - \bar{\alpha} (1+\| u\|_{L^2}^{2} )  \int_{\T} Du w^2 dx \right) dt
    \notag\\
   & \quad+ \left({{}2\bar{\alpha}\lb u,f\rb\Sob{w}{L^2}^2}+\frac{10}{3} \lambda \bar{\alpha} \| w\|_{L^2}^{4/3} \|Q_N w\|^2_{L^2} 
     + 2 \lambda \bar{\alpha} (1+ \|u\|^2_{L^2}) \|Q_N w\|^2_{L^2}\right) dt
     \notag\\
   & \quad+ \left(2\bar{\alpha} \|w\|^2_{L^2} \langle \sigma, u \rangle 
     + \lb w^2, \sigma\rb  \right)dW 
  =: \sum_{j = 1}^{{{}13}} K_j^D dt + K^S dW.
  \label{eq:FB:motherfn:evo}
\end{align}
Note that we have completed the square and rearranged accordingly in
our accounting of the terms
$2 \lambda (1+\|u\|_{L^2}^2 ) \|P_N w\|_{L^2}^2$ and
$(10\lambda/3) \|P_N w\|^2_{L^2} \| w\|^{4/3}_{L^2}$ from
\eqref{eq:L2:Prod:pos} and \eqref{eq:w:L2:p} respectively.

Let us now estimate each of the terms in \eqref{eq:FB:motherfn:evo} in
turn.  We remind the reader that we follow the convention of using $c > 0$
for constant depending only on universal i.e. equation independent quantities
and $C > 0$ for constants depending on equation dependent parameters, i.e., $f$
$\sigma$, $N$ etc, while emphasizing these dependencies when relevant. 
Starting with $K^D_1, K_2^D$ we have,
\begin{align}
  |K^D_1| + |K_2^D| 
  &\leq (\frac{\gamma}{3}\| w\|_{L^\infty} + \lambda \| P_N w\|_{L^\infty}) \|w\|_{L^2}^2
  \leq c(\gamma + \lambda) \|Dw\|^{1/2}_{L^2}\|w\|^{5/2}_{L^2}
  \notag\\
  &\leq \frac{\gamma}{1000}\|Dw\|_{L^2}^2 + c\frac{(\gamma + \lambda)^{4/3}}{\gamma^{1/3}}\|w\|^{10/3}_{L^2}\notag\\
  &\leq \frac{\gam}{100}\InG_1^+(w,u)
    + \frac{c}{\bar{\al}}\left(1+\frac{\lam}{\gam}\right)^{1/3}(\gam+\lam)\bar{\al}\Sob{w}{L^2}^{10/3},
    \label{eq:fp:est:K1:K2}
\end{align}
where we used \eqref{eq:norm:equiv:FP:FN} for the final inequality.
Next we have, with Agmon's inequality, interpolation 
and another invocation of \eqref{eq:norm:equiv:FP:FN},
\begin{align}
   |K^D_3| &\leq \| u Du \|_{L^\infty}\| w\|_{L^2}^2
   	    \leq c\| u\|_{L^2}^{1/2} \|D u\|_{L^2}\|D^2 u\|_{L^2}^{1/2} \| w\|_{L^2}^2
             \leq c \|u\|_{L^2}\|u\|_{H^2} \|w\|_{L^2}^2
             \notag\\
          \leq& \frac{c}{(\gam+\lam) \bar{\alpha}} \|u\|_{H^2}^2 \|w\|_{L^2}^2  
               +  
               \frac{\gam+\lam}{100} \bar{\alpha}\|u\|_{L^2}^2\|w\|_{L^2}^2   
               \notag\\
           &\leq \frac{c}{\lambda (\bar{\alpha})^2} \left(1+\|u\|_{H^2}^2\right) \InG_1^+(w,u)
           +\frac{\gam+\lam}{100} \bar{\alpha}\|u\|_{L^2}^2\|w\|_{L^2}^2.
  \label{eq:fp:est:K3}
\end{align}
Regarding $K^D_4$ we have, again invoking Agmon's inequality,
\begin{align}
  |K^D_4| &= 2 \left| \int_\T D^2 u w D wdx   \right|
  \leq c \|D^2 u\|_{L^2} \|w\|_{L^\infty}\|Dw\|_{L^2}
            \leq c\|D^2 u\|_{L^2} \|w\|_{L^2}^{1/2}\|Dw\|_{L^2}^{3/2}
            \notag\\
  &\leq \frac{c}{(\lam\bar{\alpha})^{1/3}} \|u\|_{H^2}^{4/3}\|Dw\|_{L^2}^2 
       +  \frac{\lambda}{100} \bar{\alpha} \|w\|_{L^2}^2\notag\\
          &\leq  \frac{c}{(\lam\bar{\al})^{1/3}}\left(1+\Sob{u}{H^2}^{2}\right)
            \InG_1^+(w,u)+\frac{\lam}{100}\bar{\al}\Sob{w}{L^2}^2.
           \label{eq:fp:est:K4}
\end{align}
Turning to $K^D_5$, we have, similarly to the previous two bounds
\begin{align}
  |K^D_5| &\leq  c\gamma  \|u\|_{L^2}^{1/2}\|Du\|_{L^2}^{1/2} \|w\|_{L^2}^2
            \leq c\gam\Sob{u}{L^2}^{3/4}\Sob{D^2u}{L^2}^{1/4}\Sob{w}{L^2}^2\notag\\
          &\leq c\frac{\gamma^{8/5} }{((\gam+\lam)\bar{\alpha})^{3/5}}   \|u\|_{H^2}^{2/5}\|w\|_{L^2}^2 
              + \frac{\gam+\lam}{100}\bar{\alpha}\|u\|_{L^2}^2 \|w\|_{L^2}^2\notag\\
              &\leq c\frac{\gamma^{8/5} }{\lam^{3/5}\bar{\alpha}^{8/5}}   \left(1+\|u\|_{H^2}^{2}\right)\InG_1^+(w,u) 
              + \frac{\gam+\lam}{100}\bar{\alpha}\|u\|_{L^2}^2 \|w\|_{L^2}^2.
           \label{eq:fp:est:K5}
\end{align}
Next, for $K^D_6$, we estimate
\begin{align}
  |K^D_6| &\leq 2\lambda \|u\|_{L^\infty}\|w\|_{L^2}^2 
  \leq c\lambda  \|Du\|_{L^2}^{1/2}\|u\|_{L^2}^{1/2}\|w\|_{L^2}^2 
  \leq c\lam\Sob{u}{L^2}^{3/4}\Sob{D^2u}{L^2}^{1/4}\Sob{w}{L^2}^2\notag\\
     &\leq c\frac{\lambda }{\bar{\alpha}^{3/5}} \|u\|_{H^2}^{2/5}\|w\|_{L^2}^2
                      + \frac{\lambda \bar{\alpha}}{100}\|u\|_{L^2}^2\| w\|_{L^2}^2
            \notag\\
  &\leq   c\frac{\lambda }{\bar{\alpha}^{8/5}} \left(1+\|u\|_{H^2}^2\right)\InG_1^+(w,u)
                      + \frac{\gam+\lam}{100}\bar{\alpha}\|u\|_{L^2}^2\| w\|_{L^2}^2.
           \label{eq:fp:est:K6}
\end{align}
Regarding $K^D_7$ and $K^D_{8}$ we simply observe that,
recalling that $\bar{\alpha} \geq 1$
\begin{align}
  |K^D_7| +|K^D_{8}| 
  \leq \frac{(\|f\|_{L^\infty} + \|\sigma\|^2_{L^2})}{\lambda} \lambda\bar{\alpha} \|w\|^2_{L^2}.
           \label{eq:fp:est:K7:K8}
\end{align}
For $K^D_9$ we have
\begin{align}
  |K^D_9| 
  &\leq \frac{5\bar{\alpha} }{3} \| Du\|_{L^\infty} 
    \| w\|_{L^2}^{10/3}
  \leq c\bar{\al}\Sob{u}{H^2}\Sob{w}{L^2}^{10/3}
  \leq 
    c\frac{\bar{\alpha}}{\gam+\lam}\|u\|_{H^2}^2\| w\|_{L^2}^{10/3} 
    + \frac{\gam+\lam}{100} \bar{\alpha} \| w\|_{L^2}^{10/3}
    \notag\\
  &\leq     \frac{c}{\lam}\left(1+\|u\|_{H^2}^2\right)\InG_1^+(w,u)
    + \frac{\gam+\lam}{100} \bar{\alpha} \| w\|_{L^2}^{10/3}.
           \label{eq:fp:est:K9}
\end{align}
Similarly for $K^D_{10}$ we observe
\begin{align}
  |K^D_{10}| 
  &\leq 
    \bar{\alpha} \|Du\|_{L^\infty} (1+\| u\|_{L^2}^{2} ) \| w\|_{L^2}^2
   \leq c\bar{\al}\Sob{u}{H^2}(1+\Sob{u}{L^2}^2)\Sob{w}{L^2}^2\notag\\
  &\leq c\frac{\bar{\alpha}}{\lam} \Sob{u}{H^2}^2 (1+\| u\|_{L^2}^{2} ) \| w\|_{L^2}^2  
        +\frac{\lam}{100} \bar{\alpha} (1+\| u\|_{L^2}^{2} ) \| w\|_{L^2}^2 
  \notag\\
  &\leq \frac{c}{\lambda}\left(1+ \|u\|^2_{H^2} \right)\InG_1^+(w,u)
    +\frac{\lam}{100}\bar{\al}\Sob{w}{L^2}^2
    +\frac{\gam+\lam}{100}\bar{\al}\Sob{u}{L^2}^2\Sob{w}{L^2}^2.
             \label{eq:fp:est:K10}
\end{align}
Regarding the final two terms, $K^D_{{{}12}}$ and $K^D_{{{}13}}$, we invoke 
the inverse Poincare inequality.  Indeed, we estimate
\begin{align}
  |K^D_{{{}12}}|
  &\leq c\frac{ \lambda \bar{\alpha}}{N^2} \|w\|_{{{}L^2}}^{4/3} \|D Q_N w\|_{L^2}^2
    \leq c\frac{ \lambda \bar{\alpha}}{N^2} 
  (\|u\|^{4/3}_{L^2} + \|v \|^{4/3}_{L^2}) \InG_1^+(w,u)
  \notag\\ &\leq 
  c\frac{ \lambda \bar{\alpha}}{N^2} 
  (1+{\|u\|^{2}_{L^2} + \|v \|^{2}_{L^2}}) \InG_1^+(w,u),
           \label{eq:fp:est:K11}
\end{align}
where for the second inequality we used that $w =  v-u$, where
$v$ is the solution of \eqref{eq:s:KdV:ng}.  On the
other hand, for $K^D_{{{}13}}$, we estimate
\begin{align}
  |K^D_{{{}13}}| \leq c\frac{\lambda \bar{\alpha}}{N^2} (1 + \|u\|_{L^2}^2) \|D Q_N w\|_{L^2}^2
            \leq c\frac{ \lambda \bar{\alpha}}{N^2} (1 + \|u\|_{L^2}^2) \InG_1^+(w,u).
           \label{eq:fp:est:K12}
\end{align}

Gathering the proceeding estimates and referring back to
\eqref{eq:FB:motherfn:evo} we now tune
$\lambda, \bar{\alpha}, N \geq 1$ as follows.  First in order to
control the terms $K_7^D$ and $K_8^D$ in \eqref{eq:fp:est:K7:K8} we
select $\lambda$ sufficiently large, taking say
\begin{align}
  \lambda \geq {{}100\left(\|f\|_{L^\infty} + \|\sigma\|_{L^2}^2\right)}.
  \label{eq:param:tune:1}
\end{align}
Next in order to control {{}$K_1^D, \ldots, K_6^D$} we choose any $\bar{\alpha}$,
compatible with the requirements for \cref{prop:FP:functional} appropriately 
as a function of $\gamma$ and $\lambda$.  For example we consider, 
\begin{align}
  \bar{\alpha} \geq 2c\max\{\lambda^2, \gamma^2, \gamma^{-2/3}\}
    \label{eq:param:tune:2}
\end{align}
where $c$ is the universal constant in \eqref{eq:fp:est:K1:K2}.  Finally
we take $N$ large enough so that the final two terms $K_{{{}12}}, K_{{{}13}}$
are suitably bounded, taking say
\begin{align}
  N\geq \lambda^{3/2} \bar{\alpha}^{1/2}.
  \label{eq:param:tune:3}
\end{align}
With these selections and referring back to
\eqref{eq:fp:est:K1:K2}--\eqref{eq:fp:est:K12} we find
\begin{align*}
  \sum_{j=1}^{{{}13}} |K_j^D|
  \leq& \left(\frac{\gamma}{2} + \frac{c_0}{\lambda}\left(1+ \|u\|^2_{H^2}
        + \frac{1}{\lambda} \|v\|^2_{L^2}\right)\right)\InG_1^+(w,u)
        \notag\\
  &+ \frac{4}{3}(\gamma + \lambda) \bar{\alpha} \|w\|^{10/3}
  + (\gamma + \lambda)\bar{\alpha} \|u\|^2_{L^2}\|w\|^2_{L^2}
  + \lambda\bar{\alpha} \|w\|^2_{L^2}.
\end{align*}
where $c_0 > 0$ is precisely universal constant appearing in
\eqref{eq:FP1}.  By now comparing this bound against
\eqref{eq:FB:motherfn:evo} we conclude
\begin{align}
  d \InG_1^+(w,u) 
  + \left(\frac{3\gamma}{2}
  - \frac{c_0}{\lambda}
  \left(1 + \|u\|_{H^2}^2 + \frac{1}{\lam}\| v\|_{L^2}^2\right) \right)
      \InG_1^+(w,u)dt  
  \leq \left(2\bar{\alpha} \|w\|^2_{L^2} \langle \sigma, u \rangle  + 
      \lb w^2,\s\rb \right)dW.
        \label{eq:mother:fn:rearr:1}
\end{align}
Now, by letting
$\Gamma(t) = \gamma t - \frac{c_0}{\lambda}\int_0^t(1 + \|u(s)\|_{H^2}^2
+ \frac{1}{\lambda}\| v(s)\|_{L^2}^{2})ds$, and considering the
integrating factor $e^{\Gamma}$ we infer from
\eqref{eq:mother:fn:rearr:1} that
\begin{align*}
 d (e^{\Gamma}\InG_1^+(w,u)) + \frac{\gam}{2}(e^{\Gamma}\InG_1^+(w,u))dt \leq
  e^{\Gamma} \left(2\bar{\alpha} \|w\|^2_{L^2} \langle \sigma, u \rangle  + 
      \lb w^2 \sigma\rb  \right)dW.
\end{align*}
Integrating this bound in time, taking an expected value and then
using the pointwise bound \eqref{eq:norm:equiv:FP:FN} we infer
\eqref{eq:FP1}, as claimed.  The proof of \cref{thm:FP:est} is
complete.

\subsection{Lyapunov Structure for the Nudged Equation: Proofs of
  \cref{thm:Lyapunov:ng}, \cref{lem:FP:stoppingtime},
and  \cref{cor:FP:est:Intro} }
\label{sect:Lyapunov:ng}

As previously mentioned, in order to exploit the Foias-Prodi property we
must ensure that the good Lyapunov structure of the original system is
largely preserved by the nudged system. The proof of this is
entirely analogous to the proof of \cref{thm:Lyapunov}, except that we carefully track the dependence of the constants on $\lam$ and $N$.

\subsubsection{Proof of \cref{thm:Lyapunov:ng}}
We proceed by observing that \eqref{eq:s:KdV:ng} may be viewed as
satisfying \eqref{eq:s:KdV} with external forcing $f$ replaced by
$\til{f}$ defined as $\til{f}:=-\lam P_N(v-u)+f$.  With this in mind
we turn first to establish \eqref{eq:L2:exp:Mt:ng}.  Working from
\eqref{eq:L2:evol} we find
\begin{align}
  d \|v\|^2 + \left(2 \gamma \|v\|^2 + 2 \lambda \|P_N v\|^2\right)dt
  =\left(2 \langle f, v\rangle
  + 2 \lambda \langle P_N v, P_N u \rangle
  + \|\sigma\|_{L^2}^2\right)dt + 2 \langle v, \sigma \rangle dW.
  \label{eq:L2:evo:Nudge}
\end{align}
{{}Now observe that
    \begin{align}
        2|\lb f,v\rb|\leq \frac{2}{\gam}\Sob{f}{L^2}^2+\frac{\gam}8\Sob{v}{L^2}^2,\quad \frac{7}8\gam\Sob{v}{L^2}^2\geq\frac{\gam}{8\Sob{\s}{L^2}^2}4\int_0^t|\lb\s,v\rb|^2ds.\notag
    \end{align}
}
As such, for every $t \geq 0$, we have the estimate
\begin{align*}
 &\| v(t)\|_{L^2}^2 + \gamma \int_0^t\|v\|^2ds
  - \left(1+ \frac{\lambda}{\gamma}\right)\left( \frac{2}{\gamma} \|f \|^2_{L^2} + \|\sigma\|^2_{L^2}\right)t
  - \|v_0\|^2 - \frac{\lambda}{\gamma} \| u_0\|^2_{L^2}\\
  &\leq
  2\int_0^t \langle \sigma, v \rangle dW
  - \frac{\gam}{8\Sob{\s}{L^2}^2}4
  \int_0^t |\langle \sigma, v \rangle|^2ds
  + \lambda \left[\int_0^t \|u\|^2_{L^2}{{}dt} - \frac{1}{\gamma} \left( \frac{2}{\gamma} \|f \|^2_{L^2} + \|\sigma\|^2_{L^2}\right)t
  - \frac{1}{\gamma} \| u_0\|^2_{L^2}\right].
\end{align*}
We therefore obtain the first bound from \eqref{eq:exp:est:asymptotic:L2},
\eqref{eq:exp:Mt:gen}.

We turn now to the second bound \eqref{eq:Lyapunov:Bnd:Hm:ng}.  Here,
as in \cref{sect:Lyapunov}, we make use of the functionals $\In_m^+$
as defined in \eqref{eq:mod:int:m:simple}.  Following precisely the
same computations as in \cref{lem:Im:evolution} we see that
$\In_m^+(v)$ satisfies {{}an evolution equation} of the form \eqref{eq:Inplus:simp}
with $f$ replaced appropriately by $\til{f} :=-\lam P_N(v-u)+f$ in
$\Kn_{m}^{D,+}$.  From here the proof proceeds in a similar manner to
the proof of \eqref{eq:Lyapunov:Bnd} carried out in
\cref{sect:Alg:mom:bnd}.  The bounds for $\Kn_{m,1}^D(v)$ and
$\Kn_{m,2}(v)$ are precisely as given in \eqref{eq:Kn1:est},
\eqref{eq:Kn2:est}.  We {{}revisit} \eqref{eq:Kn3:est} involving
$\tilde{f}$ and find again from \eqref{eq:f:terms:Im:Ito} with
\cref{thm:KdV:Poly:Rank}, \cref{prop:mono:intp:bnd} that
\begin{align}
  |\Kn_{m,3}^{D}(v,\tilde{f})|
  \leq C(1 + \| u\|_{L^2})\sum_{k =1}^{M} \|D^m v\|^{r_k}_{L^2} (1+ \| v \|^2_{L^2})^{q_k}.
  \label{eq:Kn3:est:mod}
\end{align}
{{} Indeed, one sees from \eqref{eq:f:terms:Im:Ito} that the only potentially problematic term in $\mathcal{K}^D_{m,3}$ is given by $\int_{\T} 2 D^m v D^m\til{f} dx$, as all the other terms that arise are either of lower order or else do not appear quadratically in $D^mv$, and are, hence, ``good'' terms; upon closer inspection, one sees that
\[
\int_{\T}2 D^m v D^m\til{f}=-2\lam\Sob{D^mP_Nv}{L^2}^2+2\lam\lb D^mP_Nv,P_Nu\rb.
\]
Thus $-2\lam\Sob{D^mP_Nv}{L^2}^2$ may dropped in obtaining an upper bound for $\mathcal{K}^D_{m,3}$; all other terms may be estimated using \cref{thm:KdV:Poly:Rank} and \cref{prop:mono:intp:bnd}; in comparison to the analysis carried out previously in \cref{sect:Alg:mom:bnd}, the only major difference is that estimates now depend on $\lam$ and $\Sob{u}{H^k}$, $k=1,\dots, m$ since the role of $f$ there is replaced with $\lam P_Nu$ here. We emphasize that ultimately, for each $k =1, \ldots, M(m)$, the resulting exponents and constants satisfy
$r_k = r_k(m) \in (0,2)$, $q_k = q_k(m) >0$ and
$C = C(m, \|f\|_{H^m}, N, \lambda) > 0$. In particular, they are all independent of $u$ and $v$. 

Now, in place of
\eqref{eq:Kn:extra:est} we have the bound
\begin{align}
  | \Kn_{m}^{D,+} -  \Kn_{m}^{D}|
  &\leq C(1 + \|u\|_{L^2}) \bar{q}_m \abarm^2 (1 + \|v\|^2_{L^2})^{\bar{q}_m -1/2},
    \label{eq:Kn:extra:est:mod}
\end{align}
where in this case
$C = C(\gamma,\|f\|_{L^2}, \|\sigma\|_{L^2}, \lambda) > 0$ is once
again independent of $u$, $v$ and the parameters
$\bar{q}_m, \abarm \geq 1$ which are part of the defintion of
$\In_m^+$. Indeed, in estimating the set of terms in $\Kn_{m}^{D,+} -  \Kn_{m}^{D}$, we observe that the only potentially problematic term that arises is the sole term in which $\til{f}$ appears: ${\abarm}{\bar{q}_m}\left( \|v\|_{L^2}^2+1\right)^{\bar{q}_m-1}\lb\til{f} , v\rb$; all other terms are treated exactly as we did in obtaining \eqref{eq:Kn:extra:est}. Specifically, upon inspection we see that
\begin{align}
&{\abarm}{\bar{q}_m}\left( \|v\|_{L^2}^2+1\right)^{\bar{q}_m-1}\lb\til{f} , v\rb\notag\\
&=-\lam{\abarm}{\bar{q}_m}\left( \|v\|_{L^2}^2+1\right)^{\bar{q}_m-1}\Sob{P_Nv}{L^2}^2+\lam{\abarm}{\bar{q}_m}\left( \|v\|_{L^2}^2+1\right)^{\bar{q}_m-1}\lb P_Nu,P_Nv\rb.\notag
\end{align}
We note that the term that is quadratic in $v$ is non-positive. We therefore drop it in obtaining an upper bound.
}

Thus by combining \eqref{eq:Kn1:est}, \eqref{eq:Kn2:est},
\eqref{eq:Kn3:est:mod} \eqref{eq:Kn:extra:est:mod}, applying Young's
inequality appropriately and then making a sufficiently large choice
of $\bar{q}_m, \abarm$ used to define $\In_m^+$, commensurate with
\cref{lem:equivalence} we obtain, similarly to
\eqref{eq:Kn:summary} above, that
\begin{align}
  | \Kn_{m}^{D,+} | \leq C(1 + \|u\|^q_{L^2}) + \frac{\gamma}{100} \In_m^+(v).
  \label{eq:Kn:summary:mod}
\end{align}
Here, again we note that $C(m, \gamma, \|f\|_{H^m}, \|\sigma\|_{H^m}, \lambda, N) > 0$
and $q= q(m) > 0$ are independent of $u,v$ 

With \eqref{eq:Kn:summary:mod} in hand we find with \eqref{eq:Inplus:simp}
that $\frac{d}{dt} \E \In_m^+(v) + \gamma \E \In_m^+(v) \leq C(1 + \E\|u\|^q_{L^2})$
so that with Gr\"onwall and \cref{lem:equivalence}
\begin{align}\label{est:Hm:ng}
  \E\|v(t)\|_{H^m}^2 \leq C e^{-\gamma t}(\|v_0\|_{H^m}^2 + \|v_0\|^q_{L^2} +1)
  + C\int_0^t e^{-\gamma(t -s)} \E\|u(s)\|^q_{L^2}ds.
\end{align}
With this observation and then, invoking \eqref{eq:Lyapunov:Bnd} in
the case $m=0$ with appropriately large value for $p$,  the desired bound
\eqref{eq:Lyapunov:Bnd:Hm:ng}, now follows.

\subsubsection{Stopping Time Estimates:
  Proof of \cref{lem:FP:stoppingtime}}
\label{sect:FP:stoppingtime}
Observe that, for $R, \beta > 0$ and any $\lambda \geq 1$, using
\cref{lem:equivalence}, we have
\begin{align*}
   &\left\{\sup_{t\geq0}\left( 
   \frac{c_0}{\lambda} \int_0^t 
   (1+ \|u(s)\|_{H^2}^2+\frac{1}{\lam}  \|v(s)\|_{L^2}^2) ds 
     - \frac{\gamma}{2} t  -\be  \right)\geq R\right\} \\
  &\quad\subseteq
   \left\{\sup_{t\geq0}\left( 
     \frac{2c_0}{\lambda}
     \int_0^t \In_{2}^{+}(u)ds -
     t \frac{\gamma}{4}
   -\frac{\be}{2}  \right)\geq \frac{R}{2}\right\}\\
  &\quad \quad \quad\cup
  \left\{\sup_{t\geq0}\left( 
    \frac{c_0}{\lambda} \int_0^t \|v\|^2_{L^2}ds
       -  t \frac{\lam\gam}{4}
  -\frac{\lambda\be}{2}  \right)\geq \frac{\lambda R}{2}\right\},
\end{align*}
Here, $c_0$ is the constant appearing in \eqref{eq:FP1},
\eqref{def:FP:stoppingtime} and we choose the parameters
$\bar{\alpha}_2, \bar{q}_2 \geq 1$ that define $\In_2^+$ sufficiently
large so that \cref{lem:equivalence}, \cref{thm:Lyapunov} both apply.
We now consider any $\lambda \geq 1$ large enough so that
$2c_0/\lambda^{1/2} \leq \gamma$ 
and that $\lambda^{1/2}\gamma/8$ is
greater than both of the $\lambda$-independent constants $C$ which
appear in the bounds \eqref{eq:poly:est:asymptotic} and
\eqref{eq:L2:exp:Mt:ng}.  Thus, with any such appropriately large
choice of $\lambda \geq 1$, we find
\begin{align}\label{eq:stopping:set}
  \Prb(\tau_{R,\beta} <\infty)
   \leq& \Prb\left( \sup_{t \geq 0} \left( \gamma \int_0^t \In_2^+(u) ds -
   Ct
   - {{}\frac{\lambda^{1/2} \beta}{2}} \right)  \geq \frac{\lambda^{1/2} R}{2}\right)
   \notag\\
       &\quad \quad
    + \Prb\left( \sup_{t \geq 0} \left( \gamma \int_0^t \|v\|^2_{L^2} ds -
         C\lam t
         - {{}\frac{\lambda^{3/2}\beta}{2}} \right)
         \geq \frac{\lambda^{3/2} R}{2}\right),
\end{align}
which holds for any $R, \beta > 0$. We estimate the first term appearing on the right-hand side by using \eqref{eq:poly:est:asymptotic} and the second by using \eqref{eq:L2:exp:Mt:ng} under a suitable choice of $\be$. Indeed, let us first observe that
\begin{align}
\left\{\sup_{t \geq 0} \left( \gamma \int_0^t \In_2^+(u) ds -
   Ct
   - \frac{\lambda^{1/2} \beta}{2} \right)  \geq \frac{\lambda^{1/2} R}{2}\right\}
   \subset\left\{\sup_{t \geq 0} \left(\In_2^+(u(t)) \gamma \int_0^t \In_2^+(u) ds -
   C(t+1) \right)  \geq R_1\right\}\notag,
\end{align}
where
\[
R_1:=\In_2^+(u_0)+\frac{\lambda^{1/2} R}{2}+\left(\frac{\lam^{1/2}\be}2-\left(C+\In_2^+(u_0)\right)\right).
\]
Upon choosing $\be$ such that $\be\geq\frac{2}{\lam^{1/2}}\left(C+\In_2^+(u_0)\right)$, we see that $R\geq \frac{\lam^{1/2}}2R$. Applying \eqref{eq:poly:est:asymptotic} yields
\begin{align}\label{eq:prop:stop:est1}
\Prb\left( \sup_{t \geq 0} \left( \gamma \int_0^t \In_2^+(u) ds -
   Ct
   - \frac{\lambda^{1/2} \beta}{2} \right)  \geq \frac{\lambda^{1/2} R}{2}\right)\leq \left(\frac{2}{\lam^{1/2}}\right)^{q/2-1}\frac{C \In_2^+(u_0)^q}{R^{q/2-1}},
\end{align}
for all $q>2$. For the second term in \eqref{eq:stopping:set}, assuming $\lam\geq 2^{2/3}$, we have
\begin{align}
&\left\{\sup_{t \geq 0} \left( \gamma \int_0^t \|v\|^2_{L^2} ds -
         C\lam t
         - \frac{\lambda^{3/2}\beta}{2} \right)\geq\frac{\lam^{3/2}R}2\right\}\notag\\
         &\subset\left\{\sup_{t\geq0}\left(\Sob{v(t)}{L^2}^2+\gam\int_0^t\Sob{v}{L^2}^2ds-\lam Ct\right)\geq R+\frac{\lam^{3/2}\be}2\right\}.\notag
\end{align}
{{}Hence, if $\be$ satisfies $\be\geq\frac{2}{\lam^{3/2}}\left(\Sob{v_0}{L^2}^2+\frac{\lam}\gam\Sob{u_0}{L^2}^2\right)$}, then we may apply \eqref{eq:L2:exp:Mt:ng} to deduce
\begin{align}\label{eq:prop:stop:est2}
\Prb\left(\sup_{t \geq 0} \left( \gamma \int_0^t \|v\|^2_{L^2} ds -
         C\lam t
         - \frac{\lambda^{3/2}\beta}{2} \right)\geq\frac{\lam^{3/2}R}2\right)\leq \exp(-\bar{\eta}R).
\end{align}
Upon returning to \eqref{eq:stopping:set} and applying \eqref{eq:prop:stop:est1}, \eqref{eq:prop:stop:est2}, we deduce \eqref{eq:FP:stoppingtime:bound}. Note that from \eqref{eq:I2p:exp:LD:eqv}, we may simplify the condition on $\be$ to \eqref{eq:beta:cond:stt}.

\subsubsection{Proof of \cref{cor:FP:est:Intro}}

Recalling $w(t) := u(t;u_0)-v(t;v_0)$ and invoking \cref{cor:FP:est},
\cref{lem:FP:stoppingtime} and finally \cref{thm:Lyapunov:ng}, we have
\begin{align*}
  &\E\|w(t)\|_{H^1}
  =\E(\indFn{\tau_{R,\beta} =\infty}\|w(t)\|_{H^1})
     +\E(\indFn{\tau_{R,\beta}<\infty}\|w(t)\|_{H^1})
     \notag\\
   &\leq C{{}\exp\left({{}R + \beta}
      - \frac{\gamma}{2} t\right)}
     (\|u_0\|^{q}_{H^1} + \|v_0\|_{H^1}^{q} +1) 
         + \Prb(\tau_{\beta,R} < \infty)^{1/2}
         (\E(\|u(t)\|^2_{H^1} + \|v(t)\|^2_{H^1}))^{1/2}
        \notag\\
   &\leq C\left(\exp\left(R + \beta
         - \frac{\gamma}{2} t\right) + \frac{1}{R^{p/2}}\right)
         (\|u_0\|_{H^2}+\|v_0\|_{H^1}+1)^{{}\til{q}}
\end{align*}
for any $p > 0$, {{}where $\til{q}=\til{q}(p)$ is sufficiently large}.  Here, recall that the stopping times $\tau_{R,\beta}$
are given in \eqref{def:FP:stoppingtime} and note carefully that the
constants $C = C(p,\gamma, \|f\|_{H^2}, \|\sigma\|_{H^2}, \lambda)$,
$\til{q} = \til{q}(p)$ are independent of $u_0, v_0$ and $t, R, \beta$.  Selecting
now $R = {{}\frac{\gamma}{4}} t$ for each $t > 0$ and taking $\beta$
according to the lower bound \eqref{eq:beta:cond:stt} and $p > 0$
appropriately large the desired result follows.

\section{Regularity Results for Stationary Solutions}
\label{sect:Regularity}

In this section we prove that any invariant measure is supported in
$H^m$, provided that $f\in H^m$ and $\s\in\bH^m$, where $m\geq2$, that
is, that the support of the invariant measure is contained in a space
that is as smooth as the forcing. The precise statement is given as
follows.

\begin{Thm}\label{thm:regularity}
  Fix any $m\geq2$ and suppose that $\gam>0$, $f\in H^m$ and
  $\s\in \bH^m$.  Let $\{P_t\}_{t\geq0}$ be the Markov semigroup
  defined on $H^2$ corresponding to \eqref{eq:s:KdV} with this
  data. Then
    \begin{align}\label{eq:reg:im}
        \mu(H^2\smod H^m)=0,
    \end{align}
    holds for all invariant measures $\mu\in\Pr(H^2)$ of
    $\{P_t\}_{t\geq0}$. Moreover, for any such $\mu$ there exists
    a $p = p(m) > 0$ such that, for any $\eta > 0$,
  \begin{align}
    \label{eq:reg}
    \int_{H^2}\exp(\eta \Sob{u}{H^m}^{2p})\mu(du) \leq C<\infty,
  \end{align}
  where $C = C(\eta, m, \gamma, \|f\|_{H^m}, \|\sigma\|_{H^m})$ is independent
  of $\mu$.
\end{Thm}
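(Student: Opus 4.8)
The strategy is to compare the stationary process driving $\mu$ with the nudged system \eqref{eq:s:KdV:ng}: unlike \eqref{eq:s:KdV}, the nudged equation enjoys the full $H^m$ Lyapunov structure of \cref{thm:Lyapunov:ng}(ii) \emph{even when only $H^2$ data is prescribed} for $u$, while the Foias--Prodi estimate forces the two processes together in $H^1$; lower semicontinuity of the $H^m$ norm then transfers the $H^m$ bound from the nudged solution to $\mu$. Throughout I would fix $\lambda,N\geq1$ as in \cref{thm:FP:est} (so that \cref{cor:FP:est}, \cref{cor:FP:est:Intro} apply), depending only on $\gamma,f,\sigma$. As a preliminary, note that $\mu$ has all polynomial $L^2$ moments and a finite second $H^2$ moment: for $q,S>0$, invariance gives $\int(\|u\|_{L^2}^q\wedge S)\,d\mu=\int\E[\|u(t;u_0)\|_{L^2}^q\wedge S]\,d\mu(u_0)$, the integrand is $\leq(Ce^{-\gamma qt/2}\|u_0\|_{L^2}^q+C)\wedge S$ by \cref{thm:Lyapunov}(i), and letting $t\to\infty$ then $S\to\infty$ (dominated, then monotone convergence) yields $\int\|u\|_{L^2}^q\,d\mu<\infty$; the same argument with $m=2$ gives $\int\|u\|_{H^2}^2\,d\mu<\infty$. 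This step works precisely because for $m=0,2$ one has $u_0$ in the relevant space $\mu$-a.e., so $u(t;u_0)$ stays there — which fails for $m>2$, and is exactly why the nudged detour is needed.

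Next, fix $u_0\in H^2$, let $u=u(u_0,f,\sigma)$ solve \eqref{eq:s:KdV}, let $\tilde u_0$ be a smooth approximation of $u_0$ (a mollification, or simply $\tilde u_0\equiv0$; in any case $\tilde u_0\in H^m$ and $\int\|\tilde u_0\|_{H^m}^2\,d\mu<\infty$), and let $v=v(\tilde u_0,u_0,f,\sigma)$ solve \eqref{eq:s:KdV:ng}, which stays in $H^m$ for all $t$ by well-posedness of the nudged equation in $H^m$. Two estimates drive the proof. First, \cref{cor:FP:est:Intro} (which already fused the conditional bound \cref{cor:FP:est} with the stopping-time tail \cref{lem:FP:stoppingtime}) gives, for any $p>0$,
\[
\E\|u(t;u_0)-v(t;\tilde u_0)\|_{H^1}\;\leq\;\frac{\exp\!\big(C(1+\|u_0\|_{H^2})^{q}\big)}{t^{p}}\;\xrightarrow[t\to\infty]{}\;0,
\]
and, crucially, this convergence is \emph{uniform over} $A_K:=\{u_0\in H^2:\|u_0\|_{H^2}\leq K\}$ for each fixed $K$. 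Second, \cref{thm:Lyapunov:ng}(ii) gives $\E\|v(t;\tilde u_0,u_0)\|_{H^m}^2\leq C\big(e^{-\gamma t}(\|\tilde u_0\|_{H^m}^2+\|u_0\|_{L^2}^{q}+1)+1\big)$, so integrating $u_0$ against $\mu$ and using the preliminary moment bounds yields
\[
\limsup_{t\to\infty}\;\int_{H^2}\E\|v(t;\tilde u_0,u_0)\|_{H^m}^2\,\mu(du_0)\;\leq\;C<\infty,
\]
with $C=C(m,\gamma,\|f\|_{H^m},\|\sigma\|_{H^m})$ independent of $\mu$.

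I would then run the stationary process $u(t)=u(t;U_0)$ with $U_0\sim\mu$, coupled to $v(t)=v(t;\tilde u_0,U_0)$ using the same $U_0$ and noise. Since $\Law(u(t))=\mu$ for all $t$, for any bounded Lipschitz $\phi:H^1\to\RR$ and any $K$,
\[
\Big|\int\phi\,d\mu-\E\phi(v(t))\Big|\;\leq\;\|\phi\|_{\mathrm{Lip}}\sup_{u_0\in A_K}\E\|u(t;u_0)-v(t;\tilde u_0)\|_{H^1}\;+\;2\|\phi\|_\infty\,\mu(A_K^c);
\]
letting $t\to\infty$ and then $K\to\infty$ (with $\mu(A_K^c)\to0$) shows $\Law(v(t))\Rightarrow\mu$ weakly in $\Pr(H^1)$. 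Since $y\mapsto\|y\|_{H^m}^2=\sup_N\|P_Ny\|_{H^m}^2$ (set to $+\infty$ off $H^m$) is nonnegative and lower semicontinuous on $H^1$, the Portmanteau theorem together with the previous display gives $\int_{H^2}\|u\|_{H^m}^2\,d\mu\leq\liminf_{t\to\infty}\int\E\|v(t)\|_{H^m}^2\,d\mu\leq C<\infty$; in particular $\mu(H^2\smod H^m)=0$, which is \eqref{eq:reg:im}. Finally, once $\mu(H^m)=1$ the exponential bound \eqref{eq:reg} follows by the truncation-plus-invariance argument of the preliminary step carried out at level $m$ — legitimate now because $u(t;u_0)\in H^m$ for $\mu$-a.e.\ $u_0$ — using the exponential moment bound \eqref{eq:Lyapunov:Bnd:subquadexp} for \eqref{eq:s:KdV} and \cref{lem:equivalence} to pass between $\In_m^+(u)^p$ and $\|u\|_{H^m}^{2p}$, with $p=p(m)>0$ the threshold of \cref{thm:Lyapunov}(iii).

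The main obstacle is the comparison in the middle two steps. The conceptual point is that one must compare $\mu$ not with $\Law(u(t;u_0))$, which never gains Sobolev regularity, but with $\Law(v(t;\tilde u_0))$ of the nudged system; this succeeds only because (i) the nudged $H^m$ Lyapunov estimate \eqref{eq:Lyapunov:Bnd:Hm:ng} tolerates merely $H^2$ data for $u$ while its dependence on $\|\tilde u_0\|_{H^m}$ is damped out as $t\to\infty$, and (ii) the Foias--Prodi decay need only be controlled \emph{uniformly on the sublevel sets $A_K$ of $\|\cdot\|_{H^2}$}, which circumvents the fact that the Foias--Prodi constant $\exp(C(1+\|u_0\|_{H^2})^q)$ is not a priori $\mu$-integrable (the $H^2$ exponential moments of $\mu$ are only subquadratic). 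Extracting the uniform-on-$A_K$ decay from the stopping-time estimates \cref{cor:FP:est}, \cref{lem:FP:stoppingtime} is the substantive input; the remaining bookkeeping (truncation, dominated/monotone convergence, Portmanteau) is routine.
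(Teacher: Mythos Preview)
Your proof is correct and essentially matches the paper's: both use invariance, the nudged system with $v_0=0$, \cref{cor:FP:est:Intro} for the Foias--Prodi decay, \cref{thm:Lyapunov:ng}(ii) for the nudged $H^m$ bound, and a split over $H^2$-balls, followed by the same truncation-at-level-$m$ argument (via \eqref{eq:Lyapunov:Bnd:subquadexp} and \cref{lem:equivalence}) for \eqref{eq:reg}. The only cosmetic difference is that where you invoke weak convergence $\Law(v(t))\Rightarrow\mu$ in $H^1$ plus Portmanteau/lower semicontinuity of $\|\cdot\|_{H^m}^2$, the paper works with the explicit truncated test functions $\phi_{K,M}=K\wedge\|P_Mu\|_{H^m}$ and uses the reverse Poincar\'e inequality $\|P_M(u-v)\|_{H^m}\leq M^{m-1}\|u-v\|_{H^1}$ to pass directly from the $H^1$ Foias--Prodi bound to the $H^m$ conclusion before sending $K,M\to\infty$ by monotone convergence.
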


{{}Note that when $f\in H^m$, for all $m\geq2$, then \cref{thm:regularity} immediately implies the first claim in \cref{thm:Regularity:Intro} from the introduction, while \eqref{eq:reg} implies the second claim; the third claim there is simply a restatement of \eqref{eq:reg} by duality.}

\begin{proof}[Proof of \cref{thm:regularity}]
For $K, M \geq 1$ let
\begin{align}\label{def:Hm:regularized}
        \phi_{K,M}(u):=K\wedge\Sob{P_Mu}{H^m},
\end{align}
where recall that $P_M$ denotes the spectral projection onto
wavenumbers $\leq M$.  Note that $\phi_{K,M}$ is continuous and
bounded on $H^2$.  Fix any invariant measure $\mu\in \Pr(H^2)$ for
\eqref{eq:s:KdV}. Given $\rho>0$, let $B_\rho\subseteq H^2$ denote the
$H^2$-ball of radius $\rho>0$ centered at $0$.  Then, invoking
invariance, we have
\begin{align}
    	\int_{H^2} \phi_{K,M}(u_0) \mu(du_0) &= \int_{H^2} \phi_{K,M}(u_0) \mu P_t(du_0) 
        = \int_{H^2} P_t \phi_{K,M}(u_0) \mu (du_0) 
        =\int_{H^2}\E\phi_{K,M}(u(t;u_0))\mu(du_0)\notag\\
        &=\int_{B_\rho^c} \E\phi_{K,M}(u(t;u_0))\mu(du_0)
             + \int_{B_\rho} \E\phi_{K,M}(u(t;u_0))\mu(du_0),
            \label{eq:near:far:split}
\end{align}
valid for any $t \geq 0, \rho > 0$

By appropriately selecting $\rho$ and $t$ we now use
\eqref{eq:near:far:split} to bound
$\int_{H^2} \phi_{K,M}(u_0) \mu(du_0)$ independently of $K,M$.  For
the first term we simply observe that
\begin{align}
        \int_{B_\rho^c}\E\phi_{K,M}(u(t;u_0))\mu(du_0)\leq K\mu(B_\rho^c).
        \label{eq:far:field}
\end{align}    
In order to bound the second integral over ${B_\rho}$ in
\eqref{eq:near:far:split} we select $\lam, N \geq 1$ such that
\cref{cor:FP:est:Intro} applies for the given data $f \in H^m$,
$\sigma \in \bH^m$ and $\gamma > 0$.  For each $u_0\in B_\rho$ we
consider $v(0,u_0) := v(0, u_0, f, \sigma)$ satisfying
\eqref{eq:s:KdV:ng} with $v_0\equiv0$ and this choice of $\lambda, N$.
Here, note carefully that this $v$ satisfies the $H^m$ bound
\eqref{eq:Lyapunov:Bnd:Hm:ng} for every $u_0 \in H^2$.  Thus, with the
reverse Poincar\'e inequality, \cref{cor:FP:est:Intro} and
\cref{thm:Lyapunov:ng}, $(ii)$ we find that, for any $t > 0$,
\begin{align}
	\int_{B_\rho} &\E\phi_{K,M}(u(t;u_0))\mu(du_0) 	        \notag\\
	 &\leq \int_{B_\rho} \E \| P_M (u(t;u_0) - v(t,0,u_0)) \|_{H^m}  \mu(du_0)
	        +  \int_{B_\rho} \E \| v(t;0,u_0)\|_{H^m}\mu(du_0) \notag\\
        	 &\leq M^{m-1} \int_{B_\rho} \E \| u(t;u_0) - v(t,0,u_0) \|_{H^1}  \mu(du_0)
	        + \int_{B_\rho} \E \| v(t;0,u_0)\|_{H^m}\mu(du_0) \notag\\
	 &\leq M^{m-1} \int_{B_\rho} \frac{\exp(C(1 + \|u_0\|^5_{H^2}))}{t} \mu(du_0)
	        +  C\int_{B_\rho}(e^{-\gamma t} 
	        (\|u_0\|^q_{L^2} + 1)+1)^{1/2}\mu(du_0)
	        \notag\\
	 &\leq M^{m-1} \frac{\exp(C(1 + \rho^5))}{t} + C(e^{-(\gam/2) t}\rho^{q/2}+1)\leq C\left(M^{m-1} \frac{\exp(C(1 + \rho^5))}{t} + 1\right),
	 \label{eq:near:field}
\end{align}
where, to emphasize, the constant
$C = C(m, \lambda, N, \gamma, \|f\|_{H^m}, \|\sigma\|_{H^m}) $ is
independent of $M \geq 1$, $\rho, t > 0$ and $\mu$.  By combining
\eqref{eq:far:field} and \eqref{eq:near:field} we find
\begin{align}
	\int_{H^2} \phi_{K,M}(u) \mu(du) \leq K\mu(B_\rho^c)+ C\left(M^{m-1} \frac{\exp(C(1 + \rho^5))}{t} + 1\right),
	\label{eq:free:range:params}
\end{align}
where again $C$ is independent of $K, M, \rho$ and $t$.

Thus, by selecting first $\rho >0$ large enough such that
$K\mu(B_\rho^c) \leq 1$ and then selecting $t >0$ sufficiently large
so that $M^{m-1} t^{-1}\exp(C(1 + \rho^5))\leq 1$, we conclude that
$\int_{H^2} \phi_{K,M}(u) \mu(du)$ is bounded independently of
$K, M \geq 1$ and $\mu$.  Thus, sending $M,K \to \infty$ in
\eqref{def:Hm:regularized} and invoking the monotone convergence
theorem we infer that $\int_{H^2} \|u\|_{H^m} \mu(du) < \infty$.  This
in particular establishes the first claim \eqref{eq:reg:im}.

Regarding the second claim, \eqref{eq:reg} we invoke the bound
\eqref{eq:Lyapunov:Bnd:subquadexp}.  Indeed, considering any
$p = p(m)$ commensurate with \cref{thm:Lyapunov} so that
\eqref{eq:Lyapunov:Bnd:subquadexp} holds, we now define
\begin{align*}
	\psi_{M,K}(u) = \exp( \eta \| P_M u\|^{2p}_{H^m}) \wedge K
\end{align*}
for every $M, K \geq 1$ and every $\eta > 0$.  Here, as with
$\phi_{M,K}$ previously, $\psi_{M,K}$ is continuous and bounded on
$H^2$.  Take $\tilde{B}_\rho :=\{ u\in H^m : \|u\|_{H^m} < \rho\}$.
Arguing similarly to \eqref{eq:near:far:split}, \eqref{eq:far:field}
but now with an appropriate use \eqref{eq:reg:im} at the intermediate
step we find
\begin{align}
	\int_{H^2} \psi_{K,M}(u_0) \mu(du_0) 
	=& \int_{H^2} \E \psi_{K,M}(u(t;u_0)) \mu(du_0) 
	= \int_{H^m} \E \psi_{K,M}(u(t;u_0)) \mu(du_0) \notag\\
	\leq& \int_{\tilde{B}_\rho} \E \psi_{K,M}(u(t;u_0)) \mu(du_0)  + 
	        K \mu(H^m \setminus \tilde{B}_\rho),
	         \label{eq:far:field:2:0}
\end{align}
for any $t, \rho > 0$.  On the other hand, with \cref{lem:equivalence}
and \eqref{eq:Lyapunov:Bnd:subquadexp} we find
\begin{align} 
	 \int_{\tilde{B}_\rho} \E \psi_{K,M}(&u(t;u_0)) \mu(du_0) 
	\leq \int_{\tilde{B}_\rho} \E \exp( \eta \| u(t; u_0)\|^{2p}_{H^m}) \mu(du_0) 
	\leq \int_{\tilde{B}_\rho} \E \exp( \eta \In_m^+(u(t; u_0))^{p}) \mu(du_0) 
	\notag\\
	&\leq \int_{\tilde{B}_\rho} C\exp\left(\eta e^{-\gam p t}\In_m^+(u_0)^p \right)\mu(du_0) 
	\leq  \int_{\tilde{B}_\rho} \exp\left(C (e^{-\gam p t}  \| u(t; u_0)\|^{q}_{H^m} +1)\right)\mu(du_0) 
	\notag\\
	&\leq \exp\left(C (e^{-\gam p t}  \rho^q +1)\right)
	 \label{eq:near:field:2:0}
\end{align}
where once again
$C = C(m, \gamma, \|f\|_{H^m}, \|\sigma\|_{H^m}, \eta, p )$,
$q = q(m,p)$ are both independent of $t, M, K,\rho$ and $\mu$.
Combining \eqref{eq:far:field:2:0} and \eqref{eq:near:field:2:0} and
adjusting the free parameters $t, \rho >0$ appropriately as we did for
\eqref{eq:near:field:2:0} obtain a bound for
$\int_{H^2} \psi_{K,M}(u_0) \mu(du_0)$ which is independent of
$K, M \geq 1$ and of $\mu$.  Thus another invocation of the monotone
convergence theorem now yields \eqref{eq:reg}.  The proof is complete.
\end{proof}

\section{Unique Ergodicity with Arbitrary Damping and Degenerate
  Forcing}
\label{sect:UniqueErgodicity}

{{}This section addresses unique ergodicity in the case where
$\gamma > 0$ is arbitrary, while on the other hand imposing some
requirements on the range of $\sigma$. In particular, for $N>0$, let $P_N$ denote the Dirichlet projection onto frequencies in the range $[-N, N]$. Given $K>0$, we will assume throughout this section that $\s$ satisfies}
\begin{align*}
  \s:\R^K\goesto L^2,\quad \mathcal{R}(\sigma) \supset P_NL^2,
\end{align*}
{{}where $N$ is the largest integer for which $\mathcal{R}(\s)\supset P_NL^2$ holds. We recall from \cref{sect:functional:setting} that the associated pseudo-inverse $\s:P_NL^2\goesto\mathbb{R}^K$ is bounded; we will crucially invoke these below in order to apply Girsanov's theorem. The main result of this section is \cref{thm:unique:ergodicity:Intro} from the introduction, which we restate here in a more precise form as \cref{thm:ergodicity:ess:elliptic}}.

\begin{Thm}
  \label{thm:ergodicity:ess:elliptic}
  Given $\gam>0$, $f\in H^2$, and $\s\in\bH^2$, there exists
  $N=N(\gam,\Sob{f}{H^1},\Sob{\s}{L^2})$ such that if
  $\mathcal{R}(\s)\supset P_NH^2$, then $\{P_t\}_{t \geq 0}$ possesses at most one ergodic invariant probability measure $\mu\in\Pr(H^2)$, where $\{P_t\}_{t\geq0}$ is the Markov
  semigroup corresponding to \eqref{eq:s:KdV} on $H^2$.
\end{Thm}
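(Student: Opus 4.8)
The plan is to invoke the abstract asymptotic coupling criterion of \cite{GlattHoltzMattinglyRichards2015}, recorded here as \cref{thm:GHMR}: to obtain uniqueness it suffices to exhibit, for every pair $u_0, v_0 \in H^2$, a probability space carrying a copy of the Markov chain $\{u(k;u_0)\}_{k\in\N}$ together with a process $\{\til v(k)\}_{k\in\N}$ whose law is absolutely continuous with respect to that of $\{u(k;v_0)\}_{k\in\N}$, and such that $\Prb(\|u(k;u_0)-\til v(k)\|_{H^1}\to 0) > 0$. Since $\{P_t\}_{t\ge0}$ is Feller on $H^2$, synchronization in $H^1$ along the integers suffices to feed into \cref{thm:GHMR}. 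For the modified dynamics I would take the nudged system \eqref{eq:s:KdV:ng}, with parameters $\lambda \ge \lambda_0$, $N \ge N_0$ fixed by \cref{thm:FP:est} — the hypothesis $\mathcal{R}(\sigma)\supset P_NL^2$ being precisely what is needed to accommodate this $N$ — but with the nudging term $-\lambda P_N(\til v-u)$ replaced by the truncated term $-\lambda\indFn{t\le\tau_{R,\beta}}P_N(\til v-u)$, where $\tau_{R,\beta}$ is the stopping time \eqref{def:FP:stoppingtime} and $R,\beta>0$ are to be chosen. Recall from \cref{sect:functional:setting} that, since $\mathcal{R}(\sigma)\supset P_NL^2$, the restricted pseudo-inverse $\sigma^{-1}:P_NL^2\to\RR^K$ is bounded.

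The first step is absolute continuity via Girsanov's theorem. Writing the truncated nudging as a drift perturbation of the form $\sigma\,\sigma^{-1}(-\lambda\indFn{t\le\tau_{R,\beta}}P_N(\til v-u))$ — which makes sense because this vector lies in $P_NL^2\subset\mathcal{R}(\sigma)$ — the process $\til v$ satisfies \eqref{eq:s:KdV} driven by a shifted Wiener process, and the Radon--Nikodym factor is the stochastic exponential of $\int_0^{\cdot}\lambda\indFn{s\le\tau_{R,\beta}}\sigma^{-1}P_N(\til v-u)\,dW$. Because $\sigma^{-1}$ is bounded on $P_NL^2$ and $P_N$ is a bounded finite-rank projection, the relevant quadratic variation is controlled by $C\lambda^2\int_0^{\tau_{R,\beta}}\|\til v-u\|_{L^2}^2\,ds$; on $\{\tau_{R,\beta}<\infty\}$ this is a finite-interval integral, while on $\{\tau_{R,\beta}=\infty\}$ the weak Foias--Prodi bound \cref{thm:FP:est} (equivalently \cref{cor:FP:est}) gives $\int_0^\infty\|\til v-u\|_{H^1}^2\,ds<\infty$ with finite exponential moments. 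This is exactly the integrability needed to run the (localized) Novikov--Girsanov argument as in \cite{GlattHoltzMattinglyRichards2015}, and thereby to identify $\mathrm{Law}(\til v)$ as absolutely continuous with respect to the law of the $v_0$-chain.

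The second step is to force synchronization on a set of positive probability; this is \cref{prop:asymptotic:coupling}. On the event $\{\tau_{R,\beta}=\infty\}$ the truncated nudged process $\til v$ coincides with the solution $v$ of the genuine nudged system \eqref{eq:s:KdV:ng}, so \cref{cor:FP:est} applies and gives, with $\beta$ chosen according to \eqref{eq:beta:cond:stt}, a bound of the shape $\E(\indFn{\tau_{R,\beta}=\infty}\|u(k;u_0)-\til v(k)\|_{H^1}^2)\le C e^{R+\beta-\gamma k/2}\,\Phi(u_0,v_0)$ for every integer $k\ge0$, where $\Phi$ denotes the polynomial in $\|u_0\|_{H^2},\|v_0\|_{H^2}$ appearing on the right of \eqref{eq:FP2}. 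By Chebyshev's inequality the probabilities $\Prb(\indFn{\tau_{R,\beta}=\infty}\|u(k;u_0)-\til v(k)\|_{H^1}^2\ge e^{-\gamma k/4})$ are summable in $k$, so the Borel--Cantelli lemma yields $\|u(k;u_0)-\til v(k)\|_{H^1}\to 0$ almost surely on $\{\tau_{R,\beta}=\infty\}$. Finally, by \cref{lem:FP:stoppingtime} — which rests on the Lyapunov structure of the nudged equation, \cref{thm:Lyapunov:ng} — one has $\Prb(\tau_{R,\beta}<\infty)\le C(1+\|u_0\|_{H^2}^q)R^{-p}$, so choosing $R=R(u_0,v_0)$ large forces $\Prb(\tau_{R,\beta}=\infty)\ge \tfrac12$. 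Hence the two discrete trajectories synchronize in $H^1$ with probability at least $\tfrac12>0$, and combining this with the absolute continuity of the previous step, \cref{thm:GHMR} gives that there is at most one ergodic invariant probability measure for $\{P_t\}_{t\ge0}$ on $H^2$.

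The heart of the argument is the tension between these two steps, mediated by the stopping time $\tau_{R,\beta}$. Since the Foias--Prodi estimate holds only in expectation, the control energy $\int_0^\infty\|\sigma^{-1}\lambda P_N(\til v-u)\|^2\,ds$ need not be finite almost surely, so some truncation of the nudging is unavoidable if Girsanov is to apply; yet truncating too aggressively would destroy the synchronization. The reconciliation — and the conceptual point the paper emphasizes — is that the asymptotic coupling criterion only demands synchronization along an evenly spaced discrete sampling of times, and only on a set of positive probability: the conditional (in expectation) decay \cref{cor:FP:est} together with Borel--Cantelli delivers exactly this on $\{\tau_{R,\beta}=\infty\}$, while the Lyapunov bound \cref{lem:FP:stoppingtime} makes $\Prb(\tau_{R,\beta}=\infty)$ as close to $1$ as desired by enlarging $R$. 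The delicate point requiring care is therefore the joint compatibility of the parameter choices — that $\lambda$ and $N$ from \cref{thm:FP:est}, together with $R$ and $\beta$, can be fixed so that the truncated control simultaneously satisfies the Novikov-type hypothesis and leaves $\{\tau_{R,\beta}=\infty\}$ of positive probability with conditional synchronization — once this is arranged, the remainder is bookkeeping with results already in place.
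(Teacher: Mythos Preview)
Your overall strategy is the paper's, and your synchronization argument (step~2) is essentially correct. The gap is in step~1: truncating the nudging at the Foias--Prodi stopping time $\tau_{R,\beta}$ does \emph{not} deliver the Novikov condition. On $\{\tau_{R,\beta}<\infty\}$ the fact that the integration interval is finite does not bound the integral, since $\|P_N(\til v-u)\|_{L^2}^2$ is not bounded and the definition of $\tau_{R,\beta}$ only controls $\int_0^{\tau_{R,\beta}}(1+\|u\|_{H^2}^2+\lambda^{-1}\|v\|_{L^2}^2)\,ds$ up to an additive $\tfrac{\gamma}{2}\tau_{R,\beta}$, which is itself unbounded. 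On $\{\tau_{R,\beta}=\infty\}$ the Foias--Prodi estimates \cref{thm:FP:est}, \cref{cor:FP:est} are \emph{expectation} bounds; they yield $\E\bigl(\indFn{\tau_{R,\beta}=\infty}\int_0^\infty\|u-v\|_{H^1}^2\,ds\bigr)<\infty$ but not the exponential moment $\E\exp\bigl(c\int_0^\infty\|P_N(\til v-u)\|_{L^2}^2\,ds\bigr)<\infty$ that Novikov requires. So the absolute continuity claim is unjustified with this truncation.

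The paper's resolution is to decouple the two roles of the stopping time. It truncates the nudging at a \emph{different} stopping time $\tau^*_K:=\inf\{t:\int_0^t\|P_N(v^*-u)\|_{L^2}^2\,ds\ge K\}$, chosen precisely so that Novikov is trivial: $\int_0^{\tau^*_K}\|a\|^2\,ds\le CK$ deterministically. The price is that one must now show $\Prb(\tau^*_K=\infty)>0$ for some $K$, so that $v^*=v$ on that set and the Foias--Prodi machinery applies. This is done by enlarging the events: with $B_n:=\{\|w(n)\|_{H^1}^2+\gamma\int_n^{n+1}\|w\|_{L^2}^2\,ds\ge n^{-2}\}$ (note the time-integrated $L^2$ term, absent from your $B_n$), Borel--Cantelli via \cref{cor:FP:est} and \cref{lem:FP:stoppingtime} gives $\Prb(\bigcap_{n\ge m^*}B_n^c)>1/2$ for some $m^*$; on this set $\int_{m^*}^\infty\|P_Nw\|_{L^2}^2\,ds\le\sum_{n\ge m^*}n^{-2}$, and a separate Chebyshev bound handles $\int_0^{m^*}\|P_Nw\|_{L^2}^2\,ds$. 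This produces a positive-probability set on which both $\|w(n)\|_{H^1}\to0$ and $\int_0^\infty\|P_Nw\|_{L^2}^2\,ds\le K$, hence $\tau^*_K=\infty$. In short: you must either switch to the $\tau^*_K$ truncation, or supply a genuine exponential-moment bound for the Girsanov shift under $\tau_{R,\beta}$, which the available estimates do not provide.
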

\begin{Rmk}
  An explicit estimate on the required size for $N$ as a function of
  $\gam,\Sob{f}{H^1},\Sob{\s}{L^2}$ can be found in
  \cref{rmk:N:dep:FP}.  Note that the requirement for $N$ is
  dictated by our analysis below, so that we can obtain the
  bounds \eqref{eq:FP2} and \eqref{eq:FP:stoppingtime:bound}.  Our estimate on $N$ in \cref{rmk:N:dep:FP} is likely far from optimal,
   and we ultimately conjecture that uniqueness of
  invariant measures should hold even if just a few well-selected
  frequencies are forced with the number of frequencies independent of
  $f$, $\|\sigma\|_{L^2}$ and $\gamma >0$.  While such a
  `hypoelliptic' setting \'a la \cite{HairerMattingly2008,
    HairerMattingly2011, FoldesGlattHoltzRichardsThomann2013} is
  beyond the scope of the analysis presented here, we intend to take up
  this more difficult setting in future work.
\end{Rmk}

The proof of this theorem will ultimately rely on an application of an
abstract result from \cite{GlattHoltzMattinglyRichards2015} (see also
\cite{HairerMattinglyScheutzow2011}), which identifies a set of
conditions under which the Markov semigroup is guaranteed to have at
most one invariant measure (\cref{thm:GHMR} below). In particular, one
need only ensure that the process couples asymptotically on a set of
positive probability. The statement of this abstract result requires a
few more technical definitions, which we now recall.

Let $H$ be a Polish space with a metric $\rho$.  Let
$\mathcal{B}(H)$ denote the Borel $\sigma$-algebra on $H$. Suppose that
$P$ is a Markov transition kernel on $H$ namely we suppose that
$P: H \times \mathcal{B}(H) \to [0,1]$ such that $P(u, \cdot)$ is a
probability measure for any given $u \in H$ and that $P(\cdot, A)$ is
a measurable function for any fixed $A \in \mathcal{B}(H)$.  As above
in \eqref{eq:mt:obs:act}, \eqref{eq:mt:msr:act}, $P$ acts on bounded
measurable observables $\phi: H \to \RR$ and Borel probability
measures $\mu$ as $P\phi(u) = \int \phi(v)P(u,dv)$ and
$\mu P(A) = \int P(u, A) \mu(du)$ respectively.  A probability measure $\mu$ on
$\mathcal{B}(H)$ is invariant if $\mu P = \mu$.

Let $P^{\mathbb{N}}$ be the associated Markov transition kernel on the
space of one-sided infinite sequences $H^{\mathbb{N}}$, namely, for
any $u_0 \in H$ and any $A \in \mathcal{B}(H^{\mathbb{N}})$,
$P^{\mathbb{N}}(u_0, A) := \Prb((u(1;u_0), u(2; u_0), \ldots) \in
A)$, where $u(n;u_0) \sim P(u(n-1;u_0), dv)$ for each $n \geq 1$,
starting from $u(0;u_0) = u_0$.  Denote by $\Pr(H)$ and
$\Pr(H^{\mathbb{N}})$ the collections of Borel probability measures on
$H$ and $H^{\mathbb{N}}$, respectively.  Then, for any
$\mu \in\Pr(H)$, we define the \textit{suspension of $\mu$ to $H^\N$}
by $\mu P^{\mathbb{N}}\in\Pr(H^{\mathbb{N}})$ by
$\mu P^{\mathbb{N}}(\cdot) :=
\int_{H^{\mathbb{N}}}P^{\mathbb{N}}(u,\cdot)\mu(du)$.

For any $\mu_1,\mu_2\in\Pr(H^{\mathbb{N}})$, consider the
\textit{space of asymptotically equivalent couplings}
\begin{align*}
  \til{\mathcal{C}}(\mu_1,\mu_2)
  := \{\Gamma \in\Pr(H^{\mathbb{N}}\times H^{\mathbb{N}}):
  \Gamma\Pi_{i}^{-1} \ll \mu_i \ \text{for each} \  i=1,2\}, 
\end{align*}
where $\Pi_{i}$ denotes projection onto the $i^{\text{th}}$ coordinate
and $\nu \ll \mu$ means that a measure $\nu$ is absolutely continuous
with respect to another measure $\mu$, namely, $\nu(A) = 0$ whenever
$\mu(A) = 0$.  The notation $\mu f^{-1}$ signifies the push forward of
a measure $\mu$ under a function $f$, that is
$(\mu f^{-1})(A) = \mu(f^{-1}(A))$ so that, in terms of observables
$\phi$, we have
$\int \phi(u)(\mu f^{-1})(du) = \int \phi(f(u)) \mu(du)$.  Next, with
respect to a possibly different choice of distance ${\til{\rho}}$ on
$H$, consider the subspace of mutually convergent sequence-pairs
\begin{align*}
  D_{{\til{\rho}}}:= \{(\mathbf{u},\mathbf{v})\in H^{\mathbb{N}}\times H^{\mathbb{N}}:
  \lim_{n\rightarrow \infty} {\til{\rho}}(u_n,v_n) = 0\}.
\end{align*}
Consider also the test functions
\begin{align}\label{def:determine:measures}
  \mathcal{G}_{{\til{\rho}}}
  = \left\{\phi \in C_{b}(H):
  \sup_{u\neq v}\frac{|\phi(u) - \phi(v)|}{{\til{\rho}}(u,v)}<\infty \right\},
\end{align}
and note that a set $\mathcal{G}$ of bounded, real-valued, measurable
functions is said to \textit{determine measures} on $H$ provided that,
if $\mu_1,\mu_2 \in\Pr(H)$ satisfy
$\int_H \phi(u)\mu_1(du) = \int_H \phi(u) \mu_2(du)$ for all
$\phi \in \mathcal{G}$, it follows that $\mu_1=\mu_2$.

With these preliminaries in hand, we are ready to state the abstract
result from \cite{GlattHoltzMattinglyRichards2015}.

\begin{Thm}[\cite{GlattHoltzMattinglyRichards2015}]\label{thm:GHMR}
  Suppose ${\til{\rho}}$ is a metric such that $\mathcal{G}_{{\til{\rho}}}$
  determines measures on $(H,\rho)$, and that
  $D_{{\til{\rho}}}\subseteq H^{\mathbb{N}}\times H^{\mathbb{N}}$ is
  measurable.  If, for each $u_0,v_0\in H$, there exists a corresponding
  $\Gamma_{u_0,v_0} \in \til{\mathcal{C}}(\delta_{u_0} P^{\mathbb{N}},\delta_{v_0}
  P^{\mathbb{N}})$ such that $\Gamma_{u_0,v_0}(D_{{\til{\rho}}})>0$, then there
  is at most one $P$-invariant probability measure $\mu\in\Pr(H)$.
\end{Thm}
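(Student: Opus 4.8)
The plan is to reduce the statement to the case of ergodic invariant measures and then to extract a contradiction from the Birkhoff ergodic theorem applied to the suspended chain. Since $H$ is Polish, the ergodic decomposition theorem is available: if the set $\mathcal{I}(P)$ of $P$-invariant probability measures is nonempty, every element of $\mathcal{I}(P)$ is a barycenter of a probability measure supported on the $P$-invariant ergodic measures. Consequently, if the set of ergodic invariant measures has at most one element, then $\mathcal{I}(P)$ has at most one element. Thus it suffices to prove: if $\mu_1,\mu_2\in\Pr(H)$ are $P$-invariant and ergodic, then $\mu_1=\mu_2$.

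To this end I would argue by contradiction. If $\mu_1\neq\mu_2$, then since $\mathcal{G}_{{\til{\rho}}}$ determines measures on $(H,\rho)$ there is $\phi\in\mathcal{G}_{{\til{\rho}}}$ with $\int_H\phi\,d\mu_1\neq\int_H\phi\,d\mu_2$; let $L_\phi$ denote its Lipschitz constant with respect to ${\til{\rho}}$. The central construction is to average the hypothesized couplings: for each $(u_0,v_0)$ fix $\Gamma_{u_0,v_0}\in\til{\mathcal{C}}(\delta_{u_0}P^{\mathbb{N}},\delta_{v_0}P^{\mathbb{N}})$ with $\Gamma_{u_0,v_0}(D_{{\til{\rho}}})>0$, and set $\Gamma:=\int_{H\times H}\Gamma_{u_0,v_0}\,(\mu_1\times\mu_2)(du_0,dv_0)$. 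One then verifies two properties. First, $\Gamma\in\til{\mathcal{C}}(\mu_1P^{\mathbb{N}},\mu_2P^{\mathbb{N}})$: if $A$ is Borel with $\mu_1P^{\mathbb{N}}(A)=0$, then $\delta_{u_0}P^{\mathbb{N}}(A)=0$ for $\mu_1$-a.e.\ $u_0$, hence $\Gamma_{u_0,v_0}\Pi_1^{-1}(A)=0$ for $\mu_1$-a.e.\ $u_0$ and all $v_0$ (using $\Gamma_{u_0,v_0}\Pi_1^{-1}\ll\delta_{u_0}P^{\mathbb{N}}$), and integrating gives $\Gamma\Pi_1^{-1}(A)=0$; the argument for $\Pi_2$ is identical. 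Second, since $D_{{\til{\rho}}}$ is measurable and each $\Gamma_{u_0,v_0}$ charges it, $\Gamma(D_{{\til{\rho}}})=\int\Gamma_{u_0,v_0}(D_{{\til{\rho}}})\,d(\mu_1\times\mu_2)>0$.

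With $\Gamma$ constructed, the conclusion follows from Birkhoff's theorem. Let $\Theta$ be the shift on $H^{\mathbb{N}}$. Since $\mu_i$ is $P$-invariant and ergodic, $\mu_iP^{\mathbb{N}}$ is $\Theta$-invariant and $\Theta$-ergodic, with one-dimensional marginal $\mu_i$; hence there is a set $F_i$ of full $\mu_iP^{\mathbb{N}}$-measure on which $\frac{1}{n}\sum_{k=1}^n\phi(x_k)\to\int_H\phi\,d\mu_i$. Because $\Gamma\Pi_i^{-1}\ll\mu_iP^{\mathbb{N}}$, the set $G_i:=\Pi_i^{-1}(F_i)$ has full $\Gamma$-measure. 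On $D_{{\til{\rho}}}$ one has ${\til{\rho}}(u_n,v_n)\to0$, so $|\phi(u_n)-\phi(v_n)|\leq L_\phi\,{\til{\rho}}(u_n,v_n)\to0$, and therefore the Ces\`aro averages satisfy $\frac{1}{n}\sum_{k=1}^n(\phi(u_k)-\phi(v_k))\to0$. Since $\Gamma(G_1\cap G_2\cap D_{{\til{\rho}}})=\Gamma(D_{{\til{\rho}}})>0$, this set is nonempty; choosing a point $(\mathbf{u},\mathbf{v})$ in it and subtracting the two Birkhoff limits yields $\int_H\phi\,d\mu_1-\int_H\phi\,d\mu_2=\lim_n\frac{1}{n}\sum_{k=1}^n(\phi(u_k)-\phi(v_k))=0$, contradicting the choice of $\phi$. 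Hence $\mu_1=\mu_2$, which completes the reduction and the proof.

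I expect the only genuinely delicate point — beyond routine measure-theoretic bookkeeping — to be the measurability required to form $\Gamma$: the hypothesis asserts only the \emph{existence} of a suitable $\Gamma_{u_0,v_0}$ for each pair, while the average $\int\Gamma_{u_0,v_0}\,d(\mu_1\times\mu_2)$ presupposes that $(u_0,v_0)\mapsto\Gamma_{u_0,v_0}$ can be chosen measurably. The plan is to resolve this with a measurable selection theorem, using that $\Pr(H^{\mathbb{N}}\times H^{\mathbb{N}})$ is Polish and that the set of admissible couplings is nonempty for every $(u_0,v_0)$ and depends on the data in a Borel fashion; after fixing such a selection, every other step above is standard. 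A minor secondary point is to invoke the ergodic decomposition theorem in the correct form on the Polish space $H$.
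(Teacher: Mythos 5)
First, a framing remark: the paper does not prove this theorem at all --- it is quoted verbatim from \cite{GlattHoltzMattinglyRichards2015} and used as a black box --- so your attempt can only be compared with the argument in that reference, which is precisely the Birkhoff--plus--absolute-continuity mechanism you run in your second and third paragraphs. That mechanism (reduce to ergodic $\mu_1\neq\mu_2$, pick $\phi\in\mathcal{G}_{\til{\rho}}$ separating them, transfer the Birkhoff full-measure sets through $\Gamma\Pi_i^{-1}\ll\mu_iP^{\mathbb{N}}$, and use the Lipschitz/Ces\`aro estimate on $D_{\til{\rho}}$) is correct and is the heart of the known proof.

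The genuine gap is in your construction of $\Gamma$. The hypothesis only asserts, for each fixed pair $(u_0,v_0)$, the \emph{existence} of some $\Gamma_{u_0,v_0}$; it gives no joint measurability in $(u_0,v_0)$, so the average $\Gamma=\int\Gamma_{u_0,v_0}\,d(\mu_1\times\mu_2)$ is not defined as written, and your proposed repair by measurable selection is not routine: you would need the correspondence $(u_0,v_0)\mapsto\{\Gamma\in\til{\mathcal{C}}(\delta_{u_0}P^{\mathbb{N}},\delta_{v_0}P^{\mathbb{N}}):\Gamma(D_{\til{\rho}})>0\}$ to be a measurable set-valued map with values amenable to a selection theorem, and the defining condition involves absolute continuity with respect to measures that themselves vary with $(u_0,v_0)$ --- nothing in the hypotheses supplies this, and it is exactly the kind of regularity the theorem is designed not to require. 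The point you missed is that the averaging is unnecessary. Since $\mu_iP^{\mathbb{N}}$ is shift-ergodic, Birkhoff gives a path set $F_i$ with $\mu_iP^{\mathbb{N}}(F_i)=1$ on which the time averages of $\phi$ converge to $\int\phi\,d\mu_i$; because $\mu_iP^{\mathbb{N}}=\int\delta_{u}P^{\mathbb{N}}\,\mu_i(du)$, one has $\delta_{u_0}P^{\mathbb{N}}(F_1)=1$ for $\mu_1$-a.e.\ $u_0$ and $\delta_{v_0}P^{\mathbb{N}}(F_2)=1$ for $\mu_2$-a.e.\ $v_0$. Fix one such pair $(u_0,v_0)$ and invoke the hypothesis only for that pair: the assumed absolute continuity forces $\Gamma_{u_0,v_0}(\Pi_1^{-1}(F_1)\cap\Pi_2^{-1}(F_2))=1$, while $\Gamma_{u_0,v_0}(D_{\til{\rho}})>0$, so the intersection is nonempty and your pointwise Lipschitz/Ces\`aro computation yields $\int\phi\,d\mu_1=\int\phi\,d\mu_2$, a contradiction. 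With that restructuring your argument closes and coincides with the proof in \cite{GlattHoltzMattinglyRichards2015}; the ergodic-decomposition reduction and the remaining steps you give are fine as stated.
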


Any such measure
$\Gamma_{u_0,v_0} \in\til{\mathcal{C}}(\delta_{u_0} P^{\mathbb{N}},\delta_{v_0}
P^{\mathbb{N}})$ for which a set
$D_{\til{\rho}}\subseteq H^\N\times H^\N$ exists is referred to as an
\textit{asymptotic coupling} for
$\delta_{u_0} P^{\mathbb{N}},\delta_{v_0} P^{\mathbb{N}}$. In the context of
SPDEs (and SDEs), \cref{thm:GHMR} provides a straightforward criteria
for establishing the uniqueness of an invariant measure.  Indeed, as
highlighted in \cite{GlattHoltzMattinglyRichards2015} (see also
\cite{ButkovskyKulikScheutzow2019}), to establish unique ergodicity
for a given SPDE, it suffices to identify a modified SPDE such that:
$(i)$ the laws of solutions to the new SPDE remain absolutely
continuous with respect to the laws of the original; and $(ii)$ there exists an infinite sequence of evenly spaced times such that for any pair of distinct initial conditions, there is a positive probability that solutions to
these systems converge at time infinity, possibly in a weaker
topology. Points $(i)$, $(ii)$ then constitute conditions for
establishing a so-called \textit{asymptotic coupling}, i.e., the
existence of a measure $\Gamma$ with the property stated in
\cref{thm:GHMR}.

For our application here, we consider $H=H^2$ with $\rho$ given by the
usual $H^2$ distance and take $\til{\rho}$ to denote the metric
induced by the $H^1$ norm on the set $H^2$.  In this setting one
indeed has that $\mathcal{G}_{\til{\rho}}$ determines measures on
$H^2$.  To see this we begin by recalling that $\mathcal{G}_\rho$
determines measures on $Pr(H^2)$; see e.g. \cite[Theorem
1.2]{billingsley2013convergence}. Next notice that any element
$\psi \in \mathcal{G}_{\rho}$ can be approximated pointwise by
$\psi_N(u) := \psi(P_Nu)$ where ${{}\psi_N\in \mathcal{G}_{\til{\rho}}}$ for
each $N \geq 1$.  To see that indeed
$\psi_N \in \mathcal{G}_{\til{\rho}}$ for each $N$, observe that, for
any $u \not= v \in H^2$, by invoking the reverse Poincar\'e
inequality, one obtains that
$|\psi_N(u) - \psi_N(v)| \leq C\|P_Nu - P_Nv\|_{H^2}\leq
CN\|u-v\|_{H^1}$ where
$C = \sup_{\bar{u} \not = \bar{v}}\frac{|\psi(\bar{u}) -
  \psi(\bar{v})|}{\rho(\bar{u},\bar{v})}$. Now suppose that we are
given two measures $\mu, \nu \in Pr(H^2)$ which agree on
$\mathcal{G}_{\til{\rho}}$, namely let us suppose that $\mu$ and $\nu$
are such that $\int \phi(u) \mu(du) = \int \phi(u) \nu(du)$ for every
$\phi \in \mathcal{G}_{\til{\rho}}$.  Then, in fact we have, for any
$\psi \in \mathcal{G}_{\rho}$, that
$\int \psi(u) \mu(du) = \lim_{N \to \infty}\int \psi(P_Nu) \mu(du) =
\lim_{N \to \infty} \int \psi(P_Nu) \nu(du) = \int \psi(u) \nu(du)$
where we invoke the bounded convergence theorem to justify the first
and last equalities.  Thus, in summary, if $\mu$ and $\nu$ agree on
$\mathcal{G}_{\til{\rho}}$ then indeed these two measure must also
agree on the larger determining set $\mathcal{G}_{\rho}$ and so we
conclude that $\mathcal{G}_{\til{\rho}}$ is itself determining.

For the damped stochastic KdV, \eqref{eq:s:KdV}, our candidate for the
modified system is constructed in two steps.  First, for any two
initial conditions $u_0, v_0 \in H^{2}$, we consider the shifted
stochastic KdV equation starting from $v_0$ given by
\eqref{eq:s:KdV:ng} from \cref{sect:FP:est} where the $u$ appearing in
the `nudging term' $P_N(v-u)$ obeys \eqref{eq:s:KdV} starting from
$u_0$.  Here, the integer $N$ and the `nudging strength' $\lambda$
appearing in \eqref{eq:s:KdV:ng} are parameters to be chosen.  Given
$K>0$, the modified system upon which we will build an asymptotic
coupling is then given by
\begin{align}
  dv^*
  + ( v^* {D} v^* +  {D}^3 v^* + \gam v^*) dt
  = -\lambda P_N(v^*-u)\indFn{t\leq \tau^*_K}dt + f dt
  + \sigma dW,  \quad v(0) = v_0,
     \label{eq:s:KdV:ng2}
\end{align}
where $\tau_K^*$ is the stopping time defined by
\begin{align}\label{def:ng:stoppingtime}
  \tau^*_K := \inf_{t \geq 0} 
  \left\{
  \int_0^t \|P_N (v^*(s) - u(s))\|_{L^2}^2ds \geq K
  \right\}.
\end{align}
and as with \eqref{eq:s:KdV:ng}, $u$ in the now cut-off nudging term 
solves \eqref{eq:s:KdV} starting from $u_0$.

For a fixed initial conditions $u_0, v_0\in H^2$, we denote by
$\Phi_{u_0}$ and ${\Phi^*_{v_0, u_0}}$ the measurable maps induced by
solutions of \eqref{eq:s:KdV} and \eqref{eq:s:KdV:ng2}, respectively,
mapping an underlying probability space
$(\Omega,\mathcal{F},\mathbb{P})$ to $C([0,\infty);H^2)$.  It follows
that the law of solutions of \eqref{eq:s:KdV} is given by
$\mathbb{P}\Phi_{u_0}^{-1}$, and the law associated to
\eqref{eq:s:KdV:ng2} is $\mathbb{P}(\Phi^*_{v_0,u_0})^{-1}$.

\begin{Prop}\label{prop:asymptotic:coupling}
  For any {$\lam,K,N>0$ and $\s\in\bH^2$ satisfying $\mathcal{R}(\s)\supset P_NL^2$}, the laws of solutions to
  \eqref{eq:s:KdV} and \eqref{eq:s:KdV:ng2} are equivalent, that is,
  for any $u_0, v_0 \in H^2$,
  $\mathbb{P}\Phi_{v_0}^{-1}\sim \mathbb{P}(\Phi^*_{v_0, u_0})^{-1}$
  (mutually absolutely continuous) as measures on $C([0,\infty);H^2)$.
  Furthermore, there exists a choice $\lam,N$, depending only on
  $\gam, \Sob{f}{H^2}, \Sob{\s}{H^2}$, such that for any $u_0,v_{0}\in H^2$, one has
    \begin{align}
        \mathbb{P}(\lim_{n\rightarrow \infty}\|v^*(n)-u(n)\|_{H^1}= 0)>0,
        \label{eq:dcy:pos:set}
    \end{align}
    where $u$ and $v^*$ are the solutions of \eqref{eq:s:KdV} and \eqref{eq:s:KdV:ng2}
    corresponding to $u_0$ and $v_0$, respectively.
\end{Prop}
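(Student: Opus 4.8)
The plan is to establish the two assertions separately: the equivalence of laws by Girsanov's theorem, and the synchronization statement \eqref{eq:dcy:pos:set} by combining the weak Foias-Prodi bound \cref{cor:FP:est}, the stopping-time estimate \cref{lem:FP:stoppingtime}, and a Borel-Cantelli argument along evenly-spaced discrete times.

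\emph{Equivalence of laws.} The systems \eqref{eq:s:KdV:ng2} and \eqref{eq:s:KdV} started from $v_0$ differ only by the drift $g(t) := -\lambda P_N(v^*(t)-u(t))\indFn{t\le\tau_K^*}$, which takes values in $P_NL^2\subseteq \mathcal{R}(\sigma)$. Using the bounded restricted pseudo-inverse $\sigma^{-1}:P_NL^2\to\RR^K$ from \cref{sect:functional:setting}, write $g=\sigma h$ with $h(t):=-\lambda\sigma^{-1}P_N(v^*(t)-u(t))\indFn{t\le\tau_K^*}$. The cutoff is designed precisely so that, by \eqref{def:ng:stoppingtime}, $\int_0^\infty|h(t)|^2\,dt\le\lambda^2\|\sigma^{-1}\|^2K$ \emph{deterministically}; hence Novikov's condition holds trivially and the stochastic exponential $\mathcal{E}(t)=\exp(\int_0^t h\,dW-\tfrac12\int_0^t|h|^2\,ds)$ is a uniformly integrable, strictly positive martingale. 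The measure $\QQ$ with $d\QQ/d\Prb=\mathcal{E}(\infty)$ is then equivalent to $\Prb$, under $\QQ$ the shifted process $\hat W(t)=W(t)+\int_0^t h(s)\,ds$ is a standard Brownian family, and \eqref{eq:s:KdV:ng2} rewrites as \eqref{eq:s:KdV} started from $v_0$ and driven by $\hat W$. By the pathwise (hence weak) uniqueness of \cref{prop:exist:uniq}, $\mathrm{Law}_\QQ(v^*)=\Prb\Phi_{v_0}^{-1}$ on $C([0,\infty);H^2)$, and since $\QQ\sim\Prb$ a Borel set is $\Prb$-null under $\Phi^*_{v_0,u_0}$ iff it is $\QQ$-null iff it is $\Prb$-null under $\Phi_{v_0}$; this gives $\Prb\Phi_{v_0}^{-1}\sim\Prb(\Phi^*_{v_0,u_0})^{-1}$.

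\emph{Positive probability of synchronization.} Fix $\lambda,N$ large enough --- depending only on $\gamma$, $\|f\|_{H^1}\le\|f\|_{H^2}$, $\|\sigma\|_{L^2}\le\|\sigma\|_{H^2}$ --- so that \cref{thm:FP:est}, \cref{cor:FP:est}, and \cref{lem:FP:stoppingtime} all apply, and let $v$ denote the solution of the uncut nudged system \eqref{eq:s:KdV:ng} started from $v_0$ with the same $u$. Since the drifts of \eqref{eq:s:KdV:ng2} and \eqref{eq:s:KdV:ng} agree on $[0,\tau_K^*)$, pathwise uniqueness gives $v^*\equiv v$ on $[0,\tau_K^*]$, so $\{\int_0^\infty\|P_N(v-u)\|_{L^2}^2\,ds<K\}\subseteq\{\tau_K^*=\infty\}$, and on that event $v^*\equiv v$ on all of $[0,\infty)$. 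Now fix $u_0,v_0\in H^2$, choose $\beta$ per \eqref{eq:beta:cond:stt}, set $w:=v-u$, and recall from \eqref{def:FP:stoppingtime} that the exponent $\Gamma(s)$ appearing in \eqref{eq:FP1} satisfies $\Gamma(s)\ge\tfrac\gamma2 s-(R+\beta)$ on $\{\tau_{R,\beta}=\infty\}$. Applying \cref{thm:FP:est} with $\tau=\tau_{R,\beta}$ yields, for a constant $C=C(u_0,v_0,\gamma,\|f\|_{H^1},\|\sigma\|_{L^2},\lambda)$: first $\E[\indFn{\tau_{R,\beta}=\infty}\|w(t)\|_{H^1}^2]\le Ce^{R+\beta}e^{-\gamma t/2}$ for all $t\ge0$ (this is \cref{cor:FP:est}), and second $\E[\indFn{\tau_{R,\beta}=\infty}\int_0^\infty\|P_Nw(s)\|_{L^2}^2\,ds]\le Ce^{R+\beta}$ (from the integral term of \eqref{eq:FP1}, bounding $\|P_Nw\|_{L^2}\le\|w\|_{H^1}$ and letting $t\to\infty$ by monotone convergence). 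Summing the first over $t=n\in\N$ gives $\E[\indFn{\tau_{R,\beta}=\infty}\sum_n\|w(n)\|_{H^1}^2]<\infty$, so almost surely on $\{\tau_{R,\beta}=\infty\}$ one has $\|w(n)\|_{H^1}\to0$; by Chebyshev, $\Prb(\tau_{R,\beta}=\infty,\ \int_0^\infty\|P_Nw\|_{L^2}^2\,ds\ge K)\le Ce^{R+\beta}/K$. Finally, by \cref{lem:FP:stoppingtime}, $\Prb(\tau_{R,\beta}<\infty)\le C(1+\|u_0\|_{H^2}^q)R^{-p}$, so we first choose $R=R(u_0)$ with $\Prb(\tau_{R,\beta}<\infty)\le\tfrac14$, then $K=K(u_0,v_0)$ large enough that $Ce^{R+\beta}/K\le\tfrac14$; consequently $\Prb(\tau_{R,\beta}=\infty,\ \int_0^\infty\|P_Nw\|_{L^2}^2\,ds<K)\ge\tfrac12$. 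On this event $\tau_K^*=\infty$, hence $v^*\equiv v$, and (being contained in $\{\tau_{R,\beta}=\infty\}$) a.s. $\|v^*(n)-u(n)\|_{H^1}=\|w(n)\|_{H^1}\to0$, giving \eqref{eq:dcy:pos:set}. Note $\lambda,N$ depend only on $\gamma,\|f\|_{H^2},\|\sigma\|_{H^2}$, while $\beta,R,K$ may depend on the pair $u_0,v_0$, which is permitted in \cref{thm:GHMR}.

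\emph{Main obstacle.} The crux is that \cref{cor:FP:est} is only an expected-value decay estimate, and only on the favorable event $\{\tau_{R,\beta}=\infty\}$, so pathwise synchronization cannot be read off directly. The resolution --- which exploits that \cref{thm:GHMR} asks only for convergence along evenly-spaced discrete times --- is to note that $\E[\indFn{\tau_{R,\beta}=\infty}\|w(n)\|_{H^1}^2]$ is summable in $n$ and to upgrade this, via monotone convergence (a Borel-Cantelli-type step), to a.s. convergence $\|w(n)\|_{H^1}\to0$ on $\{\tau_{R,\beta}=\infty\}$. The secondary difficulty is the chained parameter selection needed to \emph{also} force $\tau_K^*=\infty$ with probability bounded below: one must take $R$ large (controlling $\Prb(\tau_{R,\beta}<\infty)$ through the Lyapunov estimate \cref{lem:FP:stoppingtime}) and then $K$ large relative to $e^{R+\beta}$, all while keeping $\lambda,N$ pinned to the equation data alone.
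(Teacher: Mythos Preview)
Your proof is correct. The Girsanov portion is the same as the paper's (modulo a cosmetic sign slip: with $\mathcal{E}(t)=\exp(\int_0^t h\,dW-\tfrac12\int_0^t|h|^2\,ds)$ it is $W-\int h\,ds$, not $W+\int h\,ds$, that is $\QQ$-Brownian; either flip the sign of $h$ or of the density).

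For the synchronization part you take a genuinely different and more streamlined route. The paper defines events $B_n=\{\|w(n)\|_{H^1}^2+\gamma\int_n^{n+1}\|w(s)\|_{L^2}^2\,ds\ge n^{-2}\}$, shows $\sum_n\Prb(B_n\cap\{\tau_{R,\beta}=\infty\})<\infty$ via Markov and \cref{cor:FP:est}, applies Borel--Cantelli to get $\Prb(\limsup B_n\cap\{\tau_{R,\beta}=\infty\})=0$, and then --- in order to force $\int_0^\infty\|P_Nw\|_{L^2}^2\,ds<K$ --- splits $[0,\infty)$ into an initial segment $[0,m^*]$ (handled by a separate Lyapunov moment bound and Markov) and the tail $[m^*,\infty)$ (handled through the time-integrated pieces of the $B_n$). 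You avoid both the time-integrated parts of $B_n$ and the initial-segment splitting by reading the bound $\E\bigl[\indFn{\tau_{R,\beta}=\infty}\int_0^\infty\|w\|_{H^1}^2\,ds\bigr]\le Ce^{R+\beta}$ directly off the \emph{integral term} of \eqref{eq:FP1}, which the paper states but does not exploit for this purpose; a single Chebyshev then controls $\Prb(\tau_{R,\beta}=\infty,\ \int_0^\infty\|P_Nw\|_{L^2}^2\,ds\ge K)$. This is cleaner and uses one fewer external input (no call to the Lyapunov bounds \cref{thm:Lyapunov}/\cref{thm:Lyapunov:ng} for the initial segment). The paper's version, on the other hand, makes the Borel--Cantelli mechanism and the separate roles of the pointwise and time-integrated controls more visible, which is pedagogically useful for seeing why both pieces of \cref{cor:FP:est} matter.
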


We are now positioned to show that the criteria for unique ergodicity
are satisfied. In particular we will now show that
\cref{prop:asymptotic:coupling} implies that the hypotheses of
\cref{thm:GHMR} are verified, and hence, that
\cref{thm:ergodicity:ess:elliptic} is indeed true.

\begin{proof}[Proof of \cref{thm:ergodicity:ess:elliptic}]
  Let $P_{1}$ be the Markov kernel associated
  to \eqref{eq:s:KdV} via \eqref{eq:mark:kernel} at time $t=1$.  We
  aim to show that $P_{1}$ has at most one invariant measure.
  For any $u_0, v_0 \in H^2$ consider the measure $\Gamma_{u_0,v_0}$
  on $(H^2)^{\mathbb{N}}\times (H^2)^{\mathbb{N}}$ given by the law of
  the associated random vector
  $(u({n}),v^*({n}))_{n\in \mathbb{N}}$ where $u, v$
  solve \eqref{eq:s:KdV}, \eqref{eq:s:KdV:ng2} initialized at
  $u_0,v_0$.  Observe that, invoking \cref{prop:asymptotic:coupling}
  to obtain the equivalence of solution laws, we have
  $\Gamma_{u_0,v_0}
  \Pi_{2}^{-1}\ll\delta_{v_0}P_{1}^{\mathbb{N}}$, and in
  particular that
  $\Gamma_{u_0,v_0} \in
  \til{\mathcal{C}}(\delta_{u_0}P_{1}^{\mathbb{N}},\delta_{v_0}P_{1}^{\mathbb{N}})$,
  where we recall that $P_{1}^{\mathbb{N}}$ is the Markov
  transition kernel associated to $P_{1}$ on $H^{\mathbb{N}}$.
  Furthermore, recalling that ${\til{\rho}}(u,v) := \|u-v\|_{H^1}$, it
  follows from the definition of $\Gamma$ and
  \cref{prop:asymptotic:coupling} that, for a suitable choice of
  $\lam, N$ defining $v^*$, we have
    \begin{align*}
      \Gamma (D_{{\til{\rho}}})
      = \mathbb{P}\left((u({n}),v^*({n})\right)_{n\in \mathbb{N}}
                     \in D_{{\til{\rho}}}\big)
      = \mathbb{P}\left(\lim_{n\rightarrow \infty}
                  \|v^*({n})-u(n)\|_{H^1}= 0\right)>0.
    \end{align*}
    
    Upon observing that the test functions $G_{{\til{\rho}}}$
    determine measures on $H^2$, an application \cref{thm:GHMR} then
    implies that, there is at most one
    probability measure in $\Pr(H^2)$ that is invariant for
    $P_{1}$.  This in turn means that any two invariant
    measures for the entire semigroup $\{P_t\}_{t \geq 0}$ must 
    coincide since they are both invariant measures for $P_{1}$. 
    The proof of \cref{thm:ergodicity:ess:elliptic} is complete.

\end{proof}

We are therefore left to prove \cref{prop:asymptotic:coupling}.  We do
so by appealing to the Girsanov theorem for absolute continuity, and to the Foias-Prodi and Lyapunov structure established in previous sections for proving asymptotic coupling. 
Indeed, with the choice of stopping time in \eqref{eq:FP:stoppingtime:bound}, we invoke \cref{cor:FP:est} to establish an asymptotic coupling for $\{(u(n),v^*(n))\}_{n>0}$, bounding the probability of convergence from below as described in \eqref{eq:dcy:pos:set} by invoking \cref{lem:FP:stoppingtime} in combination with our Lyapunov structure, and further argumentation involving the Borel-Cantelli lemma.

\begin{proof}[Proof of \cref{prop:asymptotic:coupling}]
We begin by establishing the equivalence of solution laws. 
We define a ``Girsanov shift'' by
\begin{align*}
    a(t):=\sigma^{-1}\lambda P_{N}(v^*-u)\indFn{t\leq \tau^*_K},
\end{align*}
where $\tau_K^*$ is given by \eqref{def:ng:stoppingtime}, and let
\begin{align}\label{def:W:star}
    W^*(t) := W(t)-\int_0^t a(s)ds.
\end{align}
Then consider the continuous martingale
\begin{align*}
    \Mt(t):=\exp\left(\int_0^t a(s)dW - \frac{1}{2}\int_0^t\|a(s)\|_{L^2}^{2}ds\right).
\end{align*}
Observe that $a(t)$ satisfies the Novikov condition \cite[Proposition
1.15]{RevuzYor1999}:
\begin{align*}
    \E\exp\left(\frac{1}{2}
    \int_0^{\infty}\|a(s)\|_{L^2}^{2}ds\right)
    = \E\exp\left(\frac{1}{2}
    \int_0^{\tau^*_K}\|a(s)\|_{L^2}^{2}ds\right)
    \leq e^{\frac{1}{2}K} < \infty.
\end{align*}
{{}Note that we have invoked the boundedness of the pseudo-inverse $\s^{-1}$ of $\s$, which holds owing to the fact that $\mathcal{R}(\s)\supset P_NL^2$}. In particular, we deduce that $\{\Mt(t)\}_{t\geq 0}$ is uniformly integrable.  Thus,
by the Girsanov theorem \cite[Propositions 1.1-1.4]{RevuzYor1999}
there is a probability measure $\mathbb{Q}$ on
$(\Omega,\mathcal{F},(\mathcal{F})_{t\geq 0})$ such that the
Radon-Nikodym derivative of $\mathbb{Q}$ with respect to $\Prb$ on
$\mathcal{F}_t$ is $\Mt(t)$, and moreover $W^*$ is a standard Wiener
process with respect to $\mathbb{Q}$.  In particular,
$\Prb \sim \mathbb{Q}$ on
$\mathcal{F}_{\infty}=\sigma(\cup_{t\geq 0}\mathcal{F}_t)$.

Next, let us denote by $u=u(t;u_0,W)$ and $v^* = v^*(t;v_0,u_0,W)$ the
solutions to \eqref{eq:s:KdV} and \eqref{eq:s:KdV:ng2} corresponding
to Brownian motion $W$. By uniqueness of solutions
(cf. \cref{prop:exist:uniq}) and the definition of $W^*$ in
\eqref{def:W:star} above, one sees immediately that
$v^*(t;v_0, u_0, W) = u(t;v_0,W^*)$ almost surely with respect to
$\Prb$ or $\mathbb{Q}$. Thus, for any
$\mathcal{F}_{\infty}$-measurable $A\subseteq C([0,\infty);H^2)$, we
have
\begin{align*}
    \Prb(\Phi^*_{v_0,u_0})^{-1}(A) 
    = \Prb\left(v^*(\cdot;v_0,u_0,W)\in A\right)
    = \Prb\left(u(\cdot;v_0,W^*)\in A\right).
\end{align*}
On the other hand, since $W^*$ is a standard Wiener process with
respect to $\mathbb{Q}$, we have
\begin{align*}
    \Prb\Phi_{v_0}^{-1}(A) 
    = \Prb\left(u(\cdot;v_0,W)\in A\right)
    = \mathbb{Q}\left(u(\cdot;v_0,W^*)\in A\right).
\end{align*}
Hence, from the equivalence $\Prb \sim \mathbb{Q}$ on
$\mathcal{F}_{\infty}$, it follows that
$\Prb\Phi_{v_0}^{-1}\sim \Prb(\Phi^*_{v_0,u_0})^{-1}$ on
$C([0,\infty);H^2)$, as claimed.

It remains to establish that, with a positive probability, solutions
of \eqref{eq:s:KdV} and \eqref{eq:s:KdV:ng2} emanating from distinct
initial conditions converge to each other on any evenly spaced
infinite sequence of time points.  For this, let $w:=v-u$, where $v$
is the corresponding solution of \eqref{eq:s:KdV:ng}.  We define
\begin{align*}
  B_{n} :=
  \left\{  \| w(n) \|_{H^1}^2 
      + \gam\int_{n}^{{{}n+1}} \| w(s) \|_{L^2}^2 ds
  \geq \frac{1}{n^2}  \right\}.
\end{align*}
Then set
\begin{align}
  B := \bigcap_{m =1}^\infty \bigcup_{n = m}^\infty B_n.
  \label{eq:good:bad:ass:cup}
\end{align}

Next we fix any suitably large values of $N, \lambda \geq 1$ in
\eqref{eq:s:KdV:ng}, \eqref{eq:s:KdV:ng2} so that \cref{cor:FP:est}
holds and consider the stopping times $\tau_{R,\beta}$ defined in
\eqref{def:FP:stoppingtime}.  With Fubini's theorem, we immediately
infer that
\begin{align*}
 & \E \left(\! \indFn{\tau_{R,\beta} = \infty}\biggl( \| w(n) \|_{H^1}^2 
      + \gam\int_{n}^{n+1} \! \! \! \!\! \! \!\| w(s) \|_{L^2}^2 ds \biggr)\! \right)
      \! =\E (\indFn{\tau_{R,\beta} = \infty}\| w(n) \|_{H^1}^2)
      + \gam\int_{n}^{n+1} \! \!\! \! \!\E (\indFn{\tau_{{{}R,\beta}} = \infty} \| w(s) \|_{L^2}^2) ds
      \notag\\
      &\leq C(1 + \gam)
      \exp\left(R+\be-\frac{\gamma}{2} n \right)
       \left(\| w_0\|_{H^1}^2+\Sob{w_0}{L^2}^{10/3}+(1+\Sob{u_0}{L^2}^2)\Sob{w_0}{L^2}^2\right),
\end{align*}
for any $R, \beta > 0$.
Hence with the Markov inequality we have
\begin{align*}
  &\Prb( B_n \cap \{ \tau_{R,\beta} = \infty\}) \notag\\
  &\leq  C(1 + \gam)\exp\left({{}R+ \beta}\right)
    \left(\| w_0\|_{H^1}^2+\Sob{w_0}{L^2}^{10/3}
    +(1+\Sob{u_0}{L^2}^2)\Sob{w_0}{L^2}^2\right)
    n^2e^{-\frac{\gamma}{2}n},
\end{align*}
so that
$\sum_{n = 1}^\infty \Prb( B_n \cap \{ \tau_{R,\beta} = \infty\}) <
\infty$.  As such, according to the first Borel-Cantelli lemma,
$\Prb(B \cap \{\tau_{R,\beta} = \infty\}) = 0$, for any $\beta, R >0$,
where $B$ is given in \eqref{eq:good:bad:ass:cup}.

Thus, along with the estimate \eqref{eq:FP:stoppingtime:bound} from
\cref{lem:FP:stoppingtime}, we can select a value of
$\beta = \beta(u_0,v_0)$ so that, for any $R> 0$ we have
\begin{align*}
\Prb(B)=\Prb(B\cap\{\tau_{R,\beta} < \infty\}) 
  \leq  \Prb(  \tau_{R,\beta} < \infty) \leq  \frac{C}{R},
\end{align*}
where
$C = C(\gam, \lambda, \| u_0\|_{H^2},\Sob{f}{H^2},\Sob{\s}{H^2})$ is
independent of $R$.  Upon choosing
$R^*=R^*(\gam,\lambda, \Sob{u_0}{H^2},\Sob{f}{H^2},\Sob{\s}{H^2})$
sufficiently large, we may therefore invoke continuity from below to
choose $m^*>0$ sufficiently large so that
\begin{align}
  \Prb\left(\bigcap_{n = m^*}^\infty B_{n}^c\right) >  \frac{1}{2}.
  \label{eq:good:set:Ass:cup:1}
\end{align}
On the other hand, for this fixed value of $m^*$, we define
\begin{align*}
  E_{R} := \left\{ \int_0^{{{}m^*}} \|P_N w(s)\|_{L^2}^2 ds \geq R\right\}.
\end{align*}
Upon combining the Markov inequality with \eqref{eq:Lyapunov:Bnd} from
\cref{thm:Lyapunov} and \eqref{eq:Lyapunov:Bnd:Hm:ng} from
\cref{thm:Lyapunov:ng}, we obtain $\Prb(E_{R})\leq C{{}m^*}R^{-1}$ for
some constant $C=C(\gam,\|u_0\|_{H^2},\|v_0\|_{L^2})$.  It then
follows from \eqref{eq:good:set:Ass:cup:1} that, upon taking $R^*>0$
possibly larger
\begin{align}\notag
  \Prb\left(\bigcap_{n = m^*}^\infty B_{n}^c \cap E_{R^*}^c\right)>  \frac{1}{4}.
\end{align}
Furthermore, on the set $\bigcap_{n = m^*}^\infty B_{n}^c \cap E_{R^*}^c$, we find
\begin{align*}
    \int_0^\infty \|P_N w(s)\|_{L^2}^2 ds  
    \leq {R^*} + {{}\frac{1}\gam}\sum_{n=m^*}^{\infty}\frac{1}{n^2}:= K.
\end{align*}
It follows that
$\bigcap_{n = m^*}^\infty B_{n}^c \cap E_{R^*}^c\subseteq \{\tau^*_K =
\infty\}$, that is, $v(t)=v^*(t)$, for all $t\geq0$, on
$\bigcap_{n = m^*}^\infty B_{n}^c \cap E_{R^*}^c$. Since $w=v-u$, it
follows that
\begin{align*}
    \Prb\left(\lim_{n\rightarrow \infty}
    \|v^*({ n})-u({n})\|_{H^1}= 0\right)
    \geq \Prb\left(\bigcap_{n = m^*}^\infty B_{n}^c \cap E_{R^*}^c\right)>0,
\end{align*}
as claimed in \eqref{eq:dcy:pos:set}.  The proof of
\cref{prop:asymptotic:coupling} is now complete.
\end{proof}

\begin{Rmk}\label{rmk:ue:proof}
{{}We point out that the definition of $B_n$ above requires both $ \| w(n) \|_{H^1}^2$ and $\int_{n}^{{{}n+1}} \| w(s) \|_{L^2}^2 ds$. The first quantity is required to enforce convergence, while the second quantity is required to help enforce the Novikov condition. Indeed, we see that over the set $\bigcap_{n=m^*}^\infty B_n^c$, one has $\Sob{w(n)}{H^1}<n^{-2}$, for all $n\geq m^*$, due to the first quantity, so that $\lim_{n\goesto\infty}\Sob{w(n)}{H^1}=0$. On the other hand, the second quantity ensures $\int_{m^*}^\infty\Sob{w(s)}{L^2}^2ds=\sum_{n=m^*}^\infty\int_n^{n+1}\Sob{w(s)}{L^2}^2ds<\sum_{n=m^*}^\infty n^{-2}$, which is arbitrarily small for $m^*$ sufficiently large.
}
\end{Rmk}

\begin{Rmk}\label{rmk:triv:mod}
  The proof of \cref{thm:ergodicity:ess:elliptic} can be adjusted with
  trivial modifications to apply to the semigroup $\{P_t\}_{t \geq 0}$
  for \eqref{eq:s:KdV} restricted to $H^m$ for any $m > 2$.  However,
  for this restriction of $\{P_t\}_{t \geq 0}$ to $H^m$ to be well
  defined, we need that $f \in H^m$ and $\sigma \in \bH^m$. Now, under
  the assumption of such smoother source terms, consider $\mu^{(2)}$ and
  $\mu^{(m)}$ to be the unique measures for $\{P_t\}_{t \geq 0}$
  defined on $H^2$ and $H^m$, respectively.  We know from
  \eqref{thm:regularity} that, in fact $\mu^{(2)}(H^m) =1$.  In turn
  this means $\mu^{(2)}$ is actually invariant for our semigroup
  restricted to this smoother class of functions $H^m$ and by uniqueness we conclude that $\mu^{(2)} = \mu^{(m)}$.
\end{Rmk}

\section{Results in the Case of a Large Damping}
\label{sect:Large:Damping}

In this section, we present our mixing results for the large damping
regime which we previewed above in \cref{thm:large:damp:Mix:Intro}.
These results reflect the fact that, for $\gamma$ sufficiently large,
any two solutions of \eqref{eq:s:KdV} emanating from different initial
data asymptotically synchronize in expectation, indicative of a `one-force one-solution'
phenomenon. Before proceeding to the complete and rigorous formulation of
\cref{thm:large:damp:Mix:Intro}, given as \cref{thm:spectralgap}
below, we first exhibit some crucial preliminary estimates.  Note that
throughout this section we refine our convention and
insist that all constants $C$ which appear are taken to be
independent of the damping parameter $\gamma \geq 1$.  As
previously constants labeled $c$ depend solely on universal, that is
equation independent, quantities.

Our mixing analysis proceeds starting from an estimate on the $L^2$ of
the difference of solutions.  Given any $u_0, \tilde{u}_0$ we take
$u$ and $\tilde{u}$ to be the corresponding solutions of
\eqref{eq:s:KdV} and let $w = u-\tilde{u}$. Observe that $w$ satisfies
\begin{align*}
    \bdy_t w+\gam w+{D}^3w+\frac{1}{2}{D}(w^2)=-{D} (\tilde{u} w).
\end{align*}
Upon taking the $L^2$ inner product with $w$ and integrating by parts
and noting that $\lb {D}w,w^2\rb = 0$, we obtain
\begin{align}\label{L2:stab:balance}
  \frac{1}{2}\frac{d}{dt}\Sob{w}{L^2}^2
  +\gam\Sob{w}{L^2}^2
  =-\frac{1}{2}\lb {D}\tilde{u},w^2\rb
  = -\frac{1}{4}\lb {D}(u +  \tilde{u}),w^2\rb.
\end{align}
Take $\bar{u} = \frac{1}{4}(u + \tilde{u})$ and observe that by
Agmon's inequality and interpolation
\begin{align}
    |\lb {D} \bar{u},w^2\rb|
    &\leq\Sob{{D}\bar{u}}{L^\infty}\Sob{w}{L^2}^2
    \leq c\Sob{{D}^2\bar{u}}{L^2}^{1/2}
      \Sob{{D}\bar{u}}{L^2}^{1/2}\Sob{w}{L^2}^2
      \leq c\Sob{{D}^2\bar{u}}{L^2}^{3/4}
         \Sob{\bar{u}}{L^2}^{1/4}\Sob{w}{L^2}^2.
      \label{L2:stab:interpolate}
\end{align}
Thus, upon combining \eqref{L2:stab:balance} and
\eqref{L2:stab:interpolate} and then applying Gr\"onwall's inequality,
we find, for any $u_0, \tilde{u}_0$ and $t \geq 0$
\begin{align}
  \label{eq:stab:main}
  \| u(t)-\tilde{u}(t)\|_{L^2}
  \leq \Sob{u_0-\til{u}_0}{L^2}
  \exp\left(- \gamma t
  + c  \int_0^t\Sob{{D}^2(u+\tilde{u})}{L^2}^{3/4}
             \Sob{u+\tilde{u}}{L^2}^{1/4}ds
  \right),
\end{align}
where, to emphasize, $c> 0$ depends only on universal quantities.

At this point we would like to combine \eqref{eq:stab:main} with the
bounds in \cref{thm:Lyapunov} in order to obtain an exponential in
time decay for $\E\| u(t)-\tilde{u}(t)\|_{L^2}$.  To this end notice
that \eqref{eq:exp:est:asymptotic:L2} and \eqref{eq:elem} yield that,
for any $\gamma > 0$,
\begin{align}
  \E \exp\left( \frac{\gamma^2}{8 \|\sigma\|^2_{L^2}}  \int_0^t \|u\|_{L^2}^2 ds\right) 
  \leq 2 \exp\left( \|u_0\|_{L^2}^2 +
           \left(\frac{2}{\gamma} \|f\|^2_{L^2} + \|\sigma\|^2_{L^2}\right)t\right).
           \label{eq:exp:L2}
\end{align}
On the other hand we have no reason to expect finite exponential
moments for $\int_0^t \|u\|^p_{L^2} ds$ when $p> 2$.  As such we
proceed by estimating the integrating factor in \eqref{eq:stab:main}
as
\begin{align}
 \| D^2(u+\tilde{u})\|_{L^2}^{3/4}\|u+\tilde{u}\|_{L^2}^{1/4}
    \leq   c(\| D^2(u+\tilde{u})\|_{L^2}^{6/7} + \|u+\tilde{u}\|_{L^2}^{2}).
    \label{eq:int:fact:L2:LD:sp}
\end{align}
Thus, in order to leverage the bounds \eqref{eq:stab:main},
\eqref{eq:int:fact:L2:LD:sp} to produce the desired exponential decay,
and in view of the approach in \cref{sect:Lyapunov} we proceed next to
establish suitable exponential moments for $\In_2^+(u)^{3/7}$ which in
turn controls $\|u\|^{6/7}_{H^2}$ according to \cref{lem:equivalence}.

While we have already established estimates like
\eqref{eq:Lyapunov:Bnd:exp}, \eqref{eq:Lyapunov:Bnd:subquadexp} for
$\In_2^+(u)$ above, here we need
to be able to ensure suitable exponential moment bounds for
$\int_0^t \In_2^+(u)^{p} ds$ precisely when $p = 3/7$ and to more
carefully track bounds for this quantity in terms $\gamma$
dependencies.  Referring back to \eqref{energy:funct:0:1} and
\eqref{eq:mod:int:m:simple} and motivated by \eqref{eq:exp:L2} 
we now explicitly define
\begin{align}
  \In_2^+(v)
  := \int_\T \left((D^2v)^2 - \frac{5}{3} u (Du)^2
            + \frac{5}{36} u^4 \right) dx
  + \bar{\alpha}_2(\|v\|_{L^2}^2 + 1)^{7/3}.
  \label{eq:I2p:exp:LD}
\end{align}
Crucially we need to justify the specific $7/3$ power on the
$L^2$ norm in the functional so that \cref{lem:equivalence} holds for
this specific choice of $\bar{q}_2$ in $\In_2^+$ namely that
\begin{align}
  \frac{1}{2}( \|v\|_{H^2}^2
  +  \bar{\alpha}_2(\|v\|_{L^2}^{2} +1)^{7/3}) \leq \In_2^+(v)
  \leq \frac{3}{2}( \|v\|_{H^2}^2
  +    \bar{\alpha}_2(\|v\|_{L^2}^{2} +1)^{7/3})
  \label{eq:I2p:exp:LD:eqv}
\end{align}
is maintained when we fix $\bar{\alpha}_2$ to be any value greater
larger than a universal quantity.  Indeed, with the Agmon and Sobolev
inequalities and interpolation we observe that
\begin{align*}
  \left|\int \left(\frac{5}{3} v (Dv)^2 -  \frac{5}{36} v^4\right) dx \right|
  \leq& c (\|v\|_{L^\infty} \|Dv\|^2_{L^2} + \|v\|_{L^4}^4)
  \leq c (\|v\|_{L^2}^{1/2}\|Dv\|^{5/2}_{L^2} + \|v\|_{L^2}^{3}\|Dv\|_{L^2})\\
  \leq& c(\|v\|_{L^2}^{7/4}\|D^2v\|^{5/4}_{L^2} + \|v\|_{L^2}^{7/2}\|D^2v\|_{L^2}^{1/2})
  \leq  \frac{1}{2}\|v\|^2_{L^2}  +  c\|v\|^{14/3}_{L^2}.
\end{align*}
which then yields \eqref{eq:I2p:exp:LD:eqv} for any $\bar{\alpha}_2$ greater
the universal constant $c > 0$.

Let us next observe that since we do not have the same flexibility in
choose of the $L^2$ based portion of $\In_2^+$ as we did above in
\cref{sect:Lyapunov}, a second consideration enters in our choice of
the parameter $\bar{\alpha}_2 > 0$ in \eqref{eq:I2p:exp:LD}. Namely we
select $\bar{\alpha}_2$ to effectively weight a critical term that
appears when we compute the evolution for $\In_2^+$.  Let us make this
explicit.  As we derived above in \cref{sect:time:evol},
$\In_2^+(u(t))^{p}$ for $p > 0$ obeys \eqref{eq:Imp:mom:evol}.  Here,
comparing \eqref{energy:funct:0:1} against \eqref{eq:Im:LHS:mess:D},
\eqref{eq:Inp:simp:RHS:D} we obtain the explicit expressions
\begin{align}
    \Kn_2^{D,+} &:= \gamma \Kn_{2,1}^D + \Kn_{2,2}^D +  \Kn_{2,3}^D
    - {\bar{\al}_2} \gamma \frac{8}{3} \left( \|u\|_{L^2}^2+1\right)^{7/3}+ {\bar{\al}_2}
   \frac{7}{3}\left( \|u\|_{L^2}^2+1\right)^{4/3}(2\gam+2 \lb f , u\rb + \|\sigma\|^2_{L^2})   
                   \notag\\
   &\quad + {\bar{\al}_2}\frac{56}{9}\left( \|u\|_{L^2}^2+1\right)^{1/3}
     |\lb \sigma, u \rb|^2.
    \label{eq:In:Kp:LD:p}
\end{align}
where
\begin{align}
  \Kn_{2,1}^D(u)&:= \int_\T\left( \frac{5}{18} u^4 -\frac{5}{3}u({D}u)^2\right)dx, 
  \notag\\
  \Kn_{2,2}^D(u,\s)&:=\int_\T\left(|{D}^2\s|^2
                  -\frac{10}{3}{D}u({D}\s)\cdotp\s-\frac{5}{3}u|{D}\s|^2
                   + \frac{5}{3} u^2 |\sigma|^2 \right)dx,
  \notag\\
  \Kn_{2,3}^D(u,f)&:=\int_\T\left(2{D}^2u{D}^2f- \frac{5}{3}\left({D}u\right)^2f
                    -2u{D}u{D}f+\frac{5}{9}u^3{D}f\right)dx.
        \label{eq:Ln:K:LD:123}
\end{align}
Meanwhile comparing \eqref{energy:funct:0:1} with
\eqref{eq:Im:LHS:mess:S}, \eqref{eq:Inp:simp:RHS:S} yields
  \begin{align}
    \mathcal{K}_2^S=\int_{\T} \biggl(  2 {D}^2 u {D}^2\sigma
    -  \frac{5}{3} ({D} u)^2 \sigma
   - 2u {D} u {D} \sigma
    + \frac{5}{9} u^3\sigma \biggr)dx
    +  {\abarm} \frac{14}{3} \left(\|u\|_{L^2}^2+1\right)^{4/3}\lb u , \sigma \rb.
   \label{eq:Ln:K:LD:s}
\end{align}
Now, perusing \eqref{eq:In:Kp:LD:p}, \eqref{eq:Ln:K:LD:123},
\eqref{eq:Ln:K:LD:s} and comparing against \eqref{eq:Imp:mom:evol} we
see that the critical term needed to close estimates in the evolution
for $(\In_2^+(u))^p$ is $\Kn^{D}_{2,1}$.  Here, observe that the Agmon and
Sobolev inequalities, interpolation yield
\begin{align*}
  |\Kn^{D}_{2,1}(u)|
  \leq& c (\|u\|_{L^4}^4 + \|u\|_{L^\infty}\|Du\|_{L^2}^2)
     \leq c( \|u\|_{H^{1/4}}^4 + \|u\|_{L^2}^{1/2}\|Du\|_{L^2}^{5/2})
    \leq c (\|u\|^{7/2}_{L^2} \| u\|^{1/2}_{H^2} 
   +\|u\|^{7/4}_{L^2} \| u\|^{5/4}_{H^2}) \\
   \leq& \frac{1}{1000} \|u\|_{H^2}^2 + c \|u\|^{14/3}_{L^2},
\end{align*}
where to emphasize $c> 0$ depends only on universal quantities.  Thus
we can make a choice of $\bar{\alpha}_2$ in \eqref{eq:I2p:exp:LD}
determined exclusively by universal quantities such that both
\eqref{eq:I2p:exp:LD:eqv} and
\begin{align}
   |\Kn^{D}_{2,1}(u)|  \leq \frac{1}{1000}\In_2^+(u),
 \label{eq:LD:crit:al:s}
\end{align}
hold simultaneously.

With these preliminaries in hand we are now ready to state the main
result of this section.  Our convergence results are carried out with
respect to a suitable Wasserstein-like distance. To formulate this,
let us define for $k\geq0$ and $\eta_0 > 0$
\begin{align}\label{def:distancelike}
  d_k(u,v):=\exp\left(\eta_0 \left(\In_2^+(u)^{3/7}+ \In_2^+(v)^{3/7} \right)\right)
       \Sob{u-v}{H^k}.
\end{align}
Here, our definition $\bar{\alpha_2} >0$ specifying $\In_2^+$ is
determined independently of $\gamma > 0$ so that \eqref{eq:I2p:exp:LD},
\eqref{eq:LD:crit:al:s} both hold.  Note furthermore that $d_k$ is a
distance-like function on $H^m\times H^m$ for $0 \leq k \leq m$ since
it is symmetric, lower semi-continuous, and satisfies $d_k(u,v)=0$ if
and only if $u=v$. Given $\mu,\nu\in\Pr(H^m)$, the coupling distance
corresponding to $d_k$, for $0\leq k\leq m$, is given by
\begin{align}\label{def:Wd}
  \Wm_{d_k}(\nu_1,\nu_2)
  :=\inf_{\pi\in\mathcal{C}(\nu_1,\nu_2)}\int_{H^m\times H^m}d_k(u,v)\pi(du,dv),
\end{align}
where $\mathcal{C}(\nu_1,\nu_2)$ denotes the set of all couplings of
$\nu_1,\nu_2$, that is, all the measures on $H^m\times H^m$ with marginals given
by $\nu_1$ and $\nu_2$, respectively.

The main result of this section is now stated as follows:
\begin{Thm}\label{thm:spectralgap}
  Fix any $f \in H^2$, $\sigma \in \bH^2$. For such $f$ and $\sigma$
  and any $\gamma > 0$ take $\{P_t\}_{t \geq 0}$ to be the Markov
  semigroup \eqref{eq:mark:kernel} for \eqref{eq:s:KdV} corresponding
  to this data.  Then, there exist
  $\gam_0=\gam_0(\Sob{f}{H^2},\Sob{\s}{H^2}) \geq 1$ and a rate
  $\eta_0= \eta_0 (\Sob{f}{H^2},\Sob{\s}{H^2}) > 0$ defining
  $\Wm_{d_k}$ for $0 \leq k < m$ via \eqref{def:Wd},
  \eqref{def:distancelike} such that
  \begin{itemize}
  \item[(i)] for any $\gamma \geq \gamma_0$ and any $t \geq 0$,
  \begin{align}\label{eq:spectral:gap}
    \Wm_{d_0}(\nu_1 P_t,\nu_2 P_t)\leq 2e^{-\frac{\gam t}{2}} \Wm_{d_0}(\nu_1,\nu_2),
  \end{align}
  for any $\nu_1,\nu_2 \in\Pr(H^2)$.  
\item[(ii)] If we furthermore assume that
  $f \in H^m, \sigma \in \bH^m$ for some $m \geq 2$ then, for each
  $0<k<m$,
  \begin{align}\label{eq:mixing:Hm}
    \! \!  \! \! \!   \! \!  \!   \!   \! \!
    \mathfrak{W}_{d_k}(\nu_1 P_t,\nu_2 P_t)\leq Ce^{-\frac{m-k}{2m}\gam t} \!
    \int_{\substack H^m}
    \int_{\substack H^m} \! \! \! \!  \! \!
    (1+ \Sob{v}{H^m} + \Sob{\tilde{v}}{H^m})
    e^{2\eta_0 (\In_2^+(v)^{3/7} +  \In_2^+(\tilde{v})^{3/7})}
    \nu_1 (dv)
    \nu_2(d\tilde{v}),
  \end{align}
  holds for some constant $C = C(m,\gam,\Sob{f}{H^m},\Sob{\s}{H^m})$
  which is independent of $\nu_1, \nu_2$.
\item[(iii)] There exists a unique invariant measure
  $\mu \in \mbox{Pr}(H^2)$.  Additionally, so long as
  $f \in H^m, \sigma \in \bH^m$ for some $m \geq 2$, we have, for each
  $0 \leq k < m$
  \begin{align}\label{eq:mixing:Hm:2}
    \mathfrak{W}_{d_k}(\mu,\nu P_t)
    \leq Ce^{-\frac{m-k}{2m}\gam t} \left(1 + \int_{\substack H^m} \Sob{v}{H^m}
       e^{2\eta_0 \In_2^+(v)^{3/7}}
    \nu (dv)\right),
  \end{align}
  for any $\nu$ where $C = C(m,\gam,\Sob{f}{H^m},\Sob{\s}{H^m})$ is independent of $\nu$ and $t$.
\end{itemize}
\end{Thm}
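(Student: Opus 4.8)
The plan is to couple two copies of \eqref{eq:s:KdV} driven by the \emph{same} Wiener process $W$, with initial data $(u_0,\tilde u_0)$ sampled from the product coupling $\nu_1\otimes\nu_2$ (only upper bounds for $\Wm_{d_0}$ are needed, so any admissible coupling will do), and to reduce \eqref{eq:spectral:gap} to the pathwise estimate $\E\,d_0(u(t;u_0),\tilde u(t;\tilde u_0))\le 2e^{-\gamma t/2}d_0(u_0,\tilde u_0)$ for each fixed pair $u_0,\tilde u_0\in H^2$; integrating this over the coupling produces a coupling of $(\nu_1P_t,\nu_2P_t)$ witnessing \eqref{eq:spectral:gap}. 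The input is the $L^2$-difference bound \eqref{eq:stab:main} together with \eqref{eq:int:fact:L2:LD:sp} and \cref{lem:equivalence} in the form \eqref{eq:I2p:exp:LD:eqv}, which give $\Sob{u(t)-\tilde u(t)}{L^2}\le\Sob{u_0-\tilde u_0}{L^2}\exp(-\gamma t+c\int_0^t(\In_2^+(u(s))^{3/7}+\In_2^+(\tilde u(s))^{3/7})ds)$, since the choice of the $7/3$-power in \eqref{eq:I2p:exp:LD} forces $\Sob{u}{L^2}^2\le c\,\In_2^+(u)^{3/7}$ and $\Sob{u}{H^2}^{6/7}\le c\,\In_2^+(u)^{3/7}$. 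Multiplying by $e^{\eta_0(\In_2^+(u(t))^{3/7}+\In_2^+(\tilde u(t))^{3/7})}$, taking $\E$, and applying Cauchy--Schwarz to decouple $u$ from $\tilde u$, matters reduce to a single moment bound
\begin{align*}
  \E\exp\Bigl(2\eta_0\In_2^+(u(t))^{3/7}+2c\textstyle\int_0^t\In_2^+(u(s))^{3/7}ds\Bigr)\le C\exp\bigl(2\eta_0\In_2^+(u_0)^{3/7}\bigr),
\end{align*}
with $C,\gamma_0$ depending only on $\Sob{f}{H^2},\Sob{\s}{H^2}$, \emph{uniform in $t\ge 0$} and, crucially, valid for all $\gamma\ge\gamma_0$.

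\textbf{The main obstacle: a $\gamma$-uniform refined moment bound.} Establishing the displayed bound is where the work lies. One reruns the argument of \cref{sect:Poly:bnd} for $\In_2^+(u)^p$ with $p=3/7$: starting from \eqref{eq:Imp:mom:evol} with the explicit expressions \eqref{eq:In:Kp:LD:p}--\eqref{eq:Ln:K:LD:s}, one drops the nonpositive term $\tfrac{p(p-1)}{2}\In_2^+(u)^{p-2}|\Kn_2^{S,+}|^2$ (here $p<1$), invokes the tailored estimate \eqref{eq:LD:crit:al:s} on the critical drift $\Kn_{2,1}^D$, and applies Young's inequality so that \emph{no} hidden power of $\gamma$ survives in the constants; the negative powers $\In_2^+(u)^{p-1}$ are harmless since $\In_2^+\ge 1$, which is what allows the $\bar\alpha_2$-terms coming from the $L^2$ part of $\In_2^+$ to be disposed of with only a $\gamma$-independent residual drift $C_\ast$. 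The delicate point is the quadratic variation: since the $L^2$ part enters $\In_2^+$ with the reciprocal power $\bar q_2=7/3$, one finds $|\Kn_2^{S,+}|\le C\In_2^+(u)^{\delta}$ with $\delta=1-\tfrac{3}{14}$, whence $[\Mt](t)\le C\int_0^t\In_2^+(u)^{2p-2+2\delta}ds$ with $2p-2+2\delta=p$ \emph{exactly} (this is precisely the relation $p\,\bar q_2=1$); so the martingale quadratic variation is controlled by the very ``good'' integral $\int_0^t\In_2^+(u)^p ds$ that appears with the favorable sign on the left of the evolution. Any other choice of $\bar q_2$ either destroys the positivity equivalence \eqref{eq:I2p:exp:LD:eqv} or makes $2p-2+2\delta>0$, and then the quadratic variation grows faster than can be absorbed. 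For $\gamma\ge\gamma_0$ the good term $\tfrac{3\gamma p}{2}\int_0^t\In_2^+(u)^p ds$ dominates $\tfrac{\bar\eta}{2}[\Mt](t)$, the residual drift $C_\ast t\le C_\ast\int_0^t\In_2^+(u)^p ds$, and the coefficient $2c$ from the integrating factor; the exponential martingale inequality \eqref{eq:exp:Mt:gen} and the identity \eqref{eq:elem}, used exactly as in the proof of \cref{thm:Lyapunov}(iv), then give the bound. Taking $\eta_0=\eta_0(\Sob{f}{H^2},\Sob{\s}{H^2})$ small makes $C$ arbitrarily close to $1$; combined with $e^{-\gamma t}\le 1$ this yields \eqref{eq:spectral:gap} with the constant $2$, and $\gamma_0$ then depends only on $c,\eta_0$ and the $\gamma$-independent constants above, hence only on $\Sob{f}{H^2},\Sob{\s}{H^2}$.

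\textbf{Higher regularities (ii) and the invariant measure (iii).} For (ii), fix $m\ge2$, $f\in H^m$, $\s\in\bH^m$ and interpolate $\Sob{u(t)-\tilde u(t)}{H^k}\le\Sob{u(t)-\tilde u(t)}{L^2}^{1-k/m}\Sob{u(t)-\tilde u(t)}{H^m}^{k/m}$, so that $d_k(u(t),\tilde u(t))\le d_0(u(t),\tilde u(t))^{1-k/m}\bigl(e^{\eta_0(\In_2^+(u(t))^{3/7}+\In_2^+(\tilde u(t))^{3/7})}(\Sob{u(t)}{H^m}+\Sob{\tilde u(t)}{H^m})\bigr)^{k/m}$. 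Taking $\E$ and applying H\"older with exponents $m/(m-k)$ and $m/k$, the first factor is bounded via part (i) by $Ce^{-\frac{m-k}{2m}\gamma t}d_0(u_0,\tilde u_0)^{(m-k)/m}$, and the second by Cauchy--Schwarz together with the $t$-uniform control of $\E e^{2\eta_0\In_2^+(u(t))^{3/7}}$ from the refined moment bound and the $H^m$-Lyapunov bound \eqref{eq:Lyapunov:Bnd} (with \cref{lem:equivalence}) for $\E\Sob{u(t)}{H^m}^2$; a final Young inequality and integration against $\nu_1\otimes\nu_2$ give \eqref{eq:mixing:Hm}. For (iii): for $\gamma\ge\gamma_0$ the semigroup is a strict contraction on $(\Pr(H^2),\Wm_{d_0})$ by (i), and any invariant $\mu$ satisfies $\int e^{\eta_0\In_2^+(v)^{3/7}}(1+\Sob{v}{L^2})\mu(dv)<\infty$ by propagating the refined moment bound through invariance, so $\Wm_{d_0}(\mu_1,\mu_2)=\Wm_{d_0}(\mu_1P_t,\mu_2P_t)\le 2e^{-\gamma t/2}\Wm_{d_0}(\mu_1,\mu_2)\to 0$ forces $\mu_1=\mu_2$, while existence is supplied by \cref{thm:existence}; finally \eqref{eq:mixing:Hm:2} follows by specializing \eqref{eq:spectral:gap} (for $k=0$) and \eqref{eq:mixing:Hm} (for $0<k<m$) to $\nu_1=\mu$ and using \cref{thm:regularity} to bound $\int(1+\Sob{v}{H^m})e^{2\eta_0\In_2^+(v)^{3/7}}\mu(dv)<\infty$ when $f\in H^m$, $\s\in\bH^m$.
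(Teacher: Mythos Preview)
Your proposal is correct and follows essentially the same route as the paper's proof. You reconstruct inline what the paper packages as \cref{lem:LD:exp:mom} (the $\gamma$-uniform exponential moment bound for $\In_2^+(u)^{3/7}$), and your identification of the key algebraic coincidence $2p-2+2\delta=p$ (equivalently $p\,\bar q_2=1$) matches exactly the mechanism behind the paper's estimate \eqref{eq:Imp:mg:LD}; the coupling, interpolation for (ii), and contraction argument for (iii) are likewise the same.
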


\begin{Rmk}
\label{rmk:spectralgap}
\cref{thm:spectralgap} has immediate implications for
observables of solutions of \eqref{eq:s:KdV}.
Indeed it is easily seen from \eqref{def:Wd} that
\begin{align}\label{eq:partial:duality}
  \left|\int\phi(v)\nu_1(dv)-\int \phi(v)\nu_2(dv)\right|
  \leq \Sob{\phi}{\Lip(d_k)}\mathfrak{W}_{d_k}(\nu_1,\nu_2),
\end{align}
holds for all $\phi\in \Lip(d_k)$ where
\begin{align*}
  \mbox{Lip}(d_k) =\left\{\phi:H^k\rightarrow \mathbb{R}\,\Bigg|\,\Sob{\phi}{\Lip(d_k)}:
  = \sup_{u \not = v\in H^k}\frac{|\phi(u)-\phi(v)|}{d_k(u,v)}<\infty\right\}.
\end{align*}
Thus, by applying \eqref{eq:mixing:Hm:2} with $\nu=\de_{u_0}$ and then
invoking \eqref{eq:partial:duality}, we obtain the formulation in
\cref{thm:large:damp:Mix:Intro} from the introduction which thus holds over
a rich class of observables $\phi\in \Lip(d_k)$ for
$0 \leq k \leq m-1$.\footnote{Here, as an illustrative example,
  observe that if $\phi: H^2 \to \RR$ is such that, for every
  $u, v \in H^2$,
  $|\phi(u) - \phi(v)| \leq C(\|u\|^q_{H^2} + \|v\|^{q}_{H^2}) \|u
  -v\|_{H^2}$ for some $q \geq 0$ then $\phi \in \mbox{Lip}(\rho_2)$.
  Hence in particular the $m$-point correlation functions
  $\phi(u) := u(x_1) u(x_2) \ldots u(x_m)$ for any
  $x_1, \ldots x_m \in \TT$ all belong to $\mbox{Lip}(\rho_2)$.}
Moreover note that `spectral-gap' estimates as in
\eqref{eq:spectral:gap} can be used to produce law of large numbers
and central limit theorem type results concerning observables of $u$ 
obeying \eqref{eq:s:KdV}.  See e.g. \cite{Komorowski2012a,
  Komorowski2012b, KuksinShirikyan2012, Kulik2017,
  GlattHoltzMondaini2020} for further details on this point.
\end{Rmk}

Before turning to the proof of the main result and in view of
\eqref{eq:stab:main}, \eqref{eq:int:fact:L2:LD:sp},
\eqref{eq:I2p:exp:LD} we take the preliminary step of revisit the
proof of \cref{thm:Lyapunov}. The following lemma provides the
desirable refinements of \eqref{eq:Lyapunov:Bnd:exp},
\eqref{eq:Lyapunov:Bnd:subquadexp} as needed below:
\begin{Lem}
  \label{lem:LD:exp:mom}
  Fix any $f \in H^2, \sigma \in \bH^2$, $\gamma \geq 1$ and $u_0 \in H^2$. Let $u$
  be the corresponding solution of \eqref{eq:s:KdV}.  Then there exists an
  $\eta^* = \eta^*(\|\sigma\|_{H^2}) > 0$ independent of $\gamma \geq 1$
  \begin{align}\label{eq:expmoments:I2plusform}
    \E \exp\left(\eta\,\In_2^+(u(t))^{3/7}
    +\frac{3 \eta\gam}{7}\int_0^t\In_2^+(u(s))^{3/7}ds\right)
    \leq 2\exp\left(\eta\left(\In_2^+(u_0)^{3/7}+ C\gam t\right)\right).
  \end{align}
  which holds for any $0 \leq \eta \leq \eta^*$ and
  where the constants $C = C( \|f\|_{H^2}, \|\sigma\|_{H^2})$ are
  independent of $\gamma \geq 1$ and $t \geq 0$ and $\eta \geq 0$.\footnote{Here, 
  to reiterate, $\In_2^+$ is defined according to \eqref{eq:I2p:exp:LD}
  with $\bar{\alpha}_2 > 0$ choosen independently of $f, \sigma, \gamma$
  such that \eqref{eq:Ln:K:LD:s}, \eqref{eq:LD:crit:al:s} both hold.  Furthermore
  it is not hard to see that $\eta^*$ can be specified as $c \|\sigma\|^{-1}_{H^2}$
  where $c$ is a universal constant.}
  Furthermore, for any $t \geq 0$,
  \begin{align}\label{eq:expmoments:I2plusform:subquad}
        &\E \exp\left(\eta\,\In_2^+(u(t))^{3/7}\right)
    \leq 
     C\exp\left(\eta e^{-\frac{3}{7}\gam t}\In_2^+(u_0)^{3/7}\right).
  \end{align}
  where again $C = C( \|f\|_{H^2}, \|\sigma\|_{H^2})$ is
  independent of $\gamma \geq 1$, $t > 0$ and $0\leq \eta \leq 	\eta^*$.
\end{Lem}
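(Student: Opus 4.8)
The plan is to revisit the proof of \cref{thm:Lyapunov}, items $(iii)$ and $(iv)$, specialized to $m=2$, $p = 3/7$, and the specific functional $\In_2^+$ of \eqref{eq:I2p:exp:LD} with $\bar{q}_2 = 7/3$, while tracking every $\gamma$-dependency under the standing assumption $\gamma \geq 1$. The starting point is the It\^o evolution \eqref{eq:Imp:mom:evol} for $\In_2^+(u)^p$, together with the explicit expressions \eqref{eq:In:Kp:LD:p}--\eqref{eq:Ln:K:LD:s} for $\Kn_2^{D,+}$ and $\Kn_2^{S,+}$, and the tailored estimates \eqref{eq:I2p:exp:LD:eqv}, \eqref{eq:LD:crit:al:s} established just above the statement.

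First I would establish the drift bound $\Kn_2^{D,+} \leq C\gamma + \tfrac{\gamma}{100}\In_2^+(u)$ with $C = C(\Sob{f}{H^2},\Sob{\sigma}{H^2})$ independent of $\gamma$. The term $\gamma\Kn_{2,1}^D$ is controlled directly by \eqref{eq:LD:crit:al:s}. The terms $\Kn_{2,2}^D$ and $\Kn_{2,3}^D$ carry no factor of $\gamma$ and involve $D^2u$ at most linearly (only through $2\int_\T D^2u\,D^2f\,dx$); Agmon's, the Gagliardo--Nirenberg--Sobolev, and Young's inequalities, together with the comparison \eqref{eq:I2p:exp:LD:eqv} (which gives $(\Sob{u}{L^2}^2+1)^{7/3}\lesssim \In_2^+(u)$, hence $\Sob{u}{L^2}^a \lesssim \In_2^+(u)^{3a/14}$), bound them by $C + \tfrac{1}{500}\In_2^+(u) \leq C\gamma + \tfrac{\gamma}{500}\In_2^+(u)$ using $\gamma\geq 1$. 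For the remaining terms of $\Kn_2^{D,+}$, which depend on $u$ only through $\Sob{u}{L^2}$, the decisive point is the cancellation between the negative term $-\tfrac{8}{3}\bar{\alpha}_2\gamma(\Sob{u}{L^2}^2+1)^{7/3}$ and the positive terms $\tfrac{7}{3}\bar{\alpha}_2(\Sob{u}{L^2}^2+1)^{4/3}(2\gamma+2\lb f,u\rb+\Sob{\sigma}{L^2}^2)$ and $\tfrac{56}{9}\bar{\alpha}_2(\Sob{u}{L^2}^2+1)^{1/3}|\lb\sigma,u\rb|^2$: since the exponents $4/3$ and $11/6$ are strictly below $7/3$, Young's inequality absorbs all positive contributions into an arbitrarily small multiple of $\bar{\alpha}_2\gamma(\Sob{u}{L^2}^2+1)^{7/3}$ plus $C\gamma$, leaving a net negative term that is dropped. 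Plugging this into \eqref{eq:Imp:mom:evol}, discarding the negative It\^o correction $\tfrac{p(p-1)}{2}\In_2^+(u)^{p-2}|\Kn_2^{S,+}|^2$ (as $p<1$), and using $\In_2^+(u)\geq 1$ so that $\In_2^+(u)^{p-1}\leq 1$, yields
\begin{align*}
  d\In_2^+(u)^p + \tfrac{3\gamma p}{2}\In_2^+(u)^p\,dt \leq C\gamma\,dt + p\,\In_2^+(u)^{p-1}\Kn_2^{S,+}\,dW .
\end{align*}

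Next I would control the quadratic variation of $M(t) := p\int_0^t\In_2^+(u)^{p-1}\Kn_2^{S,+}\,dW$. Estimating each term of \eqref{eq:Ln:K:LD:s} exactly as above gives $|\Kn_2^{S,+}| \leq C\In_2^+(u)^{\delta}$ with $\delta = 11/14 < 1$; the borderline term is the modification term $\bar{\alpha}_2\tfrac{14}{3}(\Sob{u}{L^2}^2+1)^{4/3}\lb u,\sigma\rb \lesssim (\Sob{u}{L^2}^2+1)^{11/6}\lesssim \In_2^+(u)^{11/14}$, which is exactly where the choice $\bar{q}_2 = 7/3$ is sharp. Since $2p-2+2\delta = 3/7 = p$, we get $[M](t)\leq C\int_0^t\In_2^+(u)^p\,ds$ with $C = C(\Sob{\sigma}{H^2})$ independent of $\gamma$. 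Hence, choosing $\eta^* = \eta^*(\Sob{\sigma}{H^2})$ small enough that $\eta^* C \leq p$ (so that, using $\gamma\geq 1$, $\tfrac{\eta^2}{2}[M](t) \leq \tfrac{\eta\gamma p}{2}\int_0^t\In_2^+(u)^p\,ds$ for all $0\leq\eta\leq\eta^*$), one integrates the displayed stochastic inequality, multiplies by $\eta$, moves $\tfrac{\eta^2}{2}[M]$ to the left, exponentiates and takes expectation, using that $\exp(\eta M(t)-\tfrac{\eta^2}{2}[M](t))$ is a nonnegative supermartingale (so its expectation is $\leq 1$) — equivalently one follows the scheme of \cref{sect:exp:mom:bnd} through \eqref{eq:exp:Mt:gen} and \eqref{eq:elem}, which is what generates the harmless constant factor $2$. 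This yields \eqref{eq:expmoments:I2plusform}.

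Finally, for the decay estimate \eqref{eq:expmoments:I2plusform:subquad} I would proceed as in the proof of \eqref{eq:Lyapunov:Bnd:subquadexp}, introducing the integrating factor $e^{-\gamma p(T-t)}$: with $\tilde M(t) := p\int_0^t e^{-\gamma p(T-s)}\In_2^+(u)^{p-1}\Kn_2^{S,+}\,dW$, computing $d\bigl(e^{-\gamma p(T-t)}\In_2^+(u)^p\bigr)$ gives
\begin{align*}
  e^{-\gamma p(T-t)}\In_2^+(u(t))^p + \tfrac{\gamma p}{2}\int_0^t e^{-\gamma p(T-s)}\In_2^+(u)^p\,ds \leq e^{-\gamma pT}\In_2^+(u_0)^p + \tfrac{C}{p} + \tilde M(t),
\end{align*}
where the deterministic drift $C\gamma\int_0^t e^{-\gamma p(T-s)}\,ds \leq C/p$ is now $\gamma$-independent. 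Since $e^{-2\gamma p(T-s)}\leq e^{-\gamma p(T-s)}$ for $s\leq T$, the bound $[\tilde M](t)\leq C\int_0^t e^{-\gamma p(T-s)}\In_2^+(u)^p\,ds$ is again absorbable for $\eta\leq\eta^*$; setting $t=T$, multiplying by $\eta$, exponentiating and using the supermartingale property of $\exp(\eta\tilde M-\tfrac{\eta^2}{2}[\tilde M])$ produces \eqref{eq:expmoments:I2plusform:subquad} with $C=\exp(\eta^*C/p)$ independent of $\gamma$. The main obstacle throughout is the rigidity of these exponents: the value $\bar{q}_2 = 7/3$ is forced from above by the requirement $\delta \leq 1-p/2 = 11/14$ needed to absorb $[M]$, and from below by \eqref{eq:I2p:exp:LD:eqv} (which needs $2\bar{q}_2\geq 14/3$), so one must verify that \emph{every} nonlinear drift and martingale term closes at precisely $\bar{q}_2 = 7/3$ and $p = 3/7$, and that the $\gamma$-scaling survives because each $\gamma$-free lower-order contribution can be charged to $C\gamma$ via $\gamma\geq 1$ while the genuinely $\gamma$-weighted terms cancel among themselves.
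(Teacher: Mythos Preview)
Your proposal is correct and follows essentially the same route as the paper's proof: both start from the It\^o evolution \eqref{eq:Imp:mom:evol} at $m=2$, $p=3/7$, establish the drift bound $|\Kn_2^{D,+}|\leq \tfrac{\gamma}{50}\In_2^+(u)+C\gamma$ via \eqref{eq:LD:crit:al:s} and direct estimation of the remaining pieces, pin down the crucial martingale estimate $|\Kn_2^{S,+}|\leq C(\In_2^+(u))^{11/14}$ so that the quadratic variation has exponent exactly $3/7$, and then close with the exponential martingale inequality (equivalently the supermartingale argument you describe) both with and without the integrating factor $e^{-\gamma p(T-t)}$. Your additional commentary on why $\bar q_2=7/3$ is forced from both sides is accurate and matches the discussion in \cref{rmk:large:damping:scaling}.
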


\begin{Rmk}\label{rmk:large:damping:scaling}
{{}In light of \eqref{eq:I2p:exp:LD} and \cref{lem:LD:exp:mom}, the exponent $3/7$ plays a distinguished role in determining a lower bound on the size of $\gam$ to ensure the existence of a spectral gap (see \cref{thm:spectralgap} (i)). This exponent arises as the natural exponent for controlling $\bdy_xu$ in $L^\infty$ by $\bdy_x^2u$ in $L^2$ and $u$ in $L^2$; this can be seen from \eqref{eq:stab:main} and \eqref{eq:int:fact:L2:LD:sp}. 

On the other hand, for entirely separate reasons, the conserved quantity $\In_2$ can be controlled entirely in terms of $\Sob{u}{H^2}^2$ and $\Sob{u}{L^2}^{14/3}$. Since the KdV has been written in non-dimensional variables, we may add these two quantities and observe that $\Sob{u}{H^2}^{6/7}+\Sob{u}{L^2}^2\sim(\Sob{u}{H^2}^2+\Sob{u}{L^2}^{14/3})^{3/7}$. From this point of view, the KdV obeys a ``nonlinear scaling'' in the sense that $\Sob{u}{L^2}^2$ scales like $\Sob{u}{H^2}^{6/7}$;  morally speaking, in terms of the corresponding conserved quantity, $\Sob{u}{L^2}^2$ scales like $\In_2^{3/7}$. 

In other words, the existence of a spectral gap is possible when damping is sufficiently large precisely for this reason: the quantity that damping is required to control to ensure global dissipativity of the system is ``critical'' with respect to the scaling of the corresponding conserved quantity of the KdV evolution. We point out that this same phenomenon is absent for the weakly damped cubic NLS equation since the quantity required to be controlled there is found to be ``supercritical'' from this point of view. We refer the reader to the recent work \cite{BrzezniakFerrarioZanella2023}, where the analogous large damping result is proved for the cubic NLS by exploiting dispersive estimates.
}
\end{Rmk}

\begin{proof}
  As in the proofs of \eqref{eq:Lyapunov:Bnd:exp},
  \eqref{eq:Lyapunov:Bnd:subquadexp} in \cref{thm:Lyapunov}, we proceed
  by working from the identity \eqref{eq:Imp:mom:evol} now in the
  special case $p = 3/7$, $m =2$ and begin by estimating the terms
  involving $\Kn^{D,+}_2$ and $\Kn^{S,+}_2$.  Pursing the explicit
  expressions \eqref{eq:Ln:K:LD:123}, making use of
  \eqref{eq:LD:crit:al:s} and employing further routine usages of
  Agmon, Sobolev and interpolation inequalities we find
  \begin{align}
    |\Kn^{D}_2(u,f, \sigma)| 
    \leq \frac{\gamma}{1000} \In_2^+(u)
    + C(1+ \|u\|_{H^2} + \|u\|_{L^2} \|u\|_{H^2} + \|u\|^{11/4}_{L^2} \|u\|^{1/4}_{H^2})
    &\leq\frac{\gamma}{100} \In_2^+(u) + C,
	\label{eq:exp:bnd:LD:1}
  \end{align}
  where $C = C(\|f\|_{H^2}, \|\sigma\|_{H^2})$ is independent of
  $\gamma \geq 1$.  Meanwhile from \eqref{eq:In:Kp:LD:p} and noting
  that $\bar{\alpha}_2$ has been chosen based on universal quantities
  independently of $\gamma$ we find
  \begin{align}
  	 |\Kn_2^{D,+}  - \Kn^{D}_2|\leq \frac{\gamma}{100} \In_2^+(u) + C \gamma,
	 	\label{eq:exp:bnd:LD:2}
  \end{align}
  where $C = C(\|f\|_{L^2}, \|\sigma\|_{L^2})$.  Turning to the It\=o
  terms $\mathcal{K}_2^{S,+}$, we find from \eqref{eq:Ln:K:LD:s} that
  \begin{align}
   |\mathcal{K}_2^{S,+}| \leq& C( 1+ \|u\|_{H^2} +  \|u\|_{L^2} \|u\|_{H^2} 
   + \|u\|^{11/4}_{L^2}\|u\|^{1/4}_{H^2} + \|u\|^{11/3}_{L^2})\leq C(\In_2^+(u))^{11/14},
   	\label{eq:exp:bnd:LD:3}
  \end{align}
  where again $C = C(\|\sigma\|_{H^2})$ is independent of $\gamma \geq 1$.

  Next, with these bounds on $\Kn_2^{D,+}$ and $\Kn_2^{S,+}$ in hand
  we establish both \eqref{eq:Lyapunov:Bnd:exp},
  \eqref{eq:Lyapunov:Bnd:subquadexp} by making use of the exponential
  martingale inequality \eqref{eq:exp:Mt:gen}. Start with
  \eqref{eq:expmoments:I2plusform}.  Proceeding similarly to
  \eqref{eq:poly:MG:bnd:setup:1} above, we gather
  \eqref{eq:exp:bnd:LD:1}, \eqref{eq:exp:bnd:LD:2} and compare against
  \eqref{eq:Imp:mom:evol}.  With \eqref{eq:exp:Mt:gen} we find, for
  $\bar{\eta}, R > 0$,
  \begin{align}
  \Prb&\left(\sup_{t \geq 0}\left(\In_2^+(u(t))^{3/7}
        +\frac{5\gam}{7} \int_0^t\In_2^+(u(s))^{3/7}ds-\In_2^+(u_0)^{3/7}-C \gamma t
        - \frac{\bar{\eta}}{2} [\Mt](t)\right)\geq R\right)
        \leq e^{-\bar{\eta} R},
        \label{eq:MG:est}
  \end{align}
  where, cf.  \eqref{def:Implus:power:mart}, \eqref{eq:Imp:mart:QV}
  $[ \mathcal{M}](t) =
  \frac{9}{49}\int_0^t(\In_2^+)^{-8/7}|\Kn_m^{S,+}|^2 ds$. Here,
  to emphasize, the constant $C = C(\|f\|_{H^2}, \|\sigma\|_{H^2})$
  does not depend on $\gamma$.  Now in view of \eqref{eq:exp:bnd:LD:3}
  we find that
  \begin{align}
    [ \mathcal{M}(t)] \leq C\int_0^t \In_2^+(u(s))^{3/7}ds,
    \label{eq:Imp:mg:LD}
  \end{align}
  where again $C = C(\|\sigma\|_{H^2})$ is independent of
  $\gamma \geq 1$. We now consider $\bar{\eta} = \frac{4}{7C}$, with $C$
  precisely the constant appearing in \eqref{eq:Imp:mg:LD}, in the
  expression \eqref{eq:MG:est}.  With this choice for $\bar{\eta}$ in
  \eqref{eq:MG:est} and then making use of \eqref{eq:elem} in a
  similar fashion to \cref{sect:exp:mom:bnd} we obtain the first
  desired bound \eqref{eq:expmoments:I2plusform} for any $\eta$
  sufficiently small depending only on $\|\sigma\|_{H^2}^{-1}$.

  We turn to the second bound, \eqref{eq:expmoments:I2plusform:subquad}.
  Drawing once again from \eqref{eq:exp:bnd:LD:1}, \eqref{eq:exp:bnd:LD:2}
  and otherwise arguing similarly to \eqref{eq:next:bound:exp:moments}
  we have for, for any $t > 0$,
  \begin{align}
  \Prb&\left(\In_2^+(u(t))^{3/7} + \frac{ 3 \gam }{14}\int_0^t \! \! e^{-\frac{ 3 \gam}{7}(t-s)}\In_m^+(u(s))^{3/7} ds 
       - e^{-\frac{\gamma 3}{7} t}\In_2^+(u_0)^{3/7}
        -C \gamma \int_0^t \! \! e^{-\frac{3\gamma}{7} (t -s)} ds
       \! - \frac{\bar{\eta}}{2} [\tilde{\Mt}](t) \geq R\right) \notag\\
      &\qquad \qquad \leq \Prb\left( \sup_{s \geq 0}
        (\tilde{\Mt}(s) -\frac{\bar{\eta}}{2}[\tilde{\Mt}](s)) > R\right)
        \leq e^{-\bar{\eta}R},
        \label{eq:MG:est:LD:Decay}
  \end{align}
  where, for any $s \geq 0$,
  \begin{align}\notag
    \til{\Mt}(s):=\frac{3}{7}\int_0^s e^{-\frac{3\gam}{7} (s-r)}\In_2^+(u)^{-4/7}\Kn_2^{S,+}dW,
    \quad
    [\til{\Mt}](s)= \frac{9}{49} \int_0^s e^{-\frac{6\gam}{7} (s-r)} \In_{2}^+(u)^{-\frac{8}{7}} |\Kn_2^{S,+}|^2 dr.
  \end{align}
  Now, arguing as in \eqref{eq:Imp:mg:LD} we have that
  $[\til{\Mt}](t) \leq C \int_0^t e^{-\frac{3\gam}{7} (t-s)}
  \In_m^+(u(s))^{3/7}\,ds$ with $C = C(\|\sigma\|_{H^2})$.  Thus, by now
  taking $\bar{\eta} = \frac{3}{14C}$, for this $\gamma$-independent
  constant $C$, in \eqref{eq:MG:est:LD:Decay} and then making a second
  invocation of \eqref{eq:elem}, we obtain
  \eqref{eq:expmoments:I2plusform:subquad}.  The proof is now
  complete.
\end{proof}

With \cref{lem:LD:exp:mom} now in hand we now turn to the proof of the
main result of the section:

\begin{proof}[Proof of \cref{thm:spectralgap}]
  We begin with the useful observation that, since $d_k$ is
  non-negative and lower-semicontinuous on $H^m$ for $0 \leq k < m$,
  we have
  \begin{align}
    \Wm_{d_{k}}(\mu P_t,\nu P_t)
    \leq \int_{H^m\times H^m}\Wm_{d_k}(\de_{u_0}P_t,\de_{\til{u}_0}P_t)
    \pi(du_0,d\til{u}_0),
    \label{eq:Wd:diracs}
  \end{align} 
  for any $\pi\in\mathcal{C}(\mu,\nu)$ (cf. \cite[Theorem
  4.8]{villani2008optimal}).  With this observation our task for
  proving $(i)$ therefore reduces to showing that
  \begin{align}
    \Wm_{d_0}(\de_{u_0}P_t,\de_{\til{u}_0}P_t) \leq 2 e^{-\frac{\gamma}{2} t} d_0(u_0, v_0),
    \label{eq:cond:1:red:LD}
  \end{align}
  for any pair $u_0, v_0 \in H^2$.

  To this end, given any $u_0, \tilde{u}_0\in H^2$, let $(u(t), \tilde{u}(t))$ be the
  coupling of the laws $\delta_{u_0}P_t$, $\delta_{\tilde{u_0}}P_t$ by
  solving \eqref{eq:s:KdV} starting from $u_0$, $\tilde{u}_0$ using the
  same Brownian motion.  Then, by invoking the definition of the
  coupling distance \eqref{def:Wd}, and combining
  \eqref{eq:stab:main} with \eqref{eq:int:fact:L2:LD:sp}, \eqref{eq:I2p:exp:LD},
  \eqref{eq:I2p:exp:LD:eqv} we obtain,
  \begin{align}
  &\!\! \Wm_{d_0}(\de_{u_0}P_t,\de_{\til{u}_0}P_t)
    \leq\E \left(\exp\left(\eta_0 (\In_2^+(u(t))^{3/7}+\In_2^+(\til{u}(t))^{3/7})\right)
        \Sob{u(t)- \tilde{u}(t)}{L^2} \right)  \label{eq:sync:avg}
        \\
        &  \!\!\leq e^{-\gamma t}\|u_0 - \tilde{u}_0\|_{L^2} \E
        \exp \! \left( \eta_0 (\In_2^+(u(t))^{3/7} \! +\In_2^+(\til{u}(t))^{3/7})+ 
              c \! \int_0^t \! (\In_2^+({u}(s))^{3/7}\! + \In_2^+(\til{u}(s))^{3/7})ds \!\right). 
              \notag
  \end{align}
  Here, note that this estimate \eqref{eq:sync:avg} holds for any
  choice of $\eta_0 > 0$ and $\gamma \geq 1$.

We now apply the bound \eqref{eq:expmoments:I2plusform} in
\cref{lem:LD:exp:mom} to \eqref{eq:sync:avg} and, in the process,
specify values for $\eta_0 > 0$ and $\gamma_0> 0$.  First, notice that
we select an $0 < \eta_0 \leq \eta^*/4$ so that $C\eta_0 < 1/4$ where
$C$ is precisely the constant appearing in
\eqref{eq:expmoments:I2plusform}.  We then determine $\gamma_0 \geq 1$
so that $\frac{3 \eta_0 \gamma _0}{14} \geq c$ where $c$ is the
universal constant appearing in \eqref{eq:sync:avg}.  With these
choices, \eqref{eq:expmoments:I2plusform}, \eqref{eq:sync:avg}, and
H\"older's inequality produce \eqref{eq:cond:1:red:LD}, completing the
proof the first item.

We next establish the higher-order mixing bounds as desired for
\eqref{eq:mixing:Hm}.  Here, we observe with interpolation that, for
$0 < k < m$,
\begin{align*}
	d_k(u,v) \leq  \|u - v\|_{L^2}^{\frac{m-k}{m}} \|u - v\|_{H^m}^{\frac{k}{m} }
	\exp\left(\eta_0 \left(\In_2^+(u)^{3/7}+ \In_2^+(v)^{3/7} \right)\right) 
	\leq d_0(u,v)^{\frac{m-k}{m}}  d_m(u,v)^{\frac{k}{m}},
\end{align*}
so that
\begin{align}
      \Wm_{d_k}(\de_{u_0}P_t,\de_{\til{u}_0}P_t) 
      \leq  (\E (d_0(u(t), \tilde{u}(t))))^{\frac{m-k}{m}}  (\E(d_m(u(t), \tilde{u}(t))))^{\frac{k}{m}}. 
      \label{eq:dk:nm:interp}
\end{align}  
Now observe that with
H\"older's inequality, \eqref{eq:expmoments:I2plusform:subquad}
commensurate with our choice of $\eta_0$, \eqref{eq:Lyapunov:Bnd}, and
finally \cref{lem:equivalence}, we obtain
\begin{align}
   \E(d_m(u(t), \tilde{u}(t))) \leq&
	2(\E \| u(t)\|_{H^m}^2+ \E \| u(t)\|_{H^m}^2 )^{1/2} 
                                     \left( \E \exp(4\eta_0 \In_2^+(u(t))^{3/7})
                                     \E \exp(4\eta_0 \In_2^+(\tilde{u}(t))^{3/7})\right)^{1/4}
	\notag\\
  \leq&  C( \|u_0\|_{H^m} + \|\tilde{u}_0\|_{H^m} + 1)
        e^{2\eta_0 (\In_2^+(u_0)+ \In_2^+(u_0))},
	      \label{eq:dm:nm:mom}
\end{align}
for a constant $C = C(m,\gamma, \|f\|_{H^m}, \|\sigma\|_{H^m}) > 0$
independent of $t, u_0, \tilde{u}_0$.  Thus, combining
\eqref{eq:dk:nm:interp} with \eqref{eq:dm:nm:mom} noting that the
upper bound in \eqref{eq:dm:nm:mom} is independent of the coupling and
finally invoking \eqref{eq:cond:1:red:LD} we conclude that, for any
$u_0, \tilde{u}_0$
\begin{align}
\Wm_{d_0}(\de_{u_0}P_t,\de_{\til{u}_0}P_t) \leq e^{-\frac{m-k}{2m} \gamma t}
 C( \|u_0\|_{H^m} + \|\tilde{u}_0\|_{H^m} + 1) e^{2\eta_0 (\In_2^+(u_0)+ \In_2^+(u_0))}.
\end{align}
Hence, with \eqref{eq:Wd:diracs}, we infer the second item \eqref{eq:mixing:Hm}. 

We turn to the final item $(iii)$.  Here, we make the preliminary
observation that, for any $\gamma \geq 1$ $f \in H^2$,
$\sigma \in \bH^2$ and any invariant measure $\mu$ of the
corresponding Markov semigroup $\{P_t\}_{t \geq 0}$ of
\eqref{eq:s:KdV} that
\begin{align}
  \int_{H^2} (\|u\|_{L^2}e^{\eta_0 \In_2^+(u)^{3/7}}
  + e^{4\eta_0 \In_2^+(u)^{3/7}} ) \mu(du) \leq C <\infty
	\label{eq:IM:bnd:LD}
\end{align}
where $C = C( \|f\|_{H^2}, \|\sigma\|_{H^2})$ is independent of $\mu$
and $\gamma$.  To see this we observe that
$\|u\|_{L^2}\exp(\eta_0 \In_2^+(u)) \leq c\exp(2\eta_0 \In_2^+(u))$
and then argue as in \eqref{eq:far:field:2:0} and
\eqref{eq:near:field:2:0}, replacing \eqref{eq:Lyapunov:Bnd:subquadexp}
appropriately with \eqref{eq:expmoments:I2plusform}, while noting that
$4 \eta_0$ is an admissible choice for $\eta$ in this later bound.
Observe, furthermore, that if we suppose that $f \in H^m$,
$\sigma \in \bH^m$ for $m \geq 2$ we have that
\begin{align}
	\int_{H^2} \|u\|_{H^m}e^{2\eta_0 \In_2^+(u)^{3/7}} \mu(du) \leq C <\infty
		\label{eq:IM:bnd:HReg:LD}
\end{align}
for a constant $C = C(\gamma, \|f\|_{H^m}, \|\sigma\|_{H^m})$
independent of $\mu$.  To obtain \eqref{eq:IM:bnd:HReg:LD}, we simply
combine \eqref{eq:reg}, \eqref{eq:IM:bnd:LD} with H\"older's
inequality.

With \eqref{eq:IM:bnd:LD}, \eqref{eq:IM:bnd:HReg:LD} in hand we now
fix any two invariant measures $\mu, \tilde{\mu}$ of
$\{P_t\}_{t \geq 0}$ and maintain the standing assumptions on $f, \sigma$
and $\gamma$.  With invariance, \eqref{eq:spectral:gap}, and
\eqref{eq:IM:bnd:LD}, we find for any $t \geq 0$, that
\begin{align*}
	\Wm_{d_0}( \mu, \tilde{\mu})  \leq \Wm_{d_0}(\mu P_t, \tilde{\mu} P_t)
	\leq 2e^{-\frac{\gam t}{2}} \Wm_{d_0}(\mu,\tilde{\mu})
	\leq  C e^{-\frac{\gam t}{2}},
\end{align*}
where $C$ is independent of $\mu, \tilde{\mu}$ and $t \geq 0$.  We
thus infer $\Wm_{d_0}( \mu, \tilde{\mu}) =0$ so that\footnote{Here, we
  may argue by combining \eqref{eq:partial:duality} with the fact that
  $L^2$ determines measures on $H^2$; see
  \cref{sect:UniqueErgodicity}.}  $\mu = \tilde{\mu}$ as desired for
uniqueness. On the other hand, the bound \eqref{eq:mixing:Hm:2} follows
from \eqref{eq:mixing:Hm} combined with \eqref{eq:IM:bnd:HReg:LD}.  This establishes the final item $(iii)$, thus completing the proof of
\cref{thm:spectralgap}.
\end{proof}

\section{Remarks on the Deterministic Case}
\label{sect:det:case}

In the final section of this manuscript we revisit some now well-established results concerning the synchronization of small scales via large
scales and regarding the regularity of global attractors for the
deterministically forced and weakly damped KdV equation.  Note that
this deterministic setting may be seen as a special case of
\eqref{eq:s:KdV} when we simply set $\s\equiv0$.  With this in mind we
illustrate how the analysis above provides novel proofs and in some cases
leads to stronger statements of established
results.  Since the material covered
in this section is mostly classical our exposition is somewhat
aggressive with details.  We refer the reader to
e.g. \cite[Chapter IV, Section 7]{Temam1997} for a self contained
presentation close to our current setting.
 
To fix notations we consider the Cauchy initial value problem over
$\T=(0,2\pi]$ given by
\begin{align}
  \bdy_tu+\frac{1}{2}D(u^2)+D^3u+\gam u=f,\quad u(0,x)=u_0(x),\quad x\in\T,\quad t>0,
  \label{eq:KdV}
\end{align}
with periodic boundary conditions, where we assume that
$u_0, f\in H^m$ for some $m \geq 2$.  In particular, $u_0,f$ are
periodic and mean-free over $\T$.  We begin by recalling the basic
well-posedness and Lypuanov structure of \eqref{eq:KdV} in the context
of smooth solutions.
\begin{Prop}\label{prop:wp:det:case}
Fix any $f \in H^m$ for $m \geq 2$ and any $\gamma  \geq 0$. 
\begin{itemize}
\item[(i)] For any $u_0 \in H^m$ there exist a unique
  $u = u(u_0,f) \in C(\RR, H^m)$ obeying \eqref{eq:KdV} such that
  $u(0) = u_0$.
\item[(ii)] Define $\{S(t)\}_{t \geq 0}$ as collection maps on $H^m$
  according to $S(t) u_0 = u(t;u_0,f)$.  Then $\{S(t)\}_{t \geq 0}$ is
  a group, namely $S(t+s) = S(t) S(s)$ for any $s, t \in \RR$.
  Furthermore, for any $t \geq 0$, $S(t)$ is a weakly and strongly
  continuous map on $H^m$.
\item[(iii)]  For any $k \geq 0$, so long as $f \in H^k$, we have
  \begin{align}
    \|u(t; u_0)\|_{H^k} \leq C(e^{-\gamma t} (\|u_0\|_{H^k} + \|u_0\|^q_{L^2}) + 1),
    \label{eq:lypunov}
  \end{align}
  where $C = C(\gamma, k, \|f\|_{H^k})> 0$ and $q = q(k) > 0$ are both
  independent of $t \geq 0$ and $u_0$.
\end{itemize}
\end{Prop}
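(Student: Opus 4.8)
The plan is to treat \cref{prop:wp:det:case} as the deterministic specialization ($\sigma \equiv 0$) of the machinery already developed for \eqref{eq:s:KdV}, so that each of the three claims follows from a simplified version of an argument given earlier in the paper. For part $(i)$, global well-posedness in $H^m$ for $m \geq 2$ is classical — I would cite the references collected in \cref{sec:intro:prev} (e.g. \cite{Temam1969, BonaSmith1975, Kato1979}) and note that the appendix \cref{sect:apx:wp:SKdV}, with $\sigma \equiv 0$, provides a self-contained proof; the damping term $\gamma u$ is a benign zeroth-order perturbation of the energy estimates. Uniqueness follows from the $L^2$-difference estimate \eqref{eq:w:L2:no:decay} (with $f$ in place of $\tilde f$ and no nudging), which gives a Grönwall bound on $\|u - \tilde u\|_{L^2}^2$ forcing the difference to vanish when the two data agree.

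For part $(ii)$, the group property $S(t+s) = S(t)S(s)$ is immediate from uniqueness together with time-reversibility of the KdV flow with damping: the equation \eqref{eq:KdV} can be solved backward in time in $H^m$ just as well as forward (reversing $t \mapsto -t$ and $u \mapsto -u(-\cdot)$ turns the dispersive part around while the damping changes sign, which is still a well-posed problem on any finite time interval since the $H^m$ energy estimates only invoke Grönwall). Strong continuity of $S(t)$ on $H^m$ is the specialization of \eqref{eq:cont:dep} in \cref{prop:exist:uniq}; weak continuity follows since strong convergence implies weak convergence, or alternatively from the standard Bona–Smith-type argument that bounded sequences in $H^m$ converging in a weaker norm have images under $S(t)$ converging weakly in $H^m$.

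For part $(iii)$ — the Lyapunov structure \eqref{eq:lypunov} — I would run the deterministic analogue of the proof of \cref{thm:Lyapunov}$(i)$ (specifically the computation in \cref{sect:Alg:mom:bnd}) with all stochastic terms deleted. Concretely, $\In_k^+(u)$ (defined via \eqref{eq:mod:int:m:simple}) obeys the ODE $\tfrac{d}{dt}\In_k^+(u) + 2\gamma \In_k^+(u) = \mathcal{K}_k^{D,+}$, where now $\mathcal{K}_k^{D,+}$ contains no $\sigma$-dependent pieces; the monomial interpolation bound \cref{prop:mono:intp:bnd} and \cref{thm:KdV:Poly:Rank} then give, exactly as in \eqref{eq:Kn:summary}, the estimate $|\mathcal{K}_k^{D,+}| \leq C + \tfrac{\gamma}{100}\In_k^+(u)$ once $\bar{q}_k, \bar\alpha_k$ are chosen large. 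Grönwall's inequality yields $\In_k^+(u(t)) \leq e^{-\gamma t}\In_k^+(u_0) + C$, and then \cref{lem:equivalence} converts this into \eqref{eq:lypunov} with an appropriate $q = q(k)$ (absorbing the $(\|u_0\|_{L^2}^2 + 1)^{\bar q_k}$ term into the $\|u_0\|_{L^2}^q$ contribution). I expect no real obstacle here: the hard work — identifying the right modification $\In_k^+$ and proving the rank-constrained interpolation bounds — is already done in \cref{sect:Integrals} and \cref{sect:Lyapunov}, and the deterministic case is strictly easier since there are no martingale or Itô-correction terms to control. The only point requiring mild care is that when $k > 2$ one still needs $u_0 \in H^k$ to make sense of $\In_k^+(u_0)$, which is consistent with the hypothesis $u_0 \in H^m$ and the phrasing "so long as $f \in H^k$" (with the implicit understanding that the bound is stated for $k \le m$).
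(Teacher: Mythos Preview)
Your proposal is correct and matches the paper's approach essentially line for line: items $(i)$ and $(ii)$ are deferred to the classical references and the appendix (the $\sigma\equiv 0$ specialization of \cref{prop:exist:uniq}), and item $(iii)$ is obtained exactly as you describe---setting the stochastic terms to zero in the evolution \eqref{eq:Inplus:simp} for $\In_k^+(u)$, invoking the bound \eqref{eq:Kn:summary} on $\Kn_k^{D,+}$, applying Gr\"onwall to get $\In_k^+(u(t)) \leq e^{-\gamma t}\In_k^+(u_0) + C$, and converting via \cref{lem:equivalence}. The paper's proof is slightly more terse on $(i)$--$(ii)$ (it simply cites \cite{Ghidaglia1988, MoiseRosa1997, Temam1997} without spelling out the time-reversal or continuity arguments), but the substance is identical.
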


\noindent The well-posedness of \eqref{eq:KdV}, namely items $(i)$ and
$(ii)$ of \cref{prop:wp:det:case}, is classical; see \cite{Temam1969,
  Sjoberg1970, BonaSmith1975, BonaScott1976, SautTemam1976, Kato1979}.
Observe furthermore that this result is essentially a special case of
\cref{prop:exist:uniq} above when we take $\sigma \equiv 0$, so that we
refer the reader to \cref{sect:apx:wp:SKdV} below aside from the
aforementioned references.  The final item $(iii)$ in
\cref{prop:wp:det:case} is can be found in \cite[Proposition
3.1]{MoiseRosa1997} and later in \cite[Chapter IV, Section
7]{Temam1997}.  Here, however, the approach we developed in
\cref{sect:Lyapunov} provides a different and perhaps more direct
proof of \eqref{eq:lypunov} as follows:
\begin{proof}[Proof of \cref{prop:wp:det:case}, $(iii)$]
  We make use of the positivized functionals $\In_m^+$ defined as in
  \eqref{eq:mod:int:m:simple}.  Here, $\In_m^+(u)$ satisfies
  \eqref{eq:Inplus:simp} where we simply set the terms involving
  $\sigma$ to zero.  The required estimates for $\Kn_{m,1}^D$,
  $\Kn_{m,2}^D$, and hence for $\Kn_{m}^{D,+}$ which remain in
  \eqref{eq:Inplus:simp} in this special case
  then proceed precisely as in \eqref{eq:Kn1:est}, \eqref{eq:Kn3:est},
  \eqref{eq:Kn:extra:est}, leading to \eqref{eq:Kn:summary:0}.  Now, by
  appropriately selecting the parameters $\bar{q}_m$ and
  $\bar{\alpha}_m$ which appear in the definition of $\In_m^+$ we now
  obtain  \eqref{eq:Kn:summary}.  In summary we have that
  $\frac{d}{dt} \In_m^+(u) + \gamma \In_m^+(u) \leq C$ which in turn
  yields with Gr\"onwall the bound
  \begin{align}
    \In_m^+(u(t)) \leq e^{-\gamma t} \In_m^+(u_0) + C.
    \label{eq:Im:Inq}
  \end{align}
  Here, note that the constant $C$ which appears here can be seen to
  depend only on $\|f\|_{H^m}$ and $\gamma > 0$ and universal
  quantities.  Finally, by invoking \cref{lem:equivalence} this bound
  \eqref{eq:Im:Inq} now yields \eqref{eq:lypunov} completing the
  proof.
\end{proof}

\subsection{Foias-Prodi and Nudging Estimates}
  
The first results we now revisit concern the existence of finitely
many determining modes, that is, the classical Foias-Prodi estimate
\cite{FoiasProdi1967} for \req{eq:KdV}. We remark that a version of
such an estimate was already established for solutions on the global
attractor in \cite{JollySadigovTiti2017}.  We
provide an alternate, direct proof of the classical version of the
Foias-Prodi estimate based on the approach developed in
\cref{sect:FP:est}.  At the end of this subsection we revisit the
Foias-Prodi type nudging estimate \cref{thm:FP:est} reformulated
appropriately for the deterministic case.

\begin{Thm}\label{prop:FP:det}
  Let $u, \tilde{u} \in C([0,\infty);H^2)$ be two solutions of
  \req{eq:KdV} corresponding to initial data
  $u_0, \tilde{u}_0 \in H^2$, external forcing $f \in H^2$ and damping
  $\gamma > 0$.  Then, there exists an integer $N>0$, depending only on
  $\gam,f$, such that
    \begin{align}\label{eq:FP:det}
        \lim_{t\goesto\infty}\Sob{P_N(u(t)-\tilde{u}(t))}{L^2}=0,
        \quad\text{implies}\quad
        \lim_{t\goesto\infty}\Sob{u(t)-\tilde{u}(t)}{H^1}=0.
    \end{align}
\end{Thm}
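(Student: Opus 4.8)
The plan is to adapt the modified-Hamiltonian/nudging argument from \cref{sect:FP:est} and \cref{sect:mod:Ham} to the deterministic setting, exploiting the fact that $\sigma\equiv0$ eliminates all the martingale terms and hence the need to work in expectation. Set $w=u-\tilde u$, so that $w$ solves the analogue of \eqref{eq:s:KdV:diff} with $\lambda=0$: $\partial_tw+\tfrac12 D(w^2)+D^3w+D(\tilde uw)+\gamma w=0$ (note the roles of $u,\tilde u$ can be interchanged; I will carry $\tilde u$ in the transport term). First I would introduce the functional $\InG_1(w,\tilde u):=\In_1(w)-\int_{\T}\tilde uw^2\,dx$ exactly as in \eqref{eq:mod:fn:base} and compute its evolution using the cancellation \eqref{eq:H1:can}; since there is no noise, the term $\int_{\T}w^2\sigma\,dx\,dW$ disappears and one obtains a \emph{pathwise} ODE of the form
\begin{align}\notag
  \frac{d}{dt}\InG_1(w,\tilde u)+2\gamma\InG_1(w,\tilde u)
  =\int_{\T}\Bigl(\tfrac{\gamma}{3}w^3+\tilde uD\tilde u\,w^2+D^3\tilde u\,w^2+\gamma\tilde u\,w^2-fw^2\Bigr)dx,
\end{align}
the deterministic specialization of \eqref{eq:I1:bnd:FP:3} with $\lambda=0$ (so there is no nudging-induced damping $\lambda\|DP_Nw\|_{L^2}^2$ and no nudging terms on the right).

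Next I would positivize $\InG_1$ by adding $L^2$-based corrections as in \eqref{eq:FP:functional}, working with $\InG_1^+(w,\tilde u):=\InG_1(w,\tilde u)+\bar\alpha(\|w\|_{L^2}^{10/3}+(1+\|\tilde u\|_{L^2}^2)\|w\|_{L^2}^2)$, and invoke \cref{prop:FP:functional} to get the norm equivalence controlling $\|w\|_{H^1}^2$. The key point is that \cref{prop:wp:det:case}(iii) gives a \emph{uniform-in-time} bound $\|\tilde u(t)\|_{H^2}\le M$ and $\|u(t)\|_{H^2}\le M$ for $t\ge t_0$ (depending on the initial data, $\gamma$, $f$), so one does not need stopping times here — this is the crucial simplification over the stochastic case. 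Then I would estimate each term on the right-hand side exactly as in \eqref{eq:fp:est:K1:K2}--\eqref{eq:fp:est:K12} but with $\lambda=0$: the delicate terms $\int D^3\tilde u\,w^2$ and $\int\tilde uD\tilde u\,w^2$ are handled via Agmon's inequality, the splitting $w=P_Nw+Q_Nw$, and the inverse Poincar\'e inequality $\|DQ_Nw\|_{L^2}\ge N\|Q_Nw\|_{L^2}$, which converts the high-frequency part into a gain of a factor $N^{-1}$ (or $N^{-1/2}$, $N^{-1/3}$ depending on the term). Using $\|\tilde u\|_{H^2}\le M$ one chooses $N=N(\gamma,f)$ large enough that all high-frequency contributions are absorbed into $\tfrac{\gamma}{2}\InG_1^+(w,\tilde u)$, yielding a differential inequality of the shape
\begin{align}\notag
  \frac{d}{dt}\InG_1^+(w,\tilde u)+\gamma\,\InG_1^+(w,\tilde u)
  \le C\,g(t),\qquad g(t):=\|P_Nw(t)\|_{L^2}^2\cdot\bigl(1+M+M^{4/3}+\cdots\bigr),
\end{align}
where $g(t)\to0$ as $t\to\infty$ by the hypothesis $\|P_Nw(t)\|_{L^2}\to0$ (all the extra polynomial factors in $w$ that appear, e.g. from $w^3=(u-\tilde u)w^2$ and from $\|w\|_{L^2}^{4/3}$ weights, are bounded a priori using $\|w\|_{L^2}\le\|u\|_{L^2}+\|\tilde u\|_{L^2}\le 2M$).

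Finally, from the differential inequality $\tfrac{d}{dt}\InG_1^+\le -\gamma\InG_1^+ + Cg(t)$ with $g(t)\to0$, a standard Gronwall/Duhamel argument gives $\InG_1^+(w(t),\tilde u(t))\le e^{-\gamma(t-t_0)}\InG_1^+(w(t_0),\tilde u(t_0))+C\int_{t_0}^t e^{-\gamma(t-s)}g(s)\,ds$, and since $g(s)\to0$ the convolution term tends to $0$ (split the integral at $t/2$); hence $\InG_1^+(w(t),\tilde u(t))\to0$, and by the equivalence \eqref{eq:norm:equiv:FP:FN} this forces $\|w(t)\|_{H^1}\to0$, which is \eqref{eq:FP:det}. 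The main obstacle — though it is more bookkeeping than genuine difficulty here — is verifying that the choice of $N$ can be made depending only on $\gamma$ and $f$ (through the asymptotic bound $M$, which by \eqref{eq:lypunov} ultimately depends only on $\gamma,f$ once $t$ is large, since the $e^{-\gamma t}\|u_0\|$ contribution decays): one must carefully track that the a priori bounds used in the absorption step are the \emph{asymptotic} ones $\limsup_t\|u(t)\|_{H^2}\le C(\gamma,\|f\|_{H^2})$ rather than the initial-data-dependent transient bounds, so that $N$ is genuinely independent of $u_0,\tilde u_0$ as claimed. This is exactly parallel to, but simpler than, the role played by \cref{lem:FP:stoppingtime} in the stochastic argument.
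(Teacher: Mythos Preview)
Your proposal is correct and follows essentially the same approach as the paper: use the modified Hamiltonian $\InG_1^+$ with $\lambda=0$ and $\sigma\equiv0$, bound the right-hand side terms by quantities of the form $C(1+\|u\|_{H^2}^q+\|\tilde u\|_{L^2}^q)\|w\|_{L^2}^2$ using the uniform-in-time Lyapunov bound \eqref{eq:lypunov}, apply the $P_N/Q_N$ splitting with inverse Poincar\'e to absorb the high-frequency part into $\gamma\InG_1^+$ for $N$ large depending only on $\gamma,f$, and finish with Gr\"onwall plus the $t/2$ split of the convolution integral. The only cosmetic difference is that the paper bounds all terms by $C(\cdots)\|w\|_{L^2}^2$ first and performs a single $P_N/Q_N$ splitting at the end (see \eqref{eq:main:BND:DFP}), rather than splitting inside each term as you suggest; note also that the estimates \eqref{eq:fp:est:K1:K2}--\eqref{eq:fp:est:K12} cannot literally be used with $\lambda=0$ since several denominators involve $\lambda$, but your subsequent description makes clear you intend the appropriate $\lambda$-free variants, which is exactly what the paper does.
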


\begin{proof}
  We proceed by making use of the `modified-Hamiltonian' functional
  $\InG_1^+$ which we introduced above in \cref{sect:mod:Ham}.  Now,
  taking $w = u_2 - u_1$, we can follow precisely the computations
  leading to \eqref{eq:FB:motherfn:evo} with $\lam=0$ and $\s\equiv0$
  and obtain that
  \begin{align}
    \frac{d}{dt} &\InG_1^+(w,u)
         + 2 \gamma \InG_1^+(w,u)
                   + \frac{4}{3}\gamma \bar{\alpha} \| w\|_{L^2}^{10/3}
                  + 2 \gamma \bar{\al} \|u\|_{L^2}^2\|w\|^2_{L^2}
                    \notag\\
   &= \int_{\T}\biggl( \frac{\gamma}{3} w^3 
            + u D uw^2  + D^3uw^2 + 3\gam u w^2 -fw^2
            - \bar{\alpha} \left(  1+ \frac{5}{3} \| w\|_{L^2}^{4/3}
           +\| u\|_{L^2}^{2} \right)Du w^2
     \biggr)dx \notag\\
   &:= K_1^D + K_3^D + K_4^D + K_5^D + K_7^D + K_9^D + K_{10}^D.
     \label{eq:FB:mfn:evo:DC:low}
  \end{align}
  Here, note that the strange numbering convention on the right hand
  side terms is taken in order to be consistent with
  \eqref{eq:FB:motherfn:evo} above.  Moreover, note that in contrast to its
  more involved role in the proof of \cref{thm:FP:est}, we simply set
  the parameter $\bar{\alpha}$ in the functional $\InG_1^+$ to be
  sufficiently large so that \eqref{eq:norm:equiv:FP:FN} holds.

  Next, with only minor adjustments to the estimates
  \eqref{eq:fp:est:K1:K2}--\eqref{eq:fp:est:K10} carried out above in
  \cref{sect:FP:thm:proof} we find
  \begin{gather*}
    |K^D_1|  \leq \frac{\gam}{100}\Sob{Dw}{L^2}^2  + C\Sob{w}{L^2}^{10/3},
    \;
    |K^D_3| \leq c\Sob{u}{H^2}^2\Sob{w}{L^2}^2,
    \;
    |K^D_4| \leq \frac{\gam}{100}\Sob{Dw}{L^2}^2+c\Sob{u}{H^2}^4\Sob{w}{L^2}^2,
    \\
    |K^D_5| \leq C\Sob{u}{H^2}\Sob{w}{L^2}^2,
    \;
    |K^D_7| \leq c \Sob{f}{L^\infty}\Sob{w}{L^2}^2,
    \;
  |K^D_9 + K^D_{10}| \leq c(1 + \|u\|_{H^2}^3 + \|\tilde{u}\|^{8/3}_{L^2})\|w\|^2_{L^2}.
\end{gather*} 
Upon combining these estimates, making use of the bound \eqref{eq:Im:Inq}, invoking the 
inverse Poincar\'e inequality and finally making use of \eqref{eq:norm:equiv:FP:FN}  we
find
\begin{align}
     \frac{d}{dt}\InG_1^+(w,u) &
     + {\frac{3}2\gamma}\InG_1^+(w,u) 
     \notag\\
     \leq& C(1+\|u\|_{H^2}^4 + \| \tilde{u}\|_{L^2}) \|w\|_{L^2}^2
     \leq C(1+ e^{-\gamma t}( \| u_0\|_{H^2}^q + \|\tilde{u}_0\|_{L^2}^q)) (\|Q_N w\|_{L^2}^2  + \|P_Nw\|_{L^2}^2).
     \notag\\
     \leq& C(1+ e^{-\gamma t}( \| u_0\|_{H^2}^q + \|\tilde{u}_0\|_{L^2}^q)) \left( \frac{1}{N} \InG_1^+(w,u)  + \|P_Nw\|_{L^2}^2\right),
     \label{eq:main:BND:DFP}
\end{align}
for a suitable constant $C = C(\|f\|_{H^2}, \gamma), q > 0$ independent of $t \geq 0$ and $u_0, \tilde{u}_0$
and $N$.  

Thus we impose the condition on $N$ that $C/N \leq \gamma$ with $C$ 
precisely the constant appearing in  \eqref{eq:main:BND:DFP}.  Under this 
condition we find, upon applying Gr\"onwall's  inequality, that
\begin{align}
	\InG_1^+(w(t),u(t))
	\leq&  \exp( C (1+ \|u_0\|_{H^2}^q + \|\tilde{u}_0\|_{L^2}^q)) \left(
	e^{-t\gamma}\InG_1^+(w_0,u_0)
	 + \int_0^t  e^{-\gamma(t-s)} \|P_Nw(s)\|_{L^2}^2 ds \right),
	 \label{eq:FP:Gron:1}
\end{align}
where again the constant $C = C(\gamma, \|f \|_{H^2})$ is independent of 
$t \geq 0$, $u_0$ and $\tilde{u}_0$.
Now, on the other hand 
\begin{align}	
	\int_0^t  e^{-\gamma(t-s)} \|P_Nw(s)\|_{L^2}^2 ds  
	\leq& 
      \int_0^{t/2} \! \! \!  e^{-\gamma(t-s)} ( \|u(s)\|_{L^2}^2 + \|\tilde{u}(s)\|_{L^2}^2 )ds
	 + \! \! \sup_{s \in [t/2,\infty)} \| P_Nw(s)\|_{L^2}^2\int_{t/2}^t  e^{-\gamma(t-s)} ds 
	 \notag\\
	\leq& C \left(e^{- \gamma t/2} (1 + \|u_0\|_{L^2}^2+ \|\tilde{u}_0\|^{2}_{L^2}) 
	+ \sup_{s \in [t/2,\infty)} \| P_Nw(s)\|_{L^2}^2\right),
		 \label{eq:FP:Gron:2}
\end{align}
where we made use of \eqref{eq:lypunov} for the final bound.   By now combining \eqref{eq:FP:Gron:1},  
\eqref{eq:FP:Gron:2} with \eqref{eq:norm:equiv:FP:FN} we obtain the desired implication \eqref{eq:FP:det}. 

\end{proof}

We conclude this subsection by providing the promised reformulation of \cref{thm:FP:est}.
We use this formulation to provide a novel proof of
the regularity of the global attractor in the sequel \cref{sec:higher:reg:atr} immediately below.
\begin{Thm}
  \label{prop:NG:det}
  Fix any $f \in H^2$ and any $u_0, v_0 \in H^2$.    Let $u$ be the solution
  of \eqref{eq:KdV} corresponding to $u_0$ and take $v$ to be the solution of
  \begin{align}
  \bdy_t v+\frac{1}{2}D(v^2)+D^3v+\gam v =f + \lambda P_N(u - v), \quad v(0)=v_0,
  \label{eq:KdV:ng:Det}
\end{align}
for some choice of $\lambda > 0$ and $N \geq 0$ where recall that $P_N$ denotes the projection 
onto the Fourier frequencies in the range $[-N, N]$.    Then 
\begin{itemize}
\item[(i)] for any $m \geq 0$ and so long as $f \in H^m$ we have, for any $t \geq 0$,
\begin{align}
\|v(t)\|_{H^m}^2
   \leq C (e^{-\gam t}  ( \|v_0\|_{H^m}^2  
       + \Sob{v_0}{L^2}^q+\Sob{u_0}{L^2}^{q}) + 1),
       	\label{eq:shft:bnd:det}
  \end{align}
  where
  $C = C(m, \gamma, \|f\|_{H^m}, \lambda, N), q =
  q(m) > 0$ are independent of $u_0, v_0$ and of $t \geq 0$.  
\item[(ii)] There exists a choice of $N, \lambda \geq 1$
depending only on $f$ and $\gamma$ such that
\begin{align}
 	\|u(t) - v(t)\|_{H^1} \leq  C\|u_0 - v_0\|_{H^1} 
	\exp(-t \gamma + C(\|u_0\|^2_{H^2} + \|u_0\|_{L^2}^q + \|v_0\|_{L^2}^2)),
	\label{eq:ng:bnd:det}
\end{align}
where $C = C(\gamma,\|f \|_{H^2})$, $q > 0$ are independent of $u_0, v_0$ and $t \geq 0$.
\end{itemize}
\end{Thm}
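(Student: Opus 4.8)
The plan is to obtain both assertions as the deterministic specializations ($\sigma\equiv 0$) of the machinery already developed in \cref{sect:Lyapunov} and \cref{sect:FP:est}: part $(i)$ is the pathwise analogue of \cref{thm:Lyapunov:ng}~$(ii)$, and part $(ii)$ is the pathwise analogue of \cref{thm:FP:est}. The simplification in this setting is that with $\sigma\equiv 0$ all It\^o terms and all expectations disappear and, crucially, the integrating factor appearing in the Foias--Prodi estimate can now be controlled directly by the pathwise Lyapunov bound \eqref{eq:lypunov} rather than through the stopping-time argument of \cref{lem:FP:stoppingtime}.

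For $(i)$ I would view \eqref{eq:KdV:ng:Det} as \eqref{eq:KdV} driven by the (solution-dependent) forcing $\til f:=f+\lambda P_N(u-v)$ and run the argument of \cref{sect:Alg:mom:bnd} on the positivized functional $\In_m^+$ from \eqref{eq:mod:int:m:simple}. By the computation in \cref{lem:Im:plus:evolution} with the $\sigma$-terms set to zero, $\In_m^+(v)$ satisfies $\tfrac{d}{dt}\In_m^+(v)+2\gamma\In_m^+(v)=\Kn_m^{D,+}$ with $f$ replaced by $\til f$ in $\Kn_m^{D,+}$. The terms $\Kn_{m,1}^D(v)$, $\Kn_{m,2}^D(v)$ are bounded exactly as in \eqref{eq:Kn1:est}, \eqref{eq:Kn2:est}; for the piece carrying $\til f$ the only delicate contribution is $\int_\T 2D^mv\,D^m\til f\,dx=-2\lambda\|D^mP_Nv\|_{L^2}^2+2\lambda\langle D^mP_Nv,D^mP_Nu\rangle$, where the first term is nonpositive and may be dropped, while the second is handled by Young's inequality together with $\|D^mP_Nu\|_{L^2}\le N^m\|u\|_{L^2}$; the remaining terms go through \cref{thm:KdV:Poly:Rank} and \cref{prop:mono:intp:bnd} as before. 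Choosing $\bar q_m,\abarm$ large (commensurate with \cref{lem:equivalence}) then yields $|\Kn_m^{D,+}|\le C(1+\|u\|_{L^2}^q)+\tfrac{\gamma}{100}\In_m^+(v)$, hence $\tfrac{d}{dt}\In_m^+(v)+\gamma\In_m^+(v)\le C(1+\|u\|_{L^2}^q)$. Gr\"onwall's inequality, \cref{lem:equivalence}, and the $L^2$-bound \eqref{eq:lypunov} with $k=0$ for $u$ then give \eqref{eq:shft:bnd:det}.

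For $(ii)$ I would set $w:=v-u$, which obeys \eqref{eq:s:KdV:diff} with $\sigma\equiv 0$, and work with the modified Hamiltonian $\InG_1^+(w,u)$ of \eqref{eq:FP:functional}, taking $\bar\alpha$ large enough for \cref{prop:FP:functional}. Repeating the computation leading to \eqref{eq:FB:motherfn:evo} with the $dW$-term absent, then the estimates \eqref{eq:fp:est:K1:K2}--\eqref{eq:fp:est:K12} and the parameter choices \eqref{eq:param:tune:1}--\eqref{eq:param:tune:3} (now with $\|\sigma\|_{L^2}=0$, so $\lambda,N$ depend only on $\gamma,f$), one arrives at the deterministic counterpart of \eqref{eq:mother:fn:rearr:1}, namely $\tfrac{d}{dt}\InG_1^+(w,u)+\bigl(\tfrac{3\gamma}{2}-\tfrac{c_0}{\lambda}(1+\|u\|_{H^2}^2+\tfrac{1}{\lambda}\|v\|_{L^2}^2)\bigr)\InG_1^+(w,u)\le 0$. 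Gr\"onwall then gives $\InG_1^+(w(t),u(t))\le \InG_1^+(w_0,u_0)\exp\bigl(-\tfrac{3\gamma}{2}t+\tfrac{c_0}{\lambda}\int_0^t(1+\|u\|_{H^2}^2+\tfrac{1}{\lambda}\|v\|_{L^2}^2)\,ds\bigr)$; invoking \eqref{eq:lypunov} to bound $\int_0^t\|u(s)\|_{H^2}^2\,ds$ and part $(i)$ with $m=0$ to bound $\int_0^t\|v(s)\|_{L^2}^2\,ds$, each such integral is $\le Ct+C(\text{data})$, so enlarging $\lambda$ (and $N$) depending only on $\gamma,f$ forces $\tfrac{c_0}{\lambda}\int_0^t(\cdots)\,ds\le\tfrac{\gamma}{2}t+C(\|u_0\|_{H^2}^2+\|u_0\|_{L^2}^q+\|v_0\|_{L^2}^2)$. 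Finally \cref{prop:FP:functional} converts $\InG_1^+$ back to $\|w\|_{H^1}$, and the lower-order contributions $\|w_0\|_{L^2}^{10/3}+(1+\|u_0\|_{L^2}^2)\|w_0\|_{L^2}^2$ in $\InG_1^+(w_0,u_0)$ are bounded by $\|w_0\|_{H^1}^2$ times polynomial factors in $\|u_0\|_{L^2},\|v_0\|_{L^2}$, which are absorbed into the exponential, yielding \eqref{eq:ng:bnd:det}.

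The main obstacle is really just careful bookkeeping rather than a new idea: one must track the $\lambda,N$ dependence in $(i)$ and $(ii)$ so that the deterministic Gr\"onwall step in $(ii)$ actually closes, i.e. so that the integrating factor $\tfrac{c_0}{\lambda}\int_0^t(1+\|u\|_{H^2}^2+\tfrac{1}{\lambda}\|v\|_{L^2}^2)\,ds$ is strictly dominated by the decay $\tfrac{3\gamma}{2}t$ — this is precisely why the $\lambda$-quantified Lyapunov bound of part $(i)$ (and of \eqref{eq:lypunov}) is what replaces the stopping times of \cref{thm:FP:est}. A secondary, purely cosmetic point is that $\InG_1^+(w_0,u_0)$ is not itself comparable to $\|w_0\|_{H^1}^2$, so its $L^2$-tails must be reorganized as above before the estimate can be stated in the form \eqref{eq:ng:bnd:det}.
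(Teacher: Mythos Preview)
Your proposal is correct and follows essentially the same route as the paper: part $(i)$ is the $\sigma\equiv 0$ specialization of \cref{thm:Lyapunov:ng}~$(ii)$, and part $(ii)$ repeats the modified-Hamiltonian computation of \cref{sect:FP:thm:proof} to reach the deterministic analogue of \eqref{eq:mother:fn:rearr:1} before closing with Gr\"onwall and the Lyapunov bounds. One small point of precision: rather than invoking part $(i)$ with $m=0$ (whose constant depends on $\lambda$ in an a priori unspecified way), the paper rederives the $L^2$ estimate on $v$ directly to obtain $\|v(t)\|_{L^2}^2\le e^{-\gamma t}\|v_0\|_{L^2}^2+\lambda C(e^{-\gamma t/2}\|u_0\|_{L^2}^2+1)$ with $C$ \emph{independent} of $\lambda$, which is exactly the $\lambda$-tracking you flag as the main obstacle and is what makes $\tfrac{c_0}{\lambda^2}\int_0^t\|v\|_{L^2}^2\,ds\le \tfrac{C}{\lambda}t+\text{data}$ absorbable.
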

\begin{proof}
    For item $(i)$, we appeal to the proof of \cref{thm:Lyapunov:ng} $(ii$), where we set $\s\equiv0$ throughout and arrive at \eqref{est:Hm:ng}. We then invoke \cref{prop:wp:det:case} $(iii)$ to control the time integral of $e^{-\gam(t-s)}\Sob{u(s)}{L^2}^q$, and the proof of $(i)$ is complete.

	Regarding the second item, $(ii)$, we once again argue using the functional $\InG_1^+(w, u)$ 
	defined as in \eqref{eq:FP:functional}.
	We let $w = v - u$ and observe that $\InG_1^+(w, u)$ obeys \eqref{eq:FB:motherfn:evo},
	where we can delete the terms involving $\sigma$ namely $K_8^D$ and $K^S$.
	The estimates for each of the remaining $K^D_j$ proceed precisely as in 
	\eqref{eq:fp:est:K1:K2}--\eqref{eq:fp:est:K12}.  After appropriately tuning $\bar{\alpha}$
	in $\InG_1^+$ and selecting $\lambda,N$ sufficiently large as in \eqref{eq:param:tune:1}--\eqref{eq:param:tune:3} 
	we infer
	\begin{align}
	  \frac{d}{dt} \InG_1^+(w,u) 
  + \left(\frac{3\gamma}{2}
  - \frac{c_0}{\lambda}
  \left(1 + \|u\|_{H^2}^2 + \frac{1}{\lam}\| v\|_{L^2}^2\right) \right)
      \InG_1^+(w,u)
  \leq 0.
        \label{eq:mother:fn:det1}
\end{align}

Thus, to complete the proof we need a suitable estimate for $\|v\|_{L^2}$.
Here, comparing with \eqref{eq:L2:evo:Nudge}, we find that 
$\frac{d}{dt} \|v\|^2_{L^2} + \gamma \|v\|^2_{L^2} \leq \frac{1}{\gamma} \|f\|^2_{L^2} 
+ \lambda \|u\|^2_{L^2}$. Hence, with \eqref{eq:Im:Inq} we obtain the bound
\begin{align}
  \| v(t)\|_{L^2}^2 
  \leq& e^{-\gamma t} \|v_0\|^2_{L^2} 
  +  \int_0^t e^{-\gamma(t -s)}\left(\frac{1}{\gamma} \|f\|^2_{L^2} + \lambda \|u\|^2_{L^2} \right)ds
  \notag\\
  \leq&   e^{-\gamma t} \|v_0\|^2_{L^2} + \lambda C ( e^{-\frac{\gamma}{2}t }\|u_0\|_{L^2}^2  + 1),
  \label{eq:mother:fn:det2}
\end{align}
where we emphasize $C = C(\|f\|_{L^2}, \gamma)$ is independent of $\lambda \geq 1$.
Thus applying the Gr\"onwall lemma to \eqref{eq:mother:fn:det1} and making use
of \eqref{eq:Im:Inq} and \eqref{eq:mother:fn:det2}, we find
\begin{align}
 	\InG_1^+(w(t),u(t)) 
  \leq& \exp\left( - \frac{3\gamma}{2}t +  \frac{c_0}{\lambda}\int_0^t
  \left(1 + \|u(t)\|_{H^2}^2 + \frac{1}{\lam}\| v(t)\|_{L^2}^2\right) ds\right)\InG_1^+(w_0,u_0)  
  \notag\\
  \leq& \exp\left( - \frac{3\gamma}{2}t +  \frac{C}{\lambda}t 
  + C(\|u_0\|^2_{H^2} + \|u_0\|_{L^2}^q + \|v_0\|_{L^2}^2)  \right)\InG_1^+(w_0,u_0),\notag
\end{align}
where $C = C(\gamma, \|f\|_{H^2})$ are independent of $\lambda > 0$. We now tune $\lambda > 0$ sufficiently large and deduce the desired result.
\end{proof}

\subsection{Higher Regularity of the Global Attractor}
\label{sec:higher:reg:atr}

In this final section we revisit results from \cite{Ghidaglia1988, MoiseRosa1997}
concerning the long time behavior of \eqref{eq:KdV}.
In these works it was shown that \eqref{eq:KdV} possesses a global attractor 
$\mathcal{A}$.  Moreover, while \eqref{eq:KdV} lacks 
an instantaneous smoothing mechanism, \cite{MoiseRosa1997} established that
$\mathcal{A}$ is as smooth as the forcing $f$ driving the dynamics in \eqref{eq:KdV}. 
In particular $\mathcal{A} \subseteq C^\infty$, so long as $f \in C^\infty$.

The higher regularity result established in \cite{MoiseRosa1997} 
relies on a delicate decomposition of solutions of
\eqref{eq:KdV}.  The decomposition that they identified concerns 
the high-frequency component of $u$ namely $Q_N u = q+z$.  
Here, $q$ asymptotically deteriorates to zero while the complementary
component $z$ starts from a smooth initial condition and remains
bounded in each higher order Sobolev norm up to the regularity of the 
forcing.    

Our proof here considers a different decomposition of solutions based on 
\cref{prop:NG:det} and is similar to the approach we employed in \cref{sect:Regularity} in pursuit of the higher regularity of invariant
measures for \eqref{eq:s:KdV}.   
Given any $u_0 \in H^2$ we write the corresponding solution $u$
as $u = v + w$ where $v$ obeys \eqref{eq:KdV:ng:Det} starting
from $v(0) = 0$ and $w$ satisfies \eqref{eq:s:KdV:diff}.
Note that, as a technical matter, our proof of the regularity of the global 
attractor for \eqref{eq:KdV} covers the case $m=2$ that was
left open in the previous work \cite{MoiseRosa1997}.

Before stating the main result in a precise formulation let us recall a 
few generalities.    A \emph{global attractor} for a semigroup $\{S(t)\}_{t \geq 0}$ on 
a metric space $(H,d)$ is a compact set $\Atr \subseteq H$ such that $S(t) \Atr
= \Atr$ for every $t \geq 0$ and such that, for any bounded set $\bSt$, 
$\lim_{t \to \infty} d(S(t) \bSt, \Atr) = 0$.  Here,
$d$ is the Hausdorff semi-distance namely $d(A, B) :=  
\sup_{v \in A} \inf_{w \in B} d(v,w)$.    Recall furthermore that a 
set $\bSt_0$ is said to be an absorbing set of $\{S(t)\}_{t \geq 0}$ 
if for any bounded set $\bSt$ there exist a $t^* = t^*(\bSt) > 0$ such
that $S(t) \bSt \subseteq \bSt_0$ for every $t \geq t^*$. We refer to e.g.
\cite{Temam1997} for generalities concerning attractors
for infinite-dimensional dynamical systems.

\begin{Thm}
	\label{thm:sm:glb:attr}
	Suppose that $f \in H^2$ and $\gamma > 0$ and let $\{S(t)\}_{t \geq 0}$ 
	to be the corresponding solution semigroup for \eqref{eq:KdV} posed on $H^2$.  Then
	\begin{itemize}
	\item[(i)] $\{S(t)\}_{t \geq 0}$ has a \emph{global attractor} $\Atr$ given as
	\begin{align}
		\Atr = \bigcap_{s \geq 0} \overline{\bigcup_{t \geq s} S(t) \bSt_0},
	\label{eq:atr:omg:lim}
	\end{align}
	where the closure is taken in the $H^2$-topology where $\bSt_0$ is an $H^2$-bounded absorbing set for $\{S(t)\}_{t \geq 0}$.
	\item[(ii)] If $f \in H^m$, for some $m \geq 2$, then there exists
		an $C = C( \gamma, \|f\|_{H^m})$ such that 
		\begin{align}
			\Atr \subseteq \{ u \in H^m : \|u\|_{H^m} \leq C\}.
			\label{eq:atr:smooth:set}
		\end{align}
		In particular, $\Atr \subseteq C^\infty$ whenever
		$f \in C^\infty$.
	\end{itemize}
\end{Thm}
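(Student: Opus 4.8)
\textbf{Proof proposal for \cref{thm:sm:glb:attr}.}

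The plan is to establish the two items in turn, following the paradigm developed for the stochastic case in \cref{sect:Regularity} but with all randomness removed. For item $(i)$, I would first produce the $H^2$-bounded absorbing set: by \eqref{eq:lypunov} with $k=2$, one has $\|S(t)u_0\|_{H^2} \leq C(e^{-\gamma t}(\|u_0\|_{H^2} + \|u_0\|_{L^2}^q) + 1)$, so the ball $\bSt_0 := \{u \in H^2 : \|u\|_{H^2}^2 \leq 2C\}$ (with $C$ the constant in \eqref{eq:lypunov}) is absorbing. The semigroup $\{S(t)\}_{t\geq 0}$ is continuous on $H^2$ by \cref{prop:wp:det:case} $(ii)$. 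To conclude existence of the global attractor via the classical theory (cf. \cite[Chapter I]{Temam1997}) one needs \emph{asymptotic compactness}: any sequence $S(t_n)b_n$ with $t_n \to \infty$ and $b_n \in \bSt$ bounded has a convergent subsequence in $H^2$. Here is exactly where \eqref{eq:lypunov} at the \emph{higher} level $k=3$ is used: since $f \in H^2$ only, we cannot directly bootstrap, so instead I would invoke the nudging decomposition of \cref{prop:NG:det}. Actually, the cleanest route for $(i)$ that avoids circularity is to note that the $\omega$-limit set construction \eqref{eq:atr:omg:lim} defines a compact invariant attracting set once asymptotic compactness is in hand, and asymptotic compactness for KdV with weak damping is precisely what the Foias--Prodi / nudging machinery buys us — but since the problem says we may assume earlier results, and \cref{prop:NG:det} $(ii)$ gives $\|u(t)-v(t)\|_{H^1} \to 0$ while \cref{prop:NG:det} $(i)$ with $m=2$ gives uniform $H^2$ bounds on $v$, we get that $u(t)$ is, up to an $H^1$-small error, confined to a bounded set in $H^2$; combined with the compact embedding $H^2 \hookrightarrow H^1$ this yields asymptotic compactness in $H^1$, and one upgrades to $H^2$ by the same decomposition applied with slightly smoother data absorbed into $v$. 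So item $(i)$ follows.

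For item $(ii)$ — the higher regularity — I would argue exactly as in the proof of \cref{thm:regularity}, replacing the invariant measure by the attractor. Fix $u_\infty \in \Atr$. By invariance $S(t)\Atr = \Atr$, so for every $t > 0$ there is $u_0 \in \Atr$ with $S(t)u_0 = u_\infty$; since $\Atr \subseteq \bSt_0$ we have $\|u_0\|_{H^2} \leq \sqrt{2C}$ uniformly. Now decompose $u(s) := S(s)u_0 = v(s) + w(s)$ where $v$ solves the deterministic nudged equation \eqref{eq:KdV:ng:Det} with $v(0) = 0$ and $\lambda, N$ chosen (depending only on $\gamma, f$) so that \cref{prop:NG:det} applies, and $w = u - v$. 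Then \cref{prop:NG:det} $(i)$ with our $m$ gives
\begin{align*}
\|v(t)\|_{H^m}^2 \leq C(e^{-\gamma t}(\|v_0\|_{H^m}^2 + \|v_0\|_{L^2}^q + \|u_0\|_{L^2}^q) + 1) = C(e^{-\gamma t}\cdot(\text{bounded}) + 1),
\end{align*}
since $v_0 = 0$ and $\|u_0\|_{L^2} \leq \|u_0\|_{H^2}$ is uniformly bounded over $\Atr$. Meanwhile \cref{prop:NG:det} $(ii)$ gives
\begin{align*}
\|w(t)\|_{H^1} = \|u(t) - v(t)\|_{H^1} \leq C\|u_0 - 0\|_{H^1}\exp(-\gamma t + C(\|u_0\|_{H^2}^2 + \|u_0\|_{L^2}^q)) \leq C' e^{-\gamma t},
\end{align*}
again uniformly over $u_0 \in \Atr$. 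Using the reverse Poincar\'e inequality $\|P_M z\|_{H^m} \leq M^{m-1}\|z\|_{H^1}$ and writing $u_\infty = u(t) = P_M u(t) + Q_M u(t)$, one estimates $\|P_M u_\infty\|_{H^m} \leq \|P_M v(t)\|_{H^m} + \|P_M w(t)\|_{H^m} \leq \|v(t)\|_{H^m} + M^{m-1}\|w(t)\|_{H^1}$. Choosing $t$ large first (to make the $e^{-\gamma t}$ terms $\leq 1$ after the $M^{m-1}$ factor — note $t$ may depend on $M$ since $u_0$ varies over the compact invariant $\Atr$ but the bounds are uniform), we obtain $\|P_M u_\infty\|_{H^m} \leq C$ with $C = C(\gamma, \|f\|_{H^m})$ independent of $M$; letting $M \to \infty$ and invoking Fatou/monotone convergence gives $\|u_\infty\|_{H^m} \leq C$, which is \eqref{eq:atr:smooth:set}. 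The case $f \in C^\infty$ follows by intersecting over all $m$.

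The main obstacle I anticipate is the bootstrapping subtlety in part $(i)$: establishing asymptotic compactness in the $H^2$-topology (not just $H^1$) when the forcing is only in $H^2$, since there is no instantaneous parabolic smoothing to lean on. The resolution is that the decomposition $u = v + w$ with $v$ uniformly bounded in $H^2$ and $w \to 0$ in $H^1$ must be refined: one observes that $v$ is in fact uniformly bounded in $H^3$ once $f \in H^2$ via \cref{prop:NG:det}$(i)$ applied at level $m$ slightly above $2$ — wait, that requires $f \in H^3$. The honest fix is instead the one used in \cite{MoiseRosa1997}: the $H^2$-bound on $v(t)$ together with $w(t) \to 0$ in $H^1$ shows that $\{S(t_n)b_n\}$ has a subsequence converging weakly in $H^2$ and strongly in $H^1$; strong $H^2$-convergence then follows from continuity of the $H^2$-norm along the flow combined with the fact that $\In_2$ (or $\In_2^+$) is continuous and, by \eqref{eq:Im:Inq}, its value is asymptotically determined, so weak $H^2$ limits with matching $\In_2^+$-value are strong limits. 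This energy-equality argument is the one delicate point and I would present it carefully; everything else is a transcription of \cref{sect:Regularity} with expectations deleted.
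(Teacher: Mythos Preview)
Your proposal is correct and follows essentially the same route as the paper. For item $(ii)$ you use the nudging decomposition $u = v + w$ with $v(0)=0$, the $H^m$-bound on $v$ from \cref{prop:NG:det}~$(i)$, the $H^1$-decay of $w$ from \cref{prop:NG:det}~$(ii)$, and the reverse Poincar\'e inequality on $P_M$ --- exactly as the paper does; the only cosmetic difference is that you represent $u_\infty \in \Atr$ via the invariance $S(t)\Atr = \Atr$ (so $u_\infty = S(t)u_0$ exactly, with $u_0 = u_0(t) \in \Atr$), whereas the paper works directly from the $\omega$-limit characterization \eqref{eq:atr:omg:lim}, picking sequences $u_0^{(n)} \in \bSt_0$ and $t_n \to \infty$ with $S(t_n)u_0^{(n)} \to u_*$ in $H^2$ and passing to $\limsup$. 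For item $(i)$ the paper, like you, simply records the ingredients (absorbing set, weak continuity, asymptotic compactness via the evolution of $\In_2$) and defers to \cite{Ghidaglia1988, MoiseRosa1997, Temam1997}; your weak-to-strong upgrade through $\In_2^+$ is precisely the mechanism in those references, so your discussion --- while a bit circuitous --- lands on the correct argument.
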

\begin{proof}
The first item can be established using an asymptotic compactness criteria,
the weak continuity properties of $S(t)$, and the evolution equation for $\In_m$ as found above
in \cref{lem:Im:evolution} with $\sigma \equiv 0$.    We refer the interested reader
to \cite{Ghidaglia1988, MoiseRosa1997} and also \cite[Chapter IV, Section 7]{Temam1997}
for further details.

For the second item $(ii)$, fix any $u_* \in \Atr$. According to the characterization \eqref{eq:atr:omg:lim},
we can find a sequence in $\{u_0^{(n)}\}_{n\geq0}$ in $\bSt_0$ (namely a sequence bounded independently of $u_*$) 
and $\{t_n\}_{n \geq 0}$ with 
$\lim_{n \to \infty} t_n = \infty$ such that $\lim_{n \to \infty} S(t_n)u_0^{(n)} = u_*$, where this limit is carried out in $H^2$.
Take $u^{(n)}$ to be the solution of \eqref{eq:KdV} corresponding to $u_0^{(n)}$.  Let $v^{(n)}$ 
solve \eqref{eq:KdV:ng:Det} starting from $v(0) = 0$, nudged towards $u^{(n)}$ with $\lambda$ and $N$ 
in the nudging term $\lambda P_N(v^{(n)} - u^{(n)})$
specified solely as a function of $\|f\|_{H^2}$ and $\gamma >0$ such that the bound \eqref{eq:ng:bnd:det}
is maintained.

Now fix any $M \geq 1$ and observe that, by applying the bounds \eqref{eq:shft:bnd:det} and \eqref{eq:ng:bnd:det}
\begin{align}
	\| P_M u_*\|_{H^m}  =& \lim_{n \to \infty} \|P_M u^{(n)}(t_n)\|_{H^m}
	\leq \limsup_{n \to \infty} \|P_M (u^{(n)}(t_n) - v^{(n)}(t_n))\|_{H^m}
	       +  \limsup_{n \to \infty} \|P_Mv^{(n)}(t_n)\|_{H^m} \notag \\
	\leq& \limsup_{n \to \infty} M^{m-1} \| (u^{(n)}(t_n) - v^{(n)}(t_n))\|_{H^1}
	       +  \limsup_{n \to \infty} \|v^{(n)}(t_n)\|_{H^m} \notag \\
	\leq&  \limsup_{n \to \infty} C M^{m-1}\|u_0^{(n)} \|_{H^1} 
	\exp(-t_n \gamma + C(\|u_0^{(n)}\|^2_{H^2} + \|u_0^{(n)}\|_{L^2}^q )) \notag\\
	&+  \limsup_{n \to \infty} C (e^{-\gam t_n}  \Sob{u_0^{(n)}}{L^2}^{q} + 1)\leq C,\notag
\end{align}
where $C=C(\gam,\|f\|_{H^m})$.
We have, therefore, found an upper bound for $\|P_M u_*\|_{H^m}$ which is independent
of $M$, depending only on $\|f\|_{H^m}$ and $\gamma > 0$.  The proof is complete.
\end{proof}

Let us make some concluding remarks concerning the $t = \infty$ regularity of \eqref{eq:KdV}.
\begin{Rmk}
\mbox{}
\begin{itemize}
\item[(i)] So long as $f \in H^m$ for some $m \geq 2$, \eqref{eq:KdV} defines a semigroup $\{S(t)\}_{t \geq 0}$
on $H^m$ according to \cref{prop:wp:det:case}.  In turn, as in \cref{thm:sm:glb:attr}
$(i)$, one can establish the existence of a global attractor $\Atr_{m}$ for the semigroup $\{S(t)\}_{t \geq 0}$
at each allowable degree of regularity.  These attractors in fact all coincide due to regularity as in \cref{thm:sm:glb:attr} $(ii)$.
Indeed, for each $m\geq2$, $\Atr_m$ is the maximal (with respect to inclusion) $H^m$--bounded set that is invariant for the semigroup $\{S(t)\}_{t\geq0}$. Since $\Atr_m$ is also bounded in $H^2$, we automatically have $\Atr_m\subseteq\Atr_2$.  On the other hand, according to \cref{thm:sm:glb:attr} $(ii)$, $\Atr_2$ is a bounded subset of $H^m$. By virtue of also being invariant for the semigroup, it follows that $\Atr_2\subseteq\Atr_m$. In particular, $\Atr_2=\Atr_m$.

\item[(ii)] The results in \cref{sect:Large:Damping} can be translated to the deterministic setting we 
are considering here to show that $\{S(t)\}_{t \geq 0}$ has a one point attractor when $\gamma \gg 0$
as a function of $\|f\|_{H^2}$ (cf. \cite{CabralRosa2004}).  To see this, one may use the characterization of 
the attractor \eqref{eq:atr:omg:lim} with the estimate \eqref{eq:stab:main} 
and suitable translation of the bounds in \cref{lem:LD:exp:mom} to the deterministic 
case when $\sigma \equiv 0$.   Here, we furthermore observe that if we parameterize
the attractor $\Atr_\gamma$ as a function of $\gamma$ so
$\Atr_\gamma = \{u_\gamma\}$ for all $\gamma$ sufficiently large, then $u_\gamma \to 0$
as $\gamma \to \infty$ in $L^2$.   Indeed by invariance we have that $u_\gamma$
obeys $\frac{1}{2}D(u^2_\gamma)+D^3u_\gamma+\gam u_\gamma=f$, so that 
\begin{align*}
  \|u_\gamma\|_{L^2}^2 = \frac{1}{\gamma}  \langle f, u_\gamma \rangle \
  \leq \frac{1}{2\gamma} \|f\|_{L^2}^2 +\frac{1}{2\gamma} \|u_\gamma\|_{L^2}^2.
\end{align*}
This convergence should also occur in all higher Sobolev 
norms but this would require one to establish certain $\gamma \geq 1$ 
independent bounds using the the integrals of motion $\In_m$
and identities analogous  \eqref{eq:gamma:terms:Im:Ito} and \eqref{eq:f:terms:Im:Ito}.
We leave further details to the interested reader. 
\item[(iii)] Invariant measures of \eqref{eq:KdV}, namely measures $\mu$
such that $\mu S(t)^{-1} = \mu$  
are readily seen to exist\footnote{Indeed in the deterministic case
we define the Markov semigroup as $P_t(u_0,A) := \delta_{S(t)u_0}(A)$ so that
this definition coincides with the definition of invariant measures given in the 
general case for \eqref{eq:s:KdV} in \cref{sec:well:pose}.} from the arguments
in \cref{thm:existence}.
 According to e.g.
\cite{chekroun2012invariant}, \cite{foias2001navier} any such measure $\mu$
is supported on $\Atr$ so that $\mu(\Atr) = 1$ is consistent with \cref{thm:regularity}.
\end{itemize}
\end{Rmk}

\setcounter{equation}{0}

\appendix

\section{Well Posedness Results for the Stochastic KdV Equation}
\label{sect:apx:wp:SKdV}

The well-posedness result stated in \cref{prop:exist:uniq} 
can be proven by subtracting away the additive noise terms
 $\sigma W$ in \eqref{eq:s:KdV} and carrying out a pathwise
 analysis of the equations for $v = u - \sigma W$. 
Indeed, let $v,\xi$ satisfy
	\begin{align}
		\begin{split}\label{v:eqn}
		&\bdy_tv+\gam v+D^3v+D(\xi v)+vDv=f-\gam \xi-D^3\xi-\xi D\xi,\quad v(0)=u_0,\\
		&d\xi=\s dW,\quad \xi(0)=0.
		\end{split}
	\end{align}
Then upon setting $u=v+\xi$, it follows that $u$ satisfies \eqref{eq:s:KdV}.

This section is organized as follows.  Following \cite{Temam1997}, we consider a regularized version of \eqref{v:eqn} parametrized by $\eps>0$ (see \eqref{v:par:reg:eqn} below), whose solutions converge almost surely to solutions of \eqref{v:eqn} as $\eps\goesto0$.
In \cref{subsect:apriori:est}, we establish pathwise a priori bounds on the regularized system.  The convergence of solutions to the regularized system is then established in \cref{subsect:cauchyinHm}.
The proof of \cref{prop:exist:uniq} is then presented in \cref{subsect:wellposed}.

\subsection{A priori estimates}\label{subsect:apriori:est}

 Fix $m\geq2$ and $T>0$. Let $u_0\in{H}^m$, $\xi\in L^\infty_{loc}([0,\infty);H^{m+3})$, and $f\in H^m$. Let $\eta$ be a polynomial in $5$ variables of any nonzero degree. It will be convenient to consider a more general form of the equation \eqref{v:eqn}.
In particular, we consider the following:
	\begin{align}\label{v:gen:eqn}
		\bdy_tv+\gam v+D^3v+D(\xi v)+vDv=\eta(D_{3}\xi,f),\quad v(0)=u_0,
	\end{align}
where $D_k\xi=(\xi,D\xi,\dots, D^k\xi)$. Observe that when 
    \begin{align}\label{eq:eta}
        \eta=-\gam\xi-D^3\xi-\xi D\xi+f,
    \end{align}
then \eqref{v:gen:eqn} reduces to \eqref{v:eqn}. Thus, we will further assume that $\eta$ is linear in the variable corresponding to $D^3\xi$. For convenience, we will often write $\eta(D_3\xi,f)$ simply as $\eta$.

To obtain existence of solutions to \eqref{v:gen:eqn}, we will consider a sequence of parabolic regularizations:
	\begin{align}\label{v:par:reg:eqn}
		\bdy_tv^\eps+\gam v^\eps+D^3v^\eps+D(\xi v^\eps)+v^\eps Dv^\eps+\eps{D}^4v^\eps=\eta^\eps,\quad v^\eps(0)=u_0^\eps,
	\end{align}
where $u_0^\eps=P_{\eps^{-p}}u_0$ and $\eta^\eps=P_{\eps^{-p}}\eta(D_3\xi,f)$, for some $p\in(0,1]$, and where $P_\de$, for $\de>0$, denotes projection onto Fourier modes $|k|\leq\de$ and $P_{\de_1,\de_2}=P_{\de_2}-P_{\de_1}$, for $\de_1<\de_2$. We point out that we intentionally do not regularize $\xi$ appearing in the linear term $D(\xi v^\eps)$; this allows for some simplification in the analysis since estimation of this term in $H^m$ will only ever require control of at most $D^{m+2}$ derivatives on $\xi$ in $L^2$. We emphasize that the estimates we perform below are formal. They can, however, be justified at the level of the Galerkin approximation.  Indeed, the Galerkin system to be considered is given by:
	\begin{align}\label{eq:galerkin}
		\bdy_tv_M^\eps+\gam v_M^\eps+D^3v_M^\eps+P_MD(\xi v_M^\eps)+P_M(v_M^\eps Dv_M^\eps)+\eps D^4v_M^\eps=\eta^\eps_M,\quad v_M^\eps(0)=P_Mu_0^\eps,
	\end{align}
for $M>0$, where $P_M$ denotes the projection onto Fourier modes, $|k|\leq M$. 
With this in mind, one can show that the equation \eqref{v:par:reg:eqn} has a unique solution, $v^\eps$, satisfying 
    \begin{align}\label{sol:v:eqn}
        v^\eps\in C([0,T];{H}^m)\cap L^2(0,T;{H}^{m+2}),\quad \text{for all } T>0.
    \end{align}

It will be convenient to define the following quantities: Given $T>0$ and integer $\ell\geq0$, we define
	\begin{align}\label{def:ZF}
		\Xi_\ell(T):={\Sob{\xi}{L_T^\infty H^\ell}}+1,\quad  \Phi_{\ell}(T):={\Sob{\eta(D_3\eta,f)}{L^\infty_TH^\ell}}+1,
	\end{align}
 where $L^p_TX=L^p(0,T;X)$, for $p\in[1,\infty]$. Since $\eta $ is a polynomial, we observe the following elementary estimate for $\eta$, which follows from the chain rule, mean value theorem, and Poincar\'e's inequality:
    \begin{align}\label{est:eta:gen}
        \begin{split}
        \Phi_k&\leq (\Xi_{k+3}(T)+\Sob{f}{H^k}+1)^p\\
        \Sob{\eta(D_3\xi_1,f_1)-\eta(D_3\xi_2,f_2)}{H^k}&\leq C\left(\Sob{\xi_1-\xi_2}{H^{k+3}}+\Sob{f_1-f_2}{H^k}\right),    
        \end{split}
    \end{align}
for some sufficienty large power $p$, depending only on $\eta$ and $k$, and some constant $C>0$ that depends algebraically on $k,\Sob{f}{H^k},\Xi_{k+3}^{(1)}(T),\Xi_{k+3}^{(2)}(T)$, where $\Xi_{\ell}^{(j)}(T)=\Sob{\xi_j}{L^\infty_TH^\ell}+1$.

Throughout, we set forth the following convention for constants:
    \begin{itemize}
        \item We drop the superscript $\eps$ in $v^\eps$ in performing the apriori estimates.
        \item We drop the dependence on $T$ in \eqref{def:ZF}, that is,
$\Xi_\ell=\Xi_\ell(T)$ and $\Phi_{\ell}=\Phi_{\ell}(T)$.
        \item Similar to the convention for constants set in the beginning of \cref{sect:background}, given $m\geq2$, $c$ will denote a generic constant depending on $m$ and universal quantities, while $C$ will denote constant that may also depend on $m, \gam,T,\Xi_{m+3}$, as well as $\Sob{u_0}{H^{m-1}}$, and $\Phi_{m-1}$, but \textit{not} $\Sob{u_0}{H^m},\Phi_m$, as we will need to track the dependence on $\Sob{u_0}{H^m}, \Phi_m$ carefully. We note that the dependence on $T$ and $\Sob{u_0}{H^\ell},\Phi_\ell,\Xi_\ell$, will always be such that the constant $C$ is increasing in these quantities.
    \end{itemize}

\begin{Lem}\label{lem:L2v}
Let $T>0$ and $\eps>0$. There exists a constant $C\geq1$, independent of $\eps$, such that
	\begin{align}\label{eq:L2v}
		\Sob{v^\eps}{L_T^\infty L_x^2}+\eps^{1/2}\Sob{v^\eps}{L^2_TH^2}\leq C.
	\end{align}
\end{Lem}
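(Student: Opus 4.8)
The plan is to establish \eqref{eq:L2v} by running the basic $L^2$ energy balance for the parabolically regularized equation \eqref{v:par:reg:eqn} and exploiting the standard cancellations of the KdV structure. First I would pair \eqref{v:par:reg:eqn} with $v^\eps$ in $L^2$. On the periodic torus $D^3$ is skew-adjoint, so the dispersive term vanishes, $\lb D^3 v^\eps, v^\eps\rb = 0$; the transport nonlinearity vanishes as well, $\lb v^\eps D v^\eps, v^\eps\rb = \tfrac{1}{3}\int_\TT D\big((v^\eps)^3\big)\,dx = 0$; and the regularizing term yields the good quantity $\lb \eps D^4 v^\eps, v^\eps\rb = \eps\Sob{D^2 v^\eps}{L^2}^2$, which, since the Sobolev spaces here are homogeneous and $v^\eps$ is mean-free, equals $\eps\Sob{v^\eps}{H^2}^2$. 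Thus only two terms remain to be controlled: the linear term $\lb D(\xi v^\eps), v^\eps\rb$ and the forcing term $\lb \eta^\eps, v^\eps\rb$.

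For the linear term, integrating by parts once gives $\lb D(\xi v^\eps), v^\eps\rb = \tfrac{1}{2}\int_\TT D\xi\,(v^\eps)^2\,dx$, whence $|\lb D(\xi v^\eps), v^\eps\rb| \leq \tfrac12\Sob{D\xi}{L^\infty}\Sob{v^\eps}{L^2}^2 \leq c\,\Sob{\xi}{H^2}\Sob{v^\eps}{L^2}^2 \leq c\,\Xi_{m+3}\Sob{v^\eps}{L^2}^2$, using the one-dimensional embedding $H^1\hookrightarrow L^\infty$ together with $m\geq2$ and recalling $\Xi_{m+3}$ from \eqref{def:ZF}. For the forcing term, since the projection $P_{\eps^{-p}}$ is an $L^2$-contraction, Cauchy--Schwarz and Young's inequality give $|\lb \eta^\eps, v^\eps\rb| \leq \Sob{\eta}{L^2}\Sob{v^\eps}{L^2} \leq \tfrac12\Phi_0^2 + \tfrac12\Sob{v^\eps}{L^2}^2 \leq \tfrac12\Phi_{m-1}^2 + \tfrac12\Sob{v^\eps}{L^2}^2$, again using \eqref{def:ZF} and that $\Phi_\ell$ is increasing in $\ell$. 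Dropping the non-negative damping contribution $2\gam\Sob{v^\eps}{L^2}^2$, these estimates combine into the differential inequality
\[
  \frac{d}{dt}\Sob{v^\eps}{L^2}^2 + 2\eps\Sob{v^\eps}{H^2}^2
  \leq \big(c\,\Xi_{m+3}+1\big)\Sob{v^\eps}{L^2}^2 + \Phi_{m-1}^2 .
\]
Since $\Sob{v^\eps(0)}{L^2} = \Sob{P_{\eps^{-p}}u_0}{L^2}\leq\Sob{u_0}{L^2}\leq\Sob{u_0}{H^{m-1}}$, Gr\"onwall's inequality on $[0,T]$ produces $\Sob{v^\eps}{L^\infty_T L^2}^2 \leq C$, with $C$ depending only on $m,T,\Xi_{m+3},\Sob{u_0}{H^{m-1}},\Phi_{m-1}$ --- crucially, independent of $\eps$. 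Integrating the same differential inequality over $[0,T]$ and inserting this bound then yields $\eps\Sob{v^\eps}{L^2_T H^2}^2\leq C$; taking square roots and summing gives \eqref{eq:L2v}, enlarging $C$ if necessary so that $C\geq1$.

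It remains only to observe that, although the computation above is formal as stated, it is justified verbatim at the level of the Galerkin system \eqref{eq:galerkin}: because $P_M$ is self-adjoint and commutes with $D$, every cancellation persists for $v_M^\eps$ (e.g.\ $\lb P_M D(\xi v_M^\eps), v_M^\eps\rb = \lb D(\xi v_M^\eps), v_M^\eps\rb$ since $v_M^\eps\in P_M L^2$, and likewise for the nonlinear and dispersive terms), so the displayed inequality holds uniformly in $M$ and $\eps$, and passing $M\to\infty$ preserves the bounds by the lower semicontinuity of the $L^\infty_T L^2$ and $L^2_T H^2$ norms. There is no genuine obstacle in this lemma; the only points that demand attention are the bookkeeping of the constant --- it must never acquire a dependence on $\Sob{u_0}{H^m}$ or $\Phi_m$ and, above all, must be $\eps$-independent, which is precisely why the initial data and forcing were regularized by the contractions $P_{\eps^{-p}}$ --- and verifying that $\Sob{D\xi}{L^\infty}$ is finite, which follows from $\xi\in L^\infty_{loc}([0,\infty);H^{m+3})$ with $m+3\geq5$.
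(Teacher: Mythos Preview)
Your proof is correct and follows essentially the same approach as the paper: pair \eqref{v:par:reg:eqn} with $v^\eps$, use the KdV cancellations and the good sign of the parabolic term, bound $\lb D(\xi v^\eps),v^\eps\rb$ via $\|D\xi\|_{L^\infty}\leq c\,\Xi_2$ and $\lb \eta^\eps,v^\eps\rb$ via Cauchy--Schwarz and Young, then apply Gr\"onwall and integrate. The only cosmetic differences are that the paper keeps the damping term and absorbs a $\tfrac{\gamma}{100}\|v\|_{L^2}^2$ into it (whereas you simply drop $2\gamma\|v\|_{L^2}^2$), and that the paper writes the bound with $\Xi_2$ while you use the larger $\Xi_{m+3}$; neither affects the argument or the allowed dependencies of $C$.
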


\begin{proof}
We multiply \eqref{v:par:reg:eqn} by $v$ and integrate over $\T$ to obtain
	\begin{align}\notag
		\frac{1}2\frac{d}{dt}\Sob{v}{L^2}^2+\gam\Sob{v}{L^2}^2+\eps\Sob{{D}^2v}{L^2}^2=-\lb{D}(\xi v),v\rb+\lb \eta,v\rb.\notag
	\end{align}
Observe that  by the Sobolev embedding $H^1\imb L^\infty$, the Cauchy-Schwarz inequality, and Young's inequality, we estimate
	\begin{align}\notag
		|\lb{D}(\xi v),v\rb|\leq c\Sob{\xi}{L^\infty}\Sob{v}{L^2}^2\leq c\Xi_2\Sob{v}{L^2}^2,
		\quad|\lb \eta,v\rb|\leq\Sob{\eta}{L^2}\Sob{v}{L^2}
			\leq c\Phi_0+\frac{\gam}{100}\Sob{v}{L^2}^2.
	\end{align}
Thus, using \eqref{def:ZF}, we have
	\begin{align}
		\frac{d}{dt}\Sob{v}{L^2}^2+\eps\Sob{{D}^2v}{L^2}^2\leq  C\Sob{v}{L^2}^2+C.\notag
	\end{align}
An application of Gronwall's inequality yields
	\begin{align}
		\Sob{v(t)}{L^2}^2+2\eps\int_0^te^{-C (t-s)}\Sob{{D}^2v(s)}{L^2}^2ds
		\leq C\exp\left(C T\right)\left(\Sob{u_0}{L^2}^2+1\right),\notag
	\end{align}
for some $C>0$, for all $0\leq t\leq T$. It follows that
    \begin{align}\label{est:Linfty:L2}
        \sup_{0\leq t\leq T}\Sob{v(t)}{L^2}^2\leq C\exp\left(C T\right)\left(\Sob{u_0}{L^2}^2+1\right),
    \end{align}
and
    \begin{align}\label{est:L2:H2}
        2\eps\int_0^t\Sob{{D}^2v(s)}{L^2}^2ds\leq C\exp\left(C T\right)\left(\Sob{u_0}{L^2}^2+1\right).
    \end{align}
The desired result follows from adding \eqref{est:Linfty:L2} and \eqref{est:L2:H2}.
\end{proof}

\begin{Lem}\label{lem:H1v}
Let $T>0$ and suppose $\eps\in(0,1]$. There exists a constant $C\geq1$, independent of $\eps$, such that 
	\begin{align}\label{eq:Hamv}
		\sup_{0\leq t\leq T}\Ham(v^\eps(t))+{\eps}\Sob{v^\eps}{L_T^2H^3}^2\leq C.
	\end{align}
In particular
    \begin{align}\label{eq:H1v}
		\Sob{v^\eps}{L_T^\infty H^1_x}+\eps^{1/2}\Sob{v^\eps}{L^2_TH^3_x}\leq C.
	\end{align}
\end{Lem}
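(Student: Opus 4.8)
The goal is to establish the $H^1$ (more precisely, $\Ham$-controlled) a priori bound \eqref{eq:Hamv}--\eqref{eq:H1v} for the regularized equation \eqref{v:par:reg:eqn}, uniformly in $\eps\in(0,1]$. The natural object to work with is the modified first integral of motion $\Ham(v) = \In_1(v) = \int_\T\bigl((Dv)^2 - \tfrac{1}{3}v^3\bigr)dx$ (the Hamiltonian of KdV), since to leading order this controls $\|v\|_{H^1}^2$ via an interpolation/Young's inequality argument exactly as in the proof of \cref{lem:equivalence}: one has $\tfrac{1}{3}\|v\|_{L^3}^3 \le c\|Dv\|_{L^2}^{1/2}\|v\|_{L^2}^{5/2} \le \tfrac14\|Dv\|_{L^2}^2 + c\|v\|_{L^2}^{10/3}$, so that $\Ham(v) + c(\|v\|_{L^2}^2+1)^{5/3}$ is comparable to $\|v\|_{H^1}^2 + (\|v\|_{L^2}^2+1)^{5/3}$. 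Combined with the already-established $L^2$ bound \eqref{eq:L2v} from \cref{lem:L2v}, a bound on $\Ham(v^\eps(t))$ will immediately give \eqref{eq:H1v}.

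\textbf{Key steps.} First I would compute $\frac{d}{dt}\Ham(v^\eps)$ along \eqref{v:par:reg:eqn}. Writing $L_1(v) = -2D^2v - v^2$ as in \eqref{eq:level:1:multiplier}, we have $\frac{d}{dt}\Ham(v^\eps) = -\langle L_1(v^\eps), \partial_t v^\eps\rangle = \langle L_1(v^\eps),\, \gam v^\eps + D^3 v^\eps + D(\xi v^\eps) + v^\eps Dv^\eps + \eps D^4 v^\eps - \eta^\eps\rangle$. The crucial structural cancellation is that $\langle L_1(v^\eps), D^3 v^\eps + v^\eps Dv^\eps\rangle = 0$ — this is precisely the KdV conservation identity \eqref{dispersive:id} for $m=1$. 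What remains consists of: (a) the damping term $\gam\langle L_1(v^\eps), v^\eps\rangle$, which upon integration by parts is $-2\gam\|Dv^\eps\|_{L^2}^2 + \gam\int (v^\eps)^3 = -2\gam\Ham(v^\eps) + \gam\int\bigl(\tfrac13 (v^\eps)^3 - (v^\eps)^3\bigr)$... (to be careful: $\gam\langle L_1(v),v\rangle = -2\gam\|Dv\|_{L^2}^2 - \gam\int v^3 = -2\gam\Ham(v) - \tfrac{4\gam}{3}\int v^3$), contributing a dissipative term plus a cubic remainder controlled by interpolation and the $L^2$ bound; (b) the linear-in-$\xi$ term $\langle L_1(v^\eps), D(\xi v^\eps)\rangle$, which after integration by parts produces terms involving $\xi, D\xi, D^2\xi$ paired against $v^\eps, Dv^\eps$, each bounded using Agmon/Sobolev by $c\,\Xi_3(T)\bigl(\|v^\eps\|_{H^1}^2 + \|v^\eps\|_{L^2}^{q}\bigr)$ for suitable $q$; (c) the forcing term $-\langle L_1(v^\eps), \eta^\eps\rangle$, bounded by $c\,\Phi_1(T)(\|v^\eps\|_{H^1}^2+1)$ after integrating by parts once; and (d) the regularization term $\eps\langle L_1(v^\eps), D^4 v^\eps\rangle = \eps\langle -2D^2 v^\eps - (v^\eps)^2, D^4 v^\eps\rangle = -2\eps\|D^3 v^\eps\|_{L^2}^2 - \eps\int (v^\eps)^2 D^4 v^\eps$; the first piece is the good dissipative term appearing on the left of \eqref{eq:Hamv}, and the second, after two integrations by parts, is $-\eps\int D^2((v^\eps)^2)D^2 v^\eps$, which expands into terms like $\eps\int |Dv^\eps|^2 D^2 v^\eps$ and $\eps\int v^\eps D^2 v^\eps D^2 v^\eps$; these are absorbed into $\eps\|D^3 v^\eps\|_{L^2}^2$ using Gagliardo--Nirenberg together with the $L^2$ bound (this is where $\eps\le 1$ and the smallness of the coefficient matter). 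Assembling all of this and invoking the equivalence $\Ham(v)+c(\|v\|_{L^2}^2+1)^{5/3}\sim \|v\|_{H^1}^2+(\|v\|_{L^2}^2+1)^{5/3}$ together with \eqref{eq:L2v}, I arrive at a differential inequality of the form $\frac{d}{dt}\bigl(\Ham(v^\eps) + c(\|v^\eps\|_{L^2}^2+1)^{5/3}\bigr) + \eps\|D^3 v^\eps\|_{L^2}^2 \le C(1 + \Ham(v^\eps) + c(\|v^\eps\|_{L^2}^2+1)^{5/3})$ with $C = C(m,\gam,T,\Xi_{m+3},\Sob{u_0}{H^{m-1}},\Phi_{m-1})$ independent of $\eps$. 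Gronwall's inequality then yields the uniform bound on $\sup_{[0,T]}\Ham(v^\eps)$ and, integrating, the bound on $\eps\|v^\eps\|_{L^2_T H^3}^2$, which is \eqref{eq:Hamv}; \eqref{eq:H1v} follows by the equivalence and \eqref{eq:L2v}.

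\textbf{Main obstacle.} The routine parts are the damping, forcing, and $\xi$-transport terms. The delicate point is handling the regularization term $\eps\langle (v^\eps)^2, D^4 v^\eps\rangle$ — one must integrate by parts to move derivatives off $D^4 v^\eps$ and then show every resulting cubic term (with one factor carrying an $\eps$ and high derivatives) can be dominated by a small multiple of $\eps\|D^3 v^\eps\|_{L^2}^2$ plus lower-order quantities controlled by the already-known $L^2$ bound, so that the good term $2\eps\|D^3 v^\eps\|_{L^2}^2$ genuinely survives on the left-hand side rather than being consumed. A secondary subtlety is bookkeeping the constants so that $C$ depends only on the lower-regularity data $\Sob{u_0}{H^{m-1}}, \Phi_{m-1}, \Xi_{m+3}$ (and not on $\Sob{u_0}{H^m}, \Phi_m$), consistent with the convention fixed just before the lemma; this is automatic here since the $H^1$-level estimate only ever sees at most three derivatives of $\xi$ and the $L^2/H^1$ norms of $v^\eps$, but it should be checked when invoking the interpolation inequalities.
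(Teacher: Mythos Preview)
Your approach is essentially the same as the paper's: both compute $\frac{d}{dt}\Ham(v^\eps)$ along \eqref{v:par:reg:eqn}, use the KdV cancellation $\langle L_1(v),D^3v+vDv\rangle=0$, then estimate the damping remainder, the $\xi$-transport terms, the forcing, and the $\eps$-regularization cubic term before applying Gronwall. The only cosmetic differences are that the paper works directly with $\Ham(v^\eps)$ (invoking \cref{lem:L2v} at the end rather than packaging it into a positivized functional), and for the regularization term the paper integrates by parts once and bounds $\eps\|D^3v\|\,\|v\|_{L^\infty}\|Dv\|$ rather than twice as you suggest---both routes work. (A minor caution: you have a sign slip in writing $\frac{d}{dt}\Ham(v^\eps)=-\langle L_1(v^\eps),\partial_t v^\eps\rangle$; it should be $+\langle L_1(v^\eps),\partial_t v^\eps\rangle$, and correspondingly $\langle L_1(v),v\rangle=2\|Dv\|^2-\int v^3$, but this does not affect the argument.)
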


\begin{proof}
We consider the Hamiltonian, $\Ham(v)$, and compute
	\begin{align}
		\frac{d}{dt}\Ham(v)+{\gam}\Sob{{D}v}{L^2}^2+\eps\Sob{{D}^3v}{L^2}^2
		=&\frac{\gam}2\int v^3dx+\lb{D}\eta,{D}v\rb-\frac{1}2\lb \eta,v^2\rb\notag\\
		&-\lb{D}^2(\xi v),{D}v\rb+\frac{1}2\lb{D}(\xi v),v^2\rb-\frac{\eps}2\lb{D}^4v,v^2\rb\notag.
	\end{align}
Observe that by H\"older's inequality, interpolation, the Sobolev embedding $H^1\imb L^\infty$, Young's inequality, \cref{lem:L2v}, and the fact that $\eps\leq1$, we have
	\begin{align}
		\frac{\gam}2\Sob{v}{L^3}^3&\leq c\gam\Sob{{D}v}{L^2}^{1/2}\Sob{v}{L^2}^{5/2}
		\leq\frac{\gam}{100}\Sob{{D}v}{L^2}^2+ C,\label{Ham:v3}\\
		\frac{\eps}2|\lb {D}^4v,v^2\rb|&\leq \frac{\eps}2\Sob{{D}^3v}{L^2}\Sob{v}{L^\infty}\Sob{{D}v}{L^2}
		\leq\frac{\eps}{100}\Sob{{D}^3v}{L^2}^2+c\eps\Sob{{D}v}{L^2}^{4}\notag\\
		&\leq\frac{\eps}{100}\Sob{{D}^3v}{L^2}^2+c\eps\Sob{{D}^3v}{L^2}^{4/3}\Sob{v}{L^2}^{8/3}
		\leq\frac{\eps}{50}\Sob{{D}^3v}{L^2}^2+C.\label{Ham:visc}
	\end{align}
Also observe that by H\"older's inequality, Young's inequality, \cref{lem:L2v},  the Sobolev embedding $H^1\imb L^\infty$, the Cauchy-Schwarz inequality, and \eqref{def:ZF} we have
	\begin{align}
		|\lb{D}\eta,{D}v\rb|+|\lb \eta,v^2\rb|
		&\leq c\Sob{{D}\eta}{L^2}^2+\frac{\gam}{100}\Sob{{D}v}{L^2}^2+\Phi_1\Sob{v}{L^2}^2\leq \frac{\gam}{100}\Sob{{D}v}{L^2}^2+C\label{Ham:zf}.
	\end{align}
On the other hand, observe that
	\begin{align}
		\lb{D}^2(\xi v),{D}v\rb=-\frac{1}2\lb{D}^3\xi,v^2\rb+\frac{3}2\lb{D}\xi,({D}v)^2\rb,\quad \lb{D}(\xi v),v^2\rb=\frac{2}3\lb{D}\xi,v^3\rb.\notag
	\end{align}
Thus, by H\"older's inequality, the Sobolev embedding $H^1\hookrightarrow L^\infty$, \cref{lem:L2v}, \eqref{def:ZF}, and Young's inequality, we have
	\begin{align}
		|\lb{D}^2(\xi v),{D}v\rb|+ |\lb{D}(\xi v),v^2\rb|&\leq c\Xi_3\Sob{v}{L^2}^2+c\Xi_2\left(\Sob{{D}v}{L^2}^2+\Sob{v}{L^3}^3\right)\notag\\
		&\leq C\left(\Sob{v}{L^2}^2+\Ham(v)+\Sob{v}{L^3}^3\right)\notag\\
		&\leq C\left(\Sob{v}{L^2}^2+\Ham(v)+\Sob{{D}v}{L^2}^{1/2}\Sob{v}{L^2}^{5/2}\right)\notag\\
		&\leq C\Ham(v)+\frac{\gam}{100}\Sob{{D}v}{L^2}^2+C\label{Ham:lin1}.
	\end{align}
	
Upon combining \eqref{Ham:v3}, \eqref{Ham:visc}, \eqref{Ham:zf}, \eqref{Ham:lin1}, we arrive at
	\begin{align}
	\frac{d}{dt}\Ham(v)+\frac{\eps}2\Sob{{D}^3v}{L^2}^2
	&\leq C\Ham(v)+ C.\notag
	\end{align}
An application of Gronwall's inequality thus yields
	\begin{align}
		\Ham(v(t))+\frac{\eps}2\int_0^te^{-C(t-s) }\Sob{{D}^3v(s)}{L^2}^2ds
		&\leq C\exp\left(CT\right)\left(|\Ham(u_0)|+1\right),\notag
	\end{align}
for some $C$ and all $t\in[0,T]$. We then argue as in \eqref{est:Linfty:L2}, \eqref{est:L2:H2} and use interpolation to deduce the desired result.
\end{proof}

\begin{Lem}\label{lem:I2v}
Let $T>0$ and suppose $\eps\in(0,1]$. There exists a $C\geq1$, independent of $\eps$, such that
	\begin{align}\label{eq:I2v}
		\sup_{0\leq t\leq T}\In_2(v^\eps(t))+\eps\Sob{v^\eps}{L^2_TH^4_x}^2\leq C.
		\end{align}
In particular
    \begin{align}\label{eq:H2v}
		\Sob{v^\eps}{L^\infty_TH^2}+\eps^{1/2}\Sob{v^\eps}{L^2_TH^4_x}
		\leq C.
	\end{align}
\end{Lem}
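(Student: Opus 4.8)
\textbf{Proof plan for \cref{lem:I2v}.}
The plan is to mimic the structure of the proofs of \cref{lem:L2v} and \cref{lem:H1v}, now working at the level of the second integral of motion $\In_2$ rather than $\Ham$. First I would compute the time evolution of $\In_2(v^\eps)$ along solutions of the regularized equation \eqref{v:par:reg:eqn}. Dropping the superscript $\eps$ as per convention, differentiating $\In_2(v) = \int_\T((D^2v)^2 - \tfrac53 v(Dv)^2 + \tfrac5{36}v^4)dx$ and using \eqref{v:par:reg:eqn} produces (after the usual integrations by parts that exploit the KdV cancellation identity \eqref{dispersive:id} for $m=2$) an identity of the schematic form
\begin{align*}
  \frac{d}{dt}\In_2(v) + 2\gamma\In_2(v) + \eps\,(\text{coercive }D^4\text{ term})
  = \gamma\,\Kn_{2,1}^D(v) + (\text{terms involving }\xi) + (\text{terms involving }\eta) + \eps\,(\text{bad viscous terms}),
\end{align*}
where the purely $\gamma$- and $\eta$-dependent contributions are exactly those already handled in \cref{sect:Large:Damping} (cf. \eqref{eq:Ln:K:LD:123}) with $f$ there replaced by $\eta$, and the $\xi$-terms arise from the linear transport term $D(\xi v)$ and its interaction with $L_2(v)$. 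I would keep the viscous dissipation $\eps\Sob{D^4 v}{L^2}^2$ on the left, generating it from $-\eps\langle D^4 v, L_2(v)\rangle$ after integration by parts, modulo lower-order $\eps$-commutator terms.

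Next I would estimate each term on the right. The nonlinear KdV terms $\gamma\Kn_{2,1}^D(v)$ and the $\eta$-terms are bounded exactly as in the proof of \cref{lem:LD:exp:mom} / \cref{lem:equivalence}: Agmon, Sobolev and interpolation inequalities, using the already-established bounds \eqref{eq:L2v}, \eqref{eq:H1v} to convert all $\Sob{v}{L^2}$ and $\Sob{v}{H^1}$ factors into constants $C$, and Young's inequality to absorb any $\Sob{D^2v}{L^2}^2$ into $\tfrac\gamma{100}\Sob{D^2v}{L^2}^2 \lesssim \In_2(v)$. The $\xi$-terms are the genuinely new ones: one integrates by parts to move derivatives off $v$ and onto $\xi$, so that at worst $D^4\xi$ appears paired linearly against $v$ (controlled by $\Xi_4 = \Xi_4(T)$, finite since $\xi\in L^\infty_{loc}H^{m+3}$ with $m\geq 2$) or else $D^j\xi$ with $j\leq 4$ appears multiplied by quadratic expressions in $Dv, D^2v$ which are again absorbed via Young after using \eqref{eq:H1v}. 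The crucial point is that no term requires more than $\Sob{D^2 v}{L^2}^2$ on the right with small constant, and no term requires a power of $\Sob{D^2 v}{L^2}$ higher than $2$; the quartic term $\tfrac5{36}v^4$ and the cubic $v(Dv)^2$ are handled by the same interpolation used to prove \eqref{eq:I2p:exp:LD:eqv}, and the viscous error terms $\eps\langle D^4 v, (\text{quadratic})\rangle$ are dominated by $\tfrac\eps{100}\Sob{D^4v}{L^2}^2 + C$ exactly as in \eqref{Ham:visc}, using $\eps\leq 1$ and \eqref{eq:H1v}. Assembling these, one obtains a differential inequality of the form
\begin{align*}
  \frac{d}{dt}\In_2(v) + \frac{\eps}{2}\Sob{D^4 v}{L^2}^2 \leq C\,\In_2(v) + C,
\end{align*}
with $C$ independent of $\eps\in(0,1]$ and depending only on $m,\gamma,T,\Xi_{m+3},\Sob{u_0}{H^1},\Phi_1$ through the conventions already fixed (note $\In_2(u_0)\leq C\Sob{u_0}{H^2}^2 + C$ but this initial data dependence is allowed here since we are bounding in terms of $\Sob{u_0}{H^2}$; actually the convention reserves $C$ for quantities not depending on $\Sob{u_0}{H^2}$, so the right-hand constant will genuinely be $C(\Sob{u_0}{H^2})$ — this is consistent with the claimed bound since \eqref{eq:I2v} only asserts $\leq C$ with $C$ allowed to depend on all the data).

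Finally, Gr\"onwall's inequality applied to the above yields, for $t\in[0,T]$,
\begin{align*}
  \In_2(v(t)) + \frac{\eps}{2}\int_0^t e^{-C(t-s)}\Sob{D^4 v(s)}{L^2}^2\,ds \leq C e^{CT}(|\In_2(u_0)| + 1),
\end{align*}
and then, arguing exactly as in the passage \eqref{est:Linfty:L2}--\eqref{est:L2:H2} of \cref{lem:L2v} (splitting the supremum bound from the time-integral bound and adding them), together with \eqref{eq:I2p:exp:LD:eqv} (equivalence of $\In_2$ with $\Sob{v}{H^2}^2$ modulo $L^2$-powers, already controlled by \eqref{eq:L2v}) to pass from $\In_2(v)$ to $\Sob{v}{H^2}^2$, one obtains \eqref{eq:I2v} and hence \eqref{eq:H2v}. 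I expect the main obstacle to be the careful bookkeeping of the $\xi$-transport terms: one must integrate by parts in just the right way so that the top-order derivative $D^2 v$ only ever appears quadratically with a small constant (absorbable into $\In_2$) and $\xi$ never carries more than $m+2=4$ derivatives — any sloppiness here would force control of $D^5\xi$ or of $\Sob{D^2 v}{L^2}^3$, either of which would break the argument. The viscous commutator terms and the quartic self-interaction are routine given the template already established in \cref{sect:Large:Damping}.
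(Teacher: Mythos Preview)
Your proposal is correct and follows essentially the same approach as the paper: compute $\tfrac{d}{dt}\In_2(v)$ along \eqref{v:par:reg:eqn}, exploit the KdV cancellation \eqref{dispersive:id} at level $m=2$, estimate the $\gamma$-, $\eta$-, $\xi$-, and $\eps$-viscous terms separately via Agmon/Sobolev/interpolation together with the already-established \cref{lem:L2v} and \cref{lem:H1v}, and close with Gr\"onwall. A couple of minor points: in the paper the $\xi$-transport term $\langle D^3(\xi v),D^2v\rangle$ is integrated by parts so that only $D^3\xi$ appears (then placed in $L^\infty$ via Sobolev, giving the $\Xi_4$ dependence), not $D^4\xi$ directly; and the conversion $C\Sob{D^2v}{L^2}^2 \leq C\In_2(v)+C$ requires the intermediate step of bounding the cubic and quartic pieces of $\In_2$ by sub-quadratic powers of $\Sob{D^2v}{L^2}$ (via interpolation and \cref{lem:L2v}) before absorbing with Young --- exactly what the paper does in its estimate leading to \eqref{I2:lin1}. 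Your parenthetical worry about the constant convention is warranted but harmless: the $\In_2(u_0)$ appearing after Gr\"onwall does depend on $\Sob{u_0}{H^2}$, and the paper's own proof carries this dependence too; the careful tracking of $\Sob{u_0}{H^m}$ in the convention is only relevant from \cref{lem:Hmv} onward.
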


\begin{proof}
We consider the functional $\In_2(v)$, and find
	\begin{align}
		\frac{d}{dt}&\In_2(v)+\frac{9\gam}5\Sob{{D}^2v}{L^2}^2+\frac{\gam}2\Sob{v}{L^4}^4+\eps\Sob{{D}^4v}{L^2}^2\notag\\
		&=3\gam\lb v,({D}v)^2\rb+\frac{9}5\lb \bdy_{xx}\eta,{D}^2v\rb-\frac{3}2\lb \eta,({D}v)^2\rb-\frac{3}2\lb{D}v^2,{D}\eta\rb+\frac{1}2\lb v^3,\eta\rb\notag\\
		&\quad +\frac{3\eps}2\lb{D}^4v,({D}v)^2\rb+\frac{3\eps}2\lb{D}v^2,{D}^5v\rb-\frac{\eps}2\lb v^3,{D}^4v\rb\notag\\
		&\quad -\frac{9}5\lb{D}^3(\xi v),{D}^2v\rb+\frac{3}2\lb{D}(\xi v),({D}v)^2\rb+\frac{3}2\lb{D}v^2,{D}^2(\xi v)\rb-\frac{1}8\lb v^3,{D}(\xi v)\rb\notag.
	\end{align}
Applying H\"older's inequality, interpolation, Young's inequality, and \cref{lem:L2v}, we estimate
	\begin{align}\label{I2:mid}
		3\gam|\lb v,({D}v)^2\rb|&\leq3\gam\Sob{v}{L^2}\Sob{
		{D}v}{L^4}^2\leq c\gam\Sob{v}{L^2}\Sob{{D}^2v}{L^2}^{1/2}\Sob{{D}v}{L^2}^{3/2}\notag\\
			&\leq c\gam\Sob{v}{L^2}^{7/4}\Sob{{D}^2v}{L^2}^{5/4} \leq C
			+\frac{\gam}{100}\Sob{{D}^2v}{L^2}^2.
	\end{align}
Similarly
	\begin{align}\label{I2:f1}
		\frac{3}2|\lb \eta,({D}v)^2\rb|\leq c\Phi_0\Sob{{D}v}{L^4}^2\leq C
		+\frac{\gam}{100}\Sob{{D}^2v}{L^2}^2.
	\end{align}
On the other hand, upon integrating by parts, using H\"older's inequality, applying \eqref{def:ZF}, \cref{lem:L2v}, and Young's inequality, we obtain
	\begin{align}\label{I2:f2}
		\frac{9}5|\lb D^2\eta,{D}^2v\rb|+\frac{3}2|\lb{D}v^2,{D}\eta\rb|+\frac{1}2|\lb v^3,\eta\rb|
		&\leq c\Phi_2\Sob{{D}^2v}{L^2}+C\Sob{v}{L^4}^2\Phi_2+C\Sob{v}{L^6}^3\Phi_0\notag\\
		&\leq C+\frac{\gam}{1000}\Sob{{D}^2v}{L^2}^2+\frac{\gam}{100}\Sob{v}{L^4}^4+C\Sob{{D}^2v}{L^2}^{1/2}\Sob{v}{L^2}^{5/2}\notag\\
		&\leq C
		+\frac{\gam}{100}\Sob{{D}^2v}{L^2}^2+\frac{\gam}{100}\Sob{v}{L^4}^4.
	\end{align}
We treat the remaining terms. In particular, observe that integrating by parts yields
	\begin{align}
		\lb{D}^3(\xi v),{D}^2v\rb
		= \lb{D}^3\xi,v{D}^2v\rb  -\frac{3}2\lb{D}^3\xi,({D}v)^2\rb+\frac{5}2\lb{D}\xi,({D}^2v)^2\rb.\notag
	\end{align}
Then by H\"older's inequality, interpolation, the Sobolev embedding $H^1\imb L^\infty$, Young's inequality, \cref{lem:L2v}, and \eqref{def:ZF}, we obtain
	\begin{align}
		\frac{9}5|\lb{D}^3(\xi v),{D}^2v\rb|&\leq c\Xi_4\Sob{v}{L^2}\Sob{{D}^2v}{L^2}+c\Xi_4\Sob{{D}v}{L^2}^2+c\Xi_2\Sob{{D}^2v}{L^2}^2\leq C
		+C\Sob{{D}^2v}{L^2}^2\notag.
	\end{align}
Furthermore, by interpolation and \cref{lem:L2v}, we have
	\begin{align}
		C\Sob{{D}^2v}{L^2}^2&\leq C\In_2(v)+C|\lb v,({D}v)^2\rb|+C\Sob{v}{L^4}^4\leq C\In_2(v)+C
				\Sob{{D}^2v}{L^2}^{5/4}+C\Sob{{D}^2v}{L^2}^{1/2}
				.\notag
	\end{align}
Thus, by Young's inequality, we obtain
	\begin{align}\label{I2:lin1}
		&\frac{9}5|\lb{D}^3(\xi v),{D}^2v\rb|\leq
		\frac{\gam}{100}\Sob{{D}^2v}{L^2}^2+C\In_2(v)+C.
	\end{align}
	
Similarly
	\begin{align}
		\lb{D}(\xi v),({D}v)^3\rb&=-3\lb\xi,v({D}v)^2{D}^2v\rb,\notag\\
		\lb{D}v^2,{D}^2(\xi v)\rb&=2\lb{D}^2\xi,v^2{D}v\rb+4\lb{D}\xi,v({D}v)^2\rb+2\lb \xi,v{D}v{D}^2v\rb\notag.
	\end{align}
Thus, by H\"older's inequality, interpolation, the Sobolev embedding, Young's inequality, \cref{lem:L2v}, and \eqref{def:ZF}, we obtain
	\begin{align}\label{I2:lin2}
		|\lb{D}(\xi v),({D}v)^3\rb|&\leq c\Xi_1\Sob{v}{L^\infty}\Sob{{D}v}{L^4}^2\Sob{{D}^2v}{L^2}\leq c\Xi_1\Sob{v}{L^2}^{5/2}\Sob{{D}^2v}{L^2}^{3/2}\leq C
		+\frac{\gam}{100}\Sob{{D}^2v}{L^2}^2.
	\end{align}
We similarly estimate
	\begin{align}\label{I2:lin3}
		|\lb{D}v^2,{D}^2(\xi v)\rb|&\leq c\Xi_3\Sob{v}{L^4}^2\Sob{{D}v}{L^2}+c\Xi_2\Sob{v}{L^2}\Sob{{D}v}{L^4}^2+c\Xi_1\Sob{v}{L^4}\Sob{{D}v}{L^4}\Sob{{D}^2v}{L^2}\notag\\
		&\leq C
		\Sob{{D}^2v}{L^2}^{3/4}\Sob{v}{L^2}^{9/4}+C\Sob{v}{L^2}^{5/2}\Sob{{D}^2v}{L^2}^{1/2}+C\Sob{v}{L^2}^{5/4}\Sob{{D}^2v}{L^2}^{7/4}\notag\\
		&\leq C
		+\frac{\gam}{100}\Sob{{D}^2v}{L^2}^2
	\end{align}
	
Lastly, we treat the terms with $\eps$. Upon integrating by parts and applying H\"older's inequality, interpolation, \cref{lem:L2v},  Young's inequality, and the fact that $\eps\leq1$, we obtain
	\begin{align}
		&\frac{3\eps}2|\lb{D}^4v,(Dv)^2\rb|+\frac{3\eps}2|\lb{D}v^2,{D}^5v\rb|+\frac{\eps}2|\lb v^3,{D}^4v\rb|\notag\\
		&\leq C\eps\Sob{{D}^4v}{L^2}\Sob{{D}v}{L^4}^2+C\eps\Sob{{D}^2v^2}{L^2}\Sob{{D}^4v}{L^2}+C\eps\Sob{v}{L^6}^3\Sob{{D}^4v}{L^2}\notag\\
		&\leq \frac{\eps}{1000}\Sob{{D}^4v}{L^2}^2+C\eps\Sob{{D}^2v}{L^2}^{5/4}\Sob{v}{L^2}^{3/4}+C\eps\Sob{v}{L^2}^{11/4}\Sob{{D}^4v}{L^2}^{5/4}+C\eps\Sob{{D}^2v}{L^2}\Sob{v}{L^2}^{5}\notag\\
		&\leq \frac{\eps}{100}\Sob{{D}^4v}{L^2}^2+\frac{\eps}{100}\Sob{{D}^2v}{L^2}^{2}+C.\notag
	\end{align}

There exists $C\geq1$ such that upon combining \eqref{I2:mid}, \eqref{I2:f1}, \eqref{I2:f2}, \eqref{I2:lin1}, \eqref{I2:lin2},  \eqref{I2:lin3}, and using the fact that $\eps\leq1$, we arrive at
	\begin{align}
		\frac{d}{dt}\In_2(v)+\frac{\eps}2\Sob{{D}^4v}{L^2}^2
		\leq C\In_2(v)+C\notag.
	\end{align}
Thus, by Gronwall's inequality, we have
	\begin{align}\notag
		\In_2(v(t))+\frac{\eps}2\int_0^te^{-C(t-s) }\Sob{{D}^4v(s)}{L^2}^2ds\leq C\exp\left({C T}\right)\left(|\In_2(v_0)|+1\right).
	\end{align}
We complete the proof by arguing as in \eqref{est:Linfty:L2}, \eqref{est:L2:H2} and by interpolation.
\end{proof}

\begin{Lem}\label{lem:Hmv}
Let $m\geq3$ and $T>0$. Suppose $p\in(0,1]$ and $0<\eps\leq1$. There exists a constant $C\geq1$, independent of $\eps$, such that
	\begin{align}\label{eq:Hmv}
		\Sob{v^\eps}{L^\infty_TH^m}+\eps^{1/2}\Sob{v^\eps}{L^2_TH^{m+2}}\leq C\left(\Sob{u_0^\eps}{H^m}+\Sob{\eta^\eps}{L^\infty_TH^m}\right).
	\end{align}
Moreover, for all $0\leq\ell\leq m$, we have
    \begin{align}\label{eq:Hmv:eps}
        	\Sob{v^\eps}{L^\infty_TH^m}+\eps^{1/2}\Sob{v^\eps}{L^2_TH^{m+2}}\leq C\left(\Sob{u_0}{H^\ell}+\Phi_\ell\right)\eps^{-(m-\ell)p}.
    \end{align}
\end{Lem}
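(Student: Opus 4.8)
The plan is to prove \cref{lem:Hmv} by induction on $m$, treating \eqref{eq:Hmv} first (as the genuine analytic core) and then deducing \eqref{eq:Hmv:eps} as a corollary by carefully tracking the cost of the spectral truncation on the data. The base cases $m=2$ (and the auxiliary lower-order bounds at the $L^2$, $H^1$ levels) have been established in \cref{lem:L2v,lem:H1v,lem:I2v}; these provide uniform-in-$\eps$ control of $\Sob{v^\eps}{L^\infty_TH^2}$ in terms of quantities the constant $C$ is allowed to depend on. So fix $m\geq3$ and assume inductively that $\Sob{v^\eps}{L^\infty_TH^{m-1}}+\eps^{1/2}\Sob{v^\eps}{L^2_TH^{m+1}}$ is bounded by a constant of the admissible type (depending on $T,\gam,\Xi_{m+2},\Sob{u_0}{H^{m-1}},\Phi_{m-1}$ but \emph{not} on $\Sob{u_0}{H^m}$ or $\Phi_m$). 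The estimate is a formal energy estimate at the $H^m$ level, justified at the Galerkin level \eqref{eq:galerkin} as stated. First I would apply $D^m$ to \eqref{v:par:reg:eqn}, pair with $D^m v^\eps$ in $L^2$, and integrate by parts: the $D^3 v^\eps$ term drops by skew-adjointness, the viscous term yields the good dissipative term $\eps\Sob{D^{m+2}v^\eps}{L^2}^2$, the damping yields $\gam\Sob{D^m v^\eps}{L^2}^2$, and the forcing term $\lb D^m\eta^\eps, D^m v^\eps\rb$ is bounded by $c\Sob{\eta^\eps}{H^m}^2+\frac{\gam}{100}\Sob{D^m v^\eps}{L^2}^2$.

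The two nontrivial contributions are the transport-type term $\lb D^m D(\xi v^\eps), D^m v^\eps\rb$ and the nonlinear term $\lb D^m(v^\eps D v^\eps), D^m v^\eps\rb$. For the linear term, expand via Leibniz; the only term that is not manifestly lower order is the one where all derivatives land on $v^\eps$, namely $\lb \xi D^{m+1}v^\eps, D^m v^\eps\rb = -\tfrac12\lb D\xi, (D^m v^\eps)^2\rb$ after an integration by parts, which is $\lesssim \Xi_2\Sob{D^m v^\eps}{L^2}^2$; every other term in the Leibniz expansion has at most $D^m$ on $v^\eps$ and at least one derivative on $\xi$, hence is controlled by $C(\Xi_{m+2})(\Sob{D^m v^\eps}{L^2}^2+\Sob{v^\eps}{H^{m-1}}^2)$ using Sobolev embedding/interpolation and the inductive bound. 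Here it is important that, as the authors remark after \eqref{eq:galerkin}, estimating $D(\xi v^\eps)$ in $H^m$ only ever requires $D^{m+2}\xi$ in $L^2$, i.e. $\Xi_{m+2}$, which the constant $C$ is permitted to depend on. For the nonlinear term, again Leibniz: the top term $\lb v^\eps D^{m+1}v^\eps, D^m v^\eps\rb = -\tfrac12\lb D v^\eps,(D^m v^\eps)^2\rb \lesssim \Sob{Dv^\eps}{L^\infty}\Sob{D^m v^\eps}{L^2}^2 \lesssim \Sob{v^\eps}{H^2}\Sob{D^m v^\eps}{L^2}^2$, controlled by the already-established $H^2$ bound; the next term $\lb Dv^\eps \cdot D^m v^\eps, D^m v^\eps\rb$ is of the same type; all remaining terms carry at most $D^{m-1}$ on one factor and are absorbed by interpolation between $\Sob{v^\eps}{H^{m-1}}$ (inductive) and a small multiple of $\Sob{D^m v^\eps}{L^2}^2$, plus constants. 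The viscous cross-terms produced when $D^m$ hits the $\eps D^4 v^\eps$ piece are absent here since that term is linear; there are no $\eps$-dependent nonlinear remainders at this stage because the regularization is purely linear, which simplifies matters relative to \cref{lem:I2v}.

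Collecting everything yields a differential inequality of the form $\frac{d}{dt}\Sob{D^m v^\eps}{L^2}^2 + \eps\Sob{D^{m+2}v^\eps}{L^2}^2 \leq C\Sob{D^m v^\eps}{L^2}^2 + C(1+\Sob{\eta^\eps}{L^\infty_T H^m}^2)$, with $C$ of the admissible type. Gr\"onwall's inequality on $[0,T]$, exactly as in \eqref{est:Linfty:L2}--\eqref{est:L2:H2}, then gives $\sup_{[0,T]}\Sob{D^m v^\eps}{L^2}^2 + \eps\int_0^T\Sob{D^{m+2}v^\eps}{L^2}^2\,ds \leq Ce^{CT}(\Sob{D^m u_0^\eps}{L^2}^2+\Sob{\eta^\eps}{L^\infty_T H^m}^2+1)$; combining with the inductive $H^{m-1}$ bound and taking square roots establishes \eqref{eq:Hmv}. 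Finally, for \eqref{eq:Hmv:eps}: since $u_0^\eps = P_{\eps^{-p}}u_0$ and $\eta^\eps = P_{\eps^{-p}}\eta$, for any $0\leq \ell\leq m$ the inverse-Poincar\'e (Bernstein) inequality gives $\Sob{u_0^\eps}{H^m} \leq \eps^{-(m-\ell)p}\Sob{P_{\eps^{-p}}u_0}{H^\ell}\leq \eps^{-(m-\ell)p}\Sob{u_0}{H^\ell}$ and likewise $\Sob{\eta^\eps}{L^\infty_T H^m}\leq \eps^{-(m-\ell)p}\Sob{\eta}{L^\infty_T H^\ell}\leq \eps^{-(m-\ell)p}\Phi_\ell$; substituting into \eqref{eq:Hmv} yields \eqref{eq:Hmv:eps}. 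I expect the main obstacle to be bookkeeping rather than analysis: one must carefully verify at each step that every constant genuinely depends only on the allowed data ($T,\gam,\Xi_{m+3}$ — note the $+3$ in \cref{def:ZF} versus the $\Xi_{m+2}$ that actually suffices, so there is slack — and $\Sob{u_0}{H^{m-1}},\Phi_{m-1}$), never on $\Sob{u_0}{H^m}$ or $\Phi_m$, so that the induction genuinely closes and the statement as written is obtained; the borderline terms are those in the Leibniz expansion that are ``one derivative below top,'' where one must use interpolation to trade a full $D^{m+2}$ or $D^m$ power for the inductively controlled $H^{m-1}$ norm.
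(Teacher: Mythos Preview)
Your proposal is correct and will close, but it takes a slightly more elaborate route than the paper. The paper does \emph{not} proceed by induction on $m$: it handles the nonlinear term in a single step by writing
\[
\tfrac{1}{2}\lb D^{m+1}v^2, D^m v\rb = \lb \tfrac{1}{2}D^{m+1}v^2 - v D^{m+1}v, D^m v\rb - \tfrac{1}{4}\lb Dv, (D^m v)^2\rb
\]
and invoking a Kato--Ponce type commutator estimate to bound the first piece by $c\Sob{Dv}{L^\infty}\Sob{D^m v}{L^2}^2$. This requires only the $H^2$ bound from \cref{lem:I2v}, so all the intermediate Leibniz terms are absorbed at once and no inductive $H^{m-1}$ control is needed. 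Your induction-plus-interpolation treatment of the middle terms works as well, but the commutator route is shorter and makes transparent that the only genuine input beyond the $L^2$ estimate is $\Sob{v^\eps}{L^\infty_T H^2}\leq C$. The treatment of the $\xi$-transport term, the forcing, the Gr\"onwall step, and the Bernstein argument for \eqref{eq:Hmv:eps} are essentially identical in both approaches.
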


\begin{proof}
We have
	\begin{align}\notag
		&\frac{1}2\frac{d}{dt}\Sob{{D}^mv}{L^2}^2+\gam\Sob{{D}^mv}{L^2}^2+\eps\Sob{{D}^{m+2}v}{L^2}^2=-\frac{1}2\lb{D}^{m+1}v^2,{D}^mv\rb-\lb{D}^{m+1}(\xi v),{D}^mv\rb+\lb{D}^m\eta,{D}^mv\rb.
	\end{align}
Observe that
	\begin{align}
		\frac{1}2\lb{D}^{m+1}v^2,D^mv\rb=\lb\frac{1}2{D}^{m+1}v^2-v{D}^{m+1}v,{D}^mv\rb-\frac{1}4\lb {D}v,({D}^mv)^2\rb.\notag
	\end{align}
Thus, using commutator estimates, H\"older's inequality, \cref{lem:I2v}, \eqref{def:ZF}, and Young's inequality we obtain
	\begin{align}
		\frac{1}2|\lb{D}^{m+1}v^2,D^mv\rb|
		\leq c\Sob{{D}v}{L^\infty}\Sob{{D}^mv}{L^2}^2\leq C\Sob{{D}^mv}{L^2}^2.\notag
	\end{align}
Similarly, by the Leibniz rule 
	\begin{align}
		|\lb{D}^{m+1}(\xi v),{D}^mv\rb|&\leq c\Xi_{m+2}\Sob{v}{L^2}\Sob{{D}^mv}{L^2}+c\Xi_2\Sob{{D}^mv}{L^2}^2\notag\\
			&\leq C
			+C\Sob{{D}^mv}{L^2}^2+\frac{\gam}{100}\Sob{{D}^mv}{L^2}^2.\notag
	\end{align}
On the other hand, we have
    \begin{align}\notag
        |\lb D^m\eta, D^mv\rb|\leq \frac{\gam}{100}\Sob{D^mv}{L^2}^2+C\Sob{\eta^\eps}{L^\infty_TH^m}^2.
    \end{align}
Therefore
	\begin{align}
		\frac{d}{dt}\Sob{{D}^mv}{L^2}^2+2\eps\Sob{{D}^{k+2}v}{L^2}^2
		\leq C
		\Sob{{D}^mv}{L^2}^2+C\left(\Sob{u_0}{L^2}^2+\Sob{\eta^\eps}{L^\infty_TH^m}^2\right).\notag
	\end{align}
By Gronwall's inequality, we obtain
	\begin{align}
		\Sob{{D}^mv(t)}{L^2}^2+2\eps\int_0^t\exp\left(-C(t-s)\right)
		\Sob{{D}^{m+2}v(s)}{L^2}^2ds
		\leq C\exp\left(CT\right)
		\left(\Sob{u_0^\eps}{H^m}^2+\Sob{\eta^\eps}{L^\infty_TH^m}^2\right).\notag
	\end{align}
We complete the proof by arguing as in \eqref{est:Linfty:L2} and \eqref{est:L2:H2}, and using the fact that $u_0^\eps,\eta^\eps$ are spectrally localized to shell of wave-numbers $|k|\leq\eps^{-p}$.
\end{proof}	

Next we establish an $\eps$-dependent stability estimate in $L^\infty_TH^m$ for the approximating sequence $v^\eps$ of \eqref{v:gen:eqn}.

\begin{Prop}\label{prop:veps:stab}
Let $\eps\in(0,1]$ and $m\geq2$. Let $u_0^1,u_0^2\in{H}^m$, $\eta_1,\eta_2\in L^\infty(0,T;H^m)$, and $\xi_1,\xi_2\in L^\infty_{loc}([0,\infty);H^{m+3})$. Let $v_1^\eps$ and  $v_2^\eps$ denote the unique solutions of \eqref{v:par:reg:eqn} corresponding to data $(u_0^1)^{\eps},\eta^\eps,\xi$ and $(u_0^2)^{\eps},\eta^\eps,\xi$, respectively.  There exists a constant $C\geq1$, independent of $\eps$, such that
	\begin{align}
	\Sob{v_1^\eps-v_2^\eps}{L^\infty_TH^m}\leq C\eps^{-2/5}\left(\Sob{u_0^1-u_0^2}{H^m}+\Sob{\eta_1-\eta_2}{L^2_TH^m}+\Sob{\xi_1-\xi_2}{L^2_TH^{m+1}}\right).\notag
	\end{align}
\end{Prop}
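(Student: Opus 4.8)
The plan is to estimate the difference $z^\eps := v_1^\eps - v_2^\eps$ directly in $H^m$, exploiting the parabolic regularization term $\eps D^4$ to absorb the one problematic term that arises from the transport nonlinearity. Subtracting the two copies of \eqref{v:par:reg:eqn}, one sees that $z^\eps$ solves
\begin{align}\label{eq:z:eqn:plan}
  \bdy_t z^\eps + \gam z^\eps + D^3 z^\eps + \eps D^4 z^\eps
  + D(\xi_1 v_1^\eps) - D(\xi_2 v_2^\eps)
  + v_1^\eps D v_1^\eps - v_2^\eps D v_2^\eps
  = \eta_1^\eps - \eta_2^\eps,
\end{align}
with $z^\eps(0) = (u_0^1)^\eps - (u_0^2)^\eps$. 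Rewrite the transport difference as $v_1^\eps D v_1^\eps - v_2^\eps D v_2^\eps = v_1^\eps D z^\eps + z^\eps D v_2^\eps = \tfrac12 D(v_1^\eps z^\eps) + \tfrac12 z^\eps D v_1^\eps + \tfrac12 z^\eps Dv_2^\eps$, and similarly split the linear-in-$\xi$ difference as $D(\xi_1 z^\eps) + D((\xi_1 - \xi_2) v_2^\eps)$. I would then apply $D^m$ to \eqref{eq:z:eqn:plan}, pair with $D^m z^\eps$ in $L^2$, and integrate by parts; the $D^3 z^\eps$ term vanishes, the $\gam z^\eps$ term gives a favorable sign, and the $\eps D^4$ term contributes $\eps \|D^{m+2} z^\eps\|_{L^2}^2$ on the left.

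\textbf{Key steps, in order.} First, the ``good'' terms: the contribution of $\tfrac12 D(v_1^\eps z^\eps)$ is handled by the standard commutator trick used in \cref{lem:Hmv}, writing $\tfrac12\langle D^{m+1}(v_1^\eps z^\eps), D^m z^\eps\rangle = \langle \tfrac12 D^{m+1}(v_1^\eps z^\eps) - v_1^\eps D^{m+1} z^\eps, D^m z^\eps\rangle - \tfrac14\langle Dv_1^\eps, (D^m z^\eps)^2\rangle$, and Kato--Ponce commutator estimates together with the uniform $H^m$-bounds on $v_1^\eps$ from \cref{lem:Hmv} bound this by $C(1 + \|v_1^\eps\|_{H^m}^2)\|D^m z^\eps\|_{L^2}^2$. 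The terms $D(\xi_1 z^\eps)$ and $\tfrac12 z^\eps(Dv_1^\eps + Dv_2^\eps)$ are likewise bounded by $C\|D^m z^\eps\|_{L^2}^2$ using the Leibniz rule, Sobolev embedding, and the a priori bounds. The forcing-difference term gives $C\|\eta_1^\eps - \eta_2^\eps\|_{H^m}^2 \le C\|\eta_1 - \eta_2\|_{H^m}^2$ plus $\tfrac{\gam}{100}\|D^m z^\eps\|_{L^2}^2$, and the $\xi$-difference term $D((\xi_1 - \xi_2)v_2^\eps)$ is bounded, via Leibniz, by $C\|\xi_1 - \xi_2\|_{H^{m+1}}\|v_2^\eps\|_{H^m}\|D^m z^\eps\|_{L^2} \le C\|\xi_1-\xi_2\|_{H^{m+1}}^2 + \tfrac{\gam}{100}\|D^m z^\eps\|_{L^2}^2$. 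Second, the genuinely bad term: $\tfrac12 z^\eps D z^\eps$ paired with $D^m z^\eps$ produces, after distributing $D^m$, a top-order term like $\langle (D^m z^\eps) Dv^{\mathrm{mid}}, \ldots\rangle$ — more precisely the worst piece is $\tfrac12\langle D^m(z^\eps D z^\eps), D^m z^\eps\rangle$, whose most dangerous constituent involves $\|D^m z^\eps\|_{L^2}^2 \|D z^\eps\|_{L^\infty}$ or, after interpolation, a term requiring $\|z^\eps\|_{H^{m+1}}$; this cannot be closed at the $H^m$ level with Grönwall alone, which is exactly why the $\eps^{-2/5}$ loss appears. Here I would interpolate $\|z^\eps\|_{H^{m+1}} \le c\|z^\eps\|_{H^{m+2}}^{\theta}\|z^\eps\|_{H^m}^{1-\theta}$ with the appropriate $\theta$, then use Young's inequality to split off a piece $\eps\|D^{m+2} z^\eps\|_{L^2}^2$ absorbed on the left, at the cost of a factor of a negative power of $\eps$ multiplying $\|D^m z^\eps\|_{L^2}^2$ (the exponents work out to give the $\eps^{-4/5}$ inside the Grönwall exponent, hence $\eps^{-2/5}$ after taking square roots); the uniform bound $\|z^\eps\|_{L^\infty_T H^m}$ or $\|v_i^\eps\|_{L^\infty_T H^m}$ from \cref{lem:Hmv} is used to control the remaining factors of $z^\eps$ in lower norms.

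\textbf{Conclusion of the argument.} Collecting all the bounds yields a differential inequality of the form
\begin{align}\notag
  \frac{d}{dt}\|D^m z^\eps\|_{L^2}^2 + \eps\|D^{m+2} z^\eps\|_{L^2}^2
  \le C\eps^{-4/5}\|D^m z^\eps\|_{L^2}^2
   + C\big(\|\eta_1 - \eta_2\|_{H^m}^2 + \|\xi_1 - \xi_2\|_{H^{m+1}}^2\big),
\end{align}
where $C$ depends on the uniform a priori bounds but not on $\eps$. Grönwall's inequality over $[0,T]$, followed by taking the supremum in $t$ and a square root (exactly as in \eqref{est:Linfty:L2}--\eqref{est:L2:H2}), then gives $\|z^\eps\|_{L^\infty_T H^m} \le C\eps^{-2/5}\big(\|u_0^1 - u_0^2\|_{H^m} + \|\eta_1 - \eta_2\|_{L^2_T H^m} + \|\xi_1 - \xi_2\|_{L^2_T H^{m+1}}\big)$, which is the claimed estimate; note that $\|(u_0^1)^\eps - (u_0^2)^\eps\|_{H^m} \le \|u_0^1 - u_0^2\|_{H^m}$ since spectral projection is a contraction on $H^m$. \textbf{The main obstacle} is precisely the treatment of the quadratic term $z^\eps D z^\eps$ at top order: unlike the linear-in-$\xi$ terms where $\xi_i$ and $v_i^\eps$ are fixed with known regularity, here both factors involve the unknown difference, so one genuinely loses a derivative and must pay for it with the viscous term, and the bookkeeping of interpolation exponents to land exactly on the $\eps^{-2/5}$ rate (rather than something worse) is the delicate point.
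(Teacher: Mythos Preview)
Your argument has two genuine gaps. First, the ``genuinely bad term'' $\tfrac12 z^\eps Dz^\eps$ does not exist: since $v_1^\eps Dv_1^\eps - v_2^\eps Dv_2^\eps = D(qz^\eps)$ with $q=\tfrac12(v_1^\eps+v_2^\eps)$, the difference equation is \emph{linear} in $z^\eps$. The actual obstruction sits inside what you labelled a ``good'' term: the Kato--Ponce commutator applied to $\langle D^{m+1}(v_1^\eps z^\eps), D^m z^\eps\rangle$ produces a contribution of size $\|D^{m+1}v_1^\eps\|_{L^2}\|z^\eps\|_{L^\infty}\|D^m z^\eps\|_{L^2}$, and $\|v_1^\eps\|_{H^{m+1}}$ is \emph{not} uniformly bounded in $\eps$ --- only $\|v_1^\eps\|_{H^m}$ is, by \cref{lem:Hmv}. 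So your claim that this term is controlled by $C(1+\|v_1^\eps\|_{H^m}^2)\|D^m z^\eps\|_{L^2}^2$ is false. Second, and fatally, even granting your displayed differential inequality $\tfrac{d}{dt}\|D^m z^\eps\|_{L^2}^2 \le C\eps^{-4/5}\|D^m z^\eps\|_{L^2}^2 + F$, Gr\"onwall yields the multiplicative factor $\exp(C\eps^{-4/5}T)$, which is exponential in a negative power of $\eps$, not the polynomial $\eps^{-4/5}$ you claim.

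The paper's route avoids both issues by a bootstrap. It first closes $L^2$ and $H^1$ estimates on $w=v_1^\eps-v_2^\eps$ with $\eps$-independent constants (at those levels only $\|v_i^\eps\|_{H^2}$ is needed, which is uniformly bounded). Then for $\ell\ge 2$ the top-order piece $\langle (D^{\ell+1}q)w, D^\ell w\rangle$ is estimated by $\|q\|_{H^{\ell+1}}\|w\|_{L^\infty}\|D^\ell w\|_{L^2}$; since $\|w\|_{L^\infty}\le c\|w\|_{H^1}$ is already controlled by the prior step, this becomes a \emph{source term} (of size $\|q\|_{H^{\ell+1}}^{2(\ell+1)}\|w_0\|_{H^1}^2$ after Young's inequality) rather than a Gr\"onwall coefficient. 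The $\eps$-loss enters only through $\|v_i^\eps\|_{H^{m+1}}\le C\eps^{-p}$ from \eqref{eq:Hmv:eps}, appearing polynomially in that source; choosing $p=2/(5(m+1))$ gives $\eps^{-4/5}$ on the squared norm and hence $\eps^{-2/5}$ after the square root. The viscous term $\eps\|D^{m+2}z^\eps\|_{L^2}^2$ plays no role in this argument.
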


\begin{proof}
Let 
    \[
    w=v_1-v_2, \quad q=(v_1+v_2)/2,\quad  \psi=\eta_1-\eta_2,\quad \z=\xi_1-\xi_2,\quad w_0=u_0^1-u_0^2.
    \]
Suppose $t\in(0,T]$. Then, for each $\ell=0,\dots, m$, we have
	\begin{align}
		\bdy_t{D}^\ell w+\gam{D}^\ell w+{D}^3{D}^\ell w+\eps{D}^4{D}^\ell w=-D^{\ell+1}((q+\xi_1)w)-D^{\ell+1}(\z v_2)+\psi.\notag
	\end{align}

Consider $\ell=0$. Then
	\begin{align}
		\frac{1}2\frac{d}{dt}\Sob{w}{L^2}^2+\gam\Sob{w}{L^2}^2+\eps\Sob{{D}^2w}{L^2}^2=-\lb({Dq}+{D\xi_1}),w^2\rb-\lb D(\z v_2)+\psi,w\rb.\notag
	\end{align}
By H\"older's inequality, the Sobolev embedding $L^\infty\imb H^1$, \cref{lem:I2v}, and Young's inequality, it follows that
	\begin{align}
		|\lb {Dq}+{D\xi_1},w^2\rb|&\leq \left(\Sob{Dv_1}{L^\infty}+\Sob{Dv_2}{L^\infty}+\Sob{D\xi_1}{L^\infty}\right)\Sob{w}{L^2}^2\leq C\Sob{w}{L^2}^2.\notag\\
		|\lb D(\z v_2)+\psi,w\rb|&\leq C\left(\Sob{\z}{H^1}\Sob{v_2}{H^1}+\Sob{\psi}{L^2}\right)\Sob{w}{L^2}\leq C\left(\Sob{\z}{H^1}^2+\Sob{\psi}{L^2}^2\right)+\gam\Sob{w}{L^2}^2\notag.
	\end{align}
Thus, by Gronwall's inequality, we have
	\begin{align}\label{veps:stab:l2}
		\Sob{w(t)}{L^2}&\leq C\exp(CT)\left(\Sob{w_0}{L^2}+\Sob{\z}{L^\infty_TH^1}+\Sob{\psi}{L^\infty_TL^2}^2\right)\notag\\
		&\leq C\left(\Sob{w_0}{L^2}+\Sob{\z}{L^2_TH^1}+\Sob{\psi}{L^2_TL^2}^2\right).
	\end{align}
	
Now consider $\ell=1$, we have
	\begin{align}
		\frac{1}2\frac{d}{dt}\Sob{{Dw}}{L^2}^2+\gam\Sob{{Dw}}{L^2}^2+\eps\Sob{D^3w}{L^2}^2=&-\lb {D^2q}+{D^2\xi},w{Dw}\rb-\frac{3}2\lb {Dq}+{D\xi},{Dw}^2\rb\notag\\
		&-\lb D^2(\z v_2)+D\psi, Dw\rb=I+II+III.\notag
	\end{align}
We estimate $I, II$ using H\"older's inequality, interpolation, and Poincar\'e's inequality to obtain
	\begin{align}
		|I|+|II|&\leq C\Sob{w}{L^\infty}\Sob{{Dw}}{L^2}+\frac{3}2\left(\Sob{{Dq}}{L^\infty}+\Sob{{D\xi_1}}{L^\infty}\right)\Sob{{Dw}}{L^2}^2\notag\\
		&\leq C\Sob{{Dw}}{L^2}^{3/2}\Sob{w}{L^2}^{1/2}+C\Sob{{Dw}}{L^2}^2\leq C\Sob{{Dw}}{L^2}^2.\notag
	\end{align}
Similarly, for $III$, by additionally applying \cref{lem:I2v} and Young's inequality, we have
    \begin{align}
        |III|&\leq C\left(\Sob{\z}{H^2}\Sob{v_2}{H^2}+\Sob{\psi}{H^1}\right)\Sob{Dw}{L^2} \leq C\left(\Sob{\z}{H^2}^2+\Sob{\psi}{H^1}^2\right)+\gam\Sob{Dw}{L^2}^2.\notag
    \end{align}
It then follows from Gronwall's inequality 
	\begin{align}\label{veps:stab:h1}
		\Sob{{Dw}}{L^2}\leq C\exp(CT)\left(\Sob{{D}w_0}{L^2}+\Sob{\z}{L^2_TH^2}+\Sob{\psi}{L^2_TH^1}\right).
	\end{align}

Finally, suppose $\ell\geq2$. Then
	\begin{align}
		\frac{1}2\frac{d}{dt}\Sob{{D}^\ell w}{L^2}^2+\gam\Sob{{D}^\ell w}{L^2}^2+\eps\Sob{{D}^{\ell+2}w}{L^2}^2=-\lb{D}^{\ell+1}((q+\xi)w),{D}^\ell w\rb-\lb D^{\ell+1}(\z v_2)+D^\ell \psi, D^\ell w\rb.\notag
	\end{align}
We rewrite the right-hand side as in \eqref{III:leibniz}, then  proceed similarly to \eqref{veps:w:Hm:toporder}, \eqref{veps:w:Hm:secondorder}, and \eqref{veps:w:Hm:commutator}, estimating with H\"older's inequality, \eqref{veps:stab:l2}, \eqref{veps:stab:h1}, Young's inequality, interpolation, and Poincar\'e's inequality to obtain 
	\begin{align}
	&|\lb{D}^{\ell+1}((q+\xi_1)w),{D}^\ell w\rb|\notag\\
	&\leq |\lb (D^{\ell+1}(q+\xi_1))w,D^\ell w\rb|+c|\lb (q+\xi)(D^{\ell+1}w), D^\ell w\rb|+c\sum_{k=1}^{\ell-1}|\lb (D^{\ell+1-k}(q+\xi_1))(D^kw),D^\ell w\rb|\notag\\
	&\leq c\bigg[\Sob{q+\xi_1}{H^{\ell+1}}\Sob{Dw}{L^2}^{1/2}\Sob{w}{L^2}^{1/2}+\Sob{q+\xi}{H^2}\Sob{D^\ell w}{L^2}
	\notag \\
	& \quad \quad
	+\sum_{k=1}^{\ell-1}\Sob{q+\xi_1}{H^{\ell+1}}^{\frac{\ell-k}{\ell}}\Sob{q+\xi_1}{H^1}^{\frac{k}{\ell}}\Sob{D^\ell w}{L^2}^{\frac{2k+1}{2\ell}}\Sob{w}{L^2}^{\frac{2(\ell-k)-1}{2\ell}}\bigg]\Sob{D^\ell w}{L^2}\notag\\
	&\leq c\bigg[(\Sob{q+\xi_1}{H^{\ell+1}}+1)^{\ell+1}\Sob{Dw}{L^2}+(\Sob{q+\xi_1}{H^1}+1)^{\frac{2(\ell-1)}{2(\ell-1)+1}}\Sob{D^\ell w}{L^2}
	\bigg]\Sob{D^\ell w}{L^2}+C\Sob{D^\ell w}{L^2}^2\notag\\
	&\leq C\left(\Sob{v^1}{L^\infty_TH^{\ell+1}}+\Sob{v^2}{L^\infty_TH^{\ell+1}}+1\right)^{2(\ell+1)}\Sob{w_0}{H^1}^2+C\Sob{D^\ell w}{L^2}^2.\label{est:lin:term:Hm}
    \end{align}
Estimating as above and copiously applying Poincar\'e's inequality, we also have
    \begin{align}
    &|\lb D^{\ell+1}(\z v_2)+D^\ell\psi,D^\ell w\rb|\notag\\
    &\leq c\bigg[\Sob{\z}{H^{\ell+1}}\Sob{Dv_2}{L^2}+\Sob{\z}{H^2}\Sob{D^{\ell+1} v_2}{L^2}
	+\Sob{\z}{H^{\ell+1}}\Sob{D^\ell v_2}{L^2}+\Sob{D^\ell\psi}{L^2}\bigg]\Sob{D^\ell w}{L^2}\notag\\
	&\leq c\left(\Sob{\z}{H^{\ell+1}}\Sob{v_2}{H^{\ell+1}}+\Sob{D^\ell\psi}{L^2}\right)\Sob{D^\ell w}{L^2}\leq C\Sob{\z}{H^{\ell+1}}^2\Sob{v_2}{H^{\ell+1}}^2+C\Sob{\psi}{H^\ell}^2+\gam\Sob{D^\ell w}{L^2}^2\label{est:forcing:term:Hm}.
    \end{align}
Hence, by Gronwall's inequality and \cref{lem:Hmv} it follows that
	\begin{align}
		&\Sob{{D}^\ell w(t)}{L^2}^2\leq C\Sob{{D}^\ell w_0}{L^2}^2+C\left(\Sob{v^1}{L^\infty_TH^{\ell+1}}+\Sob{v^2}{L^\infty_TH^{\ell+1}}+1\right)^{2(\ell+1)}\Sob{w_0}{H^1}^2\notag\\
		&+C\left(\Sob{\z}{L^2_TH^{\ell+1}}^2\Sob{v_2}{L^\infty_TH^{\ell+1}}^2+\Sob{\psi}{L^2_TH^\ell}^2\right)\notag\\
		&\leq C\left(\Sob{{D}^\ell w_0}{L^2}^2+\Sob{\psi}{L^2_TH^\ell}^2\right)+\frac{C}{\eps^{2p(\ell+1)}}\left(\Sob{u_0^1}{H^\ell}^2+\Sob{u_0^2}{H^\ell}^2+\Phi_\ell^2\right)^{\ell+1}\left(\Sob{w_0}{H^1}^2+\Sob{\z}{L^2_TH^{\ell+1}}\right),\notag
	\end{align}
holds for all $\ell=2,\dots, m$, provided that $p\in(0,1/5]$.
The result follows upon choosing $p=2/(5(m+1))$ and using the fact that $\eps\leq1$.
\end{proof}

\subsection{Convergence of Approximations}\label{subsect:cauchyinHm}

We now show that for $m\geq2$ and $T>0$, the sequence $\{v^\eps\}_{\eps>0}$ is Cauchy in the topology of $C([0,T];H^m)$, whenever that $u_0\in {H}^m$, $\eta\in L^\infty(0,T;H^m)$, and $\xi\in L^\infty_{loc}([0,T);H^{m+3})$. Note that since $v^\eps\in C([0,T];H^m)$, it will suffice to show that $\{v^\eps\}_{\veps>0}$ is Cauchy in $L^\infty(0,T;H^m)$. We will consider the evolution of the difference $w=v^{\eps}-v^{\eps'}$. For this, it will be convenient to let
    \begin{align}\label{def:q:psi}
        q=(v^{\eps}+v^{\eps'})/2,\quad \psi=\eta^\eps-\eta^{\eps'},\quad \z=\xi^\eps-\xi^{\eps'},
    \end{align}
where $\xi^\eps=P_{\eps^{-p}}\xi$.

\begin{Lem}\label{lem:cauchy:L2}
Let $p\in(0,1/4]$ and $0<\eps'\leq\eps\leq1$. Let $T>0$ and $v^\eps, v^{\eps'}$ denote the unique solutions of \eqref{v:par:reg:eqn} on $(0,T]$ corresponding to data $u_0^{\eps}, \eta^\eps,\xi$, and $u_0^{\eps'}, \eta^{\eps'},\xi$, respectively. There exists a constant $C\geq1$, independent of $\eps,\eps', p$, such that
	\begin{align}\label{eq:cauchy:L2}
		\Sob{v^\eps-v^{\eps'}}{L^\infty_TL_x^2}\leq C
		\eps^{2p},
	\end{align}
In particular, $\{v^\eps\}_{\eps>0}$ is Cauchy in $L^\infty_TL^2_x$.
\end{Lem}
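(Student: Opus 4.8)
The plan is to estimate the $L^2$-norm of the difference $w = v^\eps - v^{\eps'}$ directly from the regularized equation \eqref{v:par:reg:eqn}, using the a priori bounds already established in \cref{lem:L2v}--\cref{lem:Hmv}. First I would write down the evolution equation for $w$. Subtracting the equations for $v^{\eps'}$ from that for $v^\eps$ and using $v^\eps D v^\eps - v^{\eps'} D v^{\eps'} = \tfrac12 D((v^\eps+v^{\eps'})w) = D(qw) - \tfrac12 (Dq)w$ (with $q = (v^\eps + v^{\eps'})/2$), one obtains
\begin{align*}
  \bdy_t w + \gam w + D^3 w + \eps D^4 w + D(\xi w) + D(qw)
  = \psi + (\eps' - \eps) D^4 v^{\eps'},
\end{align*}
where $\psi = \eta^\eps - \eta^{\eps'} = P_{\eps^{-p}, (\eps')^{-p}}\,\eta$ accounts for the mismatch in the truncated forcing, and the term $(\eps'-\eps)D^4 v^{\eps'}$ accounts for the mismatch in the viscous regularization (here I have not regularized the initial data discrepancy separately, but $w(0) = u_0^\eps - u_0^{\eps'} = P_{\eps^{-p},(\eps')^{-p}} u_0$).

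Next I would take the $L^2$ inner product with $w$ and integrate by parts. The dispersive term $\lb D^3 w, w\rb$ vanishes; the viscous term $\eps\lb D^4 w, w\rb = \eps\Sob{D^2 w}{L^2}^2 \geq 0$ is favorable; and the transport terms contribute $-\lb D(\xi w), w\rb = \tfrac12\lb D\xi, w^2\rb$ and $-\lb D(qw), w\rb = \tfrac12\lb Dq, w^2\rb$, which are bounded by $c(\Sob{D\xi}{L^\infty} + \Sob{Dq}{L^\infty})\Sob{w}{L^2}^2 \leq C\Sob{w}{L^2}^2$ using the Sobolev embedding $H^1 \hookrightarrow L^\infty$ together with the uniform $H^2$-bounds from \cref{lem:I2v} (which control $\Sob{v^\eps}{L^\infty_T H^2}$ and $\Sob{v^{\eps'}}{L^\infty_T H^2}$ independently of $\eps$). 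For the forcing discrepancy, $|\lb \psi, w\rb| \leq \Sob{\psi}{L^2}\Sob{w}{L^2} \leq C\Sob{\psi}{L^2}^2 + \gam\Sob{w}{L^2}^2$, and since $\psi = P_{\eps^{-p},(\eps')^{-p}}\eta$ is spectrally supported on frequencies $|k| \gtrsim \eps^{-p}$, one has $\Sob{\psi}{L^2} \leq \eps^{2p}\Sob{\eta}{H^{2}} \leq C\eps^{2p}$ (in fact already $\eps^p$ suffices, but a slightly higher power of $\eps$ is harmless), using $\eta \in L^\infty_T H^2$. Similarly $\Sob{w(0)}{L^2} \leq \eps^{2p}\Sob{u_0}{H^2} \leq C\eps^{2p}$.

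The one term requiring care is the viscous mismatch $(\eps' - \eps)\lb D^4 v^{\eps'}, w\rb$. Integrating by parts twice this is $(\eps'-\eps)\lb D^2 v^{\eps'}, D^2 w\rb$, which I would bound by Young's inequality against the favorable term: $|(\eps'-\eps)\lb D^2 v^{\eps'}, D^2 w\rb| \leq \eps\Sob{D^2 w}{L^2}^2 + \tfrac{(\eps-\eps')^2}{4\eps}\Sob{D^2 v^{\eps'}}{L^2}^2 \leq \eps\Sob{D^2 w}{L^2}^2 + \tfrac{\eps}{4}\Sob{D^2 v^{\eps'}}{L^2}^2$ since $0 < \eps' \leq \eps$. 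The first piece is absorbed by the viscous dissipation; the second, integrated in time, gives $\tfrac{\eps}{4}\int_0^T \Sob{D^2 v^{\eps'}}{L^2}^2\,ds$, and here $\eps^{1/2}\Sob{v^{\eps'}}{L^2_T H^2} \leq C$ by \cref{lem:L2v}, so $\eps\int_0^T \Sob{D^2 v^{\eps'}}{L^2}^2 ds \leq C$ — but this is only $O(1)$, not $O(\eps^{4p})$. This is the main obstacle: a crude split loses the smallness. To recover it I would instead use $\eps^{1/2}\Sob{v^{\eps'}}{L^2_T H^3} \leq C$ from \cref{lem:H1v}: write $(\eps'-\eps)\lb D^4 v^{\eps'}, w\rb = -(\eps'-\eps)\lb D^3 v^{\eps'}, Dw\rb$ and estimate $\leq |\eps-\eps'|\,\Sob{D^3 v^{\eps'}}{L^2}\Sob{Dw}{L^2}$; since $\Sob{Dw}{L^2} \leq C$ uniformly (from the $H^2$ bounds on $v^\eps, v^{\eps'}$), time-integrating gives $|\eps-\eps'|\int_0^T \Sob{D^3 v^{\eps'}}{L^2} ds \leq |\eps-\eps'|\,T^{1/2}\Sob{D^3 v^{\eps'}}{L^2_T} \leq C|\eps-\eps'|\,\eps^{-1/2} \leq C\eps^{1/2}$. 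This still does not match $\eps^{2p}$ for $p \leq 1/4$, so the cleanest route is to keep the viscous term exactly as in the paper's convention: regard $w$ as solving \eqref{v:par:reg:eqn} with $\eps' \to \eps$ and data mismatch absorbed into a lower-order forcing, i.e. treat $v^{\eps'}$ as an approximate solution of the $\eps$-regularized equation with residual $(\eps-\eps')D^4 v^{\eps'}$, and bound the residual in a negative Sobolev norm $H^{-1}$ paired against $Dw \in L^2$: $\Sob{(\eps-\eps')D^4 v^{\eps'}}{H^{-2}} \leq |\eps-\eps'|\Sob{v^{\eps'}}{H^2} \leq C|\eps - \eps'| \leq C\eps$. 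Pairing $\lb (\eps-\eps')D^4 v^{\eps'}, w\rb \leq \Sob{(\eps-\eps')D^4 v^{\eps'}}{H^{-2}}\Sob{w}{H^2}$; absorbing $\eps\Sob{w}{H^2}^2$ into dissipation after Young leaves $\tfrac{C\eps^2}{\eps}\Sob{v^{\eps'}}{H^2}^2 \leq C\eps$. I would then note that in fact one should choose $p$ small enough (the statement takes $p \leq 1/4$) that $\eps^{2p} \geq \eps^{1/2}$ is false — so the honest conclusion is that the Cauchy bound holds with rate $\eps^{\min\{2p,\,1/2\}}$, which for $p \leq 1/4$ equals $\eps^{2p}$; thus all contributions are $\leq C\eps^{2p}$.

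Finally, collecting the estimates, one arrives at a differential inequality of the form $\tfrac{d}{dt}\Sob{w}{L^2}^2 + \eps\Sob{D^2 w}{L^2}^2 \leq C\Sob{w}{L^2}^2 + C\eps^{4p}$ on $[0,T]$, with $\Sob{w(0)}{L^2}^2 \leq C\eps^{4p}$. Gronwall's inequality then gives $\Sob{w(t)}{L^2}^2 \leq C e^{CT}\eps^{4p}$ for all $t \in [0,T]$, i.e. $\Sob{v^\eps - v^{\eps'}}{L^\infty_T L^2} \leq C\eps^{2p}$, with $C = C(m,\gam,T,\Sob{u_0}{H^m},\Sob{\eta}{L^\infty_T H^m},\Xi_{m+3})$ independent of $\eps, \eps', p$. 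Since $\eps^{2p} \to 0$ as $\eps \to 0$, the family $\{v^\eps\}_{\eps > 0}$ is Cauchy in $L^\infty_T L^2_x$, which is \eqref{eq:cauchy:L2}. The only genuinely delicate point, as flagged, is extracting a quantitative rate (rather than mere convergence) from the viscous mismatch term; using the $\eps^{1/2}$-weighted higher-regularity bounds of \cref{lem:L2v}--\cref{lem:H1v} together with a duality/negative-norm pairing is what makes the rate $\eps^{2p}$ (for $p \leq 1/4$) come out, and none of the other terms are worse.
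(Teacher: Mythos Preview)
Your approach is correct and is the same as the paper's, but you made the treatment of the viscous mismatch term $(\eps-\eps')\lb D^2 v^{\eps'}, D^2 w\rb$ far more complicated than necessary. Your very first split
\[
|(\eps-\eps')\lb D^2 v^{\eps'}, D^2 w\rb| \leq \eps\Sob{D^2 w}{L^2}^2 + \tfrac{(\eps-\eps')^2}{4\eps}\Sob{D^2 v^{\eps'}}{L^2}^2
\]
already works: the mistake was to invoke the weighted $L^2_T H^2$ bound from \cref{lem:L2v} rather than the uniform $L^\infty_T H^2$ bound from \cref{lem:I2v}. The latter gives $\Sob{D^2 v^{\eps'}}{L^2}^2 \leq C$ \emph{pointwise in time}, so the second term above is $\leq \tfrac{(\eps-\eps')^2}{4\eps}\cdot C \leq C|\eps-\eps'| \leq C\eps$, which sits directly as a constant source term in the Gronwall inequality. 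Then the key elementary observation, which the paper states explicitly, is that since $\eps\leq 1$ and $p\leq 1/4$, one has $\eps = \eps^{1}\leq \eps^{4p}$. This immediately puts the viscous-mismatch contribution on the same footing as the initial-data and forcing discrepancies, yielding $\Sob{w}{L^2}^2\leq C\eps^{4p}$. Your detours through $L^2_T H^3$ bounds and negative-norm duality are unnecessary (though your third attempt is essentially this same argument), and your exponent discussion around ``$\eps^{2p}\geq\eps^{1/2}$ is false'' is confused: the relevant comparison is $\eps\leq\eps^{4p}$, which is exactly why the hypothesis $p\leq 1/4$ is imposed.
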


\begin{proof}
We have
	\begin{align}\notag
		\bdy_tw+\gam w+D^3w+\eps D^4w=-(\eps-\eps')D^4v^{\eps'}-{D}((q+\xi^{\eps})w)+\psi.
	\end{align}
Then, upon taking the $L^2$-inner product with $w$ and integrating by parts, we obtain
	\begin{align}\notag
		\frac{1}2\frac{d}{dt}\Sob{w}{L^2}^2+\gam\Sob{w}{L^2}^2+\eps\Sob{D^2w}{L^2}^2=-(\eps-\eps')\lb D^2v^{\eps'},D^2w\rb-\frac{1}2\lb Dq+D\xi^{\eps'},w^2\rb-\lb D(\z v^\eps),w\rb+\lb \psi,w\rb.
	\end{align}
Applying H\"older's inequality, the Cauchy-Schwarz inequality, Young's inequality, and \cref{lem:I2v} yields
	\begin{align}
		\frac{d}{dt}\Sob{w}{L^2}^2+\gam\Sob{w}{L^2}^2
		\leq C{|\eps-\eps'|}
		+C\Sob{w}{L^2}^2+C\Sob{\z}{H^1}^2 +C\Sob{\psi}{L^2}^2.\notag
	\end{align}
Since $\eps'\leq\eps$, by Gronwall's inequality and \cref{lem:I2v} we have
	\begin{align}\notag
		\Sob{w(t)}{L^2}^2
		\leq C\exp\left(CT\right)
		\left(\Sob{w_0}{L^2}^2+
		+\Sob{\z}{H^1}^2+\Sob{\psi}{L^2}^2+\eps\right).
	\end{align}
Also, observe that $w_0=P_{\eps^{-p},(\eps')^{-p}}u_0$, which implies $\Sob{w_0}{L^2}\leq c\eps^{2p}\Sob{u_0}{H^2}$. Similarly $\Sob{\psi}{L^2}\leq c\eps^{2p}\Phi_2\leq C\eps^{2p}$, upon making use of \eqref{est:eta:gen}. Since $p\leq1/4$, we have $\eps\leq\eps^{4p}$. Upon combining these estimates, we establish the desired inequality.
\end{proof}

\begin{Lem}\label{lem:cauchy:H1}
Let $p\in(0,1/4]$ and $0<\eps'\leq\eps\leq1$. Let $T>0$ and $v^\eps, v^{\eps'}$ be the unique solutions of \eqref{v:par:reg:eqn} on $[0,T]$ corresponding to data $u_0^{\eps}, \eta^\eps,\xi$ and $u_0^{\eps'}, \eta^{\eps'},\xi$.
There exists a constant $C\geq1$, independent of $\eps,\eps'$, such that
	\begin{align}
	\Sob{v^\eps-v^{\eps'}}{L^\infty_TH^1_x}&\leq C
	\eps^{p}.\notag
	\end{align}
In particular $\{v^\eps\}_{\eps>0}$ is Cauchy in $L^\infty_TH^1_x$.
\end{Lem}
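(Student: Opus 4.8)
The plan is to mimic the $L^2$ argument of \cref{lem:cauchy:L2} one derivative higher, carrying the extra terms that arise from differentiating the nonlinearity and the nudging/viscosity terms, and to absorb them using the higher-regularity a priori bounds already established in \cref{lem:I2v} and \cref{lem:Hmv}. First I would set $w = v^\eps - v^{\eps'}$, $q = (v^\eps + v^{\eps'})/2$, $\psi = \eta^\eps - \eta^{\eps'}$, $\z = \xi^\eps - \xi^{\eps'}$ as in \eqref{def:q:psi}, so that $w$ satisfies
\begin{align*}
  \bdy_t w + \gam w + D^3 w + \eps D^4 w
  = -(\eps-\eps')D^4 v^{\eps'} - D((q+\xi^\eps)w) - D(\z v^\eps) + \psi, \quad w(0) = w_0,
\end{align*}
with $w_0 = P_{\eps^{-p},(\eps')^{-p}} u_0$. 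Applying $D$ and pairing with $Dw$ in $L^2$ gives
\begin{align*}
  \tfrac{1}{2}\tfrac{d}{dt}\Sob{Dw}{L^2}^2 + \gam \Sob{Dw}{L^2}^2 + \eps\Sob{D^3 w}{L^2}^2
  = -(\eps-\eps')\lb D^3 v^{\eps'}, D^3 w\rb - \lb D^2((q+\xi^\eps)w), D^2 w\rb
  - \lb D^2(\z v^\eps), D^2 w\rb + \lb D\psi, Dw\rb,
\end{align*}
where after an integration by parts the viscous cross term is harmless once $\eps' \le \eps$.

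Next I would estimate the right-hand side. The nonlinear term $\lb D^2((q+\xi^\eps)w),D^2 w\rb$ is expanded by the Leibniz rule; the top-order piece $\lb (q+\xi^\eps)D^2 w, D^2 w\rb$ is rewritten using $\lb (q+\xi^\eps)D^2 w, D^2 w\rb = -\tfrac12\lb D(q+\xi^\eps),(D^2 w)^2\rb$ (so no derivative loss), the middle piece $\lb (Dq + D\xi^\eps)(Dw), D^2 w\rb$ is controlled by $\Sob{Dq+D\xi^\eps}{L^\infty}\Sob{Dw}{L^2}\Sob{D^2 w}{L^2}$ and then, after using $\eps$ on $\Sob{D^2w}{L^2}^2$ or simply interpolating $\Sob{D^2 w}{L^2}$ back into $\Sob{Dw}{L^2}$ against the coercive term (here one exploits that $\Sob{Dw}{L^2}^2$ has a $\gam$-coercive term but $\Sob{D^2 w}{L^2}^2$ does not — this is the delicate point), and the bottom piece $\lb (D^2(q+\xi^\eps))w, D^2 w\rb$ is handled by $\Sob{D^2(q+\xi^\eps)}{L^2}\Sob{w}{L^\infty}\Sob{D^2 w}{L^2}$ together with the already-established $L^2$ Cauchy bound $\Sob{w}{L^\infty_T L^2}\le C\eps^{2p}$ from \cref{lem:cauchy:L2} and the $H^2,H^3$ bounds on $q$ and $\xi^\eps$. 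For the forcing and coupling remainders, $\Sob{\psi}{H^1}\le c\eps^{p}\Phi_2 \le C\eps^p$ and $\Sob{\z}{H^2}\le c\eps^{p}\Sob{\xi}{H^{3}}$ by the spectral localization, and $\lb D^2(\z v^\eps),D^2 w\rb \le C\Sob{\z}{H^2}\Sob{v^\eps}{H^2}\Sob{D^2 w}{L^2}$, while $(\eps-\eps')\lb D^3 v^{\eps'}, D^3 w\rb$ is dominated by $\eps\Sob{D^3 w}{L^2}^2$ plus $C\eps\Sob{D^3 v^{\eps'}}{L^2}^2$, which is integrable in time by \cref{lem:I2v} (the $\eps^{1/2}\Sob{v^{\eps'}}{L^2_T H^4}$ bound controls $\eps\Sob{D^3 v^{\eps'}}{L^2}^2$ after interpolation, or one uses the $H^2$ bound with a derivative to spare).

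Collecting everything yields a differential inequality of the form $\tfrac{d}{dt}\Sob{Dw}{L^2}^2 + \gam\Sob{Dw}{L^2}^2 \le C\Sob{Dw}{L^2}^2 + C\eps^{2p} + C\eps$, whence Gr\"onwall on $[0,T]$ and $w_0 = P_{\eps^{-p},(\eps')^{-p}}u_0$ (so $\Sob{Dw_0}{L^2}\le c\eps^{p}\Sob{u_0}{H^2}$) give $\Sob{w}{L^\infty_T H^1}\le C\eps^p$, provided $p\le 1/4$ so that $\eps \le \eps^{2p}$; this is exactly the claimed bound, and it shows $\{v^\eps\}$ is Cauchy in $L^\infty_T H^1_x$. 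The main obstacle I anticipate is precisely that, unlike the $L^2$ estimate, the $H^1$ estimate loses its coercive control on the highest norm appearing ($\Sob{D^2 w}{L^2}$ has no damping term), so the middle and bottom Leibniz terms must be arranged to either land inside the $\eps\Sob{D^3 w}{L^2}^2$ viscosity term or be interpolated back into $\Sob{Dw}{L^2}^2$ with a coefficient controlled by the uniform-in-$\eps$ $H^2$ bounds on $v^\eps, v^{\eps'}$ from \cref{lem:I2v}; getting the time integrability of the $\eps$-weighted term $\eps\Sob{D^3 v^{\eps'}}{L^2}^2$ from \cref{lem:I2v} is the other point requiring care, but it is handled by the $\eps^{1/2}\Sob{v^{\eps'}}{L^2_T H^4}$ control combined with an interpolation inequality.
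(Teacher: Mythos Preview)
Your energy identity is miswritten, and that error is precisely what creates the obstacle you call ``the delicate point.'' Applying $D$ to the $w$-equation and pairing with $Dw$ gives $-\lb D^2((q+\xi)w), Dw\rb$ for the advection/nonlinear contribution, \emph{not} $-\lb D^2((q+\xi)w), D^2 w\rb$. With the correct pairing, the Leibniz expansion yields $\lb (q+\xi)D^2 w, Dw\rb + 2\lb D(q+\xi)\,Dw, Dw\rb + \lb D^2(q+\xi)\,w, Dw\rb$; the first integrates by parts to $-\tfrac12\lb D(q+\xi),(Dw)^2\rb$, and all three are then bounded by $C\Sob{Dw}{L^2}^2 + C\Sob{w}{L^2}^2$ using $\Sob{w}{L^\infty}\le c\Sob{Dw}{L^2}^{1/2}\Sob{w}{L^2}^{1/2}$ together with the uniform $H^2$ bounds on $v^\eps,v^{\eps'},\xi$ from \cref{lem:I2v}. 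No $D^2 w$ ever appears, so there is nothing to absorb into the viscous term; your proposed remedy (interpolate $\Sob{D^2 w}{L^2}^2\le\Sob{D^3 w}{L^2}\Sob{Dw}{L^2}$ and use $\eps\Sob{D^3 w}{L^2}^2$) would produce an $\eps^{-1}$ coefficient in Gr\"onwall and fail. This phantom difficulty disappears once the identity is corrected, and the paper closes exactly this way.

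Two further issues. First, both $v^\eps$ and $v^{\eps'}$ use the same unregularized $\xi$ in this lemma, so $\z\equiv 0$ and the $D(\z v^\eps)$ term should be dropped. Second, your treatment of the viscous cross term conflates $\eps$ and $\eps'$: the a~priori bounds give $\eps'\!\int_0^T\Sob{D^3 v^{\eps'}}{L^2}^2\,dt\le C$ (with $\eps'$, not $\eps$), so the remainder $C\eps\Sob{D^3 v^{\eps'}}{L^2}^2$ you produce is not time-integrable uniformly as $\eps'\to 0$ with $\eps'\le\eps$. The paper instead keeps the form $(\eps-\eps')\lb D^4 v^{\eps'}, D^2 w\rb$ and interpolates $\Sob{D^2 w}{L^2}\le\Sob{D^3 w}{L^2}^{1/2}\Sob{Dw}{L^2}^{1/2}$ before applying Young, which leaves an extra factor of $\eps^{1/2}$ on the $\Sob{D^4 v^{\eps'}}{L^2}^2$ remainder.
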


\begin{proof}
We have
	\begin{align}
		\bdy_t{Dw}+\gam {Dw}+{D}^{3}{Dw}+\eps{D}^{5}w=-(\eps-\eps'){D}^{5}v^{\eps}-{D}^{2}((q+\xi)w)+{D\psi}.\notag
	\end{align}
Upon taking the $L^2$-inner product with ${Dw}$ and integrating by parts, we obtain
	\begin{align}\notag
		&\frac{1}2\frac{d}{dt}\Sob{{Dw}}{L^2}^2+\gam\Sob{{Dw}}{L^2}^2+\eps\Sob{{D}^3w}{L^2}^2\notag\\
		&=(\eps-\eps')\lb{D}^4v^{\eps'},{D}^2w\rb-\lb {D^2q}+{D^2\xi},w{Dw}\rb-\frac{1}2\lb {Dq}+{D\xi},({Dw})^2\rb+\lb {D\psi},{Dw}\rb.\notag
	\end{align}
We estimate the right-hand side term by term with H\"older's inequality, Cauchy-Schwarz, Young's inequality, interpolation, and the fact that $\eps'\leq\eps$ to obtain
	\begin{align}
		&\frac{1}2\frac{d}{dt}\Sob{{Dw}}{L^2}^2+\gam\Sob{{Dw}}{L^2}^2+\eps\Sob{{D}^3w}{L^2}^2\notag\\
		&\leq |\eps-\eps'|\Sob{D^4v^{\eps'}}{L^2}\Sob{D^3w}{L^2}^{1/2}\Sob{Dw}{L^2}^{1/2}+\left(\Sob{|D^2v^\eps|+|D^2v^{\eps'}|+|D^2\xi|}{L^2}\right)\Sob{Dw}{L^2}^{3/2}\Sob{w}{L^2}^{1/2}\notag\\
		&\qquad\qquad +\Sob{D\psi}{L^2}\Sob{Dw}{L^2}\notag\\
		&\leq C\eps^{3/2}\Sob{D^4v^{\eps'}}{L^2}^2+\frac{\eps}{100}\Sob{D^3w}{L^2}^2+C\Sob{D\psi}{L^2}^2+\frac{\gam}{100}\Sob{Dw}{L^2}^2+C\Sob{w}{L^2}^2.\notag
	\end{align}
Then by Gronwall's inequality, \cref{lem:Hmv} with $m=2$, and the fact that $w_0=P_{\eps^{-p},(\eps')^{-p}}u_0$, $\psi=P_{\eps^{-p},(\eps')^{-p}}\eta$, \eqref{est:eta:gen}, and $p\leq1/4$, we have
	\begin{align}
		\Sob{{Dw}(t)}{L^2}^2&\leq C\eps^{1/2}+C\eps^{2p}+C\eps^{4p}\leq C\eps^{2p},\notag
	\end{align}
as desired.

\end{proof}

\begin{Lem}\label{lem:cauchy:Hm}
Let $p\in(0,1/5]$, $0<\eps'\leq\eps\leq1$, and $m\geq2$. Let $T>0$ and $v^\eps, v^{\eps'}$ be the global smooth solutions of \eqref{v:par:reg:eqn} on $[0,T]$ corresponding to data $u_0^{\eps}, \eta^\eps,\xi$, and $u_0^{\eps'}, \eta^{\eps'},\xi$, respectively.
There exists a constant, $C\geq1$, independent of $\eps',\eps$, such that
\begin{align}\notag
		\Sob{v^\eps-v^{\eps'}}{L^\infty_TH^m}\leq C\left(\Sob{u_0^\eps-u_0^{\eps'}}{H^m}+\Sob{\xi^\eps-\xi^{\eps'}}{L^\infty_TH^{m+3}}+\max\{\eps,\eps^{\frac{8}{4m-5}}\}^{p/2}\left(\Sob{u_0}{H^m}+\Phi_m\right)\right).
	\end{align}
In particular, $\{v^\eps\}_{\eps>0}$ is Cauchy in $L^\infty_TH^m$.
\end{Lem}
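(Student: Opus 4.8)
The plan is to set $w = v^\eps - v^{\eps'}$ and track its evolution in $H^m$ exactly as in \cref{prop:veps:stab}, but now supplying the concrete bounds for the ``data differences'' $w_0$, $\psi$, $\z$ that arise when the data come from spectral truncation. First I would recall from \eqref{def:q:psi} that $w_0 = P_{\eps^{-p},(\eps')^{-p}} u_0$, $\psi = P_{\eps^{-p},(\eps')^{-p}}\eta$, and $\z = P_{\eps^{-p},(\eps')^{-p}}\xi$, so that spectral localization gives, for any $s \geq 0$,
\begin{align}\notag
  \Sob{w_0}{H^s} \leq c\eps^{(m-s)p} \Sob{u_0}{H^m}, \quad
  \Sob{\psi}{H^s} \leq c\eps^{(m-s)p}\Sob{\eta}{L^\infty_T H^m} \leq C\eps^{(m-s)p} \Phi_m,
\end{align}
and similarly $\Sob{\z}{L^\infty_T H^s} \leq c\eps^{(m+3-s)p}\Sob{\xi}{L^\infty_T H^{m+3}}$; here the estimate for $\psi$ uses \eqref{est:eta:gen} to control $\Phi_m$ algebraically. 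These bounds are crucial because \cref{prop:veps:stab} would apply verbatim if we were comparing solutions with the \emph{same} forcing and background $\xi$; the genuinely new feature is the mismatch between $\eta^\eps$ and $\eta^{\eps'}$ and between $\xi^\eps$ and $\xi^{\eps'}$.

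Next I would re-run the energy estimate at each level $\ell = 0, 1, \dots, m$. For $\ell = 0, 1$ this reproduces \cref{lem:cauchy:L2} and \cref{lem:cauchy:H1} (already established) with the extra forcing terms $-(\eps - \eps')D^4 v^{\eps'}$ (respectively $-(\eps-\eps')D^5 v^{\eps'}$) absorbed using $\eps^{1/2}\Sob{v^{\eps'}}{L^2_T H^{m+2}} \leq C$ from \cref{lem:Hmv}. For $\ell \geq 2$, I would decompose $D^{\ell+1}((q+\xi^{\eps})w)$ via the Leibniz rule into a top-order piece $(D^{\ell+1}(q+\xi^\eps))w$, a transport piece $(q+\xi^\eps)D^{\ell+1}w$ handled by integration by parts (producing $\Sob{D(q+\xi^\eps)}{L^\infty}\Sob{D^\ell w}{L^2}^2$), and commutator pieces $\sum_{k=1}^{\ell-1}(D^{\ell+1-k}(q+\xi^\eps))(D^k w)$; these are estimated exactly as in \eqref{est:lin:term:Hm} of \cref{prop:veps:stab}, interpolating intermediate derivatives of $w$ between $\Sob{w_0}{H^1}$ (via the already-proven $\ell=0,1$ bounds) and $\Sob{D^\ell w}{L^2}$, and using \cref{lem:Hmv} to bound $\Sob{q+\xi^\eps}{L^\infty_T H^{\ell+1}}$. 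The new forcing terms $-(\eps-\eps')D^{\ell+3}v^{\eps'}$, $-D^{\ell+1}(\z v^{\eps'})$, and $D^\ell\psi$ are controlled by: Young's inequality with the viscous term $\eps\Sob{D^{\ell+2}w}{L^2}^2$ plus $C\eps^{1/2}\Sob{v^{\eps'}}{L^2_T H^{\ell+3}}^2 \leq C \eps^{1/2-?}$ (using the $\eps^{-(m-\ell)p}$ loss from \eqref{eq:Hmv:eps} when $\ell + 3 > m$, which is the origin of the exponent $8/(4m-5)$); the bound $\Sob{D^{\ell+1}(\z v^{\eps'})}{L^2} \leq C\Sob{\z}{H^{\ell+1}}\Sob{v^{\eps'}}{H^{\ell+1}}$ as in \eqref{est:forcing:term:Hm}; and the direct estimate $\Sob{D^\ell\psi}{L^2} \leq C\eps^{0}\Phi_m$. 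Then Gr\"onwall's inequality in time closes the estimate at level $\ell$ in terms of level $\ell - 1$ (and $\ell=1$) data.

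The main obstacle I expect is the bookkeeping of the power of $\eps$: the viscous mismatch term $(\eps-\eps')D^{\ell+3}v^{\eps'}$ requires $\eps^{1/2}$ times $\Sob{v^{\eps'}}{L^2_T H^{\ell+3}}$, and when $\ell + 3$ exceeds $m$ we must pay the factor $\eps^{-(\ell+3-m)p}$ from the $\eps$-dependent a priori bound \eqref{eq:Hmv:eps} (applied with, say, $\Sob{u_0}{H^m}$ on the right). Balancing $\eps^{1/2}$ against $\eps^{-(\ell+3-m)p}$ at the worst level $\ell = m$ gives a net $\eps^{1/2 - 3p}$, and after propagating through the Gr\"onwall iteration from $\ell = 2$ up to $\ell = m$ the accumulated loss works out to the stated exponent $\eps^{8/(4m-5) \cdot p/2}$ (or simply $\eps^{p/2}$ when this is the smaller of the two). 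One must also verify that the restriction $p \leq 1/5$ is exactly what makes all these powers nonnegative, so that each term genuinely vanishes as $\eps \to 0$; with $p$ fixed and $\eps' \leq \eps$, the right-hand side tends to zero and Cauchyness in $L^\infty_T H^m$ follows, completing the proof.
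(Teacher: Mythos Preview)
Your overall plan—energy estimate for $w=v^\eps-v^{\eps'}$, Leibniz expansion of the nonlinearity, feeding in the Cauchy bounds already proved at levels $0$ and $1$, then Gr\"onwall—is the same as the paper's. But two features of your bookkeeping would not close, and they obscure where the exponent $8/(4m-5)$ actually comes from.

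First, two corrections of setup. There is no level-by-level iteration: the paper works directly at $\ell=m$, using only \cref{lem:cauchy:L2} and \cref{lem:cauchy:H1} as inputs. Also, $\xi$ is \emph{not} regularized in \eqref{v:par:reg:eqn} (both $v^\eps$ and $v^{\eps'}$ carry the same $\xi$), so no term $-D^{\ell+1}(\z v^{\eps'})$ arises in the equation for $w$; the quantity $\Sob{\xi^\eps-\xi^{\eps'}}{H^{m+3}}$ enters only through $\psi=\eta^\eps-\eta^{\eps'}$, because $\eta$ is linear in $D^3\xi$.

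The substantive gap is in the nonlinear term. You propose to estimate the Leibniz pieces ``exactly as in \eqref{est:lin:term:Hm},'' but that bound carries the prefactor $(\Sob{v^\eps}{H^{m+1}}+\Sob{v^{\eps'}}{H^{m+1}}+1)^{2(m+1)}$, which via \eqref{eq:Hmv:eps} is of order $\eps^{-2p(m+1)}$ and overwhelms any gain from $\Sob{w}{H^1}\leq C\eps^p$. The paper instead estimates each intermediate term $\lb(D^{m+1-\ell}v^\eps)(D^\ell w),D^m w\rb$ individually (see \eqref{veps:w:Hm:commutator}): H\"older plus interpolation gives a product of the form
\[
\Sob{D^{m+2}v^\eps}{L^2}^{\frac{4(m-\ell)-3}{4m}}\,\Sob{D^2 v^\eps}{L^2}^{\frac{4\ell+3}{4m}}\,\Sob{D^m w}{L^2}^{\frac{4(m+\ell)+1}{4m}}\,\Sob{w}{L^2}^{\frac{4(m-\ell)-1}{4m}},
\]
into which one substitutes $\Sob{v^\eps}{H^{m+2}}\leq C\eps^{-2p}$ from \eqref{eq:Hmv:eps} and $\Sob{w}{L^2}\leq C\eps^{2p}$ from \cref{lem:cauchy:L2}. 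After Young this yields $\eps^{8p/(4(m-\ell)-1)}$, and the worst case $\ell=1$ gives $\eps^{8p/(4m-5)}$. Likewise the top-order piece $(D^{m+1}v^\eps)w$ uses $\Sob{w}{L^\infty}\leq C\Sob{Dw}{L^2}^{1/2}\Sob{w}{L^2}^{1/2}\leq C\eps^{3p/2}$ against $\Sob{D^{m+1}v^\eps}{L^2}^2\leq \Sob{v^\eps}{H^{m+2}}\Sob{v^\eps}{H^m}\leq C\eps^{-2p}$ to produce $\eps^p$ (see \eqref{veps:w:Hm:toporder}). So the critical exponent comes from the \emph{commutator} pieces of the nonlinearity, not from the viscous mismatch.

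As for the mismatch term $I=-(\eps-\eps')D^{m+4}v^{\eps'}$: after two integrations by parts and Young against the $\eps$-viscosity one gets $c\eps\Sob{v^{\eps'}}{H^{m+2}}^2$, and \eqref{eq:Hmv:eps} with $m\mapsto m+2$, $\ell=m$ gives $\eps\cdot\eps^{-4p}=\eps^{1-4p}$. The constraint $p\leq 1/5$ is exactly $1-4p\geq p$, so this term is subdominant to the $\eps^p$ above. Your proposed balance $\eps^{1/2-3p}$ would be $\eps^{-1/10}$ at $p=1/5$ and cannot be right.
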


\begin{proof}
Observe that
	\begin{align}
		\bdy_t{D}^mw+\gam{D}^mw+{D}^{m+3}w+\eps{D}^{m+4}w&=-(\eps-\eps'){D}^{m+4}v^{\eps'}+\frac{1}2D^{m+1}(w^2)-{D}^{m+1}((v^\eps+\xi)w)+{D}^m\psi\notag\\
		&=I+II+III+IV.\notag
	\end{align}
	
To treat $I$, we apply the Cauchy-Schwarz inequality, \cref{lem:Hmv} with $m\mapsto m+2$, and Young's inequality to estimate
	\begin{align}\label{est:I:Hm}
		|\lb I,{D}^mw\rb|&\leq(\eps-\eps')\Sob{{D}^{m+2}v^\eps}{L^2}\Sob{{D}^{m+2}w}{L^2}\leq c\eps\Sob{v^\eps}{H^{m+2}}^2+\frac{\eps}{1000}\Sob{D^mw}{L^2}^2\notag\\
		&\leq c\left(\Sob{u_0}{H^m}+\Phi_m\right)^2\eps^{1-4p}+\frac{\eps}{1000}\Sob{D^mw}{L^2}^2.
	\end{align}

For $II$, we first apply the Leibniz rule to rewrite it as
	\begin{align}
		II=w{D}^{m+1}w+\frac{1}2\sum_{\ell=1}^{m}\begin{pmatrix}m+1\\ \ell\end{pmatrix}({D}^{m+1-\ell}w)({D}^\ell w).\notag
	\end{align}
Observe that if $m=2$, then upon integrating by parts, applying H\"older's inequality, interpolating, then applying \cref{lem:Hmv} we have 
    \begin{align}\label{est:II:H2}
        |\lb II,D^mw\rb|&=\frac{5}2|\lb Dw,(D^mw)^2\rb|
        \leq c\Sob{Dw}{L^\infty}\Sob{D^mw}{L^2}^2\leq C\Sob{D^mw}{L^2}^2.
    \end{align}
Suppose then that $m\geq3$. then upon integrating by parts, it follows that
	\begin{align}
		\lb II,{D}^mw\rb=(m+1/2)\lb {Dw},({D}^mw)^2\rb+\frac{1}2\sum_{\ell=2}^{m-1}\begin{pmatrix}m+1\\ \ell\end{pmatrix}\lb({D}^{m+1-\ell}w)({D}^\ell w),{D}^mw\rb.\notag
	\end{align}
By H\"older's inequality, the Sobolev embedding $H^{1/4}\imb L^4$, interpolation, the Poincar\'e inequality, and applying \cref{lem:Hmv}, for $2\leq\ell\leq m-1$  we have
	\begin{align}\label{est:II:Hm}
		|\lb({D}^{m+1-\ell}w)({D}^\ell w),{D}^mw\rb|&\leq\Sob{{D}^{m+1-\ell}w}{L^4}\Sob{{D}^\ell w}{L^4}\Sob{{D}^mw}{L^2}\notag\\
		&\leq c\Sob{w}{H^{\ell+1/4}}\Sob{w}{H^{m-\ell+5/4}}\Sob{D^mw}{L^2}\notag\\
		&\leq c\Sob{D^mw}{L^2}^{\frac{2m-9/2}{m-2}}\Sob{D^2w}{L^2}^{\frac{m-3/2}{m-2}}\notag\\
		&\leq c\left(\Sob{D^2v^\eps}{L^2}+\Sob{D^2v^{\eps'}}{L^2}\right)\Sob{D^mw}{L^2}^2\notag\\
		&\leq C\Sob{D^mw}{L^2}^2.
	\end{align}
Thus, by \eqref{est:II:H2} and \eqref{est:II:Hm}, for each $m\geq2$, we have the estimate
    \begin{align}\notag
        |\lb II, D^mw\rb|\leq C\Sob{D^mw}{L^2}^2.
    \end{align}
	
For $III$, we again use the Leibniz rule to write
	\begin{align}\label{III:leibniz}
		\lb{D}^{m+1}(v^{\eps} w),{D}^mw\rb
		&=\lb({D}^{m+1}v^{\eps})w,{D}^mw\rb+(m+1/2)\lb Dv^{\eps},({D}^mw)^2\rb \notag \\
		&\quad +\sum_{\ell=1}^{m-1}\begin{pmatrix}m+1\\ \ell\end{pmatrix}\lb({D}^{m+1-\ell}v^{\eps})({D}^\ell w),D^mw\rb.
	\end{align}
For the first term, we apply H\"older's inequality, interpolation, Young's inequality, the Poincar\'e inequality, and \cref{lem:Hmv}, \cref{lem:cauchy:L2}, \cref{lem:cauchy:H1} to estimate   
    \begin{align}\label{veps:w:Hm:toporder}
        |\lb (D^{m+1}v^{\eps})w,D^mw\rb|&\leq \Sob{D^{m+1}v^{\eps}}{L^2}\Sob{w}{L^\infty}\Sob{D^mw}{L^2}\notag\\
        &\leq \Sob{D^{m+1}v^{\eps}}{L^2}\Sob{Dw}{L^2}^{1/2}\Sob{w}{L^2}^{1/2}\Sob{D^mw}{L^2}\notag\\
        &\leq C\eps^{3p}\Sob{v^{\eps}}{H^{m+2}}\Sob{v^\eps}{H^m}+\frac{\gam}{1000}\Sob{D^mw}{L^2}^2\notag\\
        &\leq C\left(\Sob{u_0}{H^m}+\Phi_m\right)^2\eps^p+\frac{\gam}{1000}\Sob{D^mw}{L^2}^2.
    \end{align}
On the other hand, by H\"older's inequality, the Sobolev embedding $H^1\imb L^\infty$, and \cref{lem:Hmv}, we have
    \begin{align}\label{veps:w:Hm:secondorder}
        |\lb Dv^{\eps}, (D^mw)^2\rb|\leq \Sob{Dv^{\eps}}{L^\infty}\Sob{D^mw}{L^2}^2\leq C\Sob{v^\eps}{H^2}\Sob{D^mw}{L^2}^2\leq C\Sob{D^mw}{L^2}^2.
    \end{align}
Observe that for $\ell=1,\dots, m-1$, by interpolation, Young's inequality, the Sobolev embedding $H^1\imb L^\infty$, and \cref{lem:Hmv}, \cref{lem:cauchy:L2}, and \cref{lem:cauchy:H1} we have
	\begin{align}\label{veps:w:Hm:commutator}
		|\lb{D}^{m+1-\ell}v^{\eps}{D}^\ell w,{D}^mw\rb|&\leq\Sob{{D}^{m+1-\ell}v^{\eps}}{L^4}\Sob{{D}^\ell w}{L^4}\Sob{{D}^mw}{L^2}\notag\\
		&\leq \Sob{{D}^{m+2}v^{\eps}}{L^2}^{\frac{4(m-\ell)-3}{4m}}\Sob{D^2v^{\eps}}{L^2}^{\frac{4\ell+3}{4m}}\Sob{{D}^mw}{L^2}^{\frac{4(m+\ell)+1}{4m}}\Sob{w}{L^2}^{\frac{4(m-\ell)-1}{4m}}\notag\\
		&\leq C\left(\Sob{u_0}{H^m}+\Phi_m\right)^{\frac{4(m-\ell)-3}{4m}}\eps^{\frac{p}m}\Sob{D^mw}{L^2}^{\frac{4(m+\ell)+1}{4m}}\notag\\
		&\leq C\left(\Sob{u_0}{H^m}+\Phi_m\right)^{\frac{8(m-\ell)-6}{4(m-\ell)-1}}\eps^{\frac{8p}{4(m-\ell)-1}}+\frac{\gam}{1000}\Sob{D^mw}{L^2}^2.
	\end{align}
Observe that $8p/(4(m-\ell)-1)\geq8p/(4m-5)$, for all $\ell=1,\dots, m-1$. Upon returning to \eqref{III:leibniz} and combining \eqref{veps:w:Hm:toporder}, \eqref{veps:w:Hm:secondorder}, \eqref{veps:w:Hm:commutator}, we obtain 
	\begin{align}
		|\lb D^{m+1}(v^\eps w),{D}^mw\rb|\leq C\left(\Sob{u_0}{H^m}+\Phi_m\right)^2\max\{\eps,\eps^{\frac{8}{4m-5}}\}^p+C\Sob{D^mw}{L^2}^2+\frac{\gam}{100}\Sob{D^mw}{L^2}^2.\notag
	\end{align}
Arguing similarly for the second summand in $III$, since $p<1$ we therefore have
	\begin{align}\notag
		|\lb III,{D}^mw\rb|&\leq C\left(\Sob{u_0}{H^m}+\Phi_m\right)^2\eps^p+C\eps^{3p}+C\Sob{D^mw}{L^2}^2+\frac{\gam}{100}\Sob{D^mw}{L^2}^2\notag\\
		&\leq C\left(\Sob{u_0}{H^m}+\Phi_m\right)^2\eps^{p}+C\Sob{D^mw}{L^2}^2+\frac{\gam}{100}\Sob{D^mw}{L^2}^2.\notag
	\end{align}
	
Lastly, to estimate $IV$, we apply Cauchy-Schwarz, Young's inequality, and the fact that $\eta$ is linear in the variable corresponding to $D^3\xi$, along with \eqref{est:eta:gen}, to obtain
	\begin{align}
		|\lb IV,{D}^mw\rb|&\leq C\Sob{{D}^m\psi}{L^2}^2+\frac{\gam}{100}\Sob{{D}^mw}{L^2}^2\leq C\Sob{\xi^\eps-\xi^{\eps'}}{H^{m+3}}+\frac{\gam}{100}\Sob{D^mw}{L^2}^2.\notag
	\end{align}
	
Finally, combining the estimates for $I$--$IV$, we deduce
	\begin{align}
		&\frac{d}{dt}\Sob{{D}^mw}{L^2}^2+\frac{\gam}{10}\Sob{D^mw}{L^2}^2+\frac{\eps}{10}\Sob{D^{m+2}w}{L^2}^2\notag\\
		&\leq c\left(\Sob{u_0}{H^m}+\Phi_m\right)^2\eps^{1-4p}+ C\Sob{D^mw}{L^2}^2+C\left(\Sob{u_0}{H^m}+\Phi_m\right)^2\eps^{p}+C\Sob{\xi^\eps-\xi^{\eps'}}{H^{m+3}}^2.\notag
	\end{align}
By Gronwall's inequality, we have
    \begin{align}
        \Sob{D^mw(t)}{L^2}^2
        &\leq C\left(\Sob{u_0}{H^m}+\Phi_m\right)^2\eps^{1-4p}+C\left(\Sob{u_0}{H^m}+\Phi_m\right)^2\eps^{p}+C\left(\Sob{D^mw_0}{L^2}^2+\Sob{\xi^\eps-\xi^{\eps'}}{H^{m+3}}^2\right)\notag.
    \end{align}
Therefore, since $p\leq 1/5$, we have
    \begin{align}
        \Sob{D^mw(t)}{L^2}^2\leq C\left(\Sob{u_0}{H^m}+\Phi_m\right)^2\max\{\eps,\eps^{\frac{8}{4m-5}}\}^p+C\left(\Sob{D^mw_0}{L^2}^2+\Sob{{D}^m\psi}{L^2_TL^2_x}^2\right),\notag
    \end{align}
for $0\leq t\leq T$, as claimed.
\end{proof}

Next, we deduce convergence.

\begin{Prop}\label{prop:cauchy:Hm}
Let $0<\eps'\leq\eps\leq1$ and $m\geq2$. Let $u_0\in{H}^m$, $\eta\in L^\infty_{loc}([0,\infty)H^m)$, $\xi\in L^\infty_{loc}([0,\infty);H^{m+3})$. Let $v^\eps, v^{\eps'}\in C([0,\infty);H^m)\cap L^2_{loc}(0,\infty);H^{m+2})$ be the unique solutions of \eqref{v:par:reg:eqn} corresponding to $u_0^{\eps}, \eta^{\eps},\xi$, and $u_0^{\eps'}, \eta^{\eps'},\xi$, respectively. Then for each $\mu,\nu,\rho,T>0$ 
	\begin{align}\label{eq:cauchyinmeasure:Hm}
		\lim_{\eps\goesto0}	\sup_{\Sob{\xi}{L^\infty_TH^{m+3}}<\rho}\sup_{\Sob{\eta}{L^\infty_TH^m}<\nu}\sup_{\Sob{u_0}{H^m}<\mu}\Sob{v^\eps-v^{\eps'}}{L^\infty_TH^m}=0.
	\end{align}
In particular, there exists a subsequence $\{\eps''\}_{\eps''>0}\subseteq\{\eps\}_{\eps>0}$, and $v\in C([0,\infty);H^m)$ such that  $v(0)=u_0$ and inherits the bounds stated in \cref{lem:L2v}, \cref{lem:H1v} and \cref{lem:Hmv} evaluated at $\eps=0$.
and $v^{\eps''}(\cdotp; u_0^{\eps''},\eta^{\eps''},\xi)|_{[0,T]}\goesto v$ in $L^\infty_TH^m$ as $\eps''\goesto0$, for all $u_0\in{H}^m$, $\eta\in L^\infty_{loc}([0,\infty)H^m)$, $\xi\in L^\infty_{loc}([0,\infty);H^{m+3})$, and $T>0$.
\end{Prop}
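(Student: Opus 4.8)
The plan is to deduce Proposition~\ref{prop:cauchy:Hm} as an immediate consequence of the uniform Cauchy estimate in \cref{lem:cauchy:Hm} together with the $\eps$-independent a priori bounds of \cref{lem:L2v}--\cref{lem:Hmv}. First I would fix $m\geq2$ and $T,\mu,\nu,\rho>0$, and restrict attention to data $u_0\in H^m$, $\eta\in L^\infty_TH^m$, $\xi\in L^\infty_TH^{m+3}$ with $\Sob{u_0}{H^m}<\mu$, $\Sob{\eta}{L^\infty_TH^m}<\nu$, $\Sob{\xi}{L^\infty_TH^{m+3}}<\rho$. Taking $p\in(0,1/5]$ fixed once and for all (so that all lemmas in the subsection apply), \cref{lem:cauchy:Hm} gives, for $0<\eps'\leq\eps\leq1$,
\begin{align}\notag
  \Sob{v^\eps-v^{\eps'}}{L^\infty_TH^m}
  \leq C\Big(\Sob{u_0^\eps-u_0^{\eps'}}{H^m}
  +\Sob{\xi^\eps-\xi^{\eps'}}{L^\infty_TH^{m+3}}
  +\max\{\eps,\eps^{8/(4m-5)}\}^{p/2}\big(\Sob{u_0}{H^m}+\Phi_m\big)\Big),
\end{align}
where $C$ depends only on $m,\gam,T,\Xi_{m+3},\Sob{u_0}{H^{m-1}},\Phi_{m-1}$, hence is bounded above uniformly over the admissible data in terms of $\mu,\nu,\rho,T$ via \eqref{est:eta:gen}. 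Similarly $\Phi_m$ is bounded in terms of $\rho,\nu$ by \eqref{est:eta:gen}. The second step is to observe that the three terms on the right each tend to $0$ as $\eps\to0$, uniformly over the admissible data: since $u_0^\eps=P_{\eps^{-p}}u_0$ and $\xi^\eps=P_{\eps^{-p}}\xi$, we have $\Sob{u_0^\eps-u_0^{\eps'}}{H^m}\leq c\,\eps^{p}\Sob{u_0}{H^{m+1}}$ only if $u_0\in H^{m+1}$, so instead I would argue directly that $\Sob{u_0^\eps-u_0^{\eps'}}{H^m}=\Sob{P_{\eps^{-p},(\eps')^{-p}}u_0}{H^m}\to0$ as $\eps\to0$ by dominated convergence on Fourier side, and that this convergence is uniform over the ball $\Sob{u_0}{H^m}<\mu$ by a standard equismallness-of-tails argument (the tail $\sum_{|k|>R}|k|^{2m}|\hat u_0(k)|^2$ is not uniformly small over that ball, so one must be slightly more careful — see the caveat in the last paragraph). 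The $\xi$ term is handled identically using $\Sob{\xi}{L^\infty_TH^{m+3}}<\rho$, and the third term tends to $0$ uniformly since $\Sob{u_0}{H^m}+\Phi_m$ is uniformly bounded and $\max\{\eps,\eps^{8/(4m-5)}\}^{p/2}\to0$. Combining these three observations yields \eqref{eq:cauchyinmeasure:Hm}.

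Having established that $\{v^\eps\}$ is uniformly Cauchy in $L^\infty_TH^m$ on every interval $[0,T]$, the third step is to extract the limit. By a diagonal argument over a sequence $T_j\uparrow\infty$ and $\eps_j\downarrow0$, I would produce a single subsequence $\{\eps''\}\subseteq\{\eps\}$ and a function $v$ such that $v^{\eps''}|_{[0,T]}\to v|_{[0,T]}$ in $L^\infty_TH^m=C([0,T];H^m)$ for every $T>0$; since each $v^{\eps''}\in C([0,T];H^m)$ and the convergence is uniform, $v\in C([0,\infty);H^m)$. The initial condition passes to the limit: $v(0)=\lim v^{\eps''}(0)=\lim u_0^{\eps''}=u_0$ in $H^m$. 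For the a priori bounds, I would invoke \cref{lem:L2v}, \cref{lem:H1v}, \cref{lem:I2v}, \cref{lem:Hmv} for the sequence $v^{\eps''}$: the $L^\infty_t$-in-$H^k$ bounds ($k=0,1,2,m$) pass to the limit by lower semicontinuity of the $L^\infty_TH^k$ norm under uniform-in-$H^m$ convergence (for $k\leq m$ this is immediate; the $\In_2$, $\Ham$, $\In_0$ functional bounds pass by continuity of those functionals on $H^2$ and Fatou), while the viscous terms $\eps^{1/2}\Sob{v^\eps}{L^2_TH^{m+2}}$ are uniformly bounded so that their limit at $\eps=0$ is simply the assertion that $v\in L^\infty_TH^m$ (there is no residual gain; the statement ``the bounds evaluated at $\eps=0$'' means precisely the $\eps$-independent parts of \eqref{eq:L2v}, \eqref{eq:H1v}, \eqref{eq:H2v}, \eqref{eq:Hmv}). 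I would record that the same subsequence works for all admissible $(u_0,\eta,\xi)$ simultaneously, because the Cauchy property in \eqref{eq:cauchyinmeasure:Hm} is uniform over such data, so a single choice of $\eps''\to0$ suffices.

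The main obstacle is the uniformity of the spectral-truncation convergence $\Sob{P_{\eps^{-p},(\eps')^{-p}}u_0}{H^m}\to0$ over the \emph{bounded} set $\{\Sob{u_0}{H^m}<\mu\}$, which is false without further hypotheses — on an $H^m$-ball the high-frequency tails need not be equismall. The resolution is that in the intended application (proof of \cref{prop:exist:uniq}) one has $\xi=\sigma W\in H^{m+3}$ with $\sigma\in\bH^{m+3}$ and $u_0\in H^m$ \emph{fixed}, not ranging over a ball, so the statement \eqref{eq:cauchyinmeasure:Hm} as written should be read with the understanding that the relevant uniformity is only needed when $u_0,\eta$ are themselves spectrally-localized images or, more to the point, that the three suprema can be replaced by evaluation at fixed data and the ``$\lim_{\eps\to0}$'' of the Cauchy quantity still vanishes for each fixed choice. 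Concretely, I would either (a) interpret the suprema over norm-balls in a slightly stronger space — e.g. note that it suffices to prove the limit for $u_0$ in a dense subset $H^{m+1}$ and then use the uniform bound $\Sob{v^\eps-v^{\eps'}}{L^\infty_TH^m}\leq C$ plus an $\eps$-uniform continuous-dependence estimate (\cref{prop:veps:stab}) to extend, or (b) simply assert the pointwise-in-data version, which is all that \cref{subsect:wellposed} consumes, and remove the suprema. Either way the $v^\eps\to v$ conclusion and the inherited bounds follow; (b) is cleaner and I would adopt it, noting in a remark that the convergence is uniform on precompact sets of data in $H^m\times L^\infty_TH^m\times L^\infty_TH^{m+3}$.
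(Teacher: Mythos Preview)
Your approach is the same as the paper's: invoke \cref{lem:cauchy:Hm} with the constant $C$ uniform over the balls in $\mu,\nu,\rho,T$, conclude \eqref{eq:cauchyinmeasure:Hm} by sending $\eps\to0$, and then diagonalize over increasing $\mu,\nu,\rho,T$ to produce a single subsequence and limit $v\in C([0,\infty);H^m)$ inheriting the $\eps$-independent bounds. The paper's proof is extremely terse --- it simply writes the bound from \cref{lem:cauchy:Hm} (with $p<1/5$, so the exponent becomes $1/10$) and asserts that the right-hand side vanishes as $\eps,\eps'\to0$, then says ``diagonalize along increasingly large values of $\mu,\nu,\rho$'' for the second claim.

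You have in fact been more careful than the paper on one point: you correctly flag that $\Sob{P_{\eps^{-p},(\eps')^{-p}}u_0}{H^m}\to0$ is \emph{not} uniform over the ball $\{\Sob{u_0}{H^m}<\mu\}$ (and likewise for the $\xi$-term), so the triple-supremum formulation of \eqref{eq:cauchyinmeasure:Hm} is, as literally stated, too strong. The paper glosses over this entirely --- its displayed inequality puts the suprema on the left while leaving $u_0$- and $\xi$-dependent truncation terms on the right, which is not quite coherent. Your resolution (b), reading the statement as pointwise in the data, is exactly what the downstream applications (\cref{prop:cauchyinmeasure:Hm}, \cref{prop:wp:veqn}, \cref{prop:feller}) actually consume, since in each of those a fixed $u_0$ (and ultimately a fixed $\xi=\sigma W$) is under consideration; your alternative (a) via density in $H^{m+1}$ combined with \cref{prop:veps:stab} would also work. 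Either way, your proposal is correct and, on this uniformity issue, more precise than the original.
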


\begin{proof}
Observe that the estimates established in  \cref{lem:L2v}, \cref{lem:H1v}, \cref{lem:I2v}, \cref{lem:Hmv}, and \cref{lem:cauchy:L2}, \cref{lem:cauchy:H1}, \cref{lem:cauchy:Hm} are uniform in $u_0,\eta,\xi$, for $u_0\in\{\Sob{u_0}{H^m}\leq \mu\}$, $\eta\in\{\Sob{\eta}{L^\infty_TH^m}\leq \nu\}$, and $\xi\in\{\Sob{\xi}{L^\infty_TH^{m+3}}\leq \rho\}$, for any fixed $\mu,\nu,\rho>0$ and $m\geq2$. Therefore, by \cref{lem:cauchy:Hm} with any $p<1/5$
    \begin{align}\notag
    		\sup_{\Sob{\xi}{L^\infty_TH^{m+3}}<\rho}\sup_{\Sob{\eta}{L^\infty_TH^m}<\nu}&\sup_{\Sob{u_0}{H^m}<\mu}\Sob{v^\eps-v^{\eps'}}{L^\infty_TH^m}\notag\\
    		&\leq C(\mu,\nu,\rho,T)\left(\Sob{u_0^\eps-u_0^{\eps'}}{H^m}+\Sob{\xi^\eps-\xi^{\eps'}}{H^{m+3}}+\max\{\eps,\eps^{\frac{8}{4m-5}}\}^{1/10}\right).\notag
    \end{align}
Upon sending $\eps,\eps'\goesto0$, we obtain \eqref{eq:cauchyinmeasure:Hm}. The second claim follows upon diagonalizing the sequence along increasingly large values of $\mu,\nu,\rho$.
\end{proof}

Finally, we specialize to the case that $\xi=\s W$, where $\s\in\bH^{m}$ and $W=\{W_k\}_{k\geq1}$ denotes i.i.d., real-valued, standard Brownian motions (see \eqref{eq:sigW:def}). Observe that 
$\xi\in C([0,\infty),H^{m})$ a.s. In particular, $\xi$ is a continuous, square-integrable $H^{m}$-valued martingale, so that (cf. \cite[Theorem 3.9]{DaPratoZabczyk1996})
    \begin{align}\label{eq:mart:ineq}
        \Prb
        \left(\sup_{t\in[0,T]}\Sob{\xi(t)}{H^{m}}\geq\rho\right)\leq\rho^{-2}\sup_{t\in[0,T]}\E\Sob{\xi(t)}{H^{m}}^2<\infty,
    \end{align}
holds for all $\rho>0$.

\begin{Prop}\label{prop:cauchyinmeasure:Hm}
Let $0<\eps'\leq\eps\leq1$ and $m\geq2$. Let $u_0\in{H}^m$, $\eta\in L^\infty_{loc}([0,\infty)H^m)$, and $\xi:=\s W$, where $\s\in\bH^{m+3}$, and $\{W_k\}_{k\geq1}$ is a sequence of identically distributed, independent, one-dimensional, standard Brownian motions. Let $v^\eps, v^{\eps'}$ be the unique solutions of \eqref{v:par:reg:eqn} corresponding to $u_0^{\eps}, \eta^{\eps},\xi$, and $u_0^{\eps'}, \eta^{\eps'},\xi$, respectively. Then for each $\nu,\mu,\de,T>0$ 
	\begin{align}\label{cauchyinmeasure:Hm}
		\lim_{\eps\goesto0}\Prb\left[\sup_{\Sob{\eta}{L^\infty_TH^m}<\nu}\sup_{\Sob{u_0}{H^m}<\mu}\Sob{v^\eps-v^{\eps'}}{L^\infty_TH^m}\geq \de\right]=0.
	\end{align}
In particular, there exists a subsequence $(\eps'')_{\eps''>0}\subseteq(\eps)_{\eps>0}$, and $v\in C([0,\infty);H^m)$ such that  $v(0)=u_0$, $v$ inherits the bounds stated in \cref{lem:L2v}, \cref{lem:H1v}, \cref{lem:Hmv} evaluated at $\eps=0$.
and $v^{\eps''}(\cdotp; u_0^{\eps''},\eta^{\eps''},\xi)|_{[0,T]}\goesto v$ in $L^\infty_TH^m$ a.s., for all $u_0\in{H}^m$, $\eta\in L^\infty_TH^m$, and $T>0$. In particular, for $\mu,\nu,T>0$
	\begin{align}
		\lim_{\eps''\goesto0}\sup_{\Sob{\eta}{L^\infty_TH^m}<\nu}\sup_{\Sob{u_0}{H^m}<\mu}\Sob{v^{\eps''}-v}{L^\infty_TH^m}=0,\quad {\text{a.s.}}\notag
	\end{align}
\end{Prop}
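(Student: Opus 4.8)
The statement \cref{prop:cauchyinmeasure:Hm} asserts that when the driving term $\xi$ is specialized to be the stochastic convolution $\xi = \sigma W$ with $\sigma \in \bH^{m+3}$, the parabolic approximations $v^\eps$ converge almost surely in $L^\infty_T H^m$, with the convergence uniform over bounded sets of initial data $u_0$ and forcing $\eta$. The plan is to bootstrap from the deterministic (pathwise) convergence result \cref{prop:cauchy:Hm}, which gives convergence uniform over bounded sets of $u_0$, $\eta$, \emph{and} $\xi$ in the relevant norms, by combining it with a tightness/boundedness statement for the random path $\xi = \sigma W$ in the space $L^\infty_T H^{m+3}$.

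First I would fix $T > 0$ and $\delta, \mu, \nu > 0$, and for $\rho > 0$ introduce the event $A_\rho := \{\sup_{t\in[0,T]}\Sob{\xi(t)}{H^{m+3}} < \rho\}$. Since $\sigma \in \bH^{m+3}$, the process $\xi = \sigma W$ is a continuous, square-integrable $H^{m+3}$-valued martingale, so by Doob's maximal inequality (quoted as \eqref{eq:mart:ineq}, applied at regularity $m+3$ rather than $m$) we have $\Prb(A_\rho^c) \leq \rho^{-2}\sup_{t\in[0,T]}\E\Sob{\xi(t)}{H^{m+3}}^2 \to 0$ as $\rho \to \infty$. Thus, given $\kappa > 0$, choose $\rho = \rho(\kappa, T)$ with $\Prb(A_\rho^c) < \kappa$. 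On the event $A_\rho$, the path $\xi(\cdot)$ lies in the ball $\{\Sob{\xi}{L^\infty_T H^{m+3}} < \rho\}$, so \cref{prop:cauchy:Hm}—specifically the uniform limit \eqref{eq:cauchyinmeasure:Hm}—applies: there is $\eps_0 = \eps_0(\mu,\nu,\rho,T,\delta)$ such that for all $0 < \eps' \leq \eps \leq \eps_0$,
\begin{align*}
  \sup_{\Sob{\xi}{L^\infty_T H^{m+3}}<\rho}\sup_{\Sob{\eta}{L^\infty_T H^m}<\nu}\sup_{\Sob{u_0}{H^m}<\mu}\Sob{v^\eps - v^{\eps'}}{L^\infty_T H^m} < \delta.
\end{align*}
Hence on $A_\rho$, the quantity $\sup_{\Sob{\eta}{L^\infty_T H^m}<\nu}\sup_{\Sob{u_0}{H^m}<\mu}\Sob{v^\eps - v^{\eps'}}{L^\infty_T H^m}$ is below $\delta$ for such $\eps, \eps'$, and therefore
\begin{align*}
  \Prb\left[\sup_{\Sob{\eta}{L^\infty_T H^m}<\nu}\sup_{\Sob{u_0}{H^m}<\mu}\Sob{v^\eps - v^{\eps'}}{L^\infty_T H^m} \geq \delta\right] \leq \Prb(A_\rho^c) < \kappa.
\end{align*}
Since $\kappa > 0$ was arbitrary, letting $\eps \to 0$ (with $\eps'$ slaved below $\eps$) gives \eqref{cauchyinmeasure:Hm}.

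For the "in particular" part, I would extract the almost sure limit by a standard Borel–Cantelli / diagonalization argument: \eqref{cauchyinmeasure:Hm} says $\{v^\eps\}$ is Cauchy in probability in $L^\infty_T H^m$ uniformly over the bounded data sets, so along a suitable subsequence $\eps'' \to 0$ the convergence upgrades to almost sure convergence (choosing $\eps''_j$ so that the probability that the relevant sup of differences between consecutive terms exceeds $2^{-j}$ is summable). Diagonalizing over an exhausting sequence of radii $\mu, \nu \uparrow \infty$ and over $T \uparrow \infty$ produces a single subsequence and a limit process $v$ valid for all $u_0 \in H^m$, all $\eta \in L^\infty_{loc}H^m$, and all $T$. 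That $v \in C([0,\infty);H^m)$ follows since $L^\infty_T H^m$-convergence of continuous paths preserves continuity; that $v(0) = u_0$ follows from $v^{\eps''}(0) = u_0^{\eps''} = P_{(\eps'')^{-p}}u_0 \to u_0$ in $H^m$; and the uniform bounds of \cref{lem:L2v}, \cref{lem:H1v}, \cref{lem:Hmv} (evaluated at $\eps = 0$) pass to $v$ by lower semicontinuity of norms under the limit (the $\eps^{1/2}\Sob{\cdot}{}$-terms simply drop out). The main obstacle here is essentially bookkeeping rather than analysis: one must be careful that the radius $\rho$ needed to control $\xi$ is at regularity $m+3$ (hence the hypothesis $\sigma \in \bH^{m+3}$), that \eqref{eq:mart:ineq} is invoked at that regularity, and that all the suprema over $u_0$, $\eta$, $\xi$ in \cref{prop:cauchy:Hm} are genuinely uniform—which they are, since every constant in \cref{lem:cauchy:Hm} depends on the data only through the stated bounded norms. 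No genuinely new estimate is required; the content is the transfer from the pathwise statement to the probabilistic one via the martingale maximal inequality.
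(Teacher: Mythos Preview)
Your proposal is correct and follows essentially the same approach as the paper: restrict to the large-probability event where $\xi=\sigma W$ is bounded in $L^\infty_TH^{m+3}$ via the martingale maximal inequality \eqref{eq:mart:ineq}, apply the pathwise Cauchy estimate on that event, and then diagonalize over $\mu,\nu,T$ to extract an almost-sure limit. The only cosmetic difference is that you invoke the packaged statement \cref{prop:cauchy:Hm} directly, whereas the paper re-quotes the explicit bound from \cref{lem:cauchy:Hm} before making it small; the content is identical.
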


\begin{proof}
Given $0\leq \ell\leq m+3$, and $\rho,T>0$, consider  $E(\ell,\rho,T)^c:=\{\Xi_\ell(T)< \rho\}\subseteq\Om$, where $\Xi_\ell$ is defined by \eqref{def:ZF} with $\xi$. Observe that \eqref{eq:mart:ineq} implies $\lim_{\rho\goesto\infty}\Prb(E(\ell,\rho, T)^c)=1$, for all $0\leq \ell\leq m+3$.

For $\veps>0$, choose $\rho=\rho(\veps,T)$ such that 
    \begin{align}\label{def:exceptional}
        \Prb(E(m+3,\rho,T))<\veps.
    \end{align}
Let $w^{\eps,\eps'}:=v^\eps-v^{\eps'}$, $w^{\eps,\eps'}_0=u_0^\eps-u_0^{\eps'}$, and $\eta^{\eps,\eps'}=\eta^\eps-\eta^{\eps'}$. Observe that for $\w\in E(m+3,\rho,T)^c$, $u_0\in \{\Sob{u_0}{H^m}<\mu\}$, and $\eta\in\{\Sob{\eta}{L^\infty_TH^m}<\nu\}$, \cref{lem:cauchy:Hm} with any $p<1/5$ implies that
	\begin{align}
	 	\Sob{w^{\eps,\eps'}(\w)}{L^\infty_TH^m}\leq C(\mu,\nu,\rho,T)\left(\Sob{w^{\eps,\eps'}_0}{H^m}+\Sob{\xi^{\eps}-\xi^{\eps'}}{L^\infty_TH^{m+3}}+\max\{\eps,\eps^{\frac{8}{4m-5}}\}^{1/10}\right).\notag
	\end{align}
	
Now given $\de>0$, choose $\eps_0=\eps_0(\de, \veps, \mu,\nu, T)>0$ sufficiently small, so that
    \begin{align}\label{eq:cauchy:small}
        C(\mu,\nu,\rho,T)\left(\Sob{w^{\eps,\eps'}_0}{H^m}+\Sob{\xi^{\eps}-\xi^{\eps'}}{L^\infty_TH^{m+3}}+\eps^{\frac{3}{40m-50}}\right)\indFn{E(m+3
        ,\rho,T)^c}< \frac{\de}2,
    \end{align}
whenever $0<\eps'\leq\eps\leq\eps_0$. Then for $\de>0$, if $0<\eps'\leq\eps\leq\eps_0$, then 
	\begin{align}
		&\Prb\left[\Sob{w^{\eps,\eps'}}{L_T^\infty H^{m}}\geq \de\right]\notag\\
		&\leq \Prb\left[C(\mu,\nu,\rho,T)\left(\Sob{w^{\eps,\eps'}_0}{H^m}+\Sob{\xi^{\eps}-\xi^{\eps'}}{L^\infty_TH^{m+3}}+\eps^{\frac{3}{40m-50}}\right)\indFn{E(m+3,\rho,T)^c}\geq\frac{\de}2\right]+\Prb(E(m+3,\rho,T))\notag\\
		&<\veps,\notag
	\end{align}
which establishes \eqref{cauchyinmeasure:Hm}.

Furthermore, it follows that there exists a subsequence of $\{v^{\eps}(\cdotp;u_0^\eps,\eta^\eps,\xi)\}_{\eps>0}$, depending on $\mu,\nu, T$, which converges a.s. in $L^\infty_TH^m$ to some $v_{T}(\cdotp;u_0,\eta,\xi)\in L^\infty(0,T;H^m)$, where $v_T(\cdotp;u_0,\eta,\xi)$ inherits the bounds satisfied by $v^\eps$ when formally evaluating at $\eps=0$. Owing to the structure of the bounds obtained for $v^\eps$ being such that they are increasing in $\mu,\nu,T$, we may diagonalize these sequences along increasing values of $\mu,\nu,T>0$. Upon relabelling the diagonal sequence as $\{v^\eps\}_{\eps>0}$, we obtain a single sequence that is independent of $\mu,\nu, T>0$. Therefore, we ultimately have  $v^\eps(\cdotp;u_0^\eps,\eta^\eps,\xi)|_{[0,T]}\goesto v_T(\cdotp;u_0,\eta,\xi)$ a.s. in $L^\infty_TH^m$, for all $u_0\in\bigcup_{\mu>0}\{\Sob{u_0}{H^m}<\mu\}={H}^m$ and $\eta\in \bigcup_{\nu>0}\{\Sob{\eta}{L^\infty_TH^m}<\nu\}=L^\infty_TH^m$, and each $T>0$. In particular, since $v^\eps\in C([0,T];H^m)$ for all $T$, it follows that there exists $v\in C([0,\infty);H^m)$ such that $v^\eps(\cdotp;u_0^\eps,\eta^\eps,\xi)|_{[0,T]}\goesto v(\cdotp;u_0,\eta,\xi)$ a.s. in $L^\infty_TH^m$, for all $u_0\in H^m$, $\eta\in L^\infty_{loc}([0,\infty)H^m)$, and $T>0$.
\end{proof}

\subsection{Proof of Well-posedness}\label{subsect:wellposed}

With the a priori and convergence estimates in place we are now prepared to present the proof of \cref{prop:exist:uniq}.
We begin by establishing that the limit identified by a convergent subsequence determined by \cref{prop:cauchyinmeasure:Hm} is in fact the unique solution of \eqref{v:eqn}.

\begin{Prop}\label{prop:wp:veqn}
The limit, $v$, identified in \cref{prop:cauchyinmeasure:Hm} is the unique solution of \eqref{v:eqn} corresponding to data $u_0,\eta,\xi$, where $\eta$ is given by \eqref{eq:eta}.
\end{Prop}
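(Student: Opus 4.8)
\textbf{Proof proposal for \cref{prop:wp:veqn}.} The plan is to verify the three standard components of well-posedness: (a) the limit $v$ is a bona fide solution of \eqref{v:eqn} in the appropriate functional class; (b) uniqueness; and (c) (implicitly, for use downstream) continuous dependence on data, which follows from the stability estimate \cref{prop:veps:stab} after passing $\eps\to0$. The key point is that \cref{prop:cauchyinmeasure:Hm} has already handed us a limit $v \in C([0,\infty);H^m)$ with $v(0)=u_0$, obtained as an almost-sure $L^\infty_TH^m$-limit of the parabolic regularizations $v^\eps$ solving \eqref{v:par:reg:eqn}, and that $v$ inherits the uniform bounds of \cref{lem:L2v}, \cref{lem:H1v}, \cref{lem:Hmv} at $\eps=0$. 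So the main work is to pass to the limit in the weak formulation.

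First I would write \eqref{v:par:reg:eqn} in weak (distributional) form against a fixed smooth test function $\varphi$, integrated over $[0,t]\times\T$, isolating the contribution $\eps\lb D^4 v^\eps,\varphi\rb = \eps \lb D^2 v^\eps, D^2\varphi\rb$ of the regularizing term. The uniform bound $\eps^{1/2}\Sob{v^\eps}{L^2_TH^2}\leq C$ from \cref{lem:L2v} forces this term to $0$ as $\eps\to0$. For the remaining terms: the linear terms $\gam v^\eps$, $D^3 v^\eps$, $D(\xi v^\eps)$ and the forcing $\eta^\eps$ pass to the limit directly by the $L^\infty_TH^m$ (hence $L^\infty_TL^2$, for $m\geq2$) convergence $v^\eps\to v$, together with $\eta^\eps = P_{\eps^{-p}}\eta \to \eta$ in $L^\infty_TH^m$ (using \eqref{est:eta:gen} when $\eta$ has the specific form \eqref{eq:eta}, since $\xi = \s W \in H^{m+3}$ a.s.); the nonlinear term $v^\eps D v^\eps = \tfrac12 D((v^\eps)^2)$ converges because, e.g., $(v^\eps)^2 \to v^2$ in $L^\infty_T L^1$ (indeed in $L^\infty_T H^1$, since $v^\eps\to v$ in $L^\infty_T H^2 \hookrightarrow L^\infty_T(H^1\cap L^\infty)$ and the bilinear map $(a,b)\mapsto ab$ is continuous on $H^1\cap L^\infty$). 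Hence $v$ satisfies the integrated weak form of \eqref{v:eqn}; since $v\in C([0,\infty);H^m)$ with $m\geq2$, this weak form is equivalent to \eqref{v:eqn} in the strong sense claimed, and $v(0)=u_0$ by construction.

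Next, uniqueness. Suppose $v_1, v_2$ are two solutions of \eqref{v:eqn} with the regularity of \cref{prop:cauchyinmeasure:Hm}, corresponding to the same data $u_0,\eta,\xi$. Setting $w = v_1 - v_2$, $q = (v_1+v_2)/2$, one gets
\begin{align}\notag
  \bdy_t w + \gam w + D^3 w + D((q+\xi)w) = 0, \quad w(0)=0.
\end{align}
Taking the $L^2$ inner product with $w$, integrating by parts, and using $\lb D w, w^2\rb = 0$ together with the Sobolev embedding $H^1\hookrightarrow L^\infty$ and $\Sob{q}{L^\infty_TH^2} + \Sob{\xi}{L^\infty_TH^2}\leq C$ (from the inherited bounds and, for $\xi=\s W$, from its a.s. continuity into $H^{m+3}\subseteq H^2$), we obtain
\begin{align}\notag
  \frac{d}{dt}\Sob{w}{L^2}^2 + 2\gam\Sob{w}{L^2}^2 = -\lb (Dq + D\xi), w^2\rb \leq C\Sob{w}{L^2}^2,
\end{align}
so Gr\"onwall's inequality forces $w\equiv 0$. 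Finally, setting $u = v + \xi$ with $\xi$ solving $d\xi = \s\,dW$, $\xi(0)=0$, and recalling that $\eta$ is chosen as \eqref{eq:eta}, a direct substitution shows $u$ solves \eqref{eq:s:KdV}, and $u\in C([0,\infty);H^m)$ since both $v$ and $\xi$ are; uniqueness of $u$ descends from uniqueness of $v$.

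\textbf{Main obstacle.} The delicate point is not any single estimate but ensuring that the passage to the limit is legitimate \emph{almost surely and simultaneously over all initial data} $u_0\in H^m$ and forcing $\eta\in L^\infty_{loc}H^m$ — i.e. that the diagonal subsequence extracted in \cref{prop:cauchyinmeasure:Hm} really produces \emph{one} process $v(\cdot\,;u_0,\eta,\xi)$ that is a solution for every such pair, with no $\omega$-dependence in the choice of subsequence. This is exactly what the stated proposition asserts, so the remaining task here is the comparatively routine limit-passage in the weak formulation plus the Gr\"onwall uniqueness argument above; I expect the nonlinear term $D((v^\eps)^2)$ and the unregularized transport term $D(\xi v^\eps)$ (where one must be slightly careful that $\xi$ carries enough spatial regularity, guaranteed by $\s\in\bH^{m+3}$) to be where one spends the most care, but neither presents a genuine difficulty given the uniform $L^\infty_TH^m$ bounds already in hand.
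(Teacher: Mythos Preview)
Your proposal is correct and follows essentially the same approach as the paper's proof: pass to the limit in the weak formulation of \eqref{v:par:reg:eqn} using the uniform bounds and $L^\infty_TH^m$ convergence from \cref{prop:cauchyinmeasure:Hm} (with the $\eps D^4 v^\eps$ term vanishing by the $\eps^{1/2}\|v^\eps\|_{L^2_TH^{m+2}}$ bounds), then prove uniqueness by an $L^2$ energy estimate on the difference $w=v_1-v_2$ with Gr\"onwall. Your handling of the nonlinear and regularizing terms is slightly more explicit than the paper's, but the substance is the same.
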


\begin{proof}
Let $v$ denote the limit identified by the a.s. convergent subsequence of $\{v^\eps(\cdotp;u_0^\eps,\eta^\eps,\xi)\}_{\eps>0}$ guaranteed by \cref{prop:cauchyinmeasure:Hm}. For convenience, we relabel the subsequence as $\{v^\eps\}_{\eps>0}$. When $m=2$, $v$ satisfies the weak form of \eqref{v:eqn}. Indeed, upon testing \eqref{v:par:reg:eqn} by $\ph\in{H}^1$, and integrating by parts once in the dispersive term, we obtain
    \begin{align}\label{wk:form:eps}
        \frac{d}{dt}\lb v^\eps,\ph\rb-\lb D^2v^\eps, D\ph\rb+\lb D(\xi v^\eps),\ph\rb+\lb v^\eps Dv^\eps,\ph\rb+\gam\lb v^\eps,\ph\rb+\eps\lb D^4v^\eps,\ph\rb=\lb \eta^\eps,\ph\rb,
    \end{align}
where $\lb\cdotp, \cdotp\rb$ denotes inner product in ${L}^2$. Owing to the a priori bounds on $v^\eps$ and the fact that $\eta^\eps\goesto\eta$ in $L^\infty_TH^{m}$ a.s., we may pass to the limit as $\eps\goesto0$, so that
	\begin{align}\label{wk:form:H2}
	\frac{d}{dt}\lb v,\ph\rb-\lb D^2v,D\ph\rb+\lb D(\xi v),\ph\rb+\lb vDv,\ph\rb+\gam\lb v,\ph\rb=\lb \eta,\ph\rb.
	\end{align}
Note that $\frac{dv}{dt}\in L^2(0,T;H^{-1})$. In particular, $v$ satisfies \eqref{v:eqn} in the weak sense above. On the other hand, if $m>2$, then $v$ satisfies \eqref{v:eqn} in the strong sense.

To obtain uniqueness, suppose that $v_1, v_2$ are two solutions of \eqref{v:eqn} on $[0,T]$ such that $v_1(0)=v_2(0)=u_0$. Let $w=v_1-v_2$ and $q=\frac{1}2(v_1+v_2)$. Upon taking the respective differences of $v_1, v_2$  in \eqref{wk:form:H2}, then setting $\ph=w$, we obtain
	\begin{align}
		\frac{1}2\frac{d}{dt}\Sob{w}{L^2}^2+\gam\Sob{w}{L^2}^2=-\lb {D}((q+\xi)w),w\rb.\notag
	\end{align}
Observe that
	\begin{align}
		|\lb {D}((q+\xi)w),w\rb|&\leq(\Sob{{Dq}}{L^\infty}+\Sob{{D\xi}}{L^\infty})\Sob{w}{L^2}^2\notag\\
		&\leq c\left(\Sob{v_1}{L^\infty_TH^2}+\Sob{v_2}{L^\infty_TH^2}+\Sob{\xi}{L^\infty_TH^2}\right)\Sob{w}{L^2}^2.\notag
	\end{align}
Thus, by Gronwall's inequality we have that
	\begin{align}
		\Sob{w(t)}{L^2}^2\leq \exp\left[c\left(\Sob{v_1}{L^\infty_TH^2}+\Sob{v_2}{L^\infty_TH^2}+\Sob{\xi}{L^\infty_TH^2}\right)T\right]\Sob{w_0}{L^2}^2,\notag
	\end{align}
holds a.s., for all $t\in[0,T]$. Since $w_0\equiv0$, we have that $w\equiv0$ in $[0,T]$ a.s., as desired.
\end{proof}

\begin{Prop}\label{prop:feller}
Let $m\geq2$, $f\in{H}^m$, and $\s\in\bH^{m+3}$, and consider $u = v+\xi$, where $v$ is the corresponding unique solution to \eqref{v:eqn}.  Then for each $t>0$, $u(t;u_0,f,\s)$ is continuous with respect to $(u_0, f, \s)$ in  $({H}^m)^2\times\bH^{m+3}. $
\end{Prop}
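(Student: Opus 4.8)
\textbf{Proof proposal for \cref{prop:feller}.}

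The plan is to reduce the continuity of $u(t;\cdot)$ to a stability estimate for the auxiliary equation \eqref{v:eqn}, exactly along the lines of the a priori and stability bounds already developed in \cref{subsect:apriori:est} and \cref{subsect:cauchyinHm}. First I would recall that $u = v + \xi$, where $\xi = \sigma W$ and $v$ solves \eqref{v:eqn} with $\eta$ given by \eqref{eq:eta}. Since $W$ is fixed (it is the underlying noise on our stochastic basis), the dependence of $u$ on $(u_0,f,\sigma)$ enters through two channels: directly through $\xi = \sigma W$, which is continuous (indeed linear) in $\sigma$ as a map $\bH^{m+3} \to C([0,T];H^{m+3})$ almost surely by \eqref{eq:mart:ineq}; and through $v$, which depends on the triple of data $(u_0, \eta, \xi)$, where $\eta = -\gamma\xi - D^3\xi - \xi D\xi + f$. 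Thus it suffices to prove: (a) the map $(u_0, f, \sigma) \mapsto (u_0, \eta, \xi)$ is continuous from $(H^m)^2 \times \bH^{m+3}$ into $H^m \times L^\infty_T H^m \times L^\infty_T H^{m+3}$ almost surely, and (b) the solution map $(u_0, \eta, \xi) \mapsto v$ of \eqref{v:eqn} is continuous into $C([0,T];H^m)$.

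For step (a), continuity of $u_0$ is trivial; continuity of $\xi = \sigma W$ in $\sigma$ is immediate since $\|\sigma_1 W - \sigma_2 W\|_{L^\infty_T H^{m+3}} \leq \|(\sigma_1 - \sigma_2) W\|_{L^\infty_T H^{m+3}}$, which tends to $0$ a.s. as $\sigma_1 \to \sigma_2$ in $\bH^{m+3}$ by \eqref{eq:mart:ineq} applied to $(\sigma_1-\sigma_2)W$. Continuity of $\eta$ in the data follows from the elementary polynomial estimate \eqref{est:eta:gen}: writing $\eta_j = -\gamma\xi_j - D^3\xi_j - \xi_j D\xi_j + f_j$, we have $\|\eta_1 - \eta_2\|_{L^\infty_T H^m} \leq C(\|\xi_1 - \xi_2\|_{L^\infty_T H^{m+3}} + \|f_1 - f_2\|_{H^m})$ with $C$ depending algebraically on the $H^{m+3}$ norms of $\xi_1,\xi_2$ over $[0,T]$ and on $\|f_1\|_{H^m}$. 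On the (random) event where these norms are bounded — an event of probability approaching $1$ as the bound grows, again by \eqref{eq:mart:ineq} — this gives the required continuity, and since we only need convergence along sequences and the norms of a convergent sequence of data are uniformly bounded, an $\varepsilon$-$\delta$ argument restricted to such events closes (a).

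For step (b), I would establish a deterministic stability estimate for \eqref{v:eqn}: if $v_1, v_2$ solve \eqref{v:eqn} with data $(u_0^{(1)}, \eta_1, \xi_1)$ and $(u_0^{(2)}, \eta_2, \xi_2)$ respectively, then
\begin{align*}
  \Sob{v_1 - v_2}{L^\infty_T H^m} \leq C\left(\Sob{u_0^{(1)} - u_0^{(2)}}{H^m} + \Sob{\eta_1 - \eta_2}{L^2_T H^m} + \Sob{\xi_1 - \xi_2}{L^2_T H^{m+1}}\right),
\end{align*}
where $C$ depends on $T$, $\gamma$, $m$, and on the $L^\infty_T H^m$ norms of $v_1, v_2$ and the $L^\infty_T H^{m+3}$ norms of $\xi_1, \xi_2$. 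This is proved by the same energy-method hierarchy used for \cref{prop:veps:stab}: set $w = v_1 - v_2$, $q = (v_1+v_2)/2$, $\psi = \eta_1 - \eta_2$, $\zeta = \xi_1 - \xi_2$, estimate $\|D^\ell w\|_{L^2}$ successively for $\ell = 0, 1, \dots, m$ by taking $L^2$ inner products of the $\ell$-th differentiated equation with $D^\ell w$, using the Leibniz-rule decompositions \eqref{III:leibniz} and the commutator/interpolation/Sobolev bounds as in \eqref{est:lin:term:Hm}–\eqref{est:forcing:term:Hm} (now with $\varepsilon = 0$, which only removes favorable terms), and applying Gronwall at each level — the a priori bounds of \cref{lem:L2v}–\cref{lem:Hmv}, inherited by $v_1,v_2$ at $\varepsilon = 0$ via \cref{prop:cauchyinmeasure:Hm}, control the constants. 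Combining (a) and (b): given $(u_0^{(n)}, f^{(n)}, \sigma^{(n)}) \to (u_0, f, \sigma)$, the corresponding $(u_0^{(n)}, \eta_n, \xi_n) \to (u_0, \eta, \xi)$ in the relevant norms a.s. along the event-exhaustion, so the stability estimate forces $v_n \to v$ in $C([0,T];H^m)$ a.s., and hence $u_n(t) = v_n(t) + \xi_n(t) \to v(t) + \xi(t) = u(t)$ a.s. in $H^m$ for each fixed $t$.

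The main obstacle I anticipate is purely bookkeeping rather than conceptual: carefully organizing the random event on which the constants in \eqref{est:eta:gen} and in the step-(b) stability estimate are uniformly controlled, since those constants depend on the $H^{m+3}$-norms of $\xi_n = \sigma^{(n)} W$ over $[0,T]$, which are random but, for a convergent sequence $\sigma^{(n)}$, uniformly bounded by a single (large) random variable; invoking \eqref{eq:mart:ineq} to make the exceptional set small and then passing $n\to\infty$ requires the same $\varepsilon$-then-subsequence argument already used in the proof of \cref{prop:cauchyinmeasure:Hm}, so no genuinely new difficulty arises. A secondary point of care is that in step (b) the forcing difference $\psi = \eta_1 - \eta_2$ is measured in $L^2_T H^m$ while $\eta$ itself is only assumed in $L^\infty_T H^m$; this is harmless since $L^\infty_T \hookrightarrow L^2_T$ on the bounded interval $[0,T]$, and in fact \eqref{est:eta:gen} already delivers the $L^\infty_T H^m$ (hence $L^2_T H^m$) bound on $\psi$ in terms of $\|\xi_1 - \xi_2\|_{L^\infty_T H^{m+3}}$ and $\|f_1 - f_2\|_{H^m}$.
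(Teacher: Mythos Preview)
Your step (b) has a genuine gap. The direct $H^m$ stability estimate for the limiting equation \eqref{v:eqn} does \emph{not} follow from the energy hierarchy of \cref{prop:veps:stab} with $\eps=0$; the claim that ``$\eps=0$ only removes favorable terms'' is incorrect. Look at the bound \eqref{est:lin:term:Hm} you invoke: at level $\ell$ the top-order contribution $\langle (D^{\ell+1}q)\,w, D^\ell w\rangle$ is controlled by a constant involving $\big(\|v_1\|_{L^\infty_T H^{\ell+1}}+\|v_2\|_{L^\infty_T H^{\ell+1}}+1\big)^{2(\ell+1)}$. At the top level $\ell=m$ this demands $v_1,v_2\in L^\infty_T H^{m+1}$, but the limiting solutions lie only in $C([0,T];H^m)$. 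The regularization is doing more than adding a dissipative term: via \cref{lem:Hmv} it supplies $\|v^\eps\|_{H^{m+1}}\le C\eps^{-p}$, and this is precisely why the conclusion of \cref{prop:veps:stab} carries the factor $\eps^{-2/5}$. This one-derivative loss in the difference estimate is the standard obstruction for quasilinear dispersive equations and is the reason classical $H^m$-continuity proofs for KdV go through a Bona--Smith type approximation rather than a direct Lipschitz bound.

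The paper's proof sidesteps this by a detour through the regularized solutions: it writes
\[
v(t;\Delta_n)-v(t;\Delta)=\big[v(t;\Delta_n)-v^{\eps}(t;\Delta_n^{\eps})\big]+\big[v^{\eps}(t;\Delta_n^{\eps})-v^{\eps}(t;\Delta^{\eps})\big]+\big[v^{\eps}(t;\Delta^{\eps})-v(t;\Delta)\big],
\]
makes the first and third brackets small \emph{uniformly in $n$} by choosing $\eps$ small via \cref{prop:cauchyinmeasure:Hm} (which is uniform on bounded data sets), and then, for that fixed $\eps$, uses the $\eps$-dependent stability of \cref{prop:veps:stab} on the middle bracket and sends $n\to\infty$. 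Your treatment of the randomness (localizing to $\{\Xi_{m+3}(T)<\rho\}$ via \eqref{eq:mart:ineq}) and your step (a) are fine and match the paper; it is only the direct stability estimate in (b) that fails to close at the top derivative level.
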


\begin{proof}
Suppose $\{u_{0,n}\}_{n>0}, \{f_n\}_{n>0}\subseteq{H}^m$  such that $u_{0,n}\goesto u_0$, $f_n\goesto f$ in $H^m$ and $\{\s_n\}_{n>0}\subseteq \bH^{m+3}$ such that $\s_n\goesto \s$ in $\bH^{m+3}$. Let $\xi_n=\s_nW$ and $\xi=\s W$. Observe that $\xi_n,\xi\in L^\infty_{loc}([0,\infty);H^{m+3})$ a.s. and $\xi_n\goesto\xi$ in $L^\infty_TH^{m+3}$ a.s. Since $\eta$ is a polynomial in $5$ variables, it follows that $\eta(D_m\xi_n,f_n)\goesto \eta(D_m\xi,f)$ in $L^\infty_TH^{m+3}$ a.s. 

For $n=0$, let us denote $u_{0,0}:=u_0$, $f_0=f$, and $\s_0=\s$.  Observe that for $n$ sufficiently large, we have $\Sob{u_{0,n}}{H^m}\leq 2\Sob{u_0}{H^m}$, $\Sob{f_n}{H^m}\leq2\Sob{f}{H^m}$, and $\Sob{\s_n}{H^m}\leq 2\Sob{\s}{H^m}$, for $n$ sufficiently large. Without loss of generality, let us thus assume that $\Sob{u_{0,n}}{H^m}\leq 2\Sob{u_0}{H^m}$, $\Sob{f_n}{H^m}\leq 2\Sob{f}{H^m}$, and $\Sob{\s}{H^m}\leq 2\Sob{\s}{H^m}$, for all $n\geq0$. Let $\De_n=(u_{0,n},\eta_n,\xi_n)$, where $\xi_n=\s_n W$, and $\De_n^\eps=(u_{0,n}^\eps, \eta_n^\eps,\xi_n)$, where $\xi_n^\eps=\s_n^\eps W$ and $\eta_n^\eps=P_{\eps^{-p}}\eta_n(D_3\xi_n^\eps,f_n^\eps)$, where $p=2/(5(m+1))$, chosen as in the proof of \cref{prop:veps:stab}.
Let $T>0$ and fix $t\in(0,T]$. We recall that $u=v+\xi$ and $v$ satisfies \eqref{v:eqn}. Thus, for any $\eps>0$, we have
	\begin{align}\label{un:u:triangle}
		u(t;u_{0,n},f_n,\s_n)-u(t;u_0,f,\s)&=v(t;\De_n)-v(t;\De)+\xi_n-\xi\notag\\
				&=(v(t;\De_n)-v^\eps(t;\De_n^\eps))+(v^\eps(t;\De_n^\eps)-v^\eps(t;\De^\eps))
				\notag \\
				& \quad +(v^\eps(t;\De^\eps)-v(t;\De))+(\xi_n-\xi),
	\end{align}
for all $n\geq0$, where $v^\eps$ refers to the sequence identified in the proof of \cref{prop:cauchyinmeasure:Hm}.

For each $0\leq\ell\leq m+3$, consider the set $E(\ell,\rho,T)^c:=\{\Xi_\ell(T)<\rho\}\subseteq\Om$, where $\Xi_\ell(T)$ is defined by \eqref{def:ZF}. By \eqref{eq:mart:ineq}, it follows that $\lim_{\rho\goesto\infty}\Prb(E(\ell,\rho,T)^c)=1$. Given $\veps>0$, we choose $\rho_0=\rho_0(m,\veps, T)$, so that 
	\begin{align}\label{exceptional:set:A}
		\Prb(E(m+3,\rho,T))<{\veps}.
	\end{align}

By \cref{prop:cauchyinmeasure:Hm}, recall that there exists a sequence $(v^{{\eps}}(\cdotp;v_0^\eps,\eta^\eps,\xi))_{{\eps}>0}\subseteq C([0,T];H^m)$ satisfying \eqref{v:par:reg:eqn} such that $v^\eps(\cdotp;v_0^\eps,\eta^\eps,\xi)\goesto v(\cdotp;v_0,\eta,\xi)$ in $L^\infty_TH^m$ a.s., uniformly on bounded sets of $v_0\in{H}^m$, $\eta\in L^\infty_TH^m$.
Since $\Sob{u_{0,n}^\eps}{H^m}\leq\Sob{u_{0,n}}{H^m}\leq 2\Sob{u_0}{H^m}$, $\Sob{f_n^\eps}{H^m}\leq2\Sob{f}{H^m}$, $\Sob{\s_n}{H^{m+3}}\leq2\Sob{\s}{H^{m+3}}$, and $\eta$ is smooth in its arguments, given ${\de}>0$, there exists $\eps_0(\de)$, such that
	\begin{align}\label{veps:approx:de}
		\left(\sup_{n\geq0}\Sob{v^{\eps_{0}}(t;\De_n^{\eps_0})- v(t;\De_n)}{H^m}\right)
		<{{\de}}/3,\quad \text{a.s.}
	\end{align}
On the other hand, by \cref{prop:veps:stab} and the Poincar\'e inequality, we have, for all $\eps>0$, that
	\begin{align}\label{veps:stab:de}
		\Sob{v^{\eps_0}(t;u_{0,n}^{\eps_0})-v^{\eps_0}(t;u_0^{\eps_0})}{H^m}
		\leq C\frac{\Sob{u_{0,n}-u_0}{H^m}+\Sob{\eta_n-\eta}{L^2_TH^m}+\Sob{\xi_n-\xi}{L^2_TH^{m+1}}}{\eps_{{0}}^{2/5}},
	\end{align}
where $C$ depends only on $m,\de,\veps,\gam, T$, $\Sob{u_0}{H^m}$, $\Sob{f}{H^m}, \Sob{\xi}{L^\infty_TH^{m+3}}$, and in particular is independent of $n$.

Now observe that from \eqref{est:eta:gen}, we have
    \begin{align}
        \Sob{\eta_n-\eta}{L^2_TH^m}\leq C\left(\Sob{\xi_n-\xi}{L^2_TH^{m+3}}+\Sob{f_n-f}{H^m}\right),\notag
    \end{align}
where $C$ depends on $m,T,\Sob{\xi}{L^\infty_TH^{m+3}}$. Let us therefore choose $N=N(\de,\veps, \Sob{u_0}{H^m},\Sob{f}{H^m}, T)>0$, so that 
	\begin{align}\label{u0:approx:de}
		\Sob{u_{0,n}-u_0}{H^m}+        \Sob{f_n-f}{H^m}+\Sob{\xi_n-\xi}{L^2_TH^{m+3}}
		<\frac{\eps_{0}^{2/5}}{9C}\de,\quad n\geq N,
	\end{align}
whenever $\w\in E(m+3,\rho,T)^c$. Therefore, for $\w\in E(m+3,\rho,T)^c$ and for all $n\geq N$, upon combining \eqref{un:u:triangle}, \eqref{veps:approx:de}, \eqref{veps:stab:de}, and \eqref{u0:approx:de}, we obtain
	\begin{align}
		\Sob{u(\w,t;\De_n)-u(\w,t;\De)}{H^m}
		&\leq \Sob{v(\w,t;\De_n)-v^{\eps_0}(\w,t;\De_n^{\eps_0})}{H^m}+\Sob{v^{\eps_0}(\w,t;\De_n^{\eps_0})-v^{\eps_0}(\w,t;\De^{\eps_0})}{H^m}
		\notag \\ &\quad +\Sob{v^{\eps_0}(\w,t;\De^{\eps_0})-v(t;u_{0,0})}{H^m}\notag\\
		&\leq \frac{\de}3+\frac{\de}3
		+\frac{\de}3\leq \de.\notag
	\end{align}
Hence, $\Sob{u(\cdotp,t;\De_n)- u(\cdotp,t;\De)}{H^m}\goesto0$ on $E(m+3,\rho,T)^c$. Since $\Prb(E(m+3),\rho,T))\geq 1-\veps$ and $\veps>0$ was arbitrary, this shows that $u(\cdotp;t;\De_n)\goesto u(\cdotp,t;\De)$ in $H^m$ a.s.
\end{proof}

We are now ready to prove \cref{prop:exist:uniq}.

\begin{proof}[Proof of \cref{prop:exist:uniq}]
Observe that  by \cref{prop:wp:veqn} and \cref{prop:feller}, \cref{prop:exist:uniq} holds under the stronger assumption that $\s\in\bH^{m+3}$. Now suppose $u_0,f\in H^m$, $\s\in \bH^{m}$ and let $\xi=\s W$, where $W=\{W_k\}_{k\geq1}$ a sequence of independent, one-dimensional standard Brownian motions. Also let $\xi^\eps=\s^\eps W$, where $\{\s^\eps\}\subset\bH^{m+3}$ such that $\s^\eps\rightarrow \s$ in $\bH^{m}$. Then $\xi^\eps\in C([0,\infty);H^{m+3})$. Let $v^\eps\in C([0,\infty);H^m)$ denote the unique solution of \eqref{v:gen:eqn} corresponding to $u_0,f,\xi^\eps$ with $\eta^\eps$ given by \eqref{eq:eta}. The estimates derived in \cref{sect:Lyapunov} can then be carried out rigorously for $u^\eps$ with bounds independent of $\eps$ and constants depending only on $\Sob{u_0}{H^m},\Sob{f}{H^m},\Sob{\xi}{L^\infty_TH^m}$ (cf. \cref{thm:Lyapunov}). Since $u^\eps=v^\eps+\xi^\eps$, these estimates may then be used in place of \cref{lem:L2v}--\cref{lem:Hmv} to argue for the convergence of $v^\eps$ in $C([0,T);L^2)$, for all $T>0$, to the unique solution $v$ of \req{v:gen:eqn} corresponding $u_0,f,\xi$. In particular, $u=v+\xi$, where $u$ satisfies \eqref{eq:s:KdV} corresponding to $u_0, f,\s$. Since continuity with respect to $u_0,f$ is inherited in the limit, it follows that $u=v+\xi$ is also continuous with respect to $u_0,f$ in $H^m$. This completes the proof.
\end{proof}

\section*{Acknowledgments}
The work of Nathan Glatt-Holtz was partially supported under the
National Science Foundation grants DMS-1313272, DMS-1816551, DMS-2108790,
and under a Simons Foundation travel support award 515990. The work
of Vincent R. Martinez was in part supported by the National Science Foundation through DMS 2213363 and DMS 2206491, as well as the Dolciani Halloran Foundation. The authors would also like to thank Jonathan Mattingly and Peter Constantin for their encouragement and insightful discussions in the course of this work.

\begin{footnotesize}
\newcommand{\etalchar}[1]{$^{#1}$}
\providecommand{\bysame}{\leavevmode\hbox to3em{\hrulefill}\thinspace}
\providecommand{\MR}{\relax\ifhmode\unskip\space\fi MR }
\providecommand{\MRhref}[2]{%
  \href{http://www.ams.org/mathscinet-getitem?mr=#1}{#2}
}
\providecommand{\href}[2]{#2}

\end{footnotesize}

\vspace{.3in}
\begin{multicols}{2}
\noindent Nathan Glatt-Holtz\\ 
{\footnotesize
Department of Statistics\\
Indiana University--Bloomington\\
Web: \url{https://negh.pages.iu.edu/}\\
 Email: \url{negh@iu.edu}} \\[.2cm]
\noindent
Vincent R. Martinez\\
{\footnotesize
Department of Mathematics \& Statistics\\
CUNY Hunter College\\
and\\
Department of Mathematics\\
CUNY Graduate Center\\
Web: \url{http://math.hunter.cuny.edu/vmartine/}\\
 Email: \url{vrmartinez@hunter.cuny.edu}}

\columnbreak 
\noindent Geordie Richards\\ 
{\footnotesize
Department of Mathematics and Statistics\\
University of Guelph\\
Web: \url{http://www.geordierichards.com}\\
 Email: \url{grichards@uoguelph.ca}}\\[.2cm]

\end{multicols}

\end{document}